\newtoks\prt
\newtheorem{thm}{Theorem}[section]
\newtheorem{lemma}[thm]{Lemma}
\newtheorem{prop}[thm]{Proposition}
\newtheorem{cor}[thm]{Corollary}
\newtheorem{obs}[thm]{Observation}
\newtheorem{example}[thm]{Example}
\theoremstyle{definition}
\newtheorem{example2}[thm]{Example}
\newtheorem{remark}[thm]{Remark}
\newtheorem{remarks}[thm]{Remarks}
\def\eqn#1$$#2$${\begin{equation}\label#1#2\end{equation}}
\def\fra{\mathscr{A}}
\def\1{\boldsymbol{1}}
\def\Db{\boldsymbol{D}}
\def\Tb{\boldsymbol{T}}
\def\A{\mathcal A}
\def\D{\mathcal D}
\def\L{\mathcal L}
\def\es{\mathcal S}
\def\ce{\mathbb C}
\def\lin{Lindel\"of}
\def\co{\operatorname{conv}}
\def\ep{\varepsilon}
\def\K{\mathcal K}
\def\er{\mathbb R}
\def\ef{\mathbb F}
\def\TT{\mathbb T}
\def\Im{\operatorname{Im}}
\def \f{\boldsymbol{f}}
\def \g{\boldsymbol{g}}
\def \h{\boldsymbol{h}}
\def\ov{\overline}
\def \Ch {\operatorname{Ch}}
\def \ext {\operatorname{ext}}
\def\span{\operatorname{span}}
\def\id{\operatorname{id}}
\def \reg {\partial _{\kern1pt\text{reg}}}
\def\iff{\Longleftrightarrow}
\def\aco{\operatorname{aco}}
\def\ip#1#2{\left\langle#1,#2\right\rangle}
\def\di{\,\mbox{\rm d}}
\newcommand{\norm}[1]{\left\|#1\right\|}
\newcommand{\normr}[1]{\left\|#1\right\|_{rep}}
\renewcommand{\Re}{\operatorname{Re}}
\newcommand{\sign}{\operatorname{sign}}
\newcommand{\wscl}[1]{\overline{#1}^{w^*}}
\newcommand{\abs}[1]{\left|#1\right|}
\newcommand{\setsep}{;\,}
\numberwithin{equation}{section}
\definecolor{green}{rgb}{0,0.5,0}
\title {Simpliciality of vector-valued function spaces}
\author{Ondřej F.K. Kalenda and Ji\v r\'\i\ Spurn\'y}
\address{Ondřej F.K. Kalenda\\
Charles University\\
Faculty of Mathematics and Physics\\
Department of Mathematical Analysis \\
Sokolovsk\'{a} 83, 186 \ 75\\Praha 8, Czech Republic}
\email{kalenda@karlin.mff.cuni.cz}
\address{Ji\v r\'\i\ Spurn\'y\\
Charles University\\
Faculty of Mathematics and Physics\\
Department of Mathematical Analysis \\
Sokolovsk\'{a} 83, 186 \ 75\\Praha 8, Czech Republic}
\email{spurny@karlin.mff.cuni.cz}
\keywords{vector-valued function space; weak simpliciality; vector simpliciality; boundary measure; ordering of measures}
\subjclass[2020]{46E40; 46A55; 46G10}
\thanks{Our research was partially supported by the Research grant GA\v{C}R 23-04776S}
\begin{document}

\begin{abstract}
    We investigate integral representation of vector-valued function spaces, i.e., of subspaces  $H\subset C(K,E)$, where $K$ is a compact space and $E$ is a (real or complex) Banach space. We point out that there are two possible ways of generalizing representation theorems known from the scalar case -- either one may represent (all) functionals from $H^*$ using $E^*$-valued vector measures on $K$ (as it is done in the literature) or one may represent (some) operators from $L(H,E)$ by scalar measures on $K$ using the Bochner integral. These two ways lead to two different notions of simpliciality which we call `vector simpliciality' and `weak simpliciality'. It turns out that these two notions are in general incomparable. Moreover, the weak simpliciality is not affected by renorming the target space $E$, while vector simpliciality may be affected. Further, if $H$ contains constants, vector simpliciality is strictly stronger and admits several characterizations (partially analogous to the characterizations known in the scalar case). We also study orderings of measures inspired by C.J.K.~Batty which may be (in special cases) used to characterize $H$-boundary measures. Finally, we give a finer version of representation theorem using positive measures on $K\times B_{E^*}$ and characterize uniqueness in this case.
\end{abstract}

\maketitle

\tableofcontents

\section{Introduction}

Investigation of Choquet simplices and simplicial function spaces forms an important part of the Choquet theory focused on integral representation theorems. The most classical version works with continuous affine functions on compact convex set. If $X$ is a compact convex subset of a Hausdorff locally convex space, then any Radon probability measure $\mu\in M_1(X)$ admits a (unique) \emph{barycenter} $r(\mu)\in X$, i.e., a point satisfying $f(r(\mu))=\int f\di\mu$ for each $f$ 
from $\fra(X)$, the space of affine continuous functions on $X$. The Choquet-Bishop-de Leeuw theorem says that, given $x\in X$, there is some $\mu\in M_1(X)$ representing $x$ (i.e., such that $r(\mu)=x$) which is maximal in the Choquet ordering defined on the cone $M_+(X)$ of all positive Radon measures on $X$ by
$$\nu_1\prec\nu_2 \equiv^{df} \int f\di\nu_1\le\int f\di\nu_2\mbox{ for each $f:X\to\er$ continuous convex}.$$
If $X$ is metrizable, then maximal measures are exactly the measures carried by $\ext X$, the set of all extreme points of $X$.
A compact convex set $X$ is called a \emph{simplex} (or a \emph{Choquet simplex}) if for each $x\in X$ there is a unique maximal representing measure. The classical reference for integral representation of affine continuous functions on compact convex sets, including the theory of Choquet simplices is \cite{alfsen}.

A slight but very useful generalization is the theory of function spaces, i.e., subspaces of $C(K,\er)$ (for some compact space $K$) containing constants and separating points of $K$. This approach goes back to Choquet, the standard reference book is \cite{lmns}. This theory was extended to the complex case \cite{hustad71,hirsberg72,fuhr-phelps,phelps-complex}. It should be noted that in the study of compact convex sets it does not really matter whether we consider real or complex functions, whereas the theory of complex function spaces is more complicated than the real case. This is mainly due to the fact that a complex function space is not necessarily self-adjoint (i.e., closed under the complex conjugation).

In \cite{fuhr-phelps,phelps-complex} also function spaces not containing constant functions were addressed. In particular, a representation theorem was established there in this case. A thorough
study of this kind of function spaces, including possible notions of simpliciality, was performed  recently in \cite{bezkonstant}. In particular, in this case we have three levels of simpliciality and some pathologies may appear.

In the present paper we investigate vector-valued function spaces, related representation theorems and possible notions of simpliciality. This topic was initiated by P.~Saab \cite{saab-cr,saab-aeq}
for vector-valued affine functions on compact convex sets and continued in \cite{saab-canad,saab-tal} for vector-valued function spaces. In particular, in \cite{saab-tal} a general representation theorem was obtained. Later C.J.K.~Batty \cite{batty-vector} presented a new approach to the representation theorem for vector-valued function spaces containing constants using an ordering of vector measures.
Recently, in \cite{transference-studia}, we analyzed in more detail some methods of \cite{batty-vector} and, using the advanced method of disintegration we substantially strengthened some of the results. However, in \cite{transference-studia} we addressed only properties of the whole space of continuous vector-valued functions $C(K,E)$.

The present paper may be viewed as a continuation of \cite{transference-studia}, we now focus on general vector-valued function spaces.

The paper is organized as follows: In the next section we collect basic properties of vector-valued function spaces $H\subset C(K,E)$ and so introduce the setting of our investigation. In Section~\ref{sec:eval} we introduce and compare various natural evaluation mappings. These evaluation mappings are useful to transfer some notions from the framework of compact convex sets to our context; this played a key role already in \cite{bezkonstant}. In Section~\ref{sec:repmeas} we address two natural ways of extending the notion of representing measure to the vector-valued case. The first one follows the approach of \cite{saab-tal} and uses vector measures to represent functionals, the second one uses scalar measures to represent operators. In Section~\ref{sec:hranice a miry}  we show that there are natural notions of Choquet boundary and boundary measures, which coincide with those used in \cite{saab-tal}
and admit several characterizations. In Section~\ref{sec:reprez} we present two versions of representation theorems (one follows from \cite{saab-tal} and the second one follows from the scalar theory) and introduce four notions of simpliciality, which may be divided to two pairs. One pair is formed by weak simpliciality and functional weak simpliciality, the second one by vector simpliciality and functional vector simpliciality. In Section~\ref{sec:H-aff} we introduce two versions of $H$-affine functions. The first one is associated with the representation of operators and weak simpliciality, the related theory turns out to be completely analogous to the scalar case.
The second version is associated with the representation of functionals and vector simpliciality and appears to have a rather different behavior than the scalar case. In Section~\ref{sec:dilation} we introduce a vector-valued version of the dilation operator, establish its properties and apply it to the abstract Dirichlet problem in our setting. In Section~\ref{sec:skonstantami} we focus on function spaces containing constants and show that in this case the vector simpliciality is stronger than the weak simpliciality. In Section~\ref{sec:ordering} we introduce some orderings of measures and apply some results of \cite{batty-vector} to our context. In this way we characterize boundary measures as maximal ones (for function spaces containing constants). In Section~\ref{sec:nasoucinu} we analyze the orderings in more detail using methods from \cite{transference-studia}. In the last section we provide a brief overview of our results.

\section{Function spaces -- basic facts}\label{sec:fs}

Throughout the paper, $K$ will be a compact Hausdorff space and $E$ will be a (real or complex) Banach space. The field in question, i.e., $\er$ or $\ce$, will be denoted by $\ef$. Further, $C(K,E)$ is the space of $E$-valued continuous functions on $K$ equipped with the supremum norm. 
An important role is played by a distinguished operator $T:C(K,E)\to C(K\times B_{E^*},\ef)$ defined by
$$T\f(t,x^*)=x^*(\f(t)),\quad (t,x^*)\in K\times B_{E^*},\ \f\in C(K,E).$$
By a consequence of the Hahn-Banach theorem, $T$ is an isometric inclusion. We will also repeatedly use the projections $\pi_1:K\times B_{E^*}\to K$ and $\pi_2:K\times B_{E^*}\to B_{E^*}$.

A \emph{function space} will mean a linear subspace  $H\subset C(K,E)$. We will usually assume that $H$ separates points of $K$.
We say that $H$ \emph{contains constants} if it contains all constant functions $K\to E$. 
We say that $H$ \emph{contains some constants} if it contains at least one nonzero constant function $K\to E$. In the opposite case we say that $H$ \emph{contains no constants}.

Given a function space $H$, we consider three associated scalar-valued function spaces:
\begin{itemize}
    \item $H_w=\span\{x^*\circ\f\setsep \f\in H, x^*\in E^*\}\subset C(K,\ef)$;
    \item $H_l=T(H)=\{(t,x^*)\mapsto x^*(\f(t))\setsep \f\in H\}\subset C(K\times B_{E^*},\ef)$;
    \item $H_s=H_l+\{f\circ\pi_1\setsep f\in H_w\}\subset C(K\times B_{E^*},\ef).$
\end{itemize}

\begin{obs} Let $H\subset C(K,E)$ be a function space.
 \begin{enumerate}[$(a)$]
     \item $H$ separates points of $K$ if and only if $H_w$ separates points of $K$.
     \item $H_l$ never separates points of $K\times B_{E^*}$, unless $K$ is a singleton.
 \end{enumerate}   
\end{obs}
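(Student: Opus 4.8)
The plan is to verify both items directly from the definitions of $H_w$ and $H_l$; the only external tool needed is the Hahn--Banach theorem, and both arguments are entirely pointwise.

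For $(a)$ I would prove the two implications separately. Suppose first that $H$ separates points of $K$, and fix $s\neq t$ in $K$. Pick $\f\in H$ with $\f(s)\neq\f(t)$, and by the Hahn--Banach theorem choose $x^*\in E^*$ with $x^*(\f(s))\neq x^*(\f(t))$; then $x^*\circ\f\in H_w$ separates $s$ and $t$. For the converse, suppose $H_w$ separates points and fix $s\neq t$. Take $g\in H_w$ with $g(s)\neq g(t)$; by definition of $H_w$ we may write $g=\sum_{i=1}^n x_i^*\circ\f_i$ with $\f_i\in H$ and $x_i^*\in E^*$ (the scalar coefficients occurring in the span are absorbed into the functionals, since a scalar multiple of an element of $E^*$ again lies in $E^*$). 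From $\sum_i x_i^*(\f_i(s))=g(s)\neq g(t)=\sum_i x_i^*(\f_i(t))$ we get some index $i$ with $x_i^*(\f_i(s))\neq x_i^*(\f_i(t))$, hence $\f_i(s)\neq\f_i(t)$ with $\f_i\in H$, so $H$ separates points.

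For $(b)$ the key observation is that the zero functional $0$ always belongs to the closed unit ball $B_{E^*}$ and that every element of $H_l$ vanishes on the slice $K\times\{0\}$: indeed $T\f(t,0)=0(\f(t))=0$ for all $\f\in H$ and $t\in K$. Consequently, if $K$ has two distinct points $t_1\neq t_2$, then $(t_1,0)$ and $(t_2,0)$ are distinct points of $K\times B_{E^*}$ which no function in $H_l$ distinguishes, so $H_l$ does not separate points; this is exactly the asserted contrapositive, so no converse needs to be addressed. There is no genuine obstacle in either part: the argument is routine, and the only places calling for a little care are the bookkeeping with finite linear combinations and scalar coefficients in $(a)$, and the realization in $(b)$ that it is precisely the zero functional in $B_{E^*}$ that destroys separation.
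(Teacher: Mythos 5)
Your proof is correct and follows essentially the same route as the paper: Hahn--Banach for the direction "$H$ separates points $\Rightarrow$ $H_w$ separates points," a direct unpacking of the span for the converse, and the observation that every $f\in H_l$ vanishes on $K\times\{0\}$ for part $(b)$. You have merely written out the details the paper leaves as "obvious."
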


\begin{proof}
    $(a)$: The `if part' is obvious. The `only if part' is a consequence of the Hahn-Banach theorem.

    $(b)$: It is enough to observe that $f(t,0)=0$ for each $f\in H_l$ and $t\in K$. 
\end{proof}

The question which points of $K\times B_{E^*}$ are separated by $H_s$ is more complicated. To understand the situation properly we introduce the following notation. Given $s,t\in K$ we set
$$\begin{aligned}
  H(t)&=\{\f(t)\setsep  \f\in H\},\\
  H(s,t)&=\{(\f(s),\f(t))\setsep \f\in H\},\\
  H(s-t)&=\{\f(s)-\f(t)\setsep \f\in H\}.
\end{aligned}$$

Clearly $H(t)$ and $H(s-t)$ are linear subspaces of $E$ and $H(s,t)$ is a linear subspace of $E\times E$. If $H$ contains constants, then obviously $H(t)=E$ for each $t\in K$.

\begin{obs}
    Assume that $H$ separates points. Then
    $$\begin{aligned}
    \forall (s,x^*),(t,y^*)\in K\times B_{E^*}\colon
( \forall f&\in H_s\colon f(s,x^*)=f(t,y^*))\\&\iff 
s=t\mbox{ and }x^*-y^*\in H(t)^\perp).\end{aligned}$$
So, if $H(t)$ is dense in $E$ for each $t\in K$ (in particular, if $H$ contains constants), then $H_s$ separates points of $B_{E^*}\times K$.
\end{obs}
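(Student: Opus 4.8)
The plan is to prove both implications of the displayed equivalence directly from the definition of $H_s = H_l + \{f\circ\pi_1 : f\in H_w\}$, and then deduce the separation statement as an immediate consequence. The key observation that makes everything work is that a generic element of $H_s$ has the form $g(s,x^*) = x^*(\f(s)) + f(s)$ where $\f\in H$ and $f\in H_w$, so evaluating at $(s,x^*)$ and $(t,y^*)$ gives $x^*(\f(s)) + f(s)$ versus $y^*(\f(t)) + f(t)$.

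First I would prove the `$\Leftarrow$' direction, which is the easy one: suppose $s=t$ and $x^*-y^*\in H(t)^\perp$. Then for any $\f\in H$ we have $\f(s)=\f(t)\in H(t)$, so $(x^*-y^*)(\f(t))=0$, i.e. $x^*(\f(t))=y^*(\f(t))$; adding the common term $f(t)=f(s)$ shows every element of $H_s$ takes the same value at $(s,x^*)$ and $(t,y^*)$.

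For the `$\Rightarrow$' direction, assume $f(s,x^*)=f(t,y^*)$ for all $f\in H_s$. I would first show $s=t$: since $H$ separates points of $K$, so does $H_w$ by the preceding Observation, hence also the space $\{f\circ\pi_1 : f\in H_w\}\subset H_s$ separates the first coordinate; as all these functions agree at $(s,x^*)$ and $(t,y^*)$, we get $s=t$. With $s=t$ in hand, I then plug in an arbitrary $\f\in H$ together with the element $f=0\in H_w$ (or note that the $H_w$-part is the same on both sides since $s=t$): the equality $x^*(\f(t))=y^*(\f(t))$ must hold for every $\f\in H$, i.e. $(x^*-y^*)(\f(t))=0$ for every $\f\in H$, which is exactly the statement that $x^*-y^*$ annihilates $H(t)=\{\f(t):\f\in H\}$, i.e. $x^*-y^*\in H(t)^\perp$.

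Finally, for the separation claim: if $H(t)$ is dense in $E$ for each $t\in K$, then $H(t)^\perp = E^*{}^\perp$ of a dense subspace equals $\{0\}$, so the right-hand side of the equivalence forces $s=t$ and $x^*=y^*$; hence distinct points of $K\times B_{E^*}$ are separated by $H_s$. The case where $H$ contains constants is subsumed since then $H(t)=E$ for each $t$. I do not anticipate a real obstacle here; the only point requiring a little care is making sure the reduction to $s=t$ legitimately uses the separation property of $H_w$ (hence of the $\pi_1$-pullback summand of $H_s$) rather than of $H_l$, which by the previous Observation never separates first coordinates on its own.
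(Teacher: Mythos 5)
Your proof is correct and follows essentially the same route as the paper: the reverse implication is verified on the two summands $H_l$ and $\{f\circ\pi_1: f\in H_w\}$ of $H_s$, and the forward implication uses the $\pi_1$-pullbacks of $H_w$ (which separates points by the preceding observation) to force $s=t$ and then the functions $T\f$ to force $x^*-y^*\in H(t)^\perp$. The only cosmetic difference is that the paper phrases the forward direction contrapositively, splitting into the cases $s\ne t$ and $s=t$ with $x^*-y^*\notin H(t)^\perp$, which is a trivial reorganization of your argument.
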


\begin{proof}
    Assume $s\ne t$. Then there is $f\in H_w$ with $f(s)\ne f(t)$. The function $(u,u^*)\mapsto f(u)$ then belongs to $H_s$ and separates $(s,x^*)$ and $(t,y^*)$.

    Next assume that $s=t$ and $x^*-y^*\notin H(t)^\perp$. This means that there is some $\f\in H$ with $(x^*-y^*)(\f(t))\ne 0$. Then $T\f\in H_l\subset H_s$ and this function separates $(s,x^*)$ and $(t,x^*)$.

    Conversely, assume that $s=t$ and $x^*-y^*\in H(t)^\perp$. Let $\f\in H$ be arbitrary. Then
    $$T\f(s,x^*)=x^*(\f(t))=y^*(\f(t))=T\f(t,y^*),$$
    so $f(s,x^*)=f(t,y^*)$ for each $f\in H_l$. Since clearly $g(s)=g(t)$ for each $g\in H_w$, we deduce that $f(s,x^*)=f(t,y^*)$ for $f\in H_s$.

    The additional statement is obvious.
\end{proof}

There are some relations between the sets $H(t)$, $H(s,t)$ and $H(s-t)$, but they are not straightforward. We collect some illustrative examples.

\begin{example2}
(1) Assume $K$ has at least two points and $\dim E\ge 2$. Fix a nonzero vector $x\in E$ and set
$$H=\{y+f\cdot x\setsep y\in E, f\in C(K,\ef)\}.$$
(The function $f\cdot x$ is defined as $f\cdot x(t)=f(t)x$, $t\in K$).
Then $H$ separates points and contains constants, hence $H(t)=E$ for $t\in K$. However,  for $s\ne t$ we have 
$$H(s-t)=\span\{x\} \mbox{ and }H(s,t)=\{(u,v)\in E\times E\setsep u-v\in\span\{x\}\}.$$

(2) Let $K=\TT$ be the unit circle, let $E$ be a nontrivial Banach space and let
$$H=\{\f\in C(\TT,E)\setsep \f(-t)=-\f(t)\mbox{ for }t\in \TT\}.$$
Then $H$ separates points but contains no constants. However for $s,t\in \TT$ we have
$$H(t)=H(s-t)=E\mbox{ and }H(s,t)=\begin{cases}
    \{(x,-x)\setsep x\in E\},& s=-t,\\ E\times E&\mbox{otherwise}.
\end{cases}$$
We note that the behavior is the same even if $\dim E=1$, i.e., in the scalar case.

(3) Let $K=\{0,1\}$, let $E$ be a nontrivial Banach space and let
$$H=\{\f\in C(K,E)\setsep \f(1)=-\f(0)\}.$$
Then $H$ separates points but contains no constants. However, 
$$H(0)=H(1)=H(1-0)=E\mbox{ and }H(0,1)=\{(x,-x)\setsep x\in E\}.
$$
We note that the behavior is again the same even if $\dim E=1$, i.e., in the scalar case.
\end{example2}

Further, if $H$ contains some constants, then both $H_w$ and $H_s$ contain constants. The space $H_l$ never contains constants (as any function from $H_l$ attains value $0$). It is also clear that $H_s$ contains constants if and only if $H_w$ contains constants.

It may happen that $H$ contains no constants, but $H_w$ contains constants. An easy example is provided by $K=\{0,1\}$ and $E=\ef^2$. Let $H$ consists of functions
$$ 0\mapsto (a,b), 1\mapsto (2a,-b),\quad a,b\in\ef.$$
It is clear that $H$ is a function space separating points of $K$ and containing no constants.  On the other hand, consider the function
$$\f: 0\mapsto (1,1), 1\mapsto (2,-1)$$
and $x^*\in E^*$ defined by $x^*(u,v)=2u+v$. Then $x^*\circ \f=3$, so it is a nonzero constant.

An example of a function space which contains some constants but does not contain constants is (taking $K$ and $E$ as above) the space consisting of functions
$$0\mapsto(a,b), 1\mapsto (a,-b), \quad a,b\in\ef.$$

A special case is $H=C(K,E)$. In this case $H_w=C(K,\ef)$, 
$$H_l=\{f\in C(K\times B_{E^*},\ef)\setsep f(t,\cdot)\mbox{ is affine and $\ef$-homogeneous for each }t\in K\}$$
and
$$H_s=\{f\in C(K\times B_{E^*},\ef)\setsep f(t,\cdot)-f(t,0)\mbox{ is affine and $\ef$-homogeneous for each }t\in K\}.$$

\section{Evaluation mappings}\label{sec:eval}

If $H\subset C(K,E)$ is a function space, we define the canonical evaluation mapping $\phi_H\colon K\to L(H,E)$ from $K$ to the space $L(H,E)$ of all bounded linear operators from $H$ to $E$  by
$$\phi_H(t)(\h)=\h(t),\quad \h\in H, t\in K.$$
If $E=\ef$, i.e., if $H$ is a scalar function space, then $\phi_H$ coincides with the classical evaluation mapping used in \cite{bezkonstant} or in \cite[Section 4.3]{lmns}.
It is clear that for each $t\in K$ we have $\phi_H(t)\in L(H,E)$ and $\norm{\phi_H(t)}\le 1$. Moreover, the mapping $\phi_H$ is continuous from $K$ to the strong operator topology. Indeed, if $t_\alpha\to t$ in $K$, and $\h\in H$, then
$$\phi_H(t_\alpha)(\h)=\h(t_\alpha)\to\h(t)=\phi_H(t)(\h).$$
Thus
$$\phi_H(t_\alpha)\overset{\mbox{\footnotesize SOT}}{\longrightarrow}\phi_H(t).$$
If $H$ separates points of $K$, then $\phi_H$ is one-to-one and so it is a homeomorphic injection of $K$ into $(B_{L(H,E)},SOT)$. We continue by a characterization of the closed absolutely convex hull of the range of the evaluation mapping. We write $M(K)=M(K,\ef)$ for the space of all Radon $\ef$-valued measures on $K$ considered as the dual space to $C(K,\ef)$. 

\begin{lemma}\label{L:kompaktnostphiH}
    Let $H\subset C(K,E)$ be a linear subspace. Then
    $$\ov{\aco}^{WOT}\phi_H(K)=\left\{\h\mapsto(B)\int_K\h\di\sigma\setsep \sigma\in B_{M(K)}\right\}$$
    and it is a compact subset of $B_{L(H,E)}$ in the weak operator topology. 
\end{lemma}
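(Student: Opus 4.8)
The plan is to show that the right-hand side, which I denote
$\mathcal{S}=\{S_\sigma\setsep\sigma\in B_{M(K)}\}$ with $S_\sigma\colon\h\mapsto(B)\int_K\h\di\sigma$ (the Bochner integral of $\h$ against the scalar measure $\sigma$, understood through the polar decomposition of $\sigma$), is an absolutely convex WOT-compact subset of $B_{L(H,E)}$ containing $\phi_H(K)$, and then that conversely $\mathcal{S}\subseteq\ov{\aco}^{WOT}\phi_H(K)$; equality follows.

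First I would record the basic properties of the map $\sigma\mapsto S_\sigma$. Since $\h\in C(K,E)$ is bounded and Bochner $|\sigma|$-measurable while $|\sigma|$ is finite, $S_\sigma(\h)$ is well defined and $\norm{S_\sigma(\h)}\le\int_K\norm{\h(t)}\di|\sigma|(t)\le\norm{\h}_\infty\norm\sigma$, so $S_\sigma\in L(H,E)$ with $\norm{S_\sigma}\le\norm\sigma\le1$. The map $\sigma\mapsto S_\sigma$ is linear, hence $\mathcal{S}=S(B_{M(K)})$ is absolutely convex; moreover $S_{\varepsilon_t}=\phi_H(t)$ for the Dirac measure $\varepsilon_t$ at $t\in K$, whence $\phi_H(K)\subseteq\mathcal{S}$. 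The key point is the identity $x^*(S_\sigma(\h))=\int_K x^*(\h(t))\di\sigma(t)=\sigma(x^*\circ\h)$, valid for $\h\in H$ and $x^*\in E^*$ because a bounded linear functional commutes with the Bochner integral and $x^*\circ\h\in C(K,\ef)$. It shows at once that $\sigma\mapsto S_\sigma$ is continuous from $(B_{M(K)},w^*)$ to $(L(H,E),WOT)$, since the WOT is generated by the seminorms $S\mapsto|x^*(S\h)|$. As $B_{M(K)}$ is $w^*$-compact (Banach-Alaoglu, because $M(K)=C(K,\ef)^*$), it follows that $\mathcal{S}$ is WOT-compact, in particular WOT-closed; being absolutely convex and containing $\phi_H(K)$, it therefore contains $\ov{\aco}^{WOT}\phi_H(K)$. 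This already yields the asserted compactness and the inclusion in $B_{L(H,E)}$.

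For the reverse inclusion I would use that $B_{M(K)}=\wscl{\aco\{\varepsilon_t\setsep t\in K\}}$. Indeed, if some $\sigma\in B_{M(K)}$ were not in this $w^*$-closed absolutely convex set, a Hahn-Banach separation in $(M(K),w^*)$ would produce $f\in C(K,\ef)$ with $\Re\sigma(f)>\sup_{t\in K}|f(t)|=\norm f_\infty$, contradicting $|\sigma(f)|\le\norm\sigma\norm f_\infty\le\norm f_\infty$. Consequently every $\sigma\in B_{M(K)}$ is a $w^*$-limit of a net of absolutely convex combinations of Dirac measures; applying the continuous linear map $\sigma\mapsto S_\sigma$ and recalling $S_{\varepsilon_t}=\phi_H(t)$, we obtain that $S_\sigma$ is a WOT-limit of absolutely convex combinations of elements of $\phi_H(K)$, i.e.\ $S_\sigma\in\ov{\aco}^{WOT}\phi_H(K)$. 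Together with the previous paragraph this proves the claimed equality.

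I do not expect a genuine obstacle: the only technical ingredients are standard facts about the Bochner integral against a finite scalar measure (existence, the commutation formula with functionals, linearity in the measure) together with the two soft functional-analytic facts above. Alternatively, one can merge the two halves by separating a hypothetical $S\notin\ov{\aco}^{WOT}\phi_H(K)$ directly in $(L(H,E),WOT)$ -- whose continuous functionals are exactly the maps of the form $S\mapsto\sum_{i=1}^n x_i^*(S\h_i)$ -- and reducing, via the same identity, to the inequality $\betr{\sum_{i=1}^n\int_K x_i^*(\h_i)\di\sigma}\le\norm{\sum_{i=1}^n x_i^*\circ\h_i}_\infty$.
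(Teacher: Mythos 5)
Your proposal is correct and follows essentially the same route as the paper: define $\sigma\mapsto S_\sigma$, check it is linear, norm-one, weak$^*$-to-WOT continuous via the identity $x^*(S_\sigma\h)=\int x^*\circ\h\di\sigma$, note $S_{\ep_t}=\phi_H(t)$, and use Banach--Alaoglu to get WOT-compactness of the image. The only difference is that you spell out the final identification (via $B_{M(K)}=\wscl{\aco}\{\ep_t\setsep t\in K\}$ and a separation argument) which the paper's proof compresses into its last displayed line.
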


\begin{proof} Let $\sigma\in M(K)$. We define 
$$U_\sigma(\h)= (B)\int_K\h \di\sigma,\quad \h\in H.$$
Since $\h$ is continuous and $K$ is compact, the integral exists in the Bochner sense. Moreover, by the basic properties of the Bochner integral we deduce that $U_\sigma\in L(H,E)$ and $\norm{U_\sigma}\le\norm{\sigma}$.

Further, the mapping $\sigma\mapsto U_\sigma$ is clearly a linear operator of norm at most one. In addition, it is weak$^*$-to-WOT continuous. To see this, fix $\h\in H$ and $x^*\in E^*$. Then
$$ \sigma\mapsto x^*(U_\sigma\h)=x^*\left(\int \h\di\sigma\right)=\int x^*\circ\h\di\sigma  
$$
is weak$^*$-continuous, which completes the argument.

Next observe that $U_{\ep_t}=\phi_H(t)$ for $t\in K$ (note that $\ep_t$ denotes the Dirac measure carried by $t$) and that $U(B_{M(K)})$ is WOT-compact by the Banach-Alaoglu theorem and weak$^*$-to WOT continuity of the mapping $\sigma\mapsto U_\sigma$. Thus $U(B_{M(K)})$ is WOT-closed.
Hence we obtain
$$\ov{\aco}^{WOT}\phi_H(K)=\ov{\aco}^{WOT}\{U_{\ep_t}\setsep t\in K\}=\{U_\sigma\setsep \sigma\in B_{M(K)}\},$$
which proves the formula. 
\end{proof}

The above definition and discussion applies also to scalar-valued function spaces, i.e., to $E=\ef$. Then $L(H,E)=H^*$ and both the strong and weak operator topologies coincide with the weak$^*$-topology. In particular, we may consider mappings
$$\phi_{H_w}\colon K\to B_{H_w^*}, \quad \phi_{H_l}\colon K\times B_{E^*}\to B_{H_l^*},\quad \phi_{H_s}\colon K\times B_{E^*}\to B_{H_s^*}.$$

The following lemma summarizes properties of the norms of evaluation mappings.

\begin{lemma}\label{L:normyevaluaci} Let $t\in K$ and $x^*\in B_{E^*}$. Then:
\begin{enumerate}[$(a)$]
    \item $\norm{\phi_{H_l}(t,x^*)}\le\norm{x^*}\cdot\norm{\phi_H(t)}$ and $\norm{\phi_H(t)}=\sup\limits_{y^*\in S_{E^*}}\norm{\phi_{H_l}(t,y^*)}$. 
        If $H$ contains constants, then  $\norm{\phi_{H_l}(t,x^*)}=\norm{x^*}$. In particular, $\norm{\phi_{H}(t)}=1$.
    \item $\norm{\phi_H(t)}\le \norm{\phi_{H_w}(t)}=\norm{\phi_{H_s}(t,x^*)}\le1$. If $H$ contains some constants, the equalities hold. If $H_w$ contains constant, then the last inequality becomes equality.
\end{enumerate}
    
\end{lemma}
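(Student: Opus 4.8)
The plan is to express each of $\norm{\phi_H(t)}$, $\norm{\phi_{H_l}(t,x^*)}$, $\norm{\phi_{H_w}(t)}$ and $\norm{\phi_{H_s}(t,x^*)}$ as a supremum over the appropriate unit ball and then simply compare these suprema. Since $T$ is an isometric inclusion, $B_{H_l}=\{T\f\setsep\f\in B_H\}$, so for $t\in K$ and $x^*\in B_{E^*}$,
$$\norm{\phi_{H_l}(t,x^*)}=\sup_{\f\in B_H}|x^*(\f(t))|,\qquad \norm{\phi_H(t)}=\sup_{\f\in B_H}\norm{\f(t)},$$
the second equality being just the definition of the operator norm. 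The first inequality in $(a)$ is then immediate from $|x^*(\f(t))|\le\norm{x^*}\norm{\f(t)}\le\norm{x^*}\norm{\phi_H(t)}$; and exchanging the order of the two suprema, together with the Hahn--Banach identity $\norm{\f(t)}=\sup_{y^*\in S_{E^*}}|y^*(\f(t))|$, gives $\norm{\phi_H(t)}=\sup_{y^*\in S_{E^*}}\norm{\phi_{H_l}(t,y^*)}$.

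If $H$ contains constants, then every $x\in B_E$ occurs as the value of a constant function from $B_H$, whence $\norm{\phi_{H_l}(t,x^*)}\ge\sup_{x\in B_E}|x^*(x)|=\norm{x^*}$; combined with the inequality just proved and the trivial bound $\norm{\phi_H(t)}\le1$ this forces $\norm{\phi_{H_l}(t,x^*)}=\norm{x^*}$ for all $x^*\in B_{E^*}$, and then the sup formula yields $\norm{\phi_H(t)}=\sup_{y^*\in S_{E^*}}\norm{y^*}=1$. This finishes $(a)$.

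The heart of $(b)$ is the identity $\norm{\phi_{H_w}(t)}=\norm{\phi_{H_s}(t,x^*)}$ for every $x^*\in B_{E^*}$. For ``$\le$'': if $f\in B_{H_w}$ then $f\circ\pi_1\in B_{H_s}$ (as $\pi_1$ is onto) and $(f\circ\pi_1)(t,x^*)=f(t)$, so $|f(t)|\le\norm{\phi_{H_s}(t,x^*)}$. For ``$\ge$'': any $g\in B_{H_s}$ has the form $g=T\f+f\circ\pi_1$ with $\f\in H$ and $f\in H_w$, and the function $s\mapsto g(s,x^*)=x^*(\f(s))+f(s)$ then lies in $H_w$, has supremum norm at most $\norm{g}\le1$ and equals $g(t,x^*)$ at $s=t$, whence $|g(t,x^*)|\le\norm{\phi_{H_w}(t)}$. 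The remaining claims of $(b)$ are routine: $\norm{\phi_{H_w}(t)}\le1$ is obvious, and $\norm{\phi_H(t)}\le\norm{\phi_{H_w}(t)}$ follows once more from $\norm{\f(t)}=\sup_{y^*\in S_{E^*}}|(y^*\circ\f)(t)|$ together with the fact that $y^*\circ\f\in B_{H_w}$ whenever $\f\in B_H$ and $y^*\in S_{E^*}$.

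It remains to handle the constants in $(b)$. If $H$ contains some constants, then (as observed in Section~\ref{sec:fs}) $H_w$ contains constants, so $\norm{\phi_{H_w}(t)}=1$; moreover a nonzero constant function $\f_0\equiv x_0$ in $H$ gives $\norm{\phi_H(t)}\ge\norm{\f_0(t)}/\norm{\f_0}=1$, so all the inequalities in the chain of $(b)$ become equalities. If instead only $H_w$ is assumed to contain constants, the same computation gives $\norm{\phi_{H_w}(t)}=1$, i.e.\ the last inequality becomes an equality. The only step here that is not pure bookkeeping is the ``$\ge$'' half of the identity $\norm{\phi_{H_w}(t)}=\norm{\phi_{H_s}(t,x^*)}$, where one must notice that freezing the second coordinate of an element of $H_s$ produces an element of $H_w$ of no larger supremum norm; everything else reduces to the isometry of $T$ and the Hahn--Banach theorem.
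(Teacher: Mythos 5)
Your proposal is correct and follows essentially the same route as the paper's proof: the inequality and sup-formula in $(a)$ by swapping suprema via Hahn--Banach, the equality $\norm{\phi_{H_w}(t)}=\norm{\phi_{H_s}(t,x^*)}$ by freezing the second coordinate of $g=T\f+f\circ\pi_1$ to land in $H_w$, and the constant-function witnesses for the equality cases. No gaps.
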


\begin{proof}
    $(a)$: Recall that $H_l=T(H)$ and $T$ is an isometry. Therefore, for any $\h\in H$ we have
    $$\abs{\phi_{H_l}(t,x^*)(\h)}=\abs{x^*(\h(t))}\le\norm{x^*}\cdot\norm{\h(t)}=\norm{x^*}\cdot\norm{\phi_H(t)(\h)}\le \norm{x^*}\cdot\norm{\phi_H(t)}\cdot\norm{\h},$$
    which proves the inequality.
    To prove the equality we compute
    $$\begin{aligned}
 \norm{\phi_H(t)}&=\sup_{\h\in B_{H}}\norm{\h(t)}=\sup_{\h\in B_{H}}\sup_{y^*\in S_{E^*}}\abs{y^*(\h(t))}
 =\sup_{y^*\in S_{E^*}}\sup_{\h\in B_{H}}\abs{y^*(\h(t))}\\&=\sup_{y^*\in S_{E^*}}\sup_{\h\in B_{H}}\abs{\phi_{H_l}(t,y^*)(\h)}=\sup_{y^*\in S_{E^*}}\norm{\phi_{H_l}(t,y^*)}.
\end{aligned}$$

Assume that $H$ contains constants. Then 
$$\norm{x^*}\ge\norm{\phi_{H_l}(t,x^*)}=\sup_{\h\in B_{H}}\abs{x^*(\h(t))}\ge \sup_{x\in B_E}\abs{x^*(x)}=\norm{x^*},$$
hence the equalities hold. Equality  $\norm{\phi_H(t)}=1$ now follows (alternatively, it is witnessed by any constant function of norm one).
   
  $(b)$: To prove the first inequality fix $\h\in H$. Then
  $$\begin{aligned}
        \norm{\phi_H(t)(\h)}&=\norm{\h(t)}=\sup_{y^*\in S_{E^*}}\abs{y^*(\h(t))}
  =\sup_{y^*\in S_{E^*}}\abs{\phi_{H_w(t)}(y^*\circ\h)}\\&\le 
  \sup_{y^*\in S_{E^*}}\norm{\phi_{H_w}(t)}\cdot\norm{y^*\circ\h} \le
    \norm{\phi_{H_w}(t)}\cdot\norm{\h},\end{aligned}$$
    which completes the argument.

    Let us pass to the equality. Inequality `$\le$' follows from the fact that $f\mapsto f\circ\pi_1$ is an isometric embedding of $H_w$ into $H_s$. To prove the converse inequality fix $f\in H_s$ with $\norm{f}\le1$. Then $f=g\circ\pi_1+T\h$ for some $g\in H_w$ and $\h\in H$. The assumption $\norm{f}\le 1$ in particular implies that
    $$\abs{g(t)+x^*(\h(t))}\le 1,\quad t\in K.$$
    But this means that $\norm{g+x^*\circ\h}\le1$. Since $g+x^*\circ\h\in H_w$, we deduce
    $$\abs{\phi_{H_s}(t,x^*)(f)}=\abs{g(t)+x^*(\h(t))}=\abs{\phi_{H_w}(t)(g+x^*\circ\h)}\le\norm{\phi_{H_w}(t)},$$
    which completes the argument.

    The last inequality is trivial.

    Assume that $H$ contains some constants. Then there is $x\in E$ with $\norm{x}=1$ such that the constant function equal to $x$ belongs to $H$. This function witnesses that $\norm{\phi_H(t)}=1$, hence equalities hold. If $H_w$ contains constants, then $H_w$ contains constant function equal to one, hence $\norm{\phi_{H_w}(t)}=1$. 
\end{proof}

We continue by collecting some examples showing that the inequalities in the previous lemma may be strict.
 
\begin{example2}\label{ex:normyevaluaci}
$(1)$ Let $K=\{0,1\}$, $E=(\ef^2,\norm{\cdot}_p)$ for some $p\in[1,\infty]$ and let
$$H=\{ 0\mapsto (0,b), 1\mapsto (a,b)\setsep a,b\in\ef\}.$$
Then $H$ contains constant function equal to $(0,1)$ and $\span(H(0)\cup H(1))=E$. Let $e_1^*$ be the first coordinate functional. Then $\norm{e_1^*}=1$ and and $\phi_{H_l}(0,e_1^*)=0$. Hence the inequality in Lemma~\ref{L:normyevaluaci}$(a)$ may be strict even if $H$ contains some constants.

$(2)$ Let $K$ and $E$ be as in $(1)$ and let 
$$H=\{0\mapsto(a,b),1\mapsto(\tfrac b2,\tfrac a2)\setsep a,b\in\ef\}.$$ Then $H$ contains no constants. Moreover,  clearly $\norm{\phi_H(1)} =\frac12$.
Let $\f\in H$ be defined using $a,b$. Then
$$e_1^*\circ \f: 0\mapsto a,1\mapsto \tfrac b2.$$
This covers all elements from $C(K)$. Thus $H_w=C(K)$ and hence $\norm{\phi_{H_w}(1)}=1$. Hence the first inequality in Lemma~\ref{L:normyevaluaci}$(b)$ may be strict (even if $H_w$ contains constants). 

$(3)$ The last inequality in Lemma~\ref{L:normyevaluaci}$(b)$ may be strict even for scalar-valued function spaces (then $H_w=H$). In particular, if
$$H=\{f\in C[0,1]\setsep f(1)=\tfrac12f(0)\},$$
then $\norm{\phi_H(1)}=\frac12$.
\end{example2}

For function spaces not containing constants one more evaluation mapping plays a key role.
In \cite{bezkonstant} it was denoted by $\theta$ (in \cite{fuhr-phelps} it was used without name,
in \cite{phelps-complex} it was denoted by $\Phi$). We are now going to define its vector-valued variant. It is the mapping $\theta_H:S_{\ef}\times K\to L(H,E)$ defined by $\theta_H(\alpha,t)=\alpha\phi_H(t)$. Then $\theta_H$ is continuous to the weak operator topology and, moreover,
$$\ov{\aco}^{WOT}\phi_H(K)=\ov{\co}^{WOT}\theta_H(S_{\ef}\times K).$$
In particular, by the Milman theorem we have
$$\ext \ov{\aco}^{WOT}\phi_H(K)\subset \theta_H(S_{\ef}\times K).$$
We continue by a lemma which enables us to relate $\theta_H$ to its scalar variants, but is formulated in a bit more general setting.

\begin{lemma}\label{L:phit=alphaphis}
    Let $s,t\in K$ and $\alpha\in\ef$. The following assertions are equivalent.
    \begin{enumerate}[$(a)$]
        \item $\phi_H(t)=\alpha\phi_H(s)$;
        \item $\phi_{H_w}(t)=\alpha\phi_{H_w}(s)$;
        \item $\phi_{H_s}(t,x^*)=\alpha\phi_{H_s}(s,x^*)$ for each $x^*\in B_{E^*}$;
        \item $\phi_{H_s}(t,x^*)=\alpha\phi_{H_s}(s,x^*)$ for some $x^*\in B_{E^*}$;
          \item $\phi_{H_l}(t,x^*)=\alpha\phi_{H_l}(s,x^*)$ for each $x^*\in B_{E^*}$.
        \end{enumerate}
\end{lemma}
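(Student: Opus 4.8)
The plan is to establish the implications $(a)\Rightarrow(b)\Rightarrow(c)\Rightarrow(d)\Rightarrow(e)\Rightarrow(a)$, or rather to organize the argument around the two ``scalarization'' procedures $\f\mapsto x^*\circ\f$ and $T$, which connect $\phi_H$ with $\phi_{H_w}$, $\phi_{H_l}$ and $\phi_{H_s}$. The key observation is that all of $H_w$, $H_l$, $H_s$ are generated (as linear spans, plus the $\pi_1$-lifted part in the case of $H_s$) by functions of the form $x^*\circ\f$ and $T\f = ((u,u^*)\mapsto u^*(\f(u)))$ with $\f\in H$, $x^*\in E^*$, and that evaluating these at a point $t$ (resp.\ $(t,x^*)$) reduces to evaluating $\f$ at $t$ and then applying $x^*$. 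So each equivalence amounts to the elementary fact that a vector identity $\h(t)=\alpha\h(s)$ in $E$ holds for all $\h\in H$ if and only if all its scalarizations hold.

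\textbf{Key steps.} First I would prove $(a)\Leftrightarrow(b)$: the identity $\phi_H(t)=\alpha\phi_H(s)$ says $\h(t)=\alpha\h(s)$ for all $\h\in H$, while $(b)$ says $f(t)=\alpha f(s)$ for all $f\in H_w$. Since $H_w=\span\{x^*\circ\f\setsep\f\in H,x^*\in E^*\}$, condition $(b)$ is equivalent to $x^*(\h(t))=\alpha x^*(\h(s))$ for all $\h\in H$, $x^*\in E^*$, which by the Hahn-Banach theorem (separating functionals) is equivalent to $\h(t)=\alpha\h(s)$ for all $\h\in H$, i.e.\ to $(a)$. Next, $(a)\Leftrightarrow(e)$: by definition $\phi_{H_l}(t,x^*)(\h)=x^*(\h(t))$, so $(e)$ says $x^*(\h(t))=\alpha x^*(\h(s))$ for all $\h\in H$ and all $x^*\in B_{E^*}$; since $B_{E^*}$ spans $E^*$, this is again the same Hahn-Banach statement as in the proof of $(a)\Leftrightarrow(b)$. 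For the $H_s$-conditions, recall every $f\in H_s$ has the form $f=g\circ\pi_1+T\h$ with $g\in H_w$, $\h\in H$, so $\phi_{H_s}(t,x^*)(f)=g(t)+x^*(\h(t))$. Thus $(c)$ says $g(t)+x^*(\h(t))=\alpha(g(s)+x^*(\h(s)))$ for all such $g,\h$ and all $x^*\in B_{E^*}$. Taking $\h=0$ recovers $(b)$; taking $g=0$ recovers $(e)$; conversely $(b)$ and $(e)$ together give $(c)$ by adding. Hence $(c)\Leftrightarrow((b)\text{ and }(e))\Leftrightarrow(a)$. Trivially $(c)\Rightarrow(d)$, so it remains to show $(d)\Rightarrow(a)$: fixing one $x^*$ for which $\phi_{H_s}(t,x^*)=\alpha\phi_{H_s}(s,x^*)$, the choice $\h=0$ already yields $g(t)=\alpha g(s)$ for all $g\in H_w$, i.e.\ $(b)$, which we have shown is equivalent to $(a)$.

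\textbf{Main obstacle.} There is no serious obstacle here; the only point requiring a little care is making sure that the single fixed functional $x^*$ in $(d)$ suffices, and this works precisely because the $\pi_1$-lifted summand $g\circ\pi_1$ of $H_s$ is independent of the $B_{E^*}$-variable, so even one value of $x^*$ exposes the full space $H_w$ and hence, via Hahn-Banach, the full vector identity in $E$. I would present the argument as the cycle $(a)\Rightarrow(b)\Rightarrow(c)\Rightarrow(d)\Rightarrow(a)$ together with the separate equivalence $(a)\Leftrightarrow(e)$, citing Hahn-Banach for the step that passes from scalarized identities back to the vector identity $\h(t)=\alpha\h(s)$.
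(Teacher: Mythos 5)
Your proposal is correct and follows essentially the same route as the paper: trivial forward implications, the decomposition of $H_s$ as $H_l+\{g\circ\pi_1\}$ to get from $(d)$ back to $(b)$ via the $\pi_1$-lifted functions, and the Hahn--Banach separation argument to pass from the scalarized identities in $(b)$ or $(e)$ back to the vector identity $\h(t)=\alpha\h(s)$. The only cosmetic difference is that you make the equivalence $(c)\iff((b)\ \&\ (e))$ explicit, whereas the paper simply notes $(a)\implies(c)$ is trivial; the substance is identical.
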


\begin{proof}
Implications $(a)\implies(b)$ and $(a)\implies(c)\implies(d) \& (e)$ are trivial. 

$(b)\implies(a)$: Assume $(b)$ holds. Then for each $\h\in H$ and $x^*\in E^*$ we have $x^*\circ \h\in H_w$ and hence
$$x^*(\h(t))=\alpha x^*(\h(s))=x^*(\alpha\h(s)).$$
By a consequence of the Hahn-Banach theorem we deduce
$$\h(t)=\alpha\h(s),$$
which completes the argument.

$(d)\implies(b)$: Assume $(d)$ holds and $x^*\in B_{E^*}$ witnesses it. Let $f\in H_w$ be arbitrary. Then $f\circ\pi_1\in H_s$ and so
$$f(t)=(f\circ\pi_1)(t,x^*)=\alpha(f\circ\pi_1)(s,x^*)=\alpha f(s),$$
which completes the proof.  

$(e)\implies (a)$. Assume $(e)$ holds and $\h\in H$. For $x^*\in B_{E^*}$ we then have
$$x^*(\h(t))=T\h(t,x^*)=\alpha T\h(s,x^*)=\alpha x^*(\h(s))=x^*(\alpha\h(s)).$$
By a consequence of the Hahn-Banach theorem we deduce
$$\h(t)=\alpha\h(s),$$
which completes the argument.
\end{proof}

In the next lemma we apply the previous one to the mapping $\theta_H$ and its variants. This lemma enables us to reduce some vector-valued problems to the scalar ones (see Section~\ref{ss:AcwH} below). 

\begin{lemma}\label{L:theta}
   Let $s,t\in K$ and $\alpha,\beta\in S_\ef$. The following assertions are equivalent.
    \begin{enumerate}[$(a)$]
        \item $\theta_H(\alpha,t)=\theta_H(\beta,s)$;
        \item $\theta_{H_w}(\alpha,t)=\theta_{H_w}(\beta,s)$;
        \item $\theta_{H_s}(\alpha,t,x^*)=\theta_{H_s}(\beta,s,x^*)$ for each $x^*\in B_{E^*}$;
        \item $\theta_{H_s}(\alpha,t,x^*)=\theta_{H_s}(\beta,s,x^*)$ for some $x^*\in B_{E^*}$;
        \item $\theta_{H_l}(\alpha,t,x^*)=\theta_{H_l}(\beta,s,x^*)$ for each $x^*\in B_{E^*}$.
        \end{enumerate} 
   Consequently, $\theta_H$ is one-to-one if and only if $\theta_{H_w}$ is one-to-one.       
\end{lemma}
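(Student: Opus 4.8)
The plan is to imitate the structure of the proof of Lemma~\ref{L:phit=alphaphis}, which already handles the case $\alpha=\beta=1$ (up to relabeling), and to reduce the general case to it. The key observation is that $\theta_H(\alpha,t)=\theta_H(\beta,s)$ means $\alpha\phi_H(t)=\beta\phi_H(s)$, i.e. $\phi_H(t)=\overline{\alpha}\beta\,\phi_H(s)$ since $\alpha\in S_\ef$ so $\overline\alpha\alpha=1$; setting $\gamma=\overline\alpha\beta\in\ef$ we may invoke Lemma~\ref{L:phit=alphaphis} with this scalar $\gamma$ in place of its ``$\alpha$''. The same manipulation works verbatim for $H_w$, $H_s$ (for each fixed $x^*$, resp.\ for some $x^*$), and $H_l$, because in each of these the mapping $\theta$ is again obtained from the corresponding $\phi$ by scalar multiplication by the unimodular parameter. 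So each of the five conditions $(a)$--$(e)$ in this lemma is equivalent to the corresponding condition in Lemma~\ref{L:phit=alphaphis} with the scalar $\gamma=\overline\alpha\beta$, and the chain of equivalences transfers.

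Concretely, first I would record the identity $\theta_H(\alpha,t)=\theta_H(\beta,s)\iff\phi_H(t)=\overline{\alpha}\beta\,\phi_H(s)$, together with its four analogues for $H_w$, $H_s$ (both quantifier versions), and $H_l$ — each is a one-line computation using $|\alpha|=1$. Then I would apply Lemma~\ref{L:phit=alphaphis} with $\gamma:=\overline\alpha\beta$: that lemma asserts the equivalence of $\phi_H(t)=\gamma\phi_H(s)$, $\phi_{H_w}(t)=\gamma\phi_{H_w}(s)$, $\phi_{H_s}(t,x^*)=\gamma\phi_{H_s}(s,x^*)$ for all $x^*$, the same for some $x^*$, and $\phi_{H_l}(t,x^*)=\gamma\phi_{H_l}(s,x^*)$ for all $x^*$. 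Composing this with the five translation identities gives exactly the equivalence of $(a)$--$(e)$ here.

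For the final ``consequently'' clause: $\theta_H$ fails to be one-to-one precisely when there exist $(\alpha,t)\neq(\beta,s)$ in $S_\ef\times K$ with $\theta_H(\alpha,t)=\theta_H(\beta,s)$. By the equivalence $(a)\iff(b)$ just established, this happens iff $\theta_{H_w}(\alpha,t)=\theta_{H_w}(\beta,s)$ for some such pair, i.e.\ iff $\theta_{H_w}$ is not one-to-one. (One should note the pair $(\alpha,t)\neq(\beta,s)$ really does give a genuine failure of injectivity for $\theta_{H_w}$ as well, and conversely, so the two ``not one-to-one'' statements match up; this is immediate since the domain $S_\ef\times K$ is the same for both.)

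I do not expect a serious obstacle: the content is entirely in Lemma~\ref{L:phit=alphaphis}, and the only thing to be careful about is the bookkeeping with the unimodular scalars — in particular making sure that passing from the pair $(\alpha,\beta)$ to the single scalar $\overline\alpha\beta$ is reversible (it is, because $\alpha$ is a unit), and that in condition $(d)$ the ``for some $x^*$'' is preserved under this passage (it is, trivially, since the identity $\theta_{H_s}(\alpha,t,x^*)=\theta_{H_s}(\beta,s,x^*)\iff\phi_{H_s}(t,x^*)=\overline\alpha\beta\,\phi_{H_s}(s,x^*)$ holds for each individual $x^*$).
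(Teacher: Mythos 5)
Your proof is correct and takes essentially the same route as the paper, which simply notes that the equivalences "follow immediately from Lemma~\ref{L:phit=alphaphis}" and that the final claim follows from $(a)\iff(b)$. Your reduction via the scalar $\gamma=\overline{\alpha}\beta$ is exactly the intended (and here, spelled-out) argument.
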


\begin{proof}
    The equivalences follow immediately from Lemma~\ref{L:phit=alphaphis}. The additional statement follows from equivalence $(a)\iff(b)$.
\end{proof}

\section{Representing measures of functionals and operators}\label{sec:repmeas}

In the theory of scalar function spaces an important role is played by representing measures. Recall that, given a scalar function space $H\subset C(K,\ef)$ and $\varphi\in H^*$, the set of representing measures is defined by
$$M_\varphi(H)=\left\{\mu\in M(K,\ef)\setsep \norm{\mu}=\norm{\varphi}\ \&\ \varphi(f)=\int f\di\mu\mbox{ for } f\in H\right\}.$$
This notation follows \cite{bezkonstant}, but the notion itself was used much earlier in \cite{hustad71,hirsberg72,fuhr-phelps,phelps-complex}.

The set $M_\varphi(H)$ is always nonempty. Indeed, given $\varphi\in H^*$, by the Hahn-Banach theorem it may be extended to $\widetilde{\varphi}\in C(K,\ef)^*$ with the same norm and, by the Riesz representation theorem $\widetilde{\varphi}$ is represented by a (unique) measure of the same norm.
An important special case, still in the scalar setting, is the set
$$M_t(H)=M_{\phi_H(t)}(H)=\left\{\mu\in M(K,\ef)\setsep \norm{\mu}=\norm{\phi_H(t)}\ \&\ f(t)=\int f\di\mu\mbox{ for }f\in H\right\}$$
defined for each $t\in K$. If $H$ additionally contains constants, then elements of $M_t(H)$ are probability measures which corresponds to the classical notion from \cite[Definition 3.3]{lmns}.

For vector-valued function spaces $H\subset C(K,E)$ there are two natural ways to generalize representing measures. We may investigate representation of functionals from $H^*$ by vector-valued measures or representations of operators from $L(H,E)$ using the Bochner integral. Both ways coincide in the scalar case. The first one was addressed in \cite{saab-tal}, the second one has not been explicitly studied yet (to our knowledge). We will briefly introduce both ways in two subsections.

\subsection{Representing functionals by vector measures}\label{sec:reprez-functionals}

The idea is to imitate the scalar setting. Let $H\subset C(K,E)$ be a function space. Given $\varphi\in H^*$, the Hahn-Banach theorem provides $\widetilde{\varphi}\in C(K,E)^*$ extending $\varphi$ and having the same norm. By Singer's theorem (see \cite{hensgen} for an easy proof) the dual space $C(K,E)^*$ is canonically isometric to $M(K,E^*)$, the space of all regular $E^*$-valued Borel measures on $K$ with bounded variation. Let us briefly recall the definition of this kind of integral.

If $\mu\in M(K,E^*)$ and $\f=\sum_{j=1}^n \chi_{A_j}\cdot x_j$ is an $E$-valued simple Borel function 
(i.e., $x_1,\dots,x_n\in E$ and $A_1,\dots,A_n$ are pairwise disjoint Borel subsets of $K$), then
$$\int \f\di\mu=\sum_{j=1}^n \mu(A_j)(x_j).$$
It is easy to check that $\f\mapsto \int\f\di\mu$ is a linear functional on the space of $E$-valued
simple Borel functions and $\abs{\int \f\di\mu}\le \norm{f}_\infty\norm{\mu}$, where $\norm{\mu}$ is the total variation of $\mu$. Hence it may be uniquely continuously extended to the space of $E$-valued functions which may be uniformly approximated by $E$-valued simple Borel functions. We denote this space by $I(K,E)$. It clearly contains $C(K,E)$. (Note that in case $E=\ef$ this integral coincides with the classical Lebesgue integral.)

Using this notion of integral we define the set of representing measures of $\varphi\in H^*$ by
$$M_\varphi(H)=\left\{\mu\in M(K,E^*)\setsep \norm{\mu}=\norm{\varphi}\ \&\ \varphi(\f)=\int\f\di\mu\mbox{ for }\f\in H\right\}.$$
As remarked above, it follows by combining the Hahn-Banach and Singer's theorems that the set $M_\varphi(H)$ is always nonempty. In the scalar case an important special case is the set $M_t(H)$ for $t\in K$. An analogous role in the new setting is played by the sets
$$M_{x^*\circ\phi_H(t)}(H),\quad t\in K, x^*\in B_{E^*}.$$
We note that $\norm{x^*\circ\phi_H(t)}=\norm{\phi_{H_l}(t,x^*)}$ for $t\in K$ and $x^*\in B_{E^*}$, so Lemma~\ref{L:normyevaluaci} may be applied.

We will further need some more notation. Given $\mu\in M(K,E^*)$ and $x\in E$, by $\mu_x$ we denote the scalar measure defined by
$$\mu_x(A)=\mu(A)(x),\quad A\subset K\mbox{ Borel}.$$
If $f:K\to\ef$ is a bounded Borel function, then $f\cdot x\in I(K,E)$ and it is easy to check that
$$\int f\cdot x\di\mu=\int f\di\mu_x.$$

\subsection{Representing measures of operators}\label{sec:reprez-oper}

Representing of operators appears to be quite different as, in general, not all operators admit such a representation. It is reflected by the following notion. Let $H\subset C(K,E)$ be a function space and let $U\in L(H,E)$. If there is a measure $\mu\in M(K)$ such that
$$U(\f)=(B)\int\f\di\mu\mbox{ for }\f\in H,$$
where the integral is in the Bochner sense, we call the operator $U$ \emph{representable}. If $U$ is a representable operator, we define
$$\normr{U}=\inf\left\{ \norm{\mu}\setsep \mu\in M(K), U(\f)=(B)\int\f\di\mu\mbox{ for }\f\in H\right\}.$$
Representable operators clearly form a vector space and $\normr{\cdot}$ is a norm on this space satisfying $\normr{\cdot}\ge\norm{\cdot}$.

\begin{obs}\label{obs:scalar}
    If $E=\ef$, then all bounded operators (=functionals) are representable and $\normr{\cdot}=\norm{\cdot}$.
\end{obs}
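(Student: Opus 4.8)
The statement to prove is Observation~\ref{obs:scalar}: when $E=\ef$, every bounded operator in $L(H,E)=H^*$ is representable and $\normr{\cdot}=\norm{\cdot}$. The plan is to reduce everything to the classical representation of scalar functionals already recalled in the text, namely that $M_\varphi(H)\ne\emptyset$ for each $\varphi\in H^*$, which follows by combining the Hahn--Banach extension theorem with the Riesz representation theorem.

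First I would observe that when $E=\ef$ the Bochner integral $(B)\int\f\di\mu$ reduces to the ordinary scalar Lebesgue integral $\int\f\di\mu$ (this is remarked explicitly after the definition of $I(K,E)$). So an operator $U\in H^*$ is representable precisely when there is $\mu\in M(K)$ with $U(\f)=\int\f\di\mu$ for all $\f\in H$. Given any $U\in H^*$, by the Hahn--Banach theorem extend $U$ to $\widetilde U\in C(K,\ef)^*$ with $\|\widetilde U\|=\|U\|$, and by the Riesz representation theorem there is $\mu\in M(K)$ with $\widetilde U(f)=\int f\di\mu$ for all $f\in C(K,\ef)$ and $\norm{\mu}=\|\widetilde U\|=\norm{U}$. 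In particular $U(\f)=\int\f\di\mu$ for $\f\in H$, so $U$ is representable; this proves the first assertion.

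For the norm identity, the inequality $\normr{U}\ge\norm{U}$ is already noted in general (it holds because $\bigl|\int\f\di\mu\bigr|\le\norm{\f}_\infty\norm{\mu}$, so any representing $\mu$ has $\norm{\mu}\ge\norm{U}$, and one takes the infimum). For the reverse inequality, the measure $\mu$ produced above satisfies $\norm{\mu}=\norm{U}$, so it witnesses $\normr{U}\le\norm{\mu}=\norm{U}$. Combining the two inequalities gives $\normr{U}=\norm{U}$, completing the proof.

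There is essentially no obstacle here: the only mild point to be careful about is spelling out that the Bochner integral degenerates to the scalar one, so that ``representable'' in the sense of Section~\ref{sec:reprez-oper} coincides with ``representable by a scalar measure'' in the classical sense; once that identification is made, the result is an immediate packaging of Hahn--Banach plus Riesz, exactly as in the scalar discussion preceding Observation~\ref{obs:scalar}.
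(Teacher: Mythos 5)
Your proof is correct and follows exactly the paper's (one-line) argument: the paper also proves this by combining the Hahn--Banach extension theorem with the Riesz representation theorem, and your write-up simply spells out the details, including the observation that the Bochner integral degenerates to the scalar Lebesgue integral. No issues.
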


\begin{proof}
    If $E=\ef$, then $L(H,E)=E^*$. Hence the result follows by combining the Hahn-Banach and Riesz theorems.
\end{proof}

We continue by a lemma on the structure of representable operators.

\begin{lemma}\label{L:reprezentableoper}
Let $H\subset C(K,E)$ be a function space. Let $B=\overline{\aco}^{WOT}\phi_H(K)$. Then:
\begin{enumerate}[$(i)$]
    \item The space of representable operators equals $\span B$.
    \item $B$ is the unit ball of the space of representable operators equipped with the norm $\normr{\cdot}$.
    \item The infimum in the definition of $\normr{\cdot}$ is attained.
\end{enumerate}  
\end{lemma}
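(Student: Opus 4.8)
The plan is to read off all three claims from Lemma~\ref{L:kompaktnostphiH}, which identifies $B=\overline{\aco}^{WOT}\phi_H(K)$ with the set $\{U_\sigma\setsep\sigma\in B_{M(K)}\}$ of Bochner-integral operators $U_\sigma(\h)=(B)\int_K\h\di\sigma$ induced by measures in the unit ball of $M(K)$, and which states that this set is WOT-compact. Note that, by definition, an operator $U\in L(H,E)$ is representable exactly when $U=U_\mu$ for some $\mu\in M(K)$, and that $\mu\mapsto U_\mu$ is linear.

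For $(i)$: if $\mu\in M(K)$ is nonzero then $U_\mu=\norm{\mu}\,U_{\mu/\norm{\mu}}$ with $\mu/\norm{\mu}\in B_{M(K)}$, so $U_\mu\in\span B$ (and $U_0=0\in\span B$); conversely any finite linear combination $\sum_i c_i U_{\sigma_i}$ equals $U_{\sum_i c_i\sigma_i}$ by linearity, hence is representable. Thus the space of representable operators is precisely $\span B$. For $(ii)$: if $U\in B$, say $U=U_\sigma$ with $\norm{\sigma}\le1$, then $\normr{U}\le\norm{\sigma}\le1$, so $B$ is contained in the closed unit ball of $(\span B,\normr{\cdot})$. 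Conversely, if $\normr{U}\le1$, then for each $\ep>0$ there is $\mu_\ep\in M(K)$ with $U=U_{\mu_\ep}$ and $\norm{\mu_\ep}\le1+\ep$; hence $\tfrac1{1+\ep}U=U_{\mu_\ep/(1+\ep)}\in B$. Since $\tfrac1{1+\ep}U\to U$ in operator norm as $\ep\to0^+$ (a fortiori in WOT) and $B$ is WOT-closed, being WOT-compact, we conclude $U\in B$; so $B$ is exactly the closed unit ball. For $(iii)$: given a representable $U$, put $r=\normr{U}$. If $r=0$ then $U=0$ and $\mu=0$ works. If $r>0$ then $\normr{U/r}=1$, so by $(ii)$ we have $U/r=U_\sigma$ for some $\sigma\in B_{M(K)}$, and $\mu:=r\sigma$ satisfies $U=U_\mu$ and $\normr{U}\le\norm{\mu}=r\norm{\sigma}\le r=\normr{U}$, whence $\norm{\mu}=\normr{U}$ and the infimum is attained.

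There is no serious obstacle here: the whole argument rests on the compactness, hence WOT-closedness, of $B$ supplied by Lemma~\ref{L:kompaktnostphiH}, which removes the need to carry out any limiting argument by hand. If one prefers to avoid invoking that closedness in the proof of $(iii)$, one can argue directly that for a representable $U$ the set $S_U=\{\mu\in M(K)\setsep U_\mu=U\}$ is weak$^*$-closed — it is the intersection, over $\h\in H$ and $x^*\in E^*$, of the weak$^*$-closed sets $\{\mu\setsep\int x^*\circ\h\di\mu=x^*(U(\h))\}$ — so $S_U\cap\bigl((1+\ep)\normr{U}\bigr)B_{M(K)}$ is nonempty and weak$^*$-compact for every $\ep>0$, and the weak$^*$-lower semicontinuous functional $\norm{\cdot}$ attains its minimum $\normr{U}$ on it.
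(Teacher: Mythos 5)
Your proof is correct and follows essentially the same route as the paper: both read off $(i)$ and $(ii)$ from the identification $B=\{U_\sigma\setsep\sigma\in B_{M(K)}\}$ in Lemma~\ref{L:kompaktnostphiH} and deduce $(iii)$ from the WOT-compactness of $B$ (the paper phrases this as $\normr{\cdot}$ being the Minkowski functional of the compact set $B$). Your write-up merely supplies the details the paper leaves implicit, plus a valid alternative weak$^*$-compactness argument for $(iii)$.
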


\begin{proof} Assertion $(ii)$ is just a reformulation of the formula from Lemma~\ref{L:kompaktnostphiH}, assertion $(i)$ then easily follows. To prove $(iii)$ observe that by $(ii)$ the norm $\normr{\cdot}$ is the Minkowski functional of $B$ and that $B$ is WOT-compact by Lemma~\ref{L:kompaktnostphiH}.
\end{proof}

We continue by observing that not all operators are representable.

\begin{example}\label{ex:nonrepresentable}
    There may be non-representable operators even if $K$ is a singleton and $\dim E=2$.
\end{example}

\begin{proof}
    Assume $K$ is a singleton and $\dim E=2$. Set $H=C(K,E)$. Then $\dim H=2$ and hence $\dim L(H,E)=4$. However, by Lemma~\ref{L:reprezentableoper} we deduce that the space of representable operators has dimension one.
\end{proof}

Given a representable operator $U\in L(H,E)$, we define the set of representing measures by
$$M_U(H)=\left\{\mu\in M(K)\setsep \norm{\mu}=\normr{U}\ \&\ U(\f)=(B)\int\f\di\mu\mbox{ for }\f\in H\right\}.$$
By Lemma~\ref{L:reprezentableoper}$(iii)$ this set is always nonempty. Similarly as in the scalar case we set
$$M_t(H)=M_{\phi_H(t)}(H)\mbox{ for }t\in K.$$

It turns out that the theory of representable operators reduces to the theory of scalar function spaces. The first step to this statement is the following proposition.

\begin{prop}\label{P:MtH=MtHw}
Let $H\subset C(K,E)$ be a function space and let $t\in K$. Then $M_t(H)=M_t(H_w)$ and $\normr{\phi_H(t)}=\norm{\phi_{H_w}(t)}$.
\end{prop}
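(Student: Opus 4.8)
The plan is to prove the two assertions together by establishing a direct correspondence between representing measures of $\phi_H(t)$ as an element of $L(H,E)$ and representing measures of $\phi_{H_w}(t)\in H_w^*$. First I would recall that a measure $\mu\in M(K)$ represents $\phi_H(t)$ (in the Bochner sense) if and only if $\h(t)=(B)\int_K\h\di\mu$ for all $\h\in H$; applying $x^*\in E^*$ to both sides and using that the Bochner integral commutes with bounded functionals, this is equivalent to $x^*(\h(t))=\int_K x^*\circ\h\di\mu$ for all $\h\in H$ and all $x^*\in E^*$. Since $H_w=\span\{x^*\circ\h\setsep \h\in H,\ x^*\in E^*\}$, by linearity this says precisely that $f(t)=\int_K f\di\mu$ for all $f\in H_w$, i.e., that $\mu$ represents the functional $\phi_{H_w}(t)$ in the scalar sense. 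The nontrivial direction here (recovering the Bochner identity from the scalar identities) uses the Hahn--Banach separation: if $x^*(\h(t))=x^*((B)\int\h\di\mu)$ for every $x^*\in E^*$, then $\h(t)=(B)\int\h\di\mu$.

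Next I would match the norm constraints. By Observation~\ref{obs:scalar} and the scalar definition, the norm condition in $M_t(H_w)$ asks for $\norm{\mu}=\norm{\phi_{H_w}(t)}$, while the norm condition in $M_t(H)$ asks for $\norm{\mu}=\normr{\phi_H(t)}$. So the equality $M_t(H)=M_t(H_w)$ will follow once we know $\normr{\phi_H(t)}=\norm{\phi_{H_w}(t)}$. To prove the latter, I would argue both inequalities. For $\normr{\phi_H(t)}\ge\norm{\phi_{H_w}(t)}$: any $\mu$ representing $\phi_H(t)$ also represents $\phi_{H_w}(t)$ by the computation above, hence $\norm{\mu}\ge\norm{\phi_{H_w}(t)}$; taking the infimum over such $\mu$ and using that it is attained (Lemma~\ref{L:reprezentableoper}$(iii)$) gives the inequality. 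For the reverse $\normr{\phi_H(t)}\le\norm{\phi_{H_w}(t)}$: pick (by Hahn--Banach plus Riesz, exactly as in the scalar discussion) a measure $\mu\in M(K)$ with $\norm{\mu}=\norm{\phi_{H_w}(t)}$ and $f(t)=\int f\di\mu$ for all $f\in H_w$; by the equivalence established in the first paragraph this $\mu$ represents $\phi_H(t)$ in the Bochner sense, so $\phi_H(t)$ is representable and $\normr{\phi_H(t)}\le\norm{\mu}=\norm{\phi_{H_w}(t)}$.

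Combining, $\normr{\phi_H(t)}=\norm{\phi_{H_w}(t)}$, and then a measure lies in $M_t(H)$ iff it satisfies the (common) representation identities and has norm equal to this common value, iff it lies in $M_t(H_w)$; hence $M_t(H)=M_t(H_w)$.

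\textbf{Main obstacle.} The conceptual content is light; the only point needing care is the interchange of the functional $x^*$ with the Bochner integral and the Hahn--Banach step that turns the family of scalar identities (one for each $x^*$) back into the single vector-valued Bochner identity. I would also take care that the infimum defining $\normr{\cdot}$ is attained (Lemma~\ref{L:reprezentableoper}$(iii)$) so that the norm-matching in the first inequality is clean rather than only approximate. Everything else is bookkeeping with the definitions of $H_w$, $M_t(H)$ and $M_t(H_w)$.
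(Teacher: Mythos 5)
Your proposal is correct and follows essentially the same route as the paper: the core step in both is the equivalence, for a fixed $\mu\in M(K)$, of the Bochner representation identity over $H$ with the scalar representation identity over $H_w$, obtained by commuting functionals with the Bochner integral in one direction and invoking the Hahn--Banach consequence in the other. Your second and third paragraphs merely spell out the norm-matching that the paper compresses into ``the equalities now follow from the scalar case'' (and the appeal to attainment of the infimum is harmless but not needed for the $\ge$ inequality).
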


\begin{proof}
    Let $\mu\in M(K)$. We observe that
$$ \f(t)=(B)\int\f\di\mu\mbox{ for }\f\in H \iff     f(t)=\int f\di\mu\mbox{ for }f\in H_w.$$ 
Indeed, implication `$\implies$' follows easily from the properties of the Bochner integral and definition of $H_w$. To prove the converse assume that the condition on the right-hand side is satisfied and fix $\f\in H$. Since $\f$ is continuous and $K$ is compact, the Bochner integral $(B)\int \f\di\mu$ exists. Moreover, for each $x^*\in E^*$ we have
$$x^*\left( (B)\int\f\di\mu \right)=\int x^*\circ\f\di\mu=x^*(\f(t))$$
as $x^*\circ\f\in H_w$. By a consequence of the Hanh-Banach theorem we complete the argument.

The equalities now follow from the scalar case.
\end{proof}

The following example is an immediate consequence of the previous proposition.

\begin{example2}\label{ex:mensinorma}
Example~\ref{ex:normyevaluaci}$(2)$ witnesses that for a representable operator $U$ we may have $\norm{U}<\normr{U}$.
\end{example2}

The next proposition provides the promised reduction of the theory of representable operators to the scalar case.

\begin{prop}\label{P:Upsilon}
 There is a unique mapping $\Upsilon: H_w^*\to L(H,E)$ satisfying the following properties.
 \begin{enumerate}[$(i)$]
     \item $\Upsilon$ is a linear operator, $\norm{\Upsilon}\le1$;
     \item $\Upsilon$ is weak$^*$-to-WOT continuous;
     \item $\Upsilon(\phi_{H_w}(t))=\phi_H(t)$ for $t\in K$;
     \item the range of $\Upsilon$ coincides with the space of representable operators;
     \item $\normr{\Upsilon(\varphi)}=\norm{\varphi}$ for $\varphi\in H_w^*$;
     \item $M_{\Upsilon(\varphi)}(H)=M_\varphi(H_w)$  for $\varphi\in H_w^*$.
 \end{enumerate}
\end{prop}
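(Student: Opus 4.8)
The plan is to build $\Upsilon$ by hand using the Hahn–Banach/Riesz machinery already set up, then read off the six properties. First I would fix $\varphi\in H_w^*$ and, using the definition of $M_\varphi(H_w)$ together with the remarks in Section~\ref{sec:repmeas} recalling that $M_\varphi(H_w)$ is nonempty, choose some $\mu\in M_\varphi(H_w)$; then I would set $\Upsilon(\varphi)(\f)=(B)\int_K\f\di\mu$ for $\f\in H$. The identity in the proof of Proposition~\ref{P:MtH=MtHw} (valid verbatim with $t$ replaced by a general functional, via the Hahn–Banach consequence) shows that $\Upsilon(\varphi)(\f)$ depends only on $\varphi$ and not on the chosen representing measure: indeed two measures representing the same $\varphi$ on $H_w$ give the same Bochner integral of every $\f\in H$ after pairing with an arbitrary $x^*\in E^*$. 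This well-definedness is the technical heart of the construction; once it is in hand, linearity of $\Upsilon$ is immediate (add representing measures), and $\normr{\Upsilon(\varphi)}\le\norm{\mu}=\norm{\varphi}$ since $\mu$ may be taken with $\norm{\mu}=\norm{\varphi}$. Combined with $\normr{\cdot}\ge\norm{\cdot}\ge$ (the obvious lower bound $\norm{\Upsilon(\varphi)}\ge\norm{\varphi}$, which follows because $H_w$-functionals factor through the operator via composition with functionals $x^*$), this gives $(i)$ and $(v)$ at once, and in particular $\Upsilon(\varphi)$ is representable, which is the easy half of $(iv)$.

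Next I would verify $(iii)$: for $t\in K$ the Dirac measure $\ep_t$ represents $\phi_{H_w}(t)$ on $H_w$, so $\Upsilon(\phi_{H_w}(t))(\f)=(B)\int\f\di\ep_t=\f(t)=\phi_H(t)(\f)$. For $(ii)$, fix $\f\in H$ and $x^*\in E^*$; I claim the map $\varphi\mapsto x^*(\Upsilon(\varphi)(\f))$ is weak$^*$-continuous on $H_w^*$. Since $x^*\circ\f\in H_w$ and, choosing $\mu\in M_\varphi(H_w)$, $x^*(\Upsilon(\varphi)(\f))=\int x^*\circ\f\di\mu=\varphi(x^*\circ\f)$, this map is just evaluation at the fixed element $x^*\circ\f\in H_w$, hence weak$^*$-continuous. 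As the functionals $\h\mapsto x^*(\h(\f))$ generate the WOT on $L(H,E)$, property $(ii)$ follows. For the remaining half of $(iv)$, let $U\in L(H,E)$ be representable, say $U(\f)=(B)\int\f\di\nu$ for some $\nu\in M(K)$; restricting to $H_w$ we may take $\nu$ with $\norm{\nu}=\normr{U}$ by Lemma~\ref{L:reprezentableoper}$(iii)$, and then the functional $\varphi:=(f\mapsto\int f\di\nu)|_{H_w}\in H_w^*$ satisfies $\Upsilon(\varphi)=U$ by the same identity as before; so the range of $\Upsilon$ is exactly the space of representable operators.

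Property $(vi)$ then follows by unwinding definitions: if $\mu\in M_\varphi(H_w)$, then $\norm{\mu}=\norm{\varphi}=\normr{\Upsilon(\varphi)}$ by $(v)$ and $\mu$ represents $\Upsilon(\varphi)$ on $H$ by construction, so $M_\varphi(H_w)\subset M_{\Upsilon(\varphi)}(H)$; conversely if $\mu\in M_{\Upsilon(\varphi)}(H)$ then the identity in the proof of Proposition~\ref{P:MtH=MtHw} shows $\mu$ represents $\varphi$ on $H_w$, and $\norm{\mu}=\normr{\Upsilon(\varphi)}=\norm{\varphi}$, so $\mu\in M_\varphi(H_w)$. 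Finally, uniqueness of $\Upsilon$: properties $(ii)$ and $(iii)$ pin down $\Upsilon$ on $\phi_{H_w}(K)$, hence on $\ov{\aco}^{w^*}\phi_{H_w}(K)=B_{H_w^*}$ by weak$^*$-to-WOT continuity and the Krein–Milman-type argument (using that $\phi_{H_w}(K)$ has weak$^*$-closed absolutely convex hull equal to the ball, as in Lemma~\ref{L:kompaktnostphiH} applied to the scalar space $H_w$), and then linearity extends it uniquely to all of $H_w^*$. The one point needing a little care is whether $\ov{\aco}^{w^*}\phi_{H_w}(K)$ is genuinely all of $B_{H_w^*}$; this holds because every element of $B_{H_w^*}$ extends to a measure of norm $\le 1$ on $K$, which is a weak$^*$-limit of absolutely convex combinations of Dirac masses — exactly the content of Lemma~\ref{L:kompaktnostphiH} in the scalar case. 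I expect the well-definedness argument and this last density point to be the only places where more than routine bookkeeping is required.
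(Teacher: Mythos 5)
Your construction is a genuinely different route from the paper's: you build $\Upsilon(\varphi)$ directly as a Bochner integral against a measure in $M_\varphi(H_w)$, with well-definedness via the Hahn--Banach argument of Proposition~\ref{P:MtH=MtHw}, whereas the paper first produces an $E^{**}$-valued operator from the bilinear form $(\h,x^*)\mapsto\varphi(x^*\circ\h)$ and then uses Lemma~\ref{L:kompaktnostphiH} to see that its range lands in $L(H,E)$ and equals the representable operators. Most of your steps -- well-definedness, linearity, $(ii)$, $(iii)$, the surjectivity half of $(iv)$, $(vi)$, and the uniqueness argument via $B_{H_w^*}=\wscl{\aco}\,\phi_{H_w}(K)$ -- are correct.

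However, there is one genuine error, in your proof of $(v)$. You invoke the chain $\norm{\varphi}\ge\normr{\Upsilon(\varphi)}\ge\norm{\Upsilon(\varphi)}\ge\norm{\varphi}$, whose last inequality $\norm{\Upsilon(\varphi)}\ge\norm{\varphi}$ is false. If it held, the chain would force $\norm{U}=\normr{U}$ for every representable $U$, which contradicts Example~\ref{ex:mensinorma}: for the space of Example~\ref{ex:normyevaluaci}$(2)$ one has $\Upsilon(\phi_{H_w}(1))=\phi_H(1)$ with $\norm{\phi_H(1)}=\tfrac12$ while $\norm{\phi_{H_w}(1)}=1$. The heuristic that ``$H_w$-functionals factor through the operator'' only shows $\abs{\varphi(x^*\circ\f)}\le\norm{\Upsilon(\varphi)}\,\norm{x^*}\,\norm{\f}$; but the unit ball of $H_w$ is not exhausted by functions $x^*\circ\f$ with $\norm{x^*}\le1$ and $\norm{\f}\le1$ (a general element is a sum, and $\norm{x^*\circ\f}$ can be much smaller than $\norm{x^*}\,\norm{\f}$), so no lower bound on $\norm{\Upsilon(\varphi)}$ in terms of $\norm{\varphi}$ follows. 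The correct lower bound is for $\normr{\cdot}$, not $\norm{\cdot}$: if $\nu\in M(K)$ satisfies $\Upsilon(\varphi)(\f)=(B)\int\f\di\nu$ for all $\f\in H$, then pairing with $x^*\in E^*$ and taking linear combinations gives $\int f\di\nu=\varphi(f)$ for all $f\in H_w$, whence $\norm{\nu}\ge\norm{\varphi}$; taking the infimum over such $\nu$ yields $\normr{\Upsilon(\varphi)}\ge\norm{\varphi}$. This is exactly the argument you already use in the converse half of $(vi)$, so the repair is local; but as written, $(v)$ is derived from a false statement.
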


\begin{proof}
    The proof will be done in several steps:

\smallskip

 {\tt Step 1.} Construction of $\Upsilon^\prime:H_w^*\to L(H,E^{**})$.

\smallskip

Given $\varphi\in H_w^*$ we define a bilinear form $B_\varphi: H\times E^*\to\ef$ by
$$B_\varphi(\h,x^*)=\varphi(x^*\circ\h), \quad \h\in H, x^*\in E^*.$$
It is clear that $B_\varphi$ is a bilinear form and $\norm{B_\varphi}\le \norm{\varphi}$.
Thus there is a (unique) operator $\Upsilon^\prime(\varphi)\in L(H,E^{**})$ such that
$$\Upsilon^\prime(\varphi)(\h)(x^*)=B_\varphi(\h,x^*), \quad \h\in H, x^*\in E^*.$$
Clearly $\Upsilon^\prime$ is a linear operator $H_w^*\to L(H,E^{**})$ with $\norm{\Upsilon^\prime}\le1$.

\smallskip

{\tt Step 2.} $\Upsilon^\prime$ is weak$^*$-to-W$^*$OT continuous.

\smallskip

Indeed, fix $\h\in H$ and $x^*\in E^*$. Then
$$\varphi\mapsto \Upsilon^\prime(\varphi)(\h)(x^*)=\varphi(x^*\circ \h)$$
is weak$^*$-continuous, which completes the argument.

\smallskip

{\tt Step 3.} $\Upsilon^\prime(\phi_{H_w}(t))=\phi_H(t)$ for $t\in K$.

\smallskip

Fix $t\in K$. For $\h\in H$ and $x^*\in E^*$ we have
$$ \Upsilon^\prime(\phi_{H_w}(t))(\h)(x^*)=\phi_{H_w}(t)(x^*\circ \h)=x^*(\h(t))=x^*(\phi_H(t)(\h)),$$
hence the equality follows.

\smallskip

{\tt Step 4.} $\Upsilon^\prime(B_{H_w^*})=\overline{\aco}^{WOT}\phi_H(K)$.

\smallskip

Recall that $B_{H_w^*}=\wscl{\aco}\phi_{H_w}(K)$. Using Steps 1--3 and the weak$^*$-compactness of $B_{H_w^*}$
we deduce $\Upsilon^\prime(B_{H_w^*})=\overline{\aco}^{W^*OT}\phi_H(K)$. We conclude by Lemma~\ref{L:kompaktnostphiH}.

\smallskip

{\tt Step 5.} Conclusion.

\smallskip

By Step 4 we get that $\Upsilon^\prime(\varphi)\in L(H,E)$ for each $\varphi\in H_w^*$.  Thus we have $\Upsilon:H_w^*\to L(H,E)$. Steps 1--3 imply properties $(i)$--$(iii)$. Properties $(iv)$ and $(v)$ follow from Step 4 using Lemma~\ref{L:reprezentableoper}.

It remains to prove $(vi)$. Let $\mu\in M(K)$. For each $\h\in H$ the Bochner integral $(B)\int \h\di\mu$ exists, so we have
$$\begin{gathered}
 \forall\h\in H\colon (B)\int \h\di\mu =\Upsilon(\varphi)(\h) \\ \iff   \forall\h\in H\,\forall x^*\in E^*\colon x^*((B)\int \h\di\mu) =x^*(\Upsilon(\varphi)(\h))
 \\ \iff   \forall\h\in H\,\forall x^*\in E^*\colon \int x^*\circ\h\di\mu =\varphi(x^*\circ\h)
 \\ \iff   \forall f\in H_w\colon \int f\di\mu =\varphi(f),
 \end{gathered}$$
 where the second equivalence follows from the construction of $\Upsilon$. In view of $(v)$ the proof is complete.
\end{proof}

\begin{cor}\label{cor:identifikacekouli}
    $(B_{H_w^*},w^*)$ is affinely homeomorphic to $(\overline{\aco}^{WOT}\phi_H(K),WOT)$ via a mapping sending $\phi_{H_w}(t)$ to $\phi_H(t)$ for each $t\in K$.
\end{cor}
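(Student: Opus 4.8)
The plan is to show that the map $\Upsilon$ furnished by Proposition~\ref{P:Upsilon}, restricted to $B_{H_w^*}$, is precisely the claimed affine homeomorphism. By Proposition~\ref{P:Upsilon}$(i)$ and $(ii)$, $\Upsilon$ is a linear (hence affine) operator which is weak$^*$-to-WOT continuous, so $\Upsilon|_{B_{H_w^*}}\colon (B_{H_w^*},w^*)\to (L(H,E),WOT)$ is a continuous affine map; and by Proposition~\ref{P:Upsilon}$(iii)$ it sends $\phi_{H_w}(t)$ to $\phi_H(t)$ for each $t\in K$, as required.

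Next I would identify the image. By Proposition~\ref{P:Upsilon}$(v)$, $\normr{\Upsilon(\varphi)}=\norm{\varphi}\le 1$ for $\varphi\in B_{H_w^*}$, so by Lemma~\ref{L:reprezentableoper}$(ii)$ we get $\Upsilon(B_{H_w^*})\subset\overline{\aco}^{WOT}\phi_H(K)$. Conversely, if $U\in\overline{\aco}^{WOT}\phi_H(K)$, then by Lemma~\ref{L:reprezentableoper}$(ii)$ the operator $U$ is representable with $\normr{U}\le1$; by Proposition~\ref{P:Upsilon}$(iv)$ there is $\varphi\in H_w^*$ with $\Upsilon(\varphi)=U$, and by $(v)$ we may choose $\norm{\varphi}=\normr{U}\le 1$, i.e. $\varphi\in B_{H_w^*}$. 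Hence $\Upsilon(B_{H_w^*})=\overline{\aco}^{WOT}\phi_H(K)$. (This is exactly Step~4 in the proof of Proposition~\ref{P:Upsilon}.)

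It then remains to check injectivity of $\Upsilon$ on $B_{H_w^*}$, in fact on all of $H_w^*$. If $\Upsilon(\varphi)=0$, then for every $\h\in H$ and $x^*\in E^*$ the construction of $\Upsilon$ gives $\varphi(x^*\circ\h)=x^*\bigl(\Upsilon(\varphi)(\h)\bigr)=0$; since $H_w$ is the linear span of the functions $x^*\circ\h$ with $\h\in H$, $x^*\in E^*$, linearity of $\varphi$ forces $\varphi=0$. Thus $\Upsilon|_{B_{H_w^*}}$ is a continuous affine bijection of $(B_{H_w^*},w^*)$ onto $(\overline{\aco}^{WOT}\phi_H(K),WOT)$. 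As $B_{H_w^*}$ is weak$^*$-compact by the Banach--Alaoglu theorem and the target space is Hausdorff (the WOT separates points of $L(H,E)$ since $E^*$ separates points of $E$), a continuous bijection of a compact space onto a Hausdorff space is a homeomorphism; its inverse is automatically affine. This yields the desired affine homeomorphism.

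I expect no serious obstacle here: the statement is essentially a repackaging of Proposition~\ref{P:Upsilon} together with Lemma~\ref{L:reprezentableoper}. The only two points requiring a moment's attention are the injectivity of $\Upsilon$ (which reduces to the very definition of $H_w$) and the passage from a continuous affine bijection to a homeomorphism, for which the weak$^*$-compactness of $B_{H_w^*}$ and the Hausdorffness of the WOT are exactly what is needed.
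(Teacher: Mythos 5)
Your proof is correct and is exactly the argument the paper leaves implicit: the corollary is the restriction of $\Upsilon$ from Proposition~\ref{P:Upsilon} to $B_{H_w^*}$, with surjectivity onto $\overline{\aco}^{WOT}\phi_H(K)$ coming from Step~4 of that proof, injectivity from the fact that the functions $x^*\circ\h$ span $H_w$, and the homeomorphism property from weak$^*$-compactness of the ball together with the Hausdorffness of the WOT. Nothing is missing.
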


\section{Choquet boundaries and $H$-boundary measures}\label{sec:hranice a miry}

In the classical theory of function space Choquet boundary and boundary measures belong to the key notions. If $H\subset C(K,\ef)$ is a scalar function space, one of the possible definitions of Choquet boundary is
$$\Ch_HK=\{t\in K\setsep \phi_H(t)\in \ext B_{H^*}\}.$$
This definition is used in \cite[Section 7]{fuhr-phelps}, or recently in \cite{bezkonstant}. For function spaces containing constant and separating points it is usual to define
$$\Ch_HK=\{t\in K\setsep M_t(H)=\{\ep_t\}\},$$
see \cite[Definition 3.4]{lmns}. In this case the two definitions are equivalent by \cite[Proposition 4.26(d)]{lmns}. 

Further, still in the scalar case, a measure $\mu\in M(K,\ef)$ is called \emph{$H$-boundary} if $\phi_H(\abs{\mu})$ is a maximal measure on $B_{H^*}$. This corresponds to the setting of \cite[Section 7]{fuhr-phelps} and \cite{bezkonstant}. If $\ef=\er$ and $H$ contains constants and separates points, this is equivalent to $\abs{\mu}$ be $H$-maximal (see \cite[Definition 3.57 and Proposition 4.28]{lmns}). This description will be used in Section~\ref{sec:ordering} below.

We recall that, provided $K$ is metrizable, $H$-boundary measures are exactly measures carried by the Choquet boundary (see \cite[Corollary 3.62]{lmns} for the classical case and \cite[Observation 3.9]{bezkonstant} for the case of function spaces not containing constants). If $K$ is non-metrizable, the situation is more complicated. In the classical case $H$-boundary measures are carried by any Baire set containing the Choquet boundary (see \cite[Theorem 3.79]{lmns}), for function spaces not containing constants this may fail (see \cite[Examples 3.11, 3.12 and 6.17]{bezkonstant}). An effective characterization of $H$-boundary measures is provided by the Mokobodzki test:

Let $X$ be a compact convex set. For a bounded function $f:X\to\er$ we define its upper and lower envelopes by
$$\begin{aligned}
  f^*(x)&=\inf\{ h(x)\setsep h\in\fra(X,\er), h\ge f\}, \\
  f_*(x)&=\sup\{ h(x)\setsep h\in\fra(X,\er), h\le f\},
\end{aligned}\qquad x\in X.$$
Mokobodzki's criterion \cite[Proposition I.4.5]{alfsen} then asserts that $\mu\in M_+(X)$ is maximal if and only if $\int f\di\mu=\int f^*\di\mu$ for each $f\in C(X,\er)$. Moreover, it is enough to test it for convex continuous functions. If $X$ is the dual unit ball of a Banach space (equipped with the weak$^*$ topology), it is enough to test it for $\ef$-invariant continuous convex functions (by \cite[Lemma 4.1]{effros}).

In the present section we address Choquet boundaries and boundary measures for vector-valued function spaces. In the literature they are usually defined via $H_w$. We will show this is a natural choice and present some equivalent conditions. This will be done within two subsections -- the first one devoted to Choquet boundaries and the second one to boundary measures. 

\subsection{Choquet boundaries}\label{sec:choquet boundaries}

We start by formulating the promised equivalences.

\begin{thm}\label{T:hranice}
    Let $H\subset C(K,E)$ be a function space and let $t\in K$. Then the following assertions are equivalent.
    \begin{enumerate}[$(1)$]
        \item $\phi_H(t)\in \ext \ov{\aco}^{WOT} (\phi_H(K))$. 
        \item $\phi_{H_w}(t)\in \ext B_{H_w^*}$.
        \item $\phi_{H_s}(t,x^*)\in \ext B_{H_s^*}$ for some $x^*\in B_{E^*}$.
        \item $\phi_{H_s}(t,e^*)\in \ext B_{H_s^*}$ for some $e^*\in\ext B_{E^*}$.
    \end{enumerate}
    If, moreover, $H(t)$ is dense in $E$, these assertions are also equivalent to the following one:
    \begin{enumerate}[$(5)$]
        \item  $\phi_{H_s}(t,e^*)\in \ext B_{H_s^*}$ for each $e^*\in\ext B_{E^*}$.
    \end{enumerate}
\end{thm}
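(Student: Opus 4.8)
The plan is to prove the equivalences by relating everything to the scalar function space $H_w$ on $K$ and the scalar function space $H_s$ on $K\times B_{E^*}$, using the tools already established. The key observation is that $(2)$ is the natural ``base case'' — it is the definition of $t$ being in the Choquet boundary of $H_w$ — and that all the other conditions should be reduced to it. The equivalence $(1)\iff(2)$ is immediate from Corollary~\ref{cor:identifikacekouli}, since the affine homeomorphism of $(B_{H_w^*},w^*)$ onto $(\ov{\aco}^{WOT}\phi_H(K),WOT)$ maps $\phi_{H_w}(t)$ to $\phi_H(t)$ and hence maps extreme points to extreme points. So the real content is among $(2)$, $(3)$, $(4)$, $(5)$, which live entirely in the scalar world of $H_s\subset C(K\times B_{E^*},\ef)$ and $H_w\subset C(K,\ef)$.

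First I would record the relation between the evaluation functionals on $H_s$ and on $H_w$: the map $f\mapsto f\circ\pi_1$ is an isometric embedding of $H_w$ into $H_s$, and via it $\phi_{H_s}(t,x^*)$ restricts to $\phi_{H_w}(t)$ on $H_w$; moreover, by Lemma~\ref{L:normyevaluaci}$(b)$ we have $\norm{\phi_{H_s}(t,x^*)}=\norm{\phi_{H_w}(t)}$ for every $x^*\in B_{E^*}$. For the direction ``$\phi_{H_s}(t,x^*)$ extreme $\implies$ $\phi_{H_w}(t)$ extreme'', i.e. $(3)\implies(2)$ and $(4)\implies(2)$: if $\phi_{H_w}(t)=\frac12(\varphi_1+\varphi_2)$ in $B_{H_w^*}$, extend each $\varphi_i$ (Hahn--Banach) to $\psi_i\in B_{H_s^*}$ in such a way that $\psi_i$ still vanishes appropriately — more carefully, one extends the decomposition of $\phi_{H_w}(t)$ to a decomposition of $\phi_{H_s}(t,x^*)$; the cleanest route is to use the norm equality above together with a suitable extension/restriction argument, or to invoke directly that the restriction map $B_{H_s^*}\to B_{H_w^*}$ is affine, surjective and norm-preserving on the relevant faces, so preimages of non-extreme points are non-extreme. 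For the reverse, ``$\phi_{H_w}(t)$ extreme $\implies$ $\phi_{H_s}(t,x^*)$ extreme'', the point is that if $\phi_{H_s}(t,x^*)=\frac12(\psi_1+\psi_2)$ then restricting to $H_w$ gives $\phi_{H_w}(t)=\frac12(\psi_1|_{H_w}+\psi_2|_{H_w})$, hence $\psi_1,\psi_2$ agree with $\phi_{H_w}(t)$ on the copy of $H_w$; one must then show they also agree on the $H_l$-part, which is where the hypothesis $H(t)$ dense in $E$ (for $(5)$) or the existence of a single good $x^*$ (for $(3)$, $(4)$) enters. Since $\ext B_{E^*}$ is nonempty (Krein--Milman), $(5)\implies(4)$ is trivial, and $(4)\implies(3)$ is trivial.

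The main obstacle I expect is the implication $(2)\implies(3)$ (and, under the density hypothesis, $(2)\implies(5)$): one must show that being extreme on the small space $H_w$ already forces being extreme on the larger space $H_s$, for an appropriate (resp. every extreme) $x^*$. The heart of the matter is controlling the ``$H_l$-direction'': a decomposition $\phi_{H_s}(t,x^*)=\frac12(\psi_1+\psi_2)$ gives $\psi_i = \phi_{H_w}(t)$ on the $H_w$-summand and $\psi_i(T\h)$ close to $x^*(\h(t))$ in an averaged sense; since $\norm{\psi_i}=\norm{\phi_{H_w}(t)}=\norm{\phi_{H_s}(t,x^*)}$, a strict-convexity-type argument (each $\psi_i$ must be a norming functional achieving the same norm as $\phi_{H_s}(t,x^*)$, and in this situation the norm computation in Lemma~\ref{L:normyevaluaci} pins down $\psi_i(T\h)=x^*(\h(t))$) shows $\psi_i=\phi_{H_s}(t,x^*)$. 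For the case of a single $x^*$ it suffices to take $x^*\in\ext B_{E^*}$ (hence also handling $(2)\implies(4)$ simultaneously) with $\norm{x^*}=1$, so that the norm of $\phi_{H_s}(t,x^*)$ is witnessed sharply; the density of $H(t)$ is what upgrades this from ``some $e^*$'' to ``every $e^*\in\ext B_{E^*}$'', because it guarantees $\norm{\phi_{H_l}(t,e^*)}=\norm{e^*}=1$ (again Lemma~\ref{L:normyevaluaci}$(a)$, whose proof only used density of $H(t)$) so the same sharp norming argument applies for every extreme $e^*$. I would organize the final writeup as: $(1)\iff(2)$ by Corollary~\ref{cor:identifikacekouli}; $(2)\implies(3)$ and $(2)\implies(4)$ by the norming/strict-extension argument; $(3)\implies(2)$ and $(4)\implies(2)$ by restriction to $H_w$; and under the extra hypothesis, $(2)\implies(5)\implies(4)$ to close the last loop.
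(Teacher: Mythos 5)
Your overall architecture matches the paper's: $(1)\iff(2)$ via Corollary~\ref{cor:identifikacekouli}, the implications $(5)\implies(4)\implies(3)$ being easy, and $(4)\implies(2)$ by extending a decomposition $\phi_{H_w}(t)=\frac12(\psi_1+\psi_2)$ to a decomposition of $\phi_{H_s}(t,e^*)$ (the paper does exactly this, setting $\widetilde{\psi}_j(f\circ\pi_1+T\h)=\psi_j(f+e^*\circ\h)$ and checking $\norm{\widetilde\psi_j}\le1$). You also correctly locate the hard direction, $(2)\implies(3)$ (and $(2)\implies(5)$ under the density hypothesis): controlling what a functional $\psi\in B_{H_s^*}$ lying over $\phi_{H_w}(t)$ does on the $H_l$-part.

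However, your mechanism for that hard direction is a genuine gap, not just a missing detail. First, the ``norming/strict-convexity'' idea does not get off the ground: the theorem assumes neither constants nor separation, so $\norm{\phi_{H_w}(t)}$ may well be $<1$ (see the example following Example~\ref{ex:chH-protipr}); and even when $\norm{\psi_i}=\norm{\phi_{H_s}(t,x^*)}=1$, equality of norms says nothing about the values $\psi_i(T\h)$. Second, your suggestion that for $(2)\implies(4)$ ``it suffices to take $x^*\in\ext B_{E^*}$'' --- i.e.\ that an arbitrary extreme point of $B_{E^*}$ serves as witness --- is refuted by the paper's Example~\ref{ex:chH-protipr}$(v)$: there $t=0$ lies in $\Ch_HK$ yet $\phi_{H_s}(0,e_1^*)\notin\ext B_{H_s^*}$ for the extreme point $e_1^*=(1,0)$. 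The witness in $(3)$ and $(4)$ must be \emph{produced} by the argument. What the paper actually does is show that the fiber $F=(V^*)^{-1}(\phi_{H_w}(t))\cap B_{H_s^*}$ is a closed face and, crucially, that $F$ coincides with $\{\phi_{H_s}(t,y^*)\setsep y^*\in B_{E^*}\}$, by exhibiting a continuous affine surjection $\Psi\colon M_1(B_{E^*})\to F$; the surjectivity rests on the structure theory of measures representing Choquet-boundary points (Lemmas~\ref{L:reprez-ext-bod}, \ref{l:factor-density} and~\ref{L:reprez-t-chb}), which shows that any $\nu\in M_\psi(H_s)$ for $\psi\in F$ is carried by $K_t\times B_{E^*}$ with an explicit density. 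The witness $x^*$ then falls out of two Krein--Milman/face extractions (an extreme point of $F$, then a Dirac measure in the face $\Psi^{-1}(\psi)$), and the density of $H(t)$ enters only to show that in case $(5)$ the measure $\frac{\sigma_1+\sigma_2}{2}$ has barycenter $e^*$ on all of $E$. None of this representing-measure machinery is present in your plan, and without it the implication $(2)\implies(3)$ is unproved.
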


To prove the theorem we need some lemmata on the scalar case. The first one deals with measures representing extreme points.

\begin{lemma}\label{L:reprez-ext-bod}
    Let $X$ be a normed space and let $\nu\in M(B_{X^*})$ with $\norm{\nu}=1$. Define $x_0^*\in X^*$ by
$$x_0^*(x)=\int x^*(x)\di\nu(x^*), \quad x\in X.$$
 Assume that $x_0^*\in\ext B_{X^*}$. Then
$\nu$ is carried by $S_{\ef}x_0^*$ and the function
$$h(\alpha x_0^*)=\ov{\alpha},\quad \alpha\in S_{\ef}$$
is the Radon-Nikod\'ym density of $\nu$ with respect to $\abs{\nu}$.
\end{lemma}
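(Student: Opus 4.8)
The plan is to argue by contradiction using Mokobodzki-type envelope estimates, or — perhaps more directly — by a barycentric/averaging argument that exploits extremality. Here is the approach I would take. Set $X^\ast$ to carry the weak$^\ast$ topology, so that $B_{X^\ast}$ is compact convex and $x_0^\ast = r(\nu)$ is the barycenter of the (now signed, but norm-one) measure $\nu$. The first step is to reduce to a positive measure: write $\nu = \alpha(x^\ast)\cdot\abs{\nu}$ where $\alpha$ is a Borel function of modulus one (Radon–Nikod\'ym), and consider the positive measure $\abs{\nu}$. We want to show $\abs{\nu}$ is carried by the circle $S_\ef x_0^\ast = \{\beta x_0^\ast : \beta\in S_\ef\}$ and that $\alpha(\beta x_0^\ast) = \bar\beta$ for $\abs{\nu}$-almost every point.

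For the \emph{real} case ($\ef=\er$): here $S_\ef = \{-1,1\}$, and the claim is that $\abs{\nu}$ is carried by $\{-x_0^\ast, x_0^\ast\}$ with $\alpha = 1$ at $x_0^\ast$ and $\alpha=-1$ at $-x_0^\ast$. The key observation is that $\norm{x_0^\ast} = 1 = \norm{\nu} = \abs{\nu}(B_{X^\ast})$, combined with $\abs{x^\ast(x)}\le 1$ for $x\in B_X$. Pick $x\in B_X$ with $x_0^\ast(x)$ close to $1$; then $\int \alpha(x^\ast) x^\ast(x)\di\abs{\nu}(x^\ast)$ is close to $1$ while the integrand is bounded by $1$ in modulus, so $\alpha(x^\ast)x^\ast(x)$ is close to $1$ on most of the mass. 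Letting $x_0^\ast(x)\to 1$ and using extremality of $x_0^\ast$ in $B_{X^\ast}$, one forces the measure $\nu$ itself (pushed forward appropriately) to concentrate at $x_0^\ast$; more precisely, the barycenter of $\abs{\nu}$ restricted to $\{\alpha=1\}$ and of $\abs{\nu}$ restricted to $\{\alpha=-1\}$ must both be extreme points (scalar multiples of $x_0^\ast$ after normalization) since a proper convex combination decomposition of the extreme point $x_0^\ast$ is impossible. This is the standard fact that a maximal/extreme-representing measure on a line segment is a point mass; I would phrase it as: if $r(\lambda) = e \in \ext B_{X^\ast}$ for a probability measure $\lambda$, and more strongly if the ``norm budget'' is tight, then $\lambda = \ep_e$.

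For the \emph{complex} case the circle $S_\ce$ is infinite, so one cannot immediately conclude a point mass; instead one shows $\abs{\nu}$ lives on the circle $S_\ce x_0^\ast$ and identifies the density. I would use the $\ef$-homogeneous structure: the function $x^\ast \mapsto \Re(\bar\beta\, x^\ast(x))$ for fixed $x$ and $\beta\in S_\ce$ is affine continuous on $B_{X^\ast}$, and testing $\int \Re(\overline{\alpha(x^\ast)}\,\overline{?}\cdots)$ against well-chosen $x$ (with $x_0^\ast(x)$ near $\norm{x_0^\ast}$ in a chosen argument) pins down both the support and $\alpha$. Concretely, the inequality $\Re(\bar\beta x^\ast(x)) \le 1$ together with $\int \Re(\overline{\alpha(x^\ast)} x^\ast(x))\di\abs{\nu} = \Re(x_0^\ast(x))\to 1$ forces $\alpha(x^\ast) = x^\ast(x)/\abs{x^\ast(x)}$ a.e. on the bulk, and letting $x$ range over a sequence realizing the norm of $x_0^\ast$ in all directions, extremality of $x_0^\ast$ again collapses the possible weak$^\ast$ cluster set of the normalized pieces of $\abs{\nu}$ onto $S_\ce x_0^\ast$. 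The density formula $h(\beta x_0^\ast) = \bar\beta$ then reads off from $\alpha(\beta x_0^\ast)\cdot(\beta x_0^\ast) = $ (the ``positive'' direction), i.e.\ $\alpha = \bar\beta$.

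\textbf{Main obstacle.} The delicate point is the complex case: converting the tightness of the norm budget into an honest statement that $\abs{\nu}$ is \emph{carried} by the circle $S_\ce x_0^\ast$ (a weak$^\ast$-closed but possibly ``thin'' set) rather than merely having barycenter on it. I expect this to require a careful approximate-extreme-point argument — for instance, showing that if a positive-measure portion of $\abs{\nu}$ sat at weak$^\ast$-distance $\ge\delta$ from $S_\ce x_0^\ast$, then splitting $\nu$ into that portion and its complement would exhibit $x_0^\ast$ as a nontrivial convex combination of two distinct norm-one functionals, contradicting $x_0^\ast\in\ext B_{X^\ast}$. Making ``distinct'' quantitative (so the contradiction is genuine) is where the real work lies; everything else is bookkeeping with the Bochner/Lebesgue integral and the Hahn–Banach-type separation already used repeatedly above.
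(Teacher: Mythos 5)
Your overall strategy matches the paper's: first use the norm equality $1=\norm{x_0^*}=\norm{\nu}$ to push $\abs{\nu}$ onto the sphere, then use extremality of $x_0^*$ to collapse the support onto $S_\ef x_0^*$, then read off the density from $\int \alpha g(\alpha)\,d\abs{\nu}=1$. But the step you yourself flag as the ``main obstacle'' is genuinely unresolved, and the mechanism you propose for it does not work. You suggest deriving a contradiction by splitting $\nu$ into a piece far from $S_\ef x_0^*$ and its complement, thereby ``exhibiting $x_0^*$ as a nontrivial convex combination of two \emph{distinct} norm-one functionals.'' The trouble is that the splitting does not produce distinct directions: if $\nu=\nu|_A+\nu|_{B_{X^*}\setminus A}$ and $y_1^*,y_2^*$ are the associated functionals, then $x_0^*=y_1^*+y_2^*$ with $\norm{y_1^*}+\norm{y_2^*}=1$, and extremality of $x_0^*$ forces both $y_j^*$ onto the segment $[0,x_0^*]$ --- i.e.\ both are nonnegative multiples of the \emph{same} $x_0^*$. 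There is no contradiction at that stage; in fact this is a correct and useful conclusion, namely that $x_0^*$ is represented by the normalized restriction $\nu|_A/\abs{\nu}(A)$. The contradiction has to come from a localization theorem applied to this conclusion: since $x_0^*\in\wscl{\aco}\,A$ (bipolar theorem) and $x_0^*$ is extreme, Milman's theorem gives $x_0^*\in S_\ef\cdot \overline{A}$. The paper arranges $A$ so that this is impossible: it picks $x_1^*$ in the support of $\nu$ outside $\span\{x_0^*\}$, uses Hahn--Banach to find $x$ with $x_0^*(x)=0$ and $x_1^*(x)=1$, and sets $A=\{x^*: \Re x^*(x)\ge \tfrac12\}$, a weak$^*$-neighborhood of $x_1^*$ of positive $\abs{\nu}$-measure on which $\Re x^*(x)\ge\tfrac12$, whereas $(\alpha x_0^*)(x)=0$ for every $\alpha\in S_\ef$. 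That Milman step is the missing idea in your plan; without it (or an equivalent), the ``quantitative distinctness'' you hope for cannot be extracted.

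Two smaller issues. First, ``weak$^*$-distance $\ge\delta$ from $S_\ce x_0^*$'' is not meaningful in a non-metrizable dual ball; the paper avoids this by working with the support of $\nu$ and a single separating functional, which is the right substitute. Second, your treatment of the density is only gestured at; the clean route (once the support is known to be $S_\ef x_0^*$) is to write $\nu=g\,\abs{\nu}$ with $g:S_\ef x_0^*\to S_\ef$ Borel, compute $x_0^*(y)=x_0^*(y)\int \alpha g(\alpha x_0^*)\,d\abs{\nu}$, and conclude $\alpha g(\alpha x_0^*)=1$ a.e.\ from the fact that an average of unimodular numbers equals $1$ only if the integrand is $1$ a.e. Your real-case sketch, which tries to concentrate mass at $x_0^*$ itself, is also not quite right even for $\ef=\er$: the measure can legitimately charge both $x_0^*$ and $-x_0^*$, so the correct conclusion is support in $\{\pm x_0^*\}$ with the signs of the density matching --- which is exactly what the span/Milman argument delivers uniformly in both the real and complex cases.
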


\begin{proof}

First observe that  $\norm{x_0^*}=1$ and hence
$$\begin{aligned}
1&=\sup_{x\in B_X}\abs{x_0^*(x)}=\sup_{x\in B_X} \abs  {\int x^*(x)\di\nu(x^*)}
\le\sup_{x\in B_X} \int \abs{x^*(x)}\di\abs{\nu}(x^*)
\\&\le \int \norm{x^*}\di\abs{\nu}(x^*)\le \int 1\di\abs{\nu}(x^*)=1.
\end{aligned}$$
We deduce that equalities take place and hence $\norm{x^*}=1$ $\abs{\nu}$-a.e. I.e., $\nu$ is carried by the sphere.

Hence, to prove the first part it is enough to show that $\nu$ is carried by $\span\{x_0^*\}$. Assume not. It means that there is $x_1^*\in B_{X^*}\setminus \span\{x_0^*\}$ belonging to the support of $\nu$. By the Hahn-Banach theorem there is $x\in X$ such that $x_0^*(x)=0$ and $x_1^*(x)=1$. Set
$$A=\{x^*\in B_{X^*}\setsep \Re x^*(x)\ge \tfrac12\}.$$ 
Then $A$ is a weak$^*$-neighborhood of $x_1^*$ in $B_{X^*}$, so $\nu$ restricted to $A$ is nonzero.
Let $\nu_1=\nu|_A$ and $\nu_2=\nu|_{B_{X^*}\setminus A}$. Let 
$$y_j^*(y)=\int x^*(y)\di\nu_j(x^*), \quad y\in X, j=1,2.$$
Then $\norm{y_j^*}\le\norm{\nu_j}$ and $x_0^*=y_1^*+y_2^*$. It follows that 
$$1=\norm{x_0^*}\le\norm{y_1^*}+\norm{y_2^*}\le \norm{\nu_1}+\norm{\nu_2}=1,$$
hence equalities hold. In particular $\norm{y_j^*}=\norm{\nu_j}$, so $y_1^*\ne0$. Further, since $x_0^*$ is an extreme point, necessarily $y_1^*$ and $y_2^*$ lie on the segment $[0,x_0^*]$.
In particular, $x_0^*=\frac{y_1^*}{\norm{y_1^*}}=\frac{y_1^*}{\norm{\nu_1}}$. I.e.,
$$x_0^*(y)=\frac1{\abs{\nu}(A)}\int_A x^*(y)\di\nu(x^*),\quad y\in X.$$
It follows by the bipolar theorem that 
$$x_0^*\in\wscl{\aco} A=\wscl{\co} S_{\ef}\cdot A.$$
Since $x_0^*$ is an extreme point, it follows from Milman's theorem 
that $x_0^*\in\wscl {S_{\ef}\cdot A}=S_{\ef}\cdot A$. But this implies that $S_\ef\cdot x_0^*\cap A\ne\emptyset$, a contradiction. This completes the proof that $\nu$ is carried by $S_{\ef}x_0^*$.

By the Radon-Nikod\'ym theorem there is a Borel function $g:S_{\ef}\to S_{\ef}$ such that for each $B\subset S_{\ef}$ Borel we have
$$\nu(B\cdot x_0^*)=\int g(\alpha)\di\abs{\nu} (\alpha x_0^*).$$
Thus for each $y\in X$ we have
$$\begin{aligned}   
x_0^*(y)&=\int x^*(y)\di\nu(x^*)=\int_{S_{\ef}x_0^*} \alpha x_0^*(y) g(\alpha)\di\abs{\nu} (\alpha x_0^*) \\&= x_0^*(y)\int_{S_{\ef}x_0^*} \alpha g(\alpha) \di\abs{\nu} (\alpha x_0^*),\end{aligned} $$
so
$$\int_{S_{\ef}x_0^*} \alpha g(\alpha) \di\abs{\nu} (\alpha x_0^*)=1.$$
It follows that $\alpha g(\alpha)=1$ $\abs{\nu}$-a.e., so $g(\alpha)=\overline{\alpha}$ $\abs{\nu}$-a.e., which completes the proof.
\end{proof}

We continue by an easy measure-theoretic lemma.

\begin{lemma}\label{L:podmsh}
    Let $\Omega$ be a set, $\Sigma^\prime\subset\Sigma$ two $\sigma$-algebras on $\Omega$ and $\mu$ a complete complex measure on $(\Omega,\Sigma)$. Let $h$ be the Radon-Nikod\'ym density of $\mu$ with respect to $\abs{\mu}$. If $\norm{\mu_{|\Sigma^\prime}}=\norm{\mu}$, then $h$ is measurable with respect to (the completion of) $\mu_{|\Sigma^\prime}$.
\end{lemma}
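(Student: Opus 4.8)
The plan is to construct an explicit $\Sigma'$-measurable density for $\mu$ with respect to $\abs{\mu}$ by applying the polar decomposition to the restricted measure $\mu':=\mu_{|\Sigma'}$, and then to show that this density agrees $\abs{\mu}$-almost everywhere with $h$.

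First I would turn the hypothesis into a statement about variations. Since every $\Sigma'$-partition of a set $A\in\Sigma'$ is in particular a $\Sigma$-partition, one has $\abs{\mu'}(A)\le\abs{\mu}(A)$ for every $A\in\Sigma'$, so $\abs{\mu}_{|\Sigma'}-\abs{\mu'}$ is a nonnegative measure on $\Sigma'$. The assumption $\norm{\mu_{|\Sigma'}}=\norm{\mu}$ says its total mass is $\abs{\mu}(\Omega)-\abs{\mu'}(\Omega)=0$, hence $\abs{\mu'}=\abs{\mu}_{|\Sigma'}$ as measures on $\Sigma'$. In particular $\int g\di\abs{\mu'}=\int g\di\abs{\mu}$ for every bounded $\Sigma'$-measurable $g$, and a $\Sigma'$-set is $\abs{\mu'}$-null iff it is $\abs{\mu}$-null.

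Next, the polar decomposition applied to $\mu'$ produces a $\Sigma'$-measurable function $h'\colon\Omega\to\ce$ with $\abs{h'}=1$ $\abs{\mu'}$-a.e.\ (hence $\abs{\mu}$-a.e., as $\{\abs{h'}\ne1\}\in\Sigma'$) and $\di\mu'=h'\di\abs{\mu'}$. For $A\in\Sigma'$ this yields
$$\int_A h\di\abs{\mu}=\mu(A)=\mu'(A)=\int_A h'\di\abs{\mu'}=\int_A h'\di\abs{\mu},$$
and by linearity and dominated convergence $\int g h\di\abs{\mu}=\int g h'\di\abs{\mu}$ for every bounded $\Sigma'$-measurable $g$. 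Taking $g=\overline{h'}$ and using $\abs{h'}=1$ $\abs{\mu}$-a.e.\ gives $\int h\overline{h'}\di\abs{\mu}=\int\abs{h'}^2\di\abs{\mu}=\abs{\mu}(\Omega)$, whence, since also $\abs{h}=1$ $\abs{\mu}$-a.e.,
$$\int\abs{h-h'}^2\di\abs{\mu}=\int\abs{h}^2\di\abs{\mu}-2\Re\int h\overline{h'}\di\abs{\mu}+\int\abs{h'}^2\di\abs{\mu}=\abs{\mu}(\Omega)-2\abs{\mu}(\Omega)+\abs{\mu}(\Omega)=0.$$
Thus $h=h'$ $\abs{\mu}$-a.e., so $h$ agrees $\abs{\mu}$-a.e.\ with the $\Sigma'$-measurable function $h'$, which is the asserted measurability with respect to the completion of $\mu_{|\Sigma'}$.

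The only genuinely nontrivial point is the passage from the "conditional" identity $\int_A h\di\abs{\mu}=\int_A h'\di\abs{\mu}$ for $A\in\Sigma'$ — which merely identifies $h'$ as a conditional expectation of $h$ relative to $\abs{\mu}$ — to the pointwise equality $h=h'$. This is exactly the equality case of the conditional Jensen inequality for the strictly convex map $z\mapsto\abs{z}^2$, made concrete by the variance computation above, and it crucially relies on $\abs{h}=\abs{h'}=1$ $\abs{\mu}$-a.e. Everything else (the comparison of variations, the two polar decompositions, and the bookkeeping about completions) is routine, so I expect this rigidity step to be the main, and fairly mild, obstacle.
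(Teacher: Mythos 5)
Your proof is correct and follows essentially the same route as the paper's: identify $\abs{\mu_{|\Sigma'}}$ with $\abs{\mu}_{|\Sigma'}$ from the norm equality, take the $\Sigma'$-measurable polar density $h'$ of the restricted measure, derive $\int g h\di\abs{\mu}=\int g h'\di\abs{\mu}$ for $\Sigma'$-measurable $g$, and conclude $h=h'$ a.e.\ by expanding $\int\abs{h-h'}^2\di\abs{\mu}$ using that both densities are unimodular. The only differences are cosmetic bookkeeping in the final variance computation.
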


\begin{proof}
    Let $g$ be the Radon-Nikod\'ym density of $\mu|_{\Sigma^\prime}$ with respect to $\abs{\mu_{|\Sigma^\prime}}$. Since $g$ is measurable with respect to $\mu_{|\Sigma^\prime}$, it is enough to show that $h=g$ $\abs{\mu}$-almost everywhere.
    
    Observe that equality $\norm{\mu_{|\Sigma^\prime}}=\norm{\mu}$ implies $\abs{\mu_{|\Sigma^\prime}}=\abs{\mu}_{|\Sigma^\prime}$. Using properties of Radon-Nikod\'ym densities we get, for any $A\in \Sigma^\prime$,
    $$\int_A h\di\abs{\mu}=\mu(A)=\int_A g\di  \abs{\mu_{|\Sigma^\prime}} =\int_A g\di\abs{\mu}_{|\Sigma^\prime}=\int_A g\di\abs{\mu}.$$
   By standard measure-theoretic arguments we deduce that
   $$\int fh\di\abs{\mu}=\int fg\di\abs{\mu} \mbox{ for each }f\in L^1(\abs{\mu}_{|\Sigma^\prime}).$$
   Hence
   $$\begin{aligned}
       \int \abs{g-h}^2\di\abs{\mu}&=\int (g-h)(\overline{g}-\overline{h})\di\abs{\mu}
       \\&=\int \abs{g}^2\di\abs{\mu} - \int g\overline{h}\di\abs{\mu}-\int \overline{g}h\di\abs{\mu} + \int \abs{h}^2\di\abs{\mu}
        \\&=\int \abs{g}^2\di\abs{\mu} - \int g\overline{g}\di\abs{\mu}-\int \overline{g}g\di\abs{\mu} + \int \abs{h}^2\di\abs{\mu}
        \\&= -\int \abs{g}^2\di\abs{\mu}+\int \abs{h}^2\di\abs{\mu} = -\norm{\mu_{|\Sigma^\prime}}+\norm{\mu}=0,
   \end{aligned}$$
   where  the third equality follows from the fact that $g$ and $\overline{g}$ belong to $L^1(\abs{\mu}_{|\Sigma^\prime})$ and in the fifth equality we used that $\abs{g}=\abs{h}=1$ $\abs{\mu}$-almost everywhere.

   The above computation completes the proof.
\end{proof}

The previous lemma has the following consequence on factorization of densities.

\begin{lemma}
    \label{l:factor-density}
    Let $\pi\colon K\to L$ be a continuous surjection of a Hausdorff compact space $K$ onto a Hausdorff compact space $L$. Let $\mu\in M(K)$ be such that $\norm{\pi(\mu)}=\norm{\mu}$ and let $h\colon L\to S_\ef$ be a Borel density of $\pi(\mu)$ with respect to $\abs{\pi(\mu)}$. Then $h\circ\pi$ is a density of $\mu$ with respect to $\abs{\mu}$.
\end{lemma}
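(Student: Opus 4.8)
The plan is to deduce this from Lemma~\ref{L:podmsh} by setting things up so that the measurability statement there becomes exactly the factorization we want. First I would push the measure structure from $L$ back to $K$ via $\pi$. Concretely, let $\Sigma$ be the Borel $\sigma$-algebra on $K$ and let $\Sigma'=\{\pi^{-1}(B)\setsep B\subset L\text{ Borel}\}$, which is a sub-$\sigma$-algebra of $\Sigma$ since $\pi$ is continuous. The key observation is that the image measure $\pi(\mu)$ on $L$ corresponds, under the measurable isomorphism $B\mapsto\pi^{-1}(B)$ between the Borel $\sigma$-algebra of $L$ and $\Sigma'$, precisely to the restriction $\mu_{|\Sigma'}$; in particular $\norm{\mu_{|\Sigma'}}=\norm{\pi(\mu)}=\norm{\mu}$ by hypothesis, and likewise $\abs{\pi(\mu)}$ corresponds to $\abs{\mu}_{|\Sigma'}$.

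Next I would apply Lemma~\ref{L:podmsh} (after passing to the completion of $\mu$, which does not change densities up to $\abs{\mu}$-null sets): since $\norm{\mu_{|\Sigma'}}=\norm{\mu}$, the density $h_\mu$ of $\mu$ with respect to $\abs{\mu}$ agrees $\abs{\mu}$-almost everywhere with a function $g$ that is $\Sigma'$-measurable (modulo completion), and moreover $g$ is the Radon--Nikod\'ym density of $\mu_{|\Sigma'}$ with respect to $\abs{\mu_{|\Sigma'}}$. Transporting through the isomorphism $B\mapsto\pi^{-1}(B)$, this density $g$ is exactly $h\circ\pi$, where $h$ is the given density of $\pi(\mu)$ with respect to $\abs{\pi(\mu)}$ on $L$: indeed for every Borel $B\subset L$,
$$\int_{\pi^{-1}(B)}(h\circ\pi)\di\abs{\mu}=\int_B h\di\pi(\abs{\mu})=\int_B h\di\abs{\pi(\mu)}=\pi(\mu)(B)=\mu(\pi^{-1}(B)),$$
using $\pi(\abs{\mu})=\abs{\pi(\mu)}$ (which follows from $\norm{\mu_{|\Sigma'}}=\norm{\mu}$, as in the proof of Lemma~\ref{L:podmsh}). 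Since $h\circ\pi$ is $\Sigma'$-measurable and satisfies the defining relation of the density on the generating sub-$\sigma$-algebra, it equals $g$, hence equals $h_\mu$, $\abs{\mu}$-almost everywhere. As $\abs{h\circ\pi}=\abs{h}\circ\pi=1$ everywhere, $h\circ\pi$ is a genuine $S_\ef$-valued density of $\mu$ with respect to $\abs{\mu}$.

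The main obstacle I anticipate is purely bookkeeping rather than conceptual: one must be careful that the measurable isomorphism between the Borel sets of $L$ and $\Sigma'$ really does intertwine $\pi(\mu)\leftrightarrow\mu_{|\Sigma'}$ and their variations, and that the completion step needed to invoke Lemma~\ref{L:podmsh} (which is stated for complete measures) does not cause trouble — it does not, since enlarging by null sets changes densities only on $\abs{\mu}$-null sets. One should also note that $h\circ\pi$ is Borel measurable on $K$ (being the composition of a Borel map with a continuous map), so the resulting density can even be taken Borel, not merely measurable for the completion; this is what makes the statement clean.
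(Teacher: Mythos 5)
Your proposal is correct and follows essentially the same route as the paper: both introduce the pulled-back $\sigma$-algebra $\Sigma'=\{\pi^{-1}(B)\setsep B\subset L \text{ measurable}\}$, observe that $\norm{\mu_{|\Sigma'}}=\norm{\pi(\mu)}=\norm{\mu}$, invoke Lemma~\ref{L:podmsh} to get that the density of $\mu$ with respect to $\abs{\mu}$ is (up to null sets) $\Sigma'$-measurable, and then identify it with $h\circ\pi$ via the change-of-variables identity $\int_{\pi^{-1}(B)}(h\circ\pi)\di\abs{\mu}=\pi(\mu)(B)=\mu(\pi^{-1}(B))$, using $\pi(\abs{\mu})=\abs{\pi(\mu)}$. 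Your explicit handling of the completion issue matches the paper's choice of working with $\mu$-measurable sets, so no gap remains.
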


\begin{proof}
 We use Lemma~\ref{L:podmsh}. In order to do this, we set $\Omega=K$, $\Sigma$ be the
 family of all $\mu$-measurable sets in $K$ and $\mu$ is our given measure. Let \[
 \Sigma^\prime=\{\pi^{-1}(B)\setsep B\subset L\ \pi(\mu)\mbox{-measurable}\}.
 \]
 Since $\norm{\pi(\mu)}=\norm{\mu}$, we obtain $\norm{\mu_{|\Sigma'}}=\norm{\pi(\mu)}=\norm{\mu}$.
If $g\colon K\to S_\ef$ is a Borel density of $\mu$ with respect $\abs{\mu}$, from Lemma~\ref{L:podmsh}  we infer that $g$ is measurable with respect to the completion of $\mu_{|\Sigma'}$. The function $h\circ \pi$ is a density of $\mu|_{\Sigma'}$ with respect to $\abs{\mu_{|\Sigma'}}$, and thus for each $A\in \Sigma'$ we have
\[
\int_A h\circ\pi\di\mu=\abs{\mu}(A)=\int_A g\di\mu.
\]
It follows that $h\circ \pi=g$ almost everywhere with respect to the completion of $\mu|_{\Sigma'}$. Thus there exists a Borel set $B\subset L$ with 
\[
0=\abs{\pi(\mu)}(B)=\pi(\abs{\mu})(B)=\abs{\mu}(\pi^{-1}(B))
\]
such that $g=h\circ \pi$ on $K\setminus \pi^{-1}(B)$. By the previous equation, $\abs{\mu}(\pi^{-1}(B))=0$, and thus $h\circ \pi=g$ $\abs{\mu}$-almost everywhere. Hence $h\circ \pi$ is a density of $\mu$ with respect to $\abs{\mu}$.
\end{proof}

As mentioned in the introduction of this section, for function spaces containing constants and separating points the Choquet boundary consists exactly of points $t$ satisfying $M_t(H)=\{\ep_t\}$. For function spaces not containing constants this is not true by \cite[Example 3.4(2)]{bezkonstant}. However, even in this case the set of representing measures has a special form. It is the content of the following lemma.

\begin{lemma}\label{L:reprez-t-chb}
    Let $H\subset C(K)$ be a (scalar) function space and Let $t\in \Ch_HK$. Let
    $$K_t=\{s\in K\setsep \phi_H(s)=\alpha\phi_H(t) \mbox{ for some }\alpha\in S_{\ef}\}.$$
    Then the function $g\colon K_t\to S_{\ef}$ given by
    $$g(s)=\alpha\mbox{ if }\phi_H(s)=\alpha\phi_H(t)$$
    is well defined and continuous and, moreover,
    $$M_t(H)=\{\overline{g}\sigma\setsep \sigma\in M_1(K_t)\}.$$
\end{lemma}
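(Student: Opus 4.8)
The plan is to prove three things: that $g$ is well defined, that $g$ is continuous, and that the set of representing measures has the claimed form. For well-definedness, suppose $\phi_H(s)=\alpha\phi_H(t)=\beta\phi_H(t)$; since $t\in\Ch_HK$ we have $\phi_H(t)\ne 0$ (an extreme point of $\ov{\aco}^{WOT}\phi_H(K)$ cannot be the origin as long as $K$ is nonempty and $H$ separates points, so in particular $\phi_H(t)\neq 0$), hence $\alpha=\beta$. For continuity: take a net $s_\nu\to s$ in $K_t$ with $g(s_\nu)=\alpha_\nu$. Since $S_\ef$ is compact, pass to a subnet with $\alpha_\nu\to\alpha$. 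Then $\phi_H(s_\nu)=\alpha_\nu\phi_H(t)\to\alpha\phi_H(t)$ in the weak$^*$-topology, while also $\phi_H(s_\nu)\to\phi_H(s)$ by weak$^*$-continuity of $\phi_H$; hence $\phi_H(s)=\alpha\phi_H(t)$ and $\alpha=g(s)$. As every subnet has a further subnet converging to $g(s)$, we conclude $g(s_\nu)\to g(s)$. Note $K_t$ is closed (it is the preimage of the closed set $S_\ef\phi_H(t)\cup\{0\}$... more carefully, $K_t=\{s: \phi_H(s)\in S_\ef\phi_H(t)\}$, which by the same compactness argument is closed), so $K_t$ is itself compact and $M_1(K_t)$ makes sense.

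For the inclusion ``$\supseteq$'': let $\sigma\in M_1(K_t)$ and put $\mu=\overline g\sigma$. Then $\norm\mu=\norm\sigma=1=\norm{\phi_H(t)}$ (using Example~\ref{ex:normyevaluaci}-type normalization; more precisely $\norm{\phi_H(t)}=1$ holds here because—wait, this need not be $1$ in general). Let me restate: we need $\norm\mu=\norm{\phi_H(t)}$. Since $\abs\mu=\sigma$ we have $\norm\mu=1$, so we must check $\norm{\phi_H(t)}=1$; but actually for $t$ a Choquet boundary point of a general scalar space this can fail (Example~\ref{ex:normyevaluaci}(3)). I will instead not assume this and argue directly: for $f\in H$,
$$\int f\di\mu=\int_{K_t} f(s)\overline{g(s)}\di\sigma(s),$$
and since $\phi_H(s)=g(s)\phi_H(t)$ means $f(s)=g(s)f(t)$ for all $f\in H$, this equals $\int_{K_t} g(s)f(t)\overline{g(s)}\di\sigma(s)=f(t)\int_{K_t}\abs{g(s)}^2\di\sigma(s)=f(t)$. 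Thus $\mu$ represents $\phi_H(t)$ on $H$, and $\norm\mu=1$. To see $\norm{\phi_H(t)}=1$: take $f\in H$ with $\abs{f(t)}$ near $\norm{\phi_H(t)}$; hmm, this gives $\le$. For $\ge$, one uses that $t\in\Ch_HK$; I expect the cleanest route is to invoke that $\phi_H(t)$, being extreme in $\ov{\aco}^{WOT}\phi_H(K)=\ov{\co}^{WOT}\theta_H(S_\ef\times K)$, must actually be a normalized functional—this is where $\norm{\phi_H(t)}=1$ comes from (if $\norm{\phi_H(t)}<1$ it would be a proper convex combination of $0$ and $\phi_H(t)/\norm{\phi_H(t)}$, contradicting extremality; and $\phi_H(t)/\norm{\phi_H(t)}$ lies in the set as it is a scalar multiple with modulus... no, $1/\norm{\phi_H(t)}\notin S_\ef$). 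The correct statement: extremality forces $\norm{\phi_H(t)}=1$ because $0$ lies in $\ov{\aco}^{WOT}\phi_H(K)$ and the ball is absolutely convex. I will record this as a short observation.

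For the inclusion ``$\subseteq$'': let $\mu\in M_t(H)$, so $\norm\mu=\norm{\phi_H(t)}=1$ and $\int f\di\mu=f(t)$ for $f\in H$. Apply $\phi_H$ (the scalar evaluation into $B_{H^*}$): $\phi_H(\mu):=\int_K\phi_H(s)\di\mu(s)$ (barycenter-type integral) equals $\phi_H(t)\in\ext B_{H^*}$. Now invoke Lemma~\ref{L:reprez-ext-bod} with $X=H$, $\nu=\phi_H(\mu)$ pushed forward appropriately—more precisely, push $\mu$ forward by the map $\phi_H\colon K\to B_{H^*}$ to get a measure $\nu$ on $B_{H^*}$ with barycenter $\phi_H(t)$ and $\norm\nu\le\norm\mu=1$; since $\norm{\phi_H(t)}=1$ forces $\norm\nu=1$. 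By Lemma~\ref{L:reprez-ext-bod}, $\nu$ is carried by $S_\ef\phi_H(t)$ and has Radon–Nikod\'ym density $\overline\alpha$ at $\alpha\phi_H(t)$. Pulling back through $\phi_H$: since $\nu$ is carried by $S_\ef\phi_H(t)$, $\mu$ is carried by $K_t=\phi_H^{-1}(S_\ef\phi_H(t))$; and applying Lemma~\ref{l:factor-density} to the surjection $\phi_H\colon K_t\to S_\ef\phi_H(t)\cong S_\ef$ (after checking $\norm{\phi_H(\mu)}=\norm\mu$, which holds here) shows that the density of $\mu$ with respect to $\abs\mu$ is $\overline g$, i.e. $\mu=\overline g\abs\mu$ with $\abs\mu\in M_1(K_t)$. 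This gives the reverse inclusion.

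**Main obstacle.** The delicate point is the interplay of the norm normalization ($\norm{\phi_H(t)}=1$) with the extreme-point hypothesis, and correctly setting up the push-forward/pull-back between $K_t$ and $S_\ef$ so that Lemmas~\ref{L:reprez-ext-bod} and \ref{l:factor-density} apply—in particular verifying the hypothesis $\norm{\pi(\mu)}=\norm\mu$ for the map $\pi=\phi_H|_{K_t}$, which is where $t\in\Ch_HK$ (extremality) does the real work. The well-definedness and continuity of $g$ are routine compactness arguments by comparison.
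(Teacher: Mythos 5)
Your proof is correct and follows essentially the same route as the paper: push $\mu$ forward under $\phi_H$, apply Lemma~\ref{L:reprez-ext-bod} to locate the support and identify the density, and pull the density back via Lemma~\ref{l:factor-density} after checking $\norm{\phi_H(\mu)}=\norm{\mu}$; and your final resolution of the normalization worry is the right one, since $B_{H^*}=\wscl{\aco}\phi_H(K)$ and extreme points of the dual unit ball have norm one, so $t\in\Ch_HK$ gives $\norm{\phi_H(t)}=1$ for free. The only adjustment needed is to apply Lemma~\ref{l:factor-density} with codomain $\phi_H(K_t)$ rather than all of $S_{\ef}\phi_H(t)$, since $\phi_H|_{K_t}$ need not be surjective onto the latter (this is exactly why the paper works with $K_t^0=\phi_H(K)\cap S_{\ef}\phi_H(t)$).
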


\begin{proof}
    The mapping $g_1:S_{\ef}\to B_{H^*}$ defined by $g_1(\alpha)=\ov{\alpha}\phi_H(t)$ for $\alpha\in S_{\ef}$ is clearly a homeomorphic injection. Let $K_t^0=\phi_H(K)\cap g_1(S_\ef)$. It is clearly a weak$^*$ closed subset of $B_{H^*}$.  It is then easy to check that
    $$K_t=\phi_H^{-1}(K_t^0) \mbox{ and } g= \ov{g_1^{-1} \circ \phi_H|_{K_t}}.$$
    This shows that $g$ is well-defined and continuous.

    Let us proceed by proving the equality. Inclusion `$\supset$' is obvious. Let us prove the converse one. Fix $\nu\in M_t(H)$. Then $\norm{\nu}=\norm{\phi_H(t)}=1$ (as $\phi_H(t)\in\ext B_{H^*}$) and
    for each $h\in H$ we have
    $$\phi_H(t)(h)=h(t)=\int_K h\di\nu=\int_K \phi_H(s)(h)\di\nu(s)=\int_{B_{H^*}} \varphi(h)\di\phi_H(\nu)(\varphi),$$
    so Lemma~\ref{L:reprez-ext-bod} may be applied to $\phi_H(\nu)$ and $\phi_H(t)$. Thus $\phi_H(\nu)$ is carried by $S_\ef\phi_H(t)\cap\phi_H(K)=K_t^0$. Since $\norm{\phi_H(\nu)}=\norm{\nu}=1$, necessarily $\abs{\phi_H(\nu)}=\phi_H(\abs{\nu})$, hence $\nu$ is carried by $K_t$.

    Now we may apply Lemma~\ref{l:factor-density} along with Lemma~\ref{L:reprez-ext-bod} to the mapping $\phi_{H}\colon K_t\to K_t^0$. The second lemma asserts that the function $h\colon g_1(S_\ef)\to S_{\ef}$ given  by $\overline{\alpha}\phi_H(t)\mapsto \alpha$ is a density of $\phi_H(\nu)$ with respect to $\abs{\phi_{H}(\nu)}$. By Lemma~\ref{l:factor-density}, $\overline{g}=h\circ\phi_H$ is a density of $\nu$ with respect to $\abs{\nu}$.
    \end{proof}

Now we are ready to prove the above theorem.

\begin{proof}[Proof of Theorem~\ref{T:hranice}.]
$(1)\iff(2)$: This follows immediately from Corollary~\ref{cor:identifikacekouli}.

$(2)\implies(3)$: Assume $(2)$ holds. The operator $V:f\mapsto f\circ \pi_1$ is a linear isometric inclusion of $H_w$ into $H_s$. Therefore the dual operator $V^*$ is a quotient map and its restriction to $B_{H_s^*}$ is a continuous affine surjection onto $B_{H_w^*}$. In particular,  $F=(V^*)^{-1}(\phi_{H_w}(t))\cap B_{H_s^*}$ is a closed face of $B_{H_s^*}$.

For $\sigma\in M_1(B_{E^*})$ define
$$\Psi(\sigma)(f)=\int f(t,x^*)\di\sigma(x^*),\quad f\in H_s.$$
Clearly $\Psi(\sigma)\in B_{H_s^*}$. Further, for $g\in H_w$ we have
$$V^*\Psi(\sigma)(g)=\Psi(\sigma)(g\circ\pi_1)=\int g(t)\di\sigma(x^*)=g(t),$$
so $\Psi(\sigma)\in F$. Thus $\Psi:M_1(B_{E^*})\to F$ is an affine continuous mapping.

Further, $\Psi$ is a surjection. To see this fix $\psi\in F$ and let $\nu\in M_\psi(H_s)$. Then $\pi_1(\nu)\in M_t(H_w)$. Indeed, assume $f\in H_w$. Then
$$\begin{aligned}
\int f\di\pi_1(\nu)&=\int f\circ\pi_1\di\nu=\psi(f\circ\pi_1)=\psi(V f)=V^*\psi (f)=\phi_{H_w}(t)(f)=f(t)
\end{aligned}$$
Moreover,
$$1=\norm{\phi_{H_w}(t)}\le\norm{\pi_1(\nu)}\le\norm{\nu}=\norm{\psi}=1.$$
Therefore Lemma~\ref{L:reprez-t-chb} may be applied to $\pi_1(\nu)$ (then $\sigma=\abs{\pi_1(\nu)}=\pi_1(\abs{\nu})$, let $g$ be the function provided by the quoted lemma).

It follows that $\nu$ is carried by $K_t\times B_{E^*}$ and that the density of $\nu$ with respect to $\abs{\nu}$ is the function $(s,x^*)\mapsto \ov{g(s)}$.
(Indeed, $s\mapsto\overline{g(s)}$ is a density of $\pi_1(\nu)$ with respect to $\abs{\pi_1(\nu)}$, and hence Lemma~\ref{l:factor-density} yields that the function $(s,x^*)\mapsto \overline{g(s)}$ is a density of $\nu$ with respect to $\abs{\nu}$.)

For $\h\in H$ we have
$$\begin{aligned}
 \psi(T\h)&=\int T\h\di\nu=\int_{K_t\times B_{E^*}}    x^*(\h(s))\overline{g(s)}\di\abs{\nu}(s,x^*)
 \\&=\int_{K_t\times B_{E^*}} x^*(g(s)\h(t))\overline{g(s)}\di\abs{\nu}(s,x^*)
 =\int_{K_t\times B_{E^*}} x^*(\h(t))\di\abs{\nu}(s,x^*)
 \\&=\int_{B_{E^*}}  x^*(\h(t))\di\pi_2(\abs{\nu})(x^*).
\end{aligned}$$
Further, if $f\in H_w$, then
$$\psi(f\circ \pi_1)=\int_{K\times B_{E^*}} f\circ\pi_1\di\nu=\int_K f\di\pi_1(\nu)=f(t)=\int_{B_{E^*}}f(t)\di\pi_2(\abs{\nu})(x^*).$$
It follows that $\psi=\Psi(\pi_2(\abs{\nu}))$, which completes the proof that $\Psi$ is a surjection.

Let $\psi\in\ext F$. Then $\Psi^{-1}(\psi)$ is a closed face of $M_1(B_{E^*})$, so it contains an extreme point, i.e. a Dirac measure $\ep_{x^*}$ for some $x^*\in B_{E^*}$. Then
$$\psi(f)=\Psi(\ep_{x^*})(f)=f(t,x^*),\quad f\in H_s.$$
I.e., $\psi=\phi_{H_s}{(t,x^*)}$. Thus $\phi_{H_s}(t,x^*)\in\ext F\subset \ext B_{H_s^*}$. This completes the proof of $(3)$.

$(3)\implies (4)$:  Let $x^*\in B_{E^*}$ be provided by $(3)$. Set
$$F=\{z^*\in B_{E^*}\setsep \phi_{H_s}(t,z^*)=\phi_{H_s}(t,x^*)\}.$$
This is a closed face of $B_{E^*}$. 
Indeed, assume that $z^*\in F$ and $z^*=\frac12(y_1^*+y_2^*)$ for some $y_1^*,y_2^*\in B_{E^*}$. Since each element of $H_s$ is affine in the second variable, we deduce that 
$$\phi_{H_s}(t,x^*)=\phi_{H_s}(t,z^*)=\tfrac12(\phi_{H_s}(t,y_1^*)+\phi_{H_s}(t,y_2^*)).$$
By the assumption we deduce that $\phi_{H_s}(t,y_1^*)=\phi_{H_s}(t,y_2^*)=\phi_{H_s}(t,x^*)$, so $y_1^*,y_2^*\in F$. 

The set $F$, being a closed face, contains some $e^*\in\ext B_{E^*}$. Then $$\phi_{H_s}(t,e^*)=\phi_{H_s}(t,x^*)=\psi\in\ext F\subset\ext B_{H_s^*}.$$
This completes the proof of $(4)$.

$(4)\implies (2)$: Assume $(4)$ holds and let $e^*\in\ext B_{E^*}$ witness it. Assume $\phi_{H_w}(t)=\frac12(\psi_1+\psi_2)$ where 
$\psi_1,\psi_2\in B_{H_w^*}$. We define $\widetilde{\psi}_1,\widetilde{\psi}_2\in H_s^*$ as follows:

If $f\in H_w$, $\h\in H$ and $j\in\{1,2\}$, we set
$$\widetilde{\psi}_j(f\circ\pi_1+T\h)=\psi_j(f+e^*\circ\h).$$
Then $\widetilde{\psi}_j$ is a well-defined linear functional. Indeed, assume $f\circ\pi_1+T\h=0$. Then for each $s\in K$ we have
$$f(s)=f\circ\pi_1(s,0)=f\circ\pi_1(s,0)+T\h(s,0)=0,$$
so $f=0$. Thus $T\h=0$ and therefore $\h=0$. The linearity is clear.

Moreover, assume $\norm{f\circ\pi_1+T\h}\le1$. This means that
$$\forall s\in K\;\forall y^*\in B_{E^*}\colon \abs{f(s)+y^*(\h(s))}\le 1.$$
In particular, applying to $y^*=e^*$ we deduce that $\norm{f+e^*\circ\h}\le 1$. Therefore
$$\abs{\widetilde{\psi}_j(f\circ\pi_1+T\h)}=\abs{\psi_j(f+e^*\circ\h)}\le \norm{\psi_j}\le1.$$
Moreover,
$$\tfrac12(\widetilde{\psi}_1+\widetilde{\psi}_2)(f\circ\pi_1+T\h)=
\tfrac12(\psi_1+\psi_2)(f+e^*\circ \h)=f(t)+e^*(\h(t)),$$
so 
$$\tfrac12(\widetilde{\psi}_1+\widetilde{\psi}_2)=\phi_{H_s}(t,e^*).$$
By the assumption we get
$$\widetilde{\psi}_1=\widetilde{\psi}_2=\phi_{H_s}(t,e^*).$$
In particular, for each $f\in H_w$ we have
$$\psi_j(f)=\widetilde{\psi}_j(f\circ\pi_1)=(f\circ\pi_1)(t,e^*)=f(t),$$
thus $\psi_1=\psi_2=\phi_{H_w}(t)$. This shows that $\phi_{H_w}(t)\in \ext B_{H_w^*}$ and completes the proof.

Implication $(5)\implies (4)$ is trivial.

It remains to prove $(2)\implies(5)$ assuming $H(t)$ is dense in $E$:  Let $V$, $F$ and $\Psi$ be as in the proof of $(2)\implies(3)$. Fix an arbitrary $e^*\in \ext B_{E^*}$ and let $\psi_1,\psi_2\in B_{H_s^*}$ such that $\phi_{H_s}(t,e^*)=\frac12(\psi_1+\psi_2)$. Since $\phi_{H_s}(t,e^*)\in F$ and $F$ is a face, we deduce that $\psi_1,\psi_2\in F$. Fix $j\in\{1,2\}$. Let $\sigma_j\in M_1(B_{E^*})$ be such that $\Psi(\sigma_j)=\psi_j$. This implies that for each $\h\in H$ we have
$$\begin{aligned}   
e^*(\h(t))&=\phi_{H_s}(t,e^*)(T\h)=\tfrac12(\psi_1(T\h)+\psi_2(T\h))=\int T\h(t,x^*)\di\tfrac{\sigma_1+\sigma_2}2(x^*)\\
&=\int x^*(\h(t))\di\tfrac{\sigma_1+\sigma_2}2(x^*).\end{aligned}$$
In other words,
$$e^*(x)=\int x^*(x)\di\tfrac{\sigma_1+\sigma_2}2(x^*) \mbox{ for each }x\in H(t).$$
Since $H(t)$ is dense in $E$, we deduce that
$$e^*(x)=\int x^*(x)\di\tfrac{\sigma_1+\sigma_2}2(x^*) \mbox{ for each }x\in E.$$

Since $\frac{\sigma_1+\sigma_2}2$ is a probability measure, the last equality means that $e^*$ is its barycenter. But $e^*$ is an extreme point, so $\frac{\sigma_1+\sigma_2}2=\ep_{e^*}$, hence $\sigma_1=\sigma_2=\ep_{e^*}$.
It now easily follows that $\psi_1=\psi_2=\phi_{H_s}(t,e^*)$, which completes the argument.
\end{proof}

We define the \emph{Choquet boundary of $H$} (denoted by $\Ch_HK$) to be the set of all $t\in K$ satisfying the equivalent conditions of Theorem~\ref{T:hranice}.

We continue by some examples showing optimality of Theorem~\ref{T:hranice}

\begin{example}\label{ex:chH-protipr}
 Let $K=\{0,1\}$, $p\in[1,\infty]$, $E=(\ef^2,\norm{\cdot}_p)$ and 
$$H=\{\f=(f_1,f_2)\in C(K,E)\setsep f_1(0)=0\}.$$
Then $H$ is a function space separating points and containing some constants. Moreover, the following
assertions are valid:
\begin{enumerate}[$(i)$]
    \item $H_w=C(K)$.
    \item $\Ch_HK=K$.
    \item If $p=\infty$, then $\phi_H(0)\notin \ext B_{L(H,E)}$.
    \item If $p<\infty$, then both $\phi_{H}(0)$ and $\phi_{H}(1)$ belong to $\ext B_{L(H,E)}$.
    \item If $p\in(1,\infty]$, then $x^*=(1,0)\in\ext B_{E^*}$ but $\phi_{H_s}(0,x^*)=\phi_{H_s}(0,0)\notin \ext B_{H_s^*}$.
\end{enumerate}
\end{example}

\begin{proof}
    Note that $E^*=(\ef^2,\norm{\cdot}_q)$, where $q$ is the dual exponent, via the standard duality. Denote by $e_1^*$ and $e_2^*$ the coordinate functionals. It is clear that $H$ separates points and that it contains constant function $(0,1)$.

    $(i)$: Let $f\in C(K)$ be arbitrary. Define $\f\in C(K,E)$ by setting $f_1=0$, $f_2=f$. Then $\f\in H$ and $e_2^*\circ\f=f$. So, $f\in H_w$. 

    $(ii)$: This follows easily from $(i)$.

    $(iii)$: Assume $p=\infty$. Then operators defined by
      $$U_1(\f)=(f_1(1),f_2(0)) \mbox{ and }U_2(\f)=(-f_1(1),f_2(0)), \qquad \f\in H,$$
    have norm one, satisfy $U_1\ne U_2$ and $\phi_{H}(0)=\frac12(U_1+U_2)$.

    $(iv)$: Assume $p<\infty$.  Assume $U_1,U_2\in B_{L(H,E)}$ with $\phi_H(0)=\frac12(U_1+U_2)$.
     If we plug there the function $\f:0\mapsto (0,1), 1\mapsto (0,0)$,
    we deduce that
    $$(0,1)=\tfrac12(U_1(\f)+U_2(\f)).$$
     Since $(0,1)\in \ext B_E$, we get $U_1(\f)=U_2(\f)=(0,1)$.
     
    Further, let $\g: 0\mapsto (0,0), 1\mapsto (1,0)$. Set $(\alpha,\beta)=U_1(\g)$. Since $\norm{\f\pm\g}=1$, we deduce
    that $\norm{(\pm\alpha,1\pm\beta)}_p\le 1$. But this implies $\alpha=\beta=0$, i.e. $U_1(\g)=0$. Similarly $U_2(\g)=0$. Analogously we show that $U_1(\h)=U_2(\h)=0$ where $\h:0\mapsto (0,0), 1\mapsto (0,1)$. It follows $U_1=U_2=\phi_H(0)$. Hence $\phi_H(0)\in\ext B_{L(H,E)}$. 

    The proof that $\phi_H(1)\in B_{L(H,E)}$ is similar.

  $(v)$: If $p\in(1,\infty)$, then $E^*$ is strictly convex and hence any norm-one element is an extreme point. If $p=\infty$, then $E^*=(\ef^2,\norm{\cdot}_1)$ and hence $(1,0)$ is an extreme point. Moreover, if $f\in H_w$, then
  $$(f\circ\pi_1)(0,e_1^*)=f(0)=(f\circ\pi_1)(0,0)$$
  and for $\h\in H$ we have
  $$T\h (0,e_1^*)=e_1^*(\h(0))=h_1(0)=0=T\h(0,0).$$
 Hence indeed $\phi_{H_s}(0,x^*)=\phi_{H_s}(0,0)$.

 Finally, $\phi_{H_s}(0,0)\notin \ext B_{H_s^*}$ as
 $$\phi_{H_s}(0,0)=\tfrac12(\phi_{H_s}(0,e_2^*)+\phi_{H_s}(0,-e_2^*))$$
 and, for $\f\in H$ we have  $T\f(0,e_2^*)=f_2(0)$  which may be nonzero.
\end{proof}

The previous example shows that:
\begin{itemize}
    \item In condition $(1)$ of Theorem~\ref{T:hranice} we cannot require $\phi_H(t)\in\ext B_{L(H,E)}$.
    \item Implication $(1)\implies (5)$ of Theorem~\ref{T:hranice} fails without the additional assumptions; even if $(1)$ is strengthened to assume $\phi_H(t)\in\ext B_{L(H,E)}$.
\end{itemize} 

The following examples witnesses that for $t\in\Ch_HK$ we may have $\norm{\phi_H(t)}<1$.

\begin{example}
    Let $K$, $p$ and $E$ be as in Example~\ref{ex:chH-protipr}. Set
    $$H=\{\f=(f_1,f_2)\in C(K)\setsep \f(1)=\tfrac12(f_2(0),f_1(0))\}.$$
Then $H$ is a function space separating points and containing no constants. Moreover, the following
assertions are valid:
\begin{enumerate}[$(i)$]
 \item $H_w=C(K)$.
    \item $\Ch_HK=K$.
    \item $\norm{\phi_H(1)}=\frac12$.
\end{enumerate}
\end{example}

\begin{proof}
    We use the notation from the proof of Example~\ref{ex:chH-protipr}.
    
    $(i)$: Let $f\in C(K)$ be arbitrary. Define $\f\in C(K,E)$ by
    $$f_1=f, f_2(0)=2 f(1), f_2(1)=\tfrac12 f(0).$$
    Then $\f\in H$ and $e_1^*\circ \f=f$. So, $f\in H_w$.

    Assertion $(ii)$ follows easily from $(i)$.
    
   $(iii)$: Let $\f\in H$. Then
   $$\norm{\phi_H(1)(\f)}=\norm{\f(1)}=\tfrac12\norm{(f_2(0),f_1(0))}=\tfrac12\norm{\f(0)}\le\tfrac12\norm{\f}.$$
    Thus $\norm{\phi_H(1)}\le\frac12$. Any nonzero $\f\in H$ witnesses that the equality holds.
\end{proof}

\subsection{$H$-boundary measures}\label{sec:boundary measures}

We now focus on boundary measures. To define them we will use the characterizations below. To formulate them in optimal way we will need some auxiliary results proved using method of disintegration. We start by briefly recalling this method.

\begin{lemma}\label{L:dezintegrace-obec}  Let $K$ and $B$ be compact Hausdorff spaces and let $\nu\in M_+(K\times B)$.  Then there is a family $(\nu_t)_{t\in K}$ of Radon probability measures on $B$ (called \emph{a disintegration kernel of} $\nu$) such that
\begin{enumerate}[$(i)$]
    \item If $f\in C(K\times B)$, then
    $$\int_{K\times B} f\di\nu=\int_K \left(\int_{B} f(t,b)\di\nu_t(b)\right)\di\pi_1(\nu)(t).$$
    \item If $C\subset K$ and $D\subset B$ are Borel sets, then
    $$\nu(C\times D)=\int_C \nu_t(D)\di\pi_1(\nu)(t).$$
\end{enumerate}
\end{lemma}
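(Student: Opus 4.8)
\textbf{Plan of proof for Lemma~\ref{L:dezintegrace-obec}.}

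The statement is the classical disintegration theorem for a positive Radon measure on a product of two compact Hausdorff spaces, so the plan is to invoke the standard machinery rather than to reinvent it. First I would normalize: it suffices to treat the case $\|\nu\|=1$, so that $\pi_1(\nu)$ is a Radon probability measure on $K$; the general case follows by scaling (and the zero measure is trivial, with $\nu_t$ arbitrary). Set $\lambda=\pi_1(\nu)\in M_1(K)$. The construction of the kernel $(\nu_t)_{t\in K}$ is obtained by applying a lifting argument or, more concretely, by the Riesz representation theorem fibrewise: for each $g\in C(B)$ the map
$$ f\mapsto \int_{K\times B} (f\circ\pi_1)\cdot(g\circ\pi_2)\di\nu,\qquad f\in C(K), $$
is a bounded linear functional on $C(K)$ dominated by $\|g\|_\infty\,\lambda$, hence there is a unique $h_g\in L^\infty(\lambda)$ with $\int_{K\times B}(f\circ\pi_1)(g\circ\pi_2)\di\nu=\int_K f\, h_g\di\lambda$ for all $f\in C(K)$, and $\|h_g\|_{L^\infty(\lambda)}\le\|g\|_\infty$. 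The assignment $g\mapsto h_g$ is linear and positive; using a lifting of $L^\infty(\lambda)$ (or separability reductions and a countable dense subalgebra of $C(B)$ when $B$ is metrizable) one produces for $\lambda$-a.e.\ $t$ a positive linear functional $g\mapsto \nu_t(g)$ on $C(B)$ of norm $\le 1$ with $\nu_t(g)=h_g(t)$, and testing against $g=\1$ gives $\nu_t(B)=1$, so $\nu_t\in M_1(B)$ by Riesz. On the $\lambda$-null set where this fails, define $\nu_t$ to be any fixed probability measure on $B$.

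Next I would verify property $(i)$. By construction it holds for $f$ of the form $(f_1\circ\pi_1)\cdot(f_2\circ\pi_2)$ with $f_1\in C(K)$, $f_2\in C(B)$, because
$$ \int_{K\times B}(f_1\circ\pi_1)(f_2\circ\pi_2)\di\nu=\int_K f_1(t)\,h_{f_2}(t)\di\lambda(t)=\int_K f_1(t)\Bigl(\int_B f_2\di\nu_t\Bigr)\di\lambda(t). $$
Both sides of $(i)$ are bounded linear functionals of $f\in C(K\times B)$ (the right-hand side because $t\mapsto\int_B f(t,b)\di\nu_t(b)$ is bounded and, by a routine argument using uniform continuity of $f$ together with the weak$^*$-measurability of $t\mapsto\nu_t$, is $\lambda$-measurable), and they agree on the linear span of such product functions, which is a subalgebra of $C(K\times B)$ separating points and containing constants. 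By the Stone--Weierstrass theorem this span is dense in $C(K\times B)$, so the two functionals coincide, giving $(i)$.

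Finally, property $(ii)$ follows from $(i)$ by a monotone-class / outer-regularity argument: for fixed open $U\subset K$ and open $V\subset B$ one approximates $\chi_U\otimes\chi_V$ from below by products $f_1\otimes f_2$ with $f_1,f_2$ continuous, $0\le f_i\le1$, and passes to the limit using monotone convergence on both sides of $(i)$; this yields $(ii)$ for open boxes, hence for all Borel boxes $C\times D$ by the uniqueness part of the Riesz representation theorem applied to the Borel measures $D\mapsto\nu(C\times D)$ and $D\mapsto\int_C\nu_t(D)\di\lambda(t)$ on $B$ (for fixed Borel $C$), and symmetrically. The main obstacle in a fully rigorous treatment is the measurability of $t\mapsto\nu_t$ and the selection of a genuine kernel (not merely an a.e.-defined object) when $B$ is non-metrizable; this is where one needs a lifting of $L^\infty(\lambda)$, and I would either cite it directly or reduce to the metrizable case by a standard argument since in our applications $B=B_{E^*}$ need not be metrizable but the measures involved are supported on metrizable subsets only after further reductions. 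For the purposes of this paper it is enough to quote the disintegration theorem in the form stated, e.g.\ from the literature on Radon measures on compact spaces.
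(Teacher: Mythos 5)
Your proposal is essentially correct, and its closing remark is in fact exactly what the paper does: the authors give no proof at all, but simply cite the classical disintegration theorem (Fremlin, Theorem 452M, or Lemma 2.7 of their companion paper). Your sketch of the underlying argument --- fibrewise Riesz representation producing $h_g\in L^\infty(\lambda)$ for each $g\in C(B)$, a lifting of $L^\infty(\lambda)$ to obtain a genuine everywhere-defined kernel $(\nu_t)$, Stone--Weierstrass to pass from product functions to all of $C(K\times B)$ for part $(i)$, and a regularity/monotone-class argument for part $(ii)$ --- is the standard route and is what the cited references carry out. Two small remarks. First, once you use a lifting you get $\nu_t\in M_1(B)$ for \emph{every} $t$, so the exceptional null set in your construction is unnecessary. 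Second, the thinnest point of your sketch is part $(ii)$: for a general Borel set $D\subset B$ (with $B$ non-metrizable) the $\lambda$-measurability of $t\mapsto\nu_t(D)$ is not automatic from the construction, and your appeal to the uniqueness part of the Riesz theorem presupposes that $D\mapsto\int_C\nu_t(D)\di\lambda(t)$ is a well-defined measure; making this rigorous requires the inner-regularity arguments built into the lifting-based disintegration theorems you would cite. You flag this difficulty yourself, and since the paper resolves it by citation rather than by proof, your treatment is on par with (indeed more informative than) the paper's.
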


\begin{proof}
    This is a classical fact, see \cite[Theorem 452M]{fremlin4} or \cite[Lemma 2.7]{transference-studia}. 
\end{proof}

We continue by a lemma on factorization of measures.

\begin{lemma}
    \label{L:factorizace-quotient}
    Let $K,L,B$ be Hausdorff compact spaces and $q\colon K\to L$ be a continuous surjection. Let $\mu\in M_+(K)$ and $\lambda\in M_+(L\times B)$ be such that $\pi_1(\lambda)=q(\mu)$. Then there exists a measure $\nu\in M_+(K\times B)$ such that $\pi_1(\nu)=\mu$ and $(q\times \id)(\nu)=\lambda$.
\end{lemma}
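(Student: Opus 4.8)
The plan is to build $\nu$ by disintegrating $\lambda$ over $L$ and then pulling the fibre measures back along $q$. Concretely, I would first apply Lemma~\ref{L:dezintegrace-obec} to $\lambda\in M_+(L\times B)$ to obtain a disintegration kernel $(\lambda_\ell)_{\ell\in L}$ of Radon probability measures on $B$ with $\int_{L\times B} f\di\lambda=\int_L\bigl(\int_B f(\ell,b)\di\lambda_\ell(b)\bigr)\di\pi_1(\lambda)(\ell)$ and $\pi_1(\lambda)=q(\mu)$. Then I would define $\nu$ as the measure on $K\times B$ given, for $g\in C(K\times B)$, by
$$\int_{K\times B} g\di\nu=\int_K\left(\int_B g(k,b)\di\lambda_{q(k)}(b)\right)\di\mu(k).$$
One must check this is a well-defined positive Radon measure: positivity is clear since each $\lambda_\ell$ is a probability measure and $\mu\ge0$; boundedness follows from $\abs{\int g\di\nu}\le\norm{g}_\infty\norm{\mu}$; and the functional is linear in $g$. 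The only subtle point is measurability of $k\mapsto\int_B g(k,b)\di\lambda_{q(k)}(b)$ with respect to $\mu$, which I address below.

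Granting that $\nu$ is a well-defined element of $M_+(K\times B)$, the two required identities are routine. For $\pi_1(\nu)=\mu$: given $f\in C(K)$, apply the defining formula to $g=f\circ\pi_1$, so the inner integral is just $f(k)\int_B 1\di\lambda_{q(k)}=f(k)$, whence $\int_{K\times B} f\circ\pi_1\di\nu=\int_K f\di\mu$. For $(q\times\id)(\nu)=\lambda$: given $f\in C(L\times B)$, apply the defining formula to $g=f\circ(q\times\id)$, so the inner integral is $\int_B f(q(k),b)\di\lambda_{q(k)}(b)$, a function depending on $k$ only through $q(k)$; call it $F\circ q$ where $F(\ell)=\int_B f(\ell,b)\di\lambda_\ell(b)$. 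Then $\int g\di\nu=\int_K (F\circ q)\di\mu=\int_L F\di q(\mu)=\int_L F\di\pi_1(\lambda)=\int_{L\times B} f\di\lambda$, the last equality being property $(i)$ of the disintegration of $\lambda$. This gives $(q\times\id)(\nu)=\lambda$.

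The main obstacle is the measurability and well-definedness of the pullback $k\mapsto\int_B g(k,b)\di\lambda_{q(k)}(b)$. The disintegration kernel $(\lambda_\ell)$ is only determined $\pi_1(\lambda)$-almost everywhere and is weak$^*$-measurable (not continuous) in $\ell$, so $\ell\mapsto\int_B g(\ell,b)\di\lambda_\ell(b)$ need not be Borel for a general continuous $g$ on $K\times B$ that does not factor through $q$ in the first variable. The clean way around this is to fix first a countable (or, in general, a suitable) family and argue as follows: for $g$ of the special form $g=\sum_i g_i\otimes h_i$ with $g_i\in C(K)$, $h_i\in C(B)$, the map $k\mapsto\sum_i g_i(k)\int_B h_i\di\lambda_{q(k)}$ is a product of a continuous function of $k$ with a composition $q$ followed by a $\pi_1(\lambda)$-measurable function, hence $\mu$-measurable because $q(\mu)=\pi_1(\lambda)$; by Stone–Weierstrass and uniform approximation this extends to all $g\in C(K\times B)$, the uniform limit of $\mu$-measurable functions being $\mu$-measurable. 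Alternatively, and perhaps more transparently, I would first reduce to the case $q=\id_L$ by noting that the general statement follows by composing: apply Lemma~\ref{L:dezintegrace-obec} to produce $\nu$ in the case $q$ is the identity (where it is exactly the reassembly of a disintegration), and for general $q$ observe that the construction above is literally "disintegrate $\lambda$, reindex fibres by $q$, integrate against $\mu$". Either route resolves the measurability issue; I would present the first, since it keeps everything inside the already-cited machinery.
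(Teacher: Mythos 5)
Your construction of $\nu$ is the same as the paper's at its core -- disintegrate $\lambda$ into a kernel $(\lambda_\ell)_{\ell\in L}$ and integrate the pulled-back fibres $\lambda_{q(k)}$ against $\mu$ -- and both of your verification computations for $\pi_1(\nu)=\mu$ and $(q\times\id)(\nu)=\lambda$ match the paper's. Where you genuinely diverge is in how the measurability obstruction is handled. The paper does not try to make the naive formula into a linear functional at all: it defines $p(f)=\overline{\int_K}\bigl(\int_B f(k,b)\di\lambda_{q(k)}(b)\bigr)\di\mu(k)$ using the \emph{upper} integral, checks that $p$ is sublinear and dominates the correctly-defined functional $g\circ(q\times\id)\mapsto\int g\di\lambda$ on the subspace of functions factoring through $q\times\id$, and then invokes Hahn--Banach plus a positivity argument ($\psi(1)=\norm{\psi}$) and the Riesz theorem to produce $\nu$; the identity $\pi_1(\nu)=\mu$ is then recovered from the one-sided bound $\psi(f\circ\pi_1)\le p(f\circ\pi_1)=\int f\di\mu$ applied to $\pm f$. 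You instead establish honest $\mu$-measurability of $k\mapsto\int_B g(k,b)\di\lambda_{q(k)}(b)$ for $g$ in the tensor span $C(K)\otimes C(B)$ (using that $F\circ q$ is $\mu$-measurable whenever $F$ is $q(\mu)$-measurable, since $q(\mu)=\pi_1(\lambda)$), and then pass to all of $C(K\times B)$ by Stone--Weierstrass and uniform convergence, which also transports positivity and the norm bound. This works: the inner integrals converge uniformly in $k$ because each $\lambda_{q(k)}$ is a probability measure, so the limiting integrand is $\mu$-measurable and the formula survives the limit; in particular it applies to $g=f\circ(q\times\id)$, whose inner integral is $F\circ q$ with $F$ $\pi_1(\lambda)$-measurable by the disintegration theorem. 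Your approach buys an explicit formula for $\nu$ on all of $C(K\times B)$ and avoids Hahn--Banach, at the cost of the density/measurability bookkeeping; the paper's upper-integral trick avoids all measurability discussion but only produces $\nu$ nonconstructively and has to recover $\pi_1(\nu)=\mu$ from an inequality. (Your second suggested route, ``reduce to $q=\id_L$ by composing,'' is not really an argument -- the case $q=\id$ is trivially $\nu=\lambda$ and the general case does not follow from it -- but since you explicitly commit to the first route, this does not affect the proof.)
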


\begin{proof}
Let $(\lambda_l)_{l\in L}\subset M_1(B)$ be a disintegration kernel of $\lambda$ provided by Lemma~\ref{L:dezintegrace-obec}. For $f\in C(K\times B,\er)$ we set
\[
p(f)=\overline{\int_K}\left(\int_B f(k,b)\di\lambda_{q(k)}(b)\right)\di\mu(k),
\]
where $\overline{\int}$ denotes the upper integral (see \cite[133I]{fremlin1}). By \cite[Proposition 133J]{fremlin1} we deduce that $p$ is a sublinear functional on $C(K\times B,\er)$. It is clear that $p(f)\le \norm{f}\cdot\norm{\mu}$. We further observe that for $g\in C(L\times B,\er)$ we have
$$\int g\di\lambda= p(g\circ (q\times\id)).$$
Indeed,
\[\begin{aligned}
\int g\di\lambda&=\int_L\left(\int_B g(l,b)\di\lambda_l(b)\right)\di q(\mu)(l)=\int_K\left(\int_B g(q(k),b)\di\lambda_{q(k)}(b)\right)\di \mu(k)\\&=p(g\circ(q\times\id)).
\end{aligned}\]
Here the first equality follows from the disintegration formula (Lemma~\ref{L:dezintegrace-obec}$(i)$), the second one follows from the rules for integration with respect to the image of a measure and the last one is an immediate consequence of the definition of $p$.

Hence, the Hahn-Banach extension theorem yields a linear functional $\psi$ on $C(K\times B,\er)$ such that $\psi\le p$ and $$\psi(g\circ(q\times\id))=\int g\di\lambda\mbox{ for }g\in C(L\times B,\er).$$
Since $\psi(f)\le p(f)\le\norm{f}\cdot\norm{\mu}$, we deduce that $\psi$ is continuous and $\norm{\psi}\le\norm{\mu}$. Further, 
$$\psi(1)=\int 1\di\lambda=\norm{\lambda}=\norm{\mu},$$
so $\psi$ is positive. Let $\nu\in M_+(K\times B)$ represent $\psi$. For each $g\in C(L\times B)$ we have
$$\int g \di (q\times \id)(\nu)=\int g\circ(q\times\id)\di\nu=\psi(g\circ(q\times\id))=\int g\di\lambda,$$
so $(q\times \id)(\nu)=\lambda$.

On the other hand, for $f\in C(K,\er)$ we have
\[\begin{aligned}
\int_K f\di \pi_1(\nu)&=\int_{K\times B} f\circ\pi_1\di\nu=\psi(f\circ\pi_1)\le p(f\circ\pi_1)\\&=\overline{\int_K}\left(\int_B f(k)\di\lambda_{q(k)}(b)\right)\di\mu(k)=\overline{\int_K}f(k)\di\mu(k)=\int_K f\di\mu.
\end{aligned}\]
Replacing $f$ by $-f$ we infer $\pi_1(\nu)=\mu$ and the proof is complete.
\end{proof}

Now we are ready to prove the promised characterizations. 

\begin{thm}\label{T:H-maximal} Let $H\subset C(K,E)$ be a function space.
Let $\sigma\in M_+(K)$. The following assertions are equivalent.
\begin{enumerate}[$(a)$]
     \item $\phi_{H}(\sigma)$ is a maximal measure on $\ov{\aco}^{WOT}\phi_H(K)$.
     \item $\phi_{H_w}(\sigma)$ is a maximal measure on $B_{H_w^*}$.
     \item $\exists\nu\in M_+(K\times B_{E^*})$ such that $\pi_1(\nu)=\sigma$ and $\phi_{H_s}(\nu)$ is a maximal measure on $B_{H_s^*}$.
\end{enumerate}
\end{thm}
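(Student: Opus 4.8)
Throughout write $V\colon H_w\to H_s$ for the isometric embedding $Vf=f\circ\pi_1$, so that $V^*\colon H_s^*\to H_w^*$ is a weak$^*$-continuous affine quotient map and $V^*\circ\phi_{H_s}=\phi_{H_w}\circ\pi_1$. Since every $f\in H_s$ is affine in the second variable and, by Lemma~\ref{L:phit=alphaphis}, $\phi_{H_s}(t,x^*)$ depends only on $(\phi_{H_w}(t),x^*)$, while $\phi_{H_w}$ is a homeomorphism of $K$ onto $\phi_{H_w}(K)$, there is a continuous surjection $\rho\colon\phi_{H_w}(K)\times B_{E^*}\to\widetilde K:=\phi_{H_s}(K\times B_{E^*})$ with $\phi_{H_s}=\rho\circ(\phi_{H_w}\times\id)$ and $V^*\circ\rho=\operatorname{pr}_1$. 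The equivalence $(a)\iff(b)$ is then immediate from Corollary~\ref{cor:identifikacekouli}: the affine homeomorphism furnished there sends $\phi_{H_w}(\sigma)$ to $\phi_H(\sigma)$ and, being an affine homeomorphism of compact convex sets, induces an order isomorphism of the cones of positive Radon measures for the Choquet ordering (continuous convex functions correspond to continuous convex functions), hence preserves maximality.

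For $(b)\iff(c)$ I would first reduce $(c)$ to an assertion about the pair $H_w,H_s$ alone. Given $\tilde m\in M_+(\widetilde K)$, lift it along the surjection $\rho$ to $\lambda\in M_+(\phi_{H_w}(K)\times B_{E^*})$ and put $\nu:=(\phi_{H_w}^{-1}\times\id)(\lambda)$; then $\phi_{H_s}(\nu)=\tilde m$, and since $\operatorname{pr}_1=V^*\circ\rho$ one has $\pi_1(\nu)=\sigma$ precisely when $V^*(\tilde m)=\phi_{H_w}(\sigma)$. Conversely $\phi_{H_s}(\nu)$ is always carried by $\widetilde K$ and $V^*(\phi_{H_s}(\nu))=\phi_{H_w}(\pi_1(\nu))$. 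Thus $(c)$ is equivalent to: there exists $\tilde m\in M_+(\widetilde K)$ with $V^*(\tilde m)=\phi_{H_w}(\sigma)$ which is maximal on $B_{H_s^*}$; and it remains to prove that $\phi_{H_w}(\sigma)$ is maximal on $B_{H_w^*}$ if and only if it admits such a lift. I would also use the weak$^*$-continuous isometric section $\tilde P^*\colon H_w^*\to H_s^*$ of $V^*$ dual to the norm-one map $\tilde P(g\circ\pi_1+T\h)=g$; note that $\tilde P^*\phi_{H_w}(t)=\phi_{H_s}(t,0)$.

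For $(c)\Rightarrow(b)$: with $\tilde m$ as above, if $\phi_{H_w}(\sigma)$ were not maximal there would be $\mu'\succ\phi_{H_w}(\sigma)$ with $\mu'\ne\phi_{H_w}(\sigma)$; writing $\mu'$ as a dilation $\int\tau_\xi\,d\phi_{H_w}(\sigma)(\xi)$, I would lift this dilation through $V^*$ — disintegrating $\tilde m$ along $V^*$ (Lemma~\ref{L:dezintegrace-obec}) and replacing each piece by a measure on $B_{H_s^*}$ of the same barycenter whose $V^*$-image is the prescribed $\tau_\xi$, the existence and measurability of these fibrewise lifts being supplied by Lemma~\ref{L:factorizace-quotient} and a measurable-selection argument — to obtain $m'\succ\tilde m$ with $V^*(m')=\mu'\ne V^*(\tilde m)$, hence $m'\ne\tilde m$, contradicting maximality of $\tilde m$. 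For $(b)\Rightarrow(c)$: assuming $\phi_{H_w}(\sigma)$ maximal, pick by Choquet--Bishop--de Leeuw a maximal $m\in M_+(B_{H_s^*})$ with $m\succ\tilde P^*(\phi_{H_w}(\sigma))$; since $V^*$ preserves $\prec$ and $\phi_{H_w}(\sigma)$ is maximal, $V^*(m)=\phi_{H_w}(\sigma)$. One must then re-concentrate $m$ onto $\widetilde K$: combining Milman's theorem, Lemma~\ref{L:phit=alphaphis} (a unimodular scalar can be absorbed into the $K$-coordinate) and Theorem~\ref{T:hranice}, every extreme point of $B_{H_s^*}$ whose $V^*$-image lies in $\phi_{H_w}(K)$ already lies in $\widetilde K$, and for each $t\in\Ch_HK$ there is a point of $\widetilde K\cap\ext B_{H_s^*}$ over $\phi_{H_w}(t)$; disintegrating $m$ along $V^*$ and re-gluing over $K$ by Lemma~\ref{L:factorizace-quotient} produces the required $\tilde m\in M_+(\widetilde K)$, and the reduction of the previous paragraph yields $\nu$.

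The main obstacle is exactly this transference of maximality across the quotient $V^*$: a continuous affine surjection of compact convex sets does not in general carry maximal measures to maximal ones, so one cannot naively push or pull along $V^*$. The resolution exploits the special fibred structure of $H_s$ over $H_w$ — the fibres of $\pi_1$ are the balls $B_{E^*}$, on which the functions of $H_s$ restrict to a constant plus an $\ef$-homogeneous linear term — together with the precise description of $\Ch_HK$ in Theorem~\ref{T:hranice}; the disintegration Lemma~\ref{L:dezintegrace-obec} and the factorization Lemma~\ref{L:factorizace-quotient} are what let the construction proceed without a metrizability hypothesis on $K$ or $B_{E^*}$, where the comfortable description "carried by the extreme points" is unavailable.
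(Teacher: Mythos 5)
Your treatment of $(a)\iff(b)$ and of $(b)\implies(c)$ follows the paper's route: for the latter, the paper likewise takes a maximal measure $m$ with $V^*(m)=\phi_{H_w}(\sigma)$ (using that the set of such measures is upward $\prec$-closed), shows via Milman's theorem that $m$ is carried by the closed set $S_\ef\phi_{H_s}(K\times B_{E^*})\cap(V^*)^{-1}(\phi_{H_w}(K))$, absorbs the unimodular scalar by Lemma~\ref{L:phit=alphaphis}, and pulls $m$ back to $K\times B_{E^*}$ by Lemma~\ref{L:factorizace-quotient}. (One small caveat: your use of $\phi_{H_w}^{-1}$ presumes that $H$ separates points, which the theorem does not assume; this is exactly why the paper routes the pullback through Lemma~\ref{L:factorizace-quotient} applied to the possibly non-injective surjection $\phi_{H_w}\colon K\to\phi_{H_w}(K)$.)

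The genuine gap is in $(c)\implies(b)$. Your plan is to take a nontrivial dilation $\xi\mapsto\tau_\xi$ witnessing non-maximality of $V^*(\tilde m)$ and to lift it fibrewise: for $\tilde m$-a.e.\ $\psi$ over $\xi$, produce a probability on $B_{H_s^*}$ with barycenter $\psi$ whose $V^*$-image is $\tau_\xi$. Such a lift does not exist for a general affine quotient, even fibrewise: take $X=[0,1]^2$, $Y=[0,1]$, $q(u,v)=\tfrac12(u+v)$, $\psi=(1,0)$, $\xi=\tfrac12$ and $\tau=\tfrac12(\delta_0+\delta_1)$; the only $\rho\in M_1(X)$ with $q(\rho)=\tau$ is $\tfrac12(\delta_{(0,0)}+\delta_{(1,1)})$, whose barycenter is $(\tfrac12,\tfrac12)\ne\psi$, because the fibres over the endpoints are singletons. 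Lemma~\ref{L:factorizace-quotient} cannot supply these lifts --- it factors a measure through a \emph{topological} quotient in a product $L\times B$ and says nothing about matching barycenters under an affine quotient of convex sets --- and "a measurable-selection argument" cannot select objects that need not exist pointwise. You correctly identify that the special fibred structure of $H_s$ over $H_w$ must be exploited, but you never say how, and this is precisely the key step: the paper proves $(c)\implies(b)$ by an entirely different mechanism, namely Mokobodzki's criterion together with an explicit computation of upper envelopes showing $f^*\circ V^*=(f\circ V^*)^*$ on $\phi_{H_s}(K\times B_{E^*})$ for $\ef$-invariant convex continuous $f$ (the computation passes the infimum over $y^*\in B_{E^*}$ inside and uses the operator $\Upsilon$ of Proposition~\ref{P:Upsilon}). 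A repair of your approach would have to show, for instance, that the dilation can be chosen with each $\tau_\xi$ carried by $\phi_{H_w}(K)$ (in which case $\tau_\xi=\phi_{H_w}(\mu_\xi)$ lifts to $\phi_{H_s}(\mu_\xi\times\ep_{x^*})$ with the right barycenter), but that is not automatic and is not argued.
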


\begin{proof}
$(a)\iff(b)$: This follows immediately from Corollary~\ref{cor:identifikacekouli}.

$(c)\implies (b)$:  This is the content of \cite[Lemma 4]{saab-tal}. But the argument provided there is very brief and the key step is not explained, so we decided to give a complete proof here.

Assume $(c)$ holds. To prove that $\phi_{H_w}(\sigma)$ is maximal, we use the Mokobodzki criterion. Let $f:B_{H_w^*}\to\er$ be convex, weak$^*$-continuous and $\ef$-invariant. The proof will be complete if we show that
$$\int f\di\phi_{H_w}(\sigma)=\int f^*\di\phi_{H_w}(\sigma).$$

To this end let $V:H_w\to H_s$ be the canonical isometric injection defined by $V(g)=g\circ\pi_1$ for $g\in H_w$. Then $\phi_{H_w}\circ\pi_1=V^*\circ \phi_{H_s}$. Indeed, for $(t,x^*)\in K\times B_{E^*}$ and $g\in H_w$ we have
$$(V^*\circ\phi_{H_s})(t,x^*)(g)=\phi_{H_s}(t,x^*)(Vg)=(g\circ\pi_1)(t,x^*)=\phi_{H_w}(\pi_1(t,x^*))(g).$$
Further, if $g:B_{H_w^*}\to\er$ is a weak$^*$-Borel function, we may compute
$$\begin{aligned}
 \int_{B_{H_w^*}} g\di\phi_{H_w}(\sigma) &=  \int_{K} g\circ\phi_{H_w} \di\sigma
 = \int_{K\times B_{E^*}} g\circ\phi_{H_w}\circ \pi_1 \di\nu \\ 
 & = \int_{K\times B_{E^*}} g\circ V^*\circ \phi_{H_s} \di\nu
 = \int_{B_{H_s^*}} g\circ V^*\di\phi_{H_s}(\nu).
 \end{aligned}$$

This may be applied, in particular, to $g=f$ and to $g=f^*$. Taking into account that $\phi_{H_s}(\nu)$ is maximal, it is enough to show $f^*\circ V^*=(f\circ V^*)^*$ on $\phi_{H_s}(K\times B_{E^*})$. So, let us compute:

$$\begin{aligned}
 (f\circ V^*)^*(\phi_{H_s}(t,x^*))&= \inf\{ c + \Re (x^*(\h(t))+v(t))\setsep    
 c\in\er,\h\in H,v\in H_w, \\&\qquad c + \Re (\alpha y^*(\h(s))+\alpha v(s))\ge (f\circ V^*)(\alpha \phi_{H_s}(s,y^*))  \\&\qquad\qquad\qquad\mbox{ for }\alpha\in S_{\ef}, s\in K, y^*\in B_{E^*}\}
\\& = \inf\{ c + \Re (x^*(\h(t))+v(t))\setsep    
 c\in\er,\h\in H,v\in H_w, \\&\qquad c + \Re (\alpha y^*(\h(s))+\alpha v(s))\ge (f\circ V^*)(\phi_{H_s}(s,y^*))  \\&\qquad\qquad\qquad\mbox{ for }\alpha\in S_{\ef}, s\in K, y^*\in B_{E^*}\}
 \\& = \inf\{ c + \Re (x^*(\h(t))+v(t))\setsep    
 c\in\er,\h\in H,v\in H_w, \\&\qquad c + \Re ( \alpha y^*(\h(s))+\alpha v(s))\ge f(\phi_{H_w}(s))  \\&\qquad\qquad\qquad\mbox{ for }\alpha\in S_{\ef}, s\in K, y^*\in B_{E^*}\}
  \\&  = \inf\{ c + \Re (x^*(\h(t))+v(t))\setsep    
 c\in\er,\h\in H,v\in H_w, \\&\qquad c + \Re ( y^*(\h(s))+\alpha v(s))\ge f(\phi_{H_w}(s))  \\&\qquad\qquad\qquad\mbox{ for } s\in K, y^*\in B_{E^*},\alpha\in S_{\ef}\}
  \\&  = \inf\{ c + \Re (x^*(\h(t))+v(t))\setsep    
 c\in\er,\h\in H,v\in H_w, \\&\qquad c -\norm{\h(s)}+ \Re \alpha v(s)\ge f(\phi_{H_w}(s))  \mbox{ for } s\in K,\alpha\in S_{\ef}\}
   \\&  = \inf\{ c + \Re (x^*(\h(t))+v(t))\setsep    
 c\in\er,\h\in H,v\in H_w, \\&\qquad c + \Re \alpha v(s)\ge f(\phi_{H_w}(s))+\norm{\h(s)}  \mbox{ for } s\in K,\alpha\in S_{\ef}\}
  \\&=\inf\{\Re x^*(\h(t)) + (f+\norm{\Upsilon(\cdot)(\h)})^*(\phi_{H_w}(t))\setsep \h\in H\}
 \end{aligned}$$

Let us explain it a bit. The first equality is an application of the definition of the upper envelope.
Note that affine continuous functions on $B_{H_s^*}$ are of the form $c+g$, where $c\in \er$ and $g$ is a real-linear weak$^*$-continuous function on $H_s^*$. Such real-linear functions may be uniformly approximated by evaluation at an element of $H_s=H_l+V(H_w)$. Moreover, it is enough to assume that the inequality holds on extreme points which are contained in $\TT\phi_{H_s}(K\times B_{E^*})$.

The second equality follows from the fact that $V^*$ is linear and $f$ is $\ef$-invariant. To show the third equality recall that $V^*\circ\phi_{H_s}=\phi_{H_w}\circ\pi_1$. The fourth equality follows from the observation that $\alpha y^*\in B_{E^*}$ whenever $y^*\in B_{E^*}$ and $\alpha\in S_{\ef}$, using the fact that the right-hand side of the inequality does not depend on $y^*$. The fifth equality follows by passing to the infimum (over $y^*\in B_{E^*}$) on the left-hand side of the inequality. The sixth equality is obvious. 
In the last equality we use the operator $\Upsilon$ from Proposition~\ref{P:Upsilon}. Note that
$f+\norm{\Upsilon(\cdot)(\h)}$ is an $\ef$-invariant convex weak$^*$-lower semicontinuous convex function and hence the last equality follows from the definition of the upper envelope similarly as the first two equalities. 

Now let us apply the above computation. Since $H$ contains zero function, we deduce $ (f\circ V^*)^*(\phi_{H_s}(t,x^*))\le f^*(\phi_{H_w}(t))$.
Further, given $\h\in H$ we have
$$\begin{aligned}
 \Re x^*(\h(t)) & + (f+\norm{\Upsilon(\cdot)(\h)})^*(\phi_{H_w}(t))\\&=
 \Re x^*(\Upsilon(\phi_{H_w}(t))(\h)) +(f+\norm{\Upsilon(\cdot)(\h)})^*(\phi_{H_w}(t))
 \\&=( \Re x^*(\Upsilon(\cdot)(\h)) +f+\norm{\Upsilon(\cdot)(\h)})^*(\phi_{H_w}(t))
 \\&\ge f^*(\phi(H_w(t)),
\end{aligned}$$
because we have, for each $\psi\in H_w^*$,
$$ \Re x^*(\Upsilon(\psi)(\h))\ge - \abs{x^*(\Upsilon(\psi)(\h))}\ge -\norm{\Upsilon(\psi)(\h)}.$$

So,
 $$ (f\circ V^*)^*(\phi_{H_s}(t,x^*))\ge f^*(\phi_{H_w}(t))=f^*(V^*(\phi_{H_s}(t,x^*))),$$
 which completes the proof.

$(b)\implies(c)$: Assume that $\phi_{H_w}(\sigma)$ is maximal. Let $V$ be as in the proof of the converse implication. We set
$$F=\{\nu\in M_+(B_{H_s^*})\setsep V^*(\nu)=\phi_{H_w}(\sigma)\}.$$
Then
$$\nu\in F,\nu_1\in M_+(B_{H_s^*}),\nu_1\succ \nu\implies \nu_1\in F.$$
Indeed, if $\nu_1\succ\nu$, then clearly $V^*(\nu_1)\succ V^*(\nu)$. Since $V^*(\nu)=\phi_{H_w}(\sigma)$ is maximal, we deduce $V^*(\nu_1)=\phi_{H_w}(\sigma)$, so $\nu_1\in F$.

Hence, $F$ contains some maximal measures. Let $\nu$ be a maximal measure in $F$. Then $\nu$ is carried by the set
$$B=S_{\ef}\phi_{H_s}(K\times B_{E^*})\cap (V^*)^{-1}(\phi_{H_w}(K)).$$
We claim that $B\subset \phi_{H_s}(K\times B_{E^*})$. To see it, fix any $\psi\in B$. 
Then $\psi= \alpha \phi_{H_s}(t,x^*)$ for some $\alpha\in S_{\ef}$, $t\in K$, $x^*\in B_{E^*}$. Moreover, 
$$V^*(\psi)=V^*(\alpha \phi_{H_s}(t,x^*))=\alpha\phi_{H_w}(t)$$
(recall that $V^*$ is linear and $V^*\circ\phi_{H_s}=\phi_{H_w}\circ \pi_1$).
Since $V^*(\psi)\in \phi_{H_w}(K)$, there is $t^\prime\in K$ with $\alpha\phi_{H_w}(t)=\phi_{H_w}(t^\prime)$. But then 
$\alpha\phi_{H_s}(t,x^*)=\phi_{H_s}(t^\prime,x^*)$ (by Lemma~\ref{L:phit=alphaphis}), thus indeed $\psi\in \phi_{H_s}(K\times B_{E^*})$. This completes the proof that $\nu$ is carried by $\phi_{H_s}(K\times B_{E^*})$.

Next observe that there is a continuous mapping $\omega$ making the following diagramm commutative
$$\xymatrix{K\times B_{E^*} \ar[r]^{\phi_{H_s}} \ar[d]_{\phi_{H_w}\times\id} & \phi_{H_s}(K\times B_{E^*}) \\ \phi_{H_w}(K)\times B_{E^*} \ar[ur]_{\omega} & }$$
Indeed, the horizontal and vertical arrows represent continuous surjections. Moreover, if $(\phi_{H_w}\times \id)(t,x^*)=(\phi_{H_w}\times\id)(s,y^*)$, then $\phi_{H_s}(t,x^*)=\phi_{H_s}(s,y^*)$ by Lemma~\ref{L:phit=alphaphis} (applied to $\alpha=1$). Hence there is some $\lambda\in M_+(\phi_{H_w}(K)\times B_{E^*})$ such that $\omega(\lambda)=\nu$.

Moreover, for $t\in K$ and $x^*\in B_{E^*}$ we have
$$ (V^*\circ\omega)(\phi_{H_w}(t),x^*) =V^*(\omega(\phi_{H_w}(t,x^*)))=V^*(\phi_{H_s}(t,x^*))=\phi_{H_w}(t),$$
i.e., $V^*\circ \omega=\pi_1$ on $\phi_{H_w}(K)\times B_{E^*}$. In particular, 
$$\phi_{H_w}(\sigma)=(V^*\circ\omega)(\lambda)=\pi_1(\lambda).$$
Therefore we may use Lemma~\ref{L:factorizace-quotient} to get some $\nu_0\in M_+(K\times B_{E^*})$ such that
$\pi_1(\nu_0)=\sigma$ and $(\phi_{H_w}\times \id)(\nu_0)=\lambda$. 
Then  $\phi_{H_s}(\nu_0)=\omega(\lambda)=\nu$ and the proof is finished (as $\nu$ is maximal).
\end{proof}

We finish this section by a definition: Assume that $H\subset C(K,E)$ is a function space.  A complex or vector measure $\mu$ on $K$ will be called \emph{$H$-boundary} if its variation $\abs{\mu}$ satisfies the equivalent conditions of Theorem~\ref{T:H-maximal}.

\section{Representation theorems and notions of simpliciality}\label{sec:reprez}

In this section we formulate two basic forms of representation theorems for vector valued function spaces and introduce possible variants of simpliciality. We start by the first version which easily follows from the known results using the identification from Section~\ref{sec:reprez-oper}.

\begin{thm}\label{T:reprez-oper}
    Let $H\subset C(K,E)$ be a function space. Then for each representable operator $U\in L(H,E)$ there is some $H$-boundary measure $\mu\in M_U(H)$.
\end{thm}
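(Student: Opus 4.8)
The goal is to show that any representable operator $U\in L(H,E)$ has a representing measure $\mu\in M_U(H)$ whose variation $\abs{\mu}$ is maximal (equivalently, $\phi_{H_w}(\abs{\mu})$ is a maximal measure on $B_{H_w^*}$). The natural strategy is to reduce the statement to the scalar case via the operator $\Upsilon\colon H_w^*\to L(H,E)$ from Proposition~\ref{P:Upsilon}: since $\Upsilon$ is onto the space of representable operators and $M_{\Upsilon(\varphi)}(H)=M_\varphi(H_w)$ for each $\varphi\in H_w^*$, it suffices to prove the corresponding scalar statement for $H_w$ and then transport it back.

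\smallskip

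\emph{Step 1: Pick the functional.} Write $U=\Upsilon(\varphi)$ for some $\varphi\in H_w^*$; by Proposition~\ref{P:Upsilon}$(v)$ we have $\norm{\varphi}=\normr{U}$, so it is enough to find a representing measure $\mu\in M_\varphi(H_w)$ for the scalar functional $\varphi$ which is $H_w$-boundary in the sense that $\phi_{H_w}(\abs{\mu})$ is maximal on $B_{H_w^*}$; then the same $\mu$ lies in $M_{\Upsilon(\varphi)}(H)=M_U(H)$ by Proposition~\ref{P:Upsilon}$(vi)$, and it is $H$-boundary by the equivalence $(b)\iff(a)$ of Theorem~\ref{T:H-maximal} together with the definition of an $H$-boundary measure.

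\smallskip

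\emph{Step 2: The scalar representation theorem.} For the scalar function space $H_w\subset C(K,\ef)$, given $\varphi\in H_w^*$ we need a representing measure whose variation gives a maximal measure on $B_{H_w^*}$. This is exactly the known scalar representation theorem in the form used in \cite{fuhr-phelps} (Section~7) or \cite{bezkonstant}: start from the functional $\phi_{H_w}^{**}(\varphi)$-type identification, i.e.\ view $\varphi\in B_{H_w^*}$ (after scaling by $\normr{U}$) as a point of the compact convex set $B_{H_w^*}$; apply the Choquet–Bishop–de Leeuw theorem on $X=B_{H_w^*}$ to obtain a maximal measure $\widetilde\mu\in M_+(B_{H_w^*})$ with barycenter $\varphi$; then pull $\widetilde\mu$ back along the homeomorphic embedding $\phi_{H_w}\colon K\to B_{H_w^*}$ (using that $\widetilde\mu$, being maximal, is carried by $\overline{\phi_{H_w}(K)}$, or rather by the closed absolutely convex hull's extreme points, which live in $S_\ef\phi_{H_w}(K)$ by Milman) and absorb the unimodular factor into a density, exactly as in Lemma~\ref{L:reprez-t-chb} and Lemma~\ref{l:factor-density}. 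This produces $\mu\in M(K,\ef)$ with $\norm{\mu}=\norm{\varphi}$, $\int f\di\mu=\varphi(f)$ for $f\in H_w$, and $\phi_{H_w}(\abs{\mu})=\widetilde\mu$ maximal.

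\smallskip

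\emph{Step 3: Assemble.} Combining Steps 1 and 2 gives the measure. The only genuinely delicate point—and the place I would expect to spend real effort—is the passage from a maximal measure on $B_{H_w^*}$ back to a measure on $K$ with maximal variation: one must check that the maximal $\widetilde\mu$ is genuinely supported (up to the unimodular rotation and a null set) on $\phi_{H_w}(K)$ rather than merely on its closed absolutely convex hull, and that the pullback's variation maps exactly to $\widetilde\mu$. This is handled by the extreme-point structure $\ext B_{H_w^*}\subset S_\ef\phi_{H_w}(K)$ (via $\overline{\aco}^{w^*}\phi_{H_w}(K)=B_{H_w^*}$ and Milman's theorem) and by the factorization-of-densities argument already available to us in Lemmas~\ref{L:podmsh}, \ref{l:factor-density} and \ref{L:reprez-t-chb}. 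Everything else is a direct translation through $\Upsilon$ and the equivalences in Theorem~\ref{T:H-maximal}.
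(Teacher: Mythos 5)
Your proof is correct and follows the paper's argument exactly: reduce to the scalar function space $H_w$ via the operator $\Upsilon$ of Proposition~\ref{P:Upsilon} (using parts $(v)$ and $(vi)$ together with Theorem~\ref{T:H-maximal} to translate ``$H_w$-boundary'' into ``$H$-boundary''), then invoke the scalar representation theorem for $H_w$. The only difference is that where the paper simply cites \cite[Theorem 1.3]{saab-canad} for the scalar step, you sketch its proof (Choquet--Bishop--de Leeuw on $B_{H_w^*}$, Milman's theorem, and the density-factorization lemmas), which is a correct but unnecessary expansion of a result the paper treats as known.
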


\begin{proof}
    Let $\Upsilon$ be the operator from Proposition~\ref{P:Upsilon} and let $\varphi\in H_w^*$ be the preimage of $U$ under $\Upsilon$. By \cite[Theorem 1.3]{saab-canad} (applied to the one-dimensional Banach space $E$) there is an $H_w$-boundary measure $\mu\in M_\varphi(H_w)$. We conclude by Proposition~\ref{P:Upsilon}$(vi)$.
\end{proof}

\begin{thm}\label{T:reprez-funct}
    Let $H\subset C(K,E)$ be a function space. Then for each $\varphi\in H^*$ there is some $H$-boundary vector measure $\mu\in M_\varphi(H)$.
\end{thm}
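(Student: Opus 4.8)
The statement asserts a Choquet-type representation theorem for functionals in $H^*$ by $E^*$-valued boundary measures. The plan is to reduce it to the scalar representation theorem via the embedding $T\colon C(K,E)\to C(K\times B_{E^*},\ef)$ and the auxiliary scalar function space $H_l=T(H)$. First I would fix $\varphi\in H^*$ and transport it to a functional on $H_l$: since $T$ is an isometric inclusion, $\varphi\circ T^{-1}$ is a well-defined bounded functional on $H_l$ with the same norm. Apply to it the scalar representation theorem (\cite[Theorem 1.3]{saab-canad} in the scalar case, or rather the version for scalar function spaces quoted in the discussion of Section~\ref{sec:hranice a miry}): there is an $H_l$-boundary measure $\lambda\in M(K\times B_{E^*},\ef)$ with $\norm{\lambda}=\norm{\varphi}$ representing $\varphi\circ T^{-1}$ on $H_l$.

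The next step is to push $\lambda$ down to an $E^*$-valued measure on $K$. Given $\lambda$, define $\mu\in M(K,E^*)$ by the natural ``integration in the second variable'': for a Borel set $A\subset K$ and $x\in E$ set $\mu(A)(x)=\int_{A\times B_{E^*}} x^*(x)\di\lambda(t,x^*)$, i.e.\ $\mu(A)$ is the functional $x\mapsto \int \chi_A(t)\,T(\chi_K\cdot x)(t,x^*)\di\lambda$. One must check this is a well-defined regular $E^*$-valued measure of bounded variation with $\norm{\mu}\le\norm{\lambda}$, and that $\int \f\di\mu=\int T\f\di\lambda$ for all $\f\in C(K,E)$ (first for simple functions, then by density). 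This immediately gives $\varphi(\f)=(\varphi\circ T^{-1})(T\f)=\int T\f\di\lambda=\int\f\di\mu$ for $\f\in H$, so $\mu$ represents $\varphi$ on $H$. For the norm, combine $\norm{\mu}\le\norm{\lambda}=\norm{\varphi}$ with the reverse inequality $\norm{\varphi}=\norm{\varphi|_H}\le\norm{\mu}$ coming from $\varphi(\f)=\int\f\di\mu$; hence $\norm{\mu}=\norm{\varphi}$, so $\mu\in M_\varphi(H)$.

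It remains to verify that $\mu$ is $H$-boundary, i.e.\ that $\abs{\mu}$ satisfies the equivalent conditions of Theorem~\ref{T:H-maximal}. Here the candidate is condition $(c)$: one wants to produce $\nu\in M_+(K\times B_{E^*})$ with $\pi_1(\nu)=\abs{\mu}$ and $\phi_{H_s}(\nu)$ maximal on $B_{H_s^*}$. The natural choice is $\nu=\abs{\lambda}$: since $\lambda$ is $H_l$-boundary, $\abs{\lambda}$ is ``$H_l$-maximal'', and one needs to upgrade this to maximality with respect to the larger space $H_s=H_l+V(H_w)$ and to check $\pi_1(\abs\lambda)=\abs\mu$. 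The latter needs the comparison of variations under the map ``integrate out $x^*$''; this may require choosing $\lambda$ carefully (e.g.\ so that $\norm{\pi_1(\lambda)}=\norm{\lambda}$, which can be arranged since $\varphi$ factors through $H$ — the functional $\varphi\circ T^{-1}$ on $H_l$ extends to all of $C(K\times B_{E^*})$ in a way compatible with the $\pi_1$-structure) so that Lemma~\ref{l:factor-density} and the disintegration machinery of Lemma~\ref{L:dezintegrace-obec} apply.

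\textbf{Main obstacle.} The genuinely delicate point is the last one: $\lambda$ as produced by the scalar theorem is only guaranteed to be $H_l$-boundary, but $H_l$ does not separate points of $K\times B_{E^*}$ (Observation at the start of Section~\ref{sec:fs}), so ``$H_l$-boundary'' is a weak condition; one must show it can be promoted to an $H_s$-maximal disintegration and that the resulting $\mu$ really satisfies Theorem~\ref{T:H-maximal}$(c)$. I expect the cleanest route is to bypass $H_l$ and instead apply the scalar theorem directly to $H_s$ (after extending $\varphi$ suitably so it is defined on $H_s$, using that $H_w$-part can be taken trivial), obtaining $\lambda$ that is $H_s$-boundary from the start, and then to read off $\mu$ and use $\pi_1(\lambda)=\abs{\mu}$ together with Theorem~\ref{T:H-maximal}$(c)\Rightarrow(b)$. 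Controlling the variation identity $\pi_1(\abs\lambda)=\abs\mu$ and the norm bookkeeping is the part that will require the most care, but it is exactly the kind of computation that the factorization lemmas in Section~\ref{sec:hranice a miry} were set up to handle.
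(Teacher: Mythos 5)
Your final plan (the last paragraph) is exactly the paper's proof: transfer $\varphi$ to $\widetilde\varphi\in H_l^*$ via the isometry $T$, extend it to $\psi\in H_s^*$ with the same norm, apply the scalar theorem \cite[Theorem 1.3]{saab-canad} to get an $H_s$-boundary $\nu\in M_\psi(H_s)$, set $\mu=T^*\nu$, and conclude via $\abs{T^*\nu}=\pi_1(\abs{\nu})$ (which holds because $\norm{T^*\nu}=\norm{\nu}$) together with Theorem~\ref{T:H-maximal}$(c)\Rightarrow(b)$ — you were right to abandon the first route through $H_l$-boundary measures, which is too weak precisely because $H_l$ does not separate points. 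The only imprecision is your parenthetical that the extension to $H_s$ can be taken with ``trivial $H_w$-part'': the assignment $f\circ\pi_1+T\h\mapsto\widetilde\varphi(T\h)$ is well defined but need not preserve the norm (one only gets $\norm{T\h}\le 2\norm{f\circ\pi_1+T\h}$ in general), so the norm-preserving extension must come from Hahn--Banach, which is what the paper uses and what your norm bookkeeping ($\norm{\nu}=\norm{\varphi}$, hence $\norm{T^*\nu}=\norm{\varphi}$) actually requires.
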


\begin{proof}
    This is the result of \cite{saab-tal}. Let us briefly recall the argument. Let $\varphi\in H^*$.
    Let $\widetilde{\varphi}\in H_l^*$ we such that $\varphi=\widetilde{\varphi}\circ T$. Let $\psi\in H_s^*$ be an extension of $\widetilde{\varphi}$ with the same norm. By \cite[Theorem 1.3]{saab-canad} (applied to the one-dimensional Banach space $E$) there is an $H_s$-boundary measure $\nu\in M_\psi(H_s)$. Then $T^*\nu\in M_\varphi(H)$. Moreover, $\abs{T^*\nu}=\pi_1(\abs{\nu})$ (by \cite[Proposition 3.5(a)]{transference-studia}) and hence $T^*\nu$ is $H$-boundary by Theorem~\ref{T:H-maximal}.
\end{proof}

Next we are going to define four versions of simpliciality of vector-valued function spaces. They generalize the scalar versions in two ways. To compare them let us recall the scalar notions. Let $H\subset C(K,\ef)$ be a (scalar) function space separating points. $H$ is said to be
\begin{itemize}
      \item \emph{simplicial} if for any $t\in K$ there is a unique $H$-boundary measure in $M_t(H)$;
    \item \emph{functionally simplicial} if for any $\varphi\in H^*$ there is a unique $H$-boundary vector measure in $M_\varphi(H)$.
\end{itemize}
Simpliciality is a direct generalization of the classical notion (see \cite[Definition 6.1]{lmns}) used in \cite{bezkonstant}. Functional simpliciality was introduced in \cite{bezkonstant}, in \cite{phelps-complex} it was used under the name `uniqueness holds for $H$'. If $H$ is a real function space containing constants and separating points, it is functionally simplicial if and only if its state space is a simplex (see, e.g., \cite[Proposition 5.3]{bezkonstant}). 

Note that we consider simpliciality and its variants only for function spaces separating points, even though the representation theorems do not require this assumption. We find this assumption natural, although the relevant definitions have sense also in general.

Now we are ready to give the promised definitions.
Let $H\subset C(K,E)$ be a function space separating points of $K$. We say that $H$ is
\begin{itemize}
    \item \emph{functionally vector simplicial} if for any $\varphi\in H^*$ there is a unique $H$-boundary vector measure in $M_\varphi(H)$;
    \item \emph{vector simplicial} if for any $t\in K$ and $x^*\in E^*$ there is a unique $H$-boundary vector measure in $M_{x^*\circ\phi_H(t)}(H)$;
    \item \emph{weakly simplicial} if for any $t\in K$ there is a unique $H$-boundary measure in $M_t(H)$;
    \item \emph{functionally weakly simplicial} if for any representable operator $U\in L(H,E)$ there is a unique $H$-boundary measure in $M_U(H)$.
\end{itemize}

In the following observation we formulate basic relationship of these notions to the scalar theory.

\begin{obs}\label{obs:weak simpl} Let $H\subset C(K,E)$ be a function space separating points.
    \begin{enumerate}[$(a)$]
        \item $H$ is weakly simplicial if and only if $H_w$ is simplicial.
        \item $H$ is functionally weakly simplicial if and only if $H_w$ is functionally simplicial.
        \item If $E=\ef$, then
        \begin{enumerate}[$(i)$]
            \item $H$ is vector simplicial $\iff$ $H$ is weakly simplicial $\iff$ $H$ is simplicial;
            \item $H$ is functionally vector simplicial $\iff$ $H$ is functionally weakly simplicial $\iff$ $H$ is functionally simplicial.
        \end{enumerate}
    \end{enumerate}
\end{obs}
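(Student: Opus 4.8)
The plan is to prove each equivalence by unwinding the definitions and invoking the identifications established earlier, without doing much genuine work. For part $(a)$, I would note that weak simpliciality asks for uniqueness of an $H$-boundary measure in $M_t(H)$ (here $M_t(H)$ is the set of representing \emph{scalar} measures of the operator $\phi_H(t)$), while simpliciality of $H_w$ asks for uniqueness of an $H_w$-boundary measure in $M_t(H_w)$. Proposition~\ref{P:MtH=MtHw} already gives $M_t(H)=M_t(H_w)$ as sets of measures. It remains to observe that a measure $\mu\in M_t(H)$ is $H$-boundary if and only if it is $H_w$-boundary: by the definition at the end of Section~\ref{sec:hranice a miry}, $\mu$ is $H$-boundary iff $\abs{\mu}$ satisfies the conditions of Theorem~\ref{T:H-maximal}, and condition $(b)$ there -- that $\phi_{H_w}(\abs{\mu})$ be maximal on $B_{H_w^*}$ -- is exactly the definition of $\mu$ being $H_w$-boundary in the scalar sense recalled in that section. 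So the two uniqueness statements coincide verbatim, and $(a)$ follows.

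For part $(b)$ the argument is parallel but uses Proposition~\ref{P:Upsilon} instead of Proposition~\ref{P:MtH=MtHw}. Functional weak simpliciality quantifies over representable operators $U\in L(H,E)$; by Proposition~\ref{P:Upsilon}$(iv)$ these are exactly the operators of the form $\Upsilon(\varphi)$ for $\varphi\in H_w^*$, and by $(v)$ the map $\Upsilon$ is injective (since $\normr{\Upsilon(\varphi)}=\norm{\varphi}$), so quantifying over representable $U$ is the same as quantifying over $\varphi\in H_w^*$. For a fixed such pair, Proposition~\ref{P:Upsilon}$(vi)$ gives $M_{\Upsilon(\varphi)}(H)=M_\varphi(H_w)$, and again the $H$-boundary condition matches the $H_w$-boundary condition via Theorem~\ref{T:H-maximal}$(b)$. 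Hence uniqueness for all representable $U$ is equivalent to uniqueness for all $\varphi\in H_w^*$, i.e.\ to functional simpliciality of $H_w$.

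For part $(c)$, when $E=\ef$ everything collapses. Here $E^*=\ef$ as well, so ranging $x^*$ over $E^*$ in the definition of vector simpliciality only rescales $\phi_H(t)$ by a scalar, and $M_{x^*\circ\phi_H(t)}(H)$ is obtained from $M_t(H)$ by the obvious scaling; the boundary condition is scaling-invariant (it depends only on $\abs{\mu}$ up to a positive multiple, hence on the direction). Thus vector simpliciality reduces to the single-point condition, i.e.\ to weak simpliciality. Moreover, by Observation~\ref{obs:scalar}, when $E=\ef$ every functional is a representable operator and $L(H,E)=H^*$, so $M_U(H)$ and $M_\varphi(H)$ coincide for all $U=\varphi\in H^*$; also $H_w=H$ when $E=\ef$. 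Combining with parts $(a)$ and $(b)$, and with the definitions of (functional) simpliciality of the scalar space $H$, all three notions in $(i)$ agree and all three in $(ii)$ agree.

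I do not expect a serious obstacle; the only point requiring minor care is the bookkeeping identification ``$\mu$ is $H$-boundary $\iff$ $\abs{\mu}$ witnesses Theorem~\ref{T:H-maximal} $\iff$ (in the scalar setting) $\phi_{H_w}(\abs{\mu})$ is maximal,'' which is how the paper's definition of $H$-boundary measure was set up, so it is essentially bureaucratic. The mild subtlety in $(c)(i)$ is confirming that passing from the family $\{M_{x^*\circ\phi_H(t)}(H):x^*\in E^*\}$ to $M_t(H)$ genuinely loses nothing -- this is where one invokes that for $E=\ef$ the functional $x^*\circ\phi_H(t)$ is just a scalar multiple of $\phi_H(t)$ and that both the representing-measure sets and the maximality condition transform correctly under that scaling; I would spell this out in one or two sentences and then refer back to $(a)$.
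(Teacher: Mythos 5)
Your proposal is correct and follows essentially the same route as the paper, which likewise derives $(a)$ and $(b)$ from Proposition~\ref{P:Upsilon} (of which Proposition~\ref{P:MtH=MtHw} is the pointwise instance) together with Theorem~\ref{T:H-maximal}, and obtains $(c)$ by combining $(a)$, $(b)$ with Observation~\ref{obs:scalar}. The extra care you take with the scaling argument in $(c)(i)$ is sound but only spells out what the paper leaves implicit.
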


\begin{proof}
    Assertions $(a)$ and $(b)$ follow from Proposition~\ref{P:Upsilon} and Theorem~\ref{T:H-maximal}.

    Assertion $(c)$ follows by combining $(a)$ and $(b)$ with Observation~\ref{obs:scalar}.
\end{proof}

\begin{remark}
    Obviously functional vector simpliciality implies vector simpliciality and, similarly, functional weak simpliciality implies weak simpliciality. Converse implications fail even for scalar function spaces containing constants (see \cite[Exercise 6.79]{lmns} or \cite[p. 15]{bezkonstant}).

    The exact relationship between vector simpliciality and weak simpliciality seems not to be clear.
    In the next section we show that these two notions are in general incomparable. However, if $H$ contains constants, then vector simpliciality is stronger than weak simpliciality.  We will analyze this situation in more detail in Section~\ref{sec:skonstantami} below.
\end{remark}

\section{$H$-affine functions}\label{sec:H-aff}

In the study of simpliciality of scalar-valued function spaces, an important role is played by the space of continuous $H$-affine functions denoted by $A_c(H)$ and defined by
$$A_c(H)=\left\{f\in C(K)\setsep f(t)=\int f\di\mu\mbox{ for each }t\in K, \mu\in M_t(H)\right\},$$
see \cite[Definition 3.8]{lmns} for the classical case and \cite[Section 4]{bezkonstant}. In the vector-valued case we have two directions of representation theorems and of simpliciality, therefore we also have two different notions of $H$-affine functions. Their definitions read as follows.

If $H\subset C(K,E)$ is a function space, we set
$$\begin{aligned}
    A_c^w(H)&=\left\{\f\in C(K,E)\setsep \f(t)=(B)\int \f\di\mu\mbox{ for each }t\in K, \mu\in M_t(H)\right\}; 
\\
A_c^v(H)&=\Bigg\{\f\in C(K,E)\setsep x^*(\f(t))=\int \f\di\mu\mbox{ for each }t\in K, x^*\in E^*,\\ &\qquad\qquad\qquad\qquad\qquad\qquad\mu\in M_{x^*\circ\phi_H(t)}(H)\Bigg\}.
\end{aligned}
$$

\begin{obs}
    If $E=\ef$, then $A_c^w(H)=A_c^v(H)=A_c(H)$.
\end{obs}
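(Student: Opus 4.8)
The claim is that when $E=\ef$ (so we are in the scalar case), both vector-valued notions of $H$-affine function collapse to the classical scalar notion $A_c(H)$. The plan is to observe that the definitions of $A_c^w(H)$ and $A_c^v(H)$ literally reduce to that of $A_c(H)$ once we substitute $E=\ef$, modulo two facts already established in the excerpt: that in the scalar case the Bochner integral coincides with the ordinary Lebesgue integral (remarked right after the definition of $I(K,E)$), and that in the scalar case the two notions of representing measures $M_t(H)$ coincide (Observation~\ref{obs:scalar} gives $L(H,E)=E^*=\ef$ and $\normr{\cdot}=\norm{\cdot}$, and for scalar $H$ we have $H_w=H$, so $M_t(H)$ as defined via operators equals the classical $M_t(H)$).

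\begin{proof}
Assume $E=\ef$. Then $C(K,E)=C(K,\ef)$, and by Observation~\ref{obs:scalar} every operator in $L(H,E)=H^*$ is representable with $\normr{\cdot}=\norm{\cdot}$; in particular for each $t\in K$ the set $M_t(H)$ (in the sense of Section~\ref{sec:reprez-oper}) coincides with the classical set $\{\mu\in M(K,\ef)\setsep\norm{\mu}=\norm{\phi_H(t)}\ \&\ f(t)=\int f\di\mu\text{ for }f\in H\}$, since here $H_w=H$. Moreover, for scalar-valued functions the Bochner integral $(B)\int\f\di\mu$ is just the ordinary integral $\int\f\di\mu$. Hence the defining condition in $A_c^w(H)$, namely $\f(t)=(B)\int\f\di\mu$ for each $t\in K$ and $\mu\in M_t(H)$, becomes verbatim the defining condition of $A_c(H)$, so $A_c^w(H)=A_c(H)$.

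For $A_c^v(H)$, note that when $E=\ef$ we have $E^*=\ef$ and every $x^*\in E^*$ is multiplication by a scalar; since the condition defining $A_c^v(H)$ is homogeneous in $x^*$ and trivially satisfied for $x^*=0$, it suffices to test it for $x^*=\id_{\ef}$. For that choice $x^*\circ\phi_H(t)=\phi_H(t)$, so $M_{x^*\circ\phi_H(t)}(H)=M_t(H)$, and the condition $x^*(\f(t))=\int\f\di\mu$ reads $\f(t)=\int\f\di\mu$ for each $t\in K$ and each $\mu\in M_t(H)$. This is again exactly the condition defining $A_c(H)$. Therefore $A_c^v(H)=A_c(H)$ as well.
\end{proof}
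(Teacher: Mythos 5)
Your proof is correct and follows essentially the same route as the paper's (which simply notes that $H=H_w$ gives $A_c^w(H)=A_c(H)$ and $E^*=\ef$ gives $A_c^v(H)=A_c(H)$); you merely spell out the details, including the correct reduction to $x^*=\id_\ef$ via homogeneity and the identification of the two notions of $M_t(H)$ through Observation~\ref{obs:scalar}.
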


\begin{proof}
    Assume $E=\ef$. Then $H=H_w$, so equality $A_c^w(H)=A_c(H)$ easily follows. Moreover, $E^*=\ef$, so equality $A_c^v(H)=A_c(H)$ follows as well.
\end{proof}

We shall now investigate properties of $A_c^w(H)$ and $A_c^v(H)$ separately. 
 
\subsection{Properties of $A_c^w(H)$}\label{ss:AcwH}

We start by collecting basic easy properties of $A_c^w(H)$.

\begin{lemma}\label{L:AcH} Let $H\subset C(K,E)$ be a function space.
    \begin{enumerate}[$(a)$]
        \item $A_c^w(H)$ is a closed subspace of $C(K,E)$ containing $H$.
        \item $H_w\subset A_c^w(H)_w\subset A_c(H_w)$.
        \item $\normr{\phi_H(t)}=\normr{\phi_{A_c^w(H)}(t)}$ for $t\in K$.
        \item $M_t(A_c^w(H))=M_t(H)$ for $t\in K$.
        \item $A_c^w(A_c^w(H))=A_c^w(H)$.
    \end{enumerate}
\end{lemma}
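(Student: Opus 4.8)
The plan is to prove the five assertions in order, as the later ones lean on the earlier ones. For $(a)$: the inclusion $H\subset A_c^w(H)$ is immediate since every $\h\in H$ satisfies $\h(t)=(B)\int\h\di\mu$ by the very definition of $M_t(H)$. That $A_c^w(H)$ is a linear subspace follows from linearity of the Bochner integral; closedness follows because if $\f_n\to\f$ uniformly and each $\f_n\in A_c^w(H)$, then for fixed $t$ and $\mu\in M_t(H)$ we have $\norm{\f(t)-(B)\int\f\di\mu}\le\norm{\f-\f_n}_\infty(1+\norm{\mu})\to0$, using $\norm{\mu}=\normr{\phi_H(t)}<\infty$. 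For $(b)$: if $g\in H_w$, write $g=\sum x_j^*\circ\h_j$ with $\h_j\in H$; since each $\h_j\in A_c^w(H)$, applying $x_j^*$ and summing gives $g\in A_c^w(H)_w$, hence $H_w\subset A_c^w(H)_w$. For the second inclusion, take $g\in A_c^w(H)_w$, say $g=\sum x_j^*\circ\f_j$ with $\f_j\in A_c^w(H)$; for $t\in K$ and $\mu\in M_t(H_w)$, we have $\mu\in M_t(H)$ by Proposition~\ref{P:MtH=MtHw}, so $\f_j(t)=(B)\int\f_j\di\mu$, and applying $x_j^*$, summing, and using $\int x_j^*\circ\f_j\di\mu=x_j^*((B)\int\f_j\di\mu)$ yields $g(t)=\int g\di\mu$; thus $g\in A_c(H_w)$.

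For $(c)$ and $(d)$: by Proposition~\ref{P:MtH=MtHw} we have $M_t(H)=M_t(H_w)$ and $\normr{\phi_H(t)}=\norm{\phi_{H_w}(t)}$, and likewise $M_t(A_c^w(H))=M_t(A_c^w(H)_w)$ and $\normr{\phi_{A_c^w(H)}(t)}=\norm{\phi_{A_c^w(H)_w}(t)}$. So it suffices to work with the scalar spaces. From $(b)$ we have $H_w\subset A_c^w(H)_w\subset A_c(H_w)$, and the scalar theory (\cite[Section 4]{bezkonstant}, the classical fact that $M_t(H_w)=M_t(A_c(H_w))$ and $\norm{\phi_{H_w}(t)}=\norm{\phi_{A_c(H_w)}(t)}$) gives that both quantities are unchanged when passing from $H_w$ to any intermediate space between $H_w$ and $A_c(H_w)$. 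Concretely: the inclusion $H_w\subset A_c^w(H)_w$ gives $M_t(A_c^w(H)_w)\subset M_t(H_w)$ and $\norm{\phi_{A_c^w(H)_w}(t)}\le\norm{\phi_{H_w}(t)}$; the reverse containment $M_t(H_w)\subset M_t(A_c^w(H)_w)$ holds because any $\mu\in M_t(H_w)$ is in $M_t(H)$, hence represents every $\f\in A_c^w(H)$ at $t$ in the Bochner sense, hence (applying functionals) represents every element of $A_c^w(H)_w$; this also forces equality of the norms. Then $(c)$ and $(d)$ follow by Proposition~\ref{P:MtH=MtHw} again.

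For $(e)$: the inclusion $A_c^w(H)\subset A_c^w(A_c^w(H))$ is $(a)$ applied to the space $A_c^w(H)$. For the reverse, let $\f\in A_c^w(A_c^w(H))$; this means $\f(t)=(B)\int\f\di\mu$ for every $t\in K$ and every $\mu\in M_t(A_c^w(H))$. But by $(d)$, $M_t(A_c^w(H))=M_t(H)$, so $\f(t)=(B)\int\f\di\mu$ for all $t$ and all $\mu\in M_t(H)$, i.e.\ $\f\in A_c^w(H)$. The main (and really the only) obstacle is $(b)$ together with the transfer in $(c)$–$(d)$: one must be careful that $A_c^w(H)_w$ genuinely sits between $H_w$ and $A_c(H_w)$ and invoke the correct scalar stability statement for $M_t$ and the norm of the evaluation under enlarging the function space within its $A_c$-hull; everything else is a routine application of the properties of the Bochner integral and of Proposition~\ref{P:MtH=MtHw}.
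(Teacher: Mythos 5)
Your proposal is correct and follows essentially the same route as the paper: the key points are the two inclusions in $(b)$ via Proposition~\ref{P:MtH=MtHw}, and the two-sided squeeze between representing measures of $H$ and of $A_c^w(H)$ that simultaneously yields $(c)$ and $(d)$, with $(e)$ then immediate. Your detour through the scalar spaces $H_w$ and $A_c^w(H)_w$ in $(c)$--$(d)$ is a harmless repackaging of the paper's direct argument (just note that the containment $M_t(A_c^w(H)_w)\subset M_t(H_w)$ only becomes available \emph{after} the norm equality is forced by the reverse direction, so the two halves of that paragraph should be stated in the opposite order).
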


\begin{proof}
    Assertion $(a)$ is trivial. 

    $(b)$: Since $H\subset A_c^w(H)$, the first inclusion follows. To prove the second one fix $\f\in A_c^w(H)$ and $x^*\in E^*$. We shall show that $x^*\circ\f\in A_c(H_w)$. To this end take any $t\in K$ and $\mu\in M_t(H_w)$. By Proposition~\ref{P:MtH=MtHw} we get $\mu\in M_t(H)$ and so
    $$\int x^*\circ\f\di\mu=x^*((B)\int \f\di\mu)=x^*(\f(t)),$$
    which completes the argument.

    $(c) \& (d)$: Fix $t\in K$. Assume $\mu\in M_t(A_c^w(H))$. Since $H\subset A_c^w(H)$, we deduce
    $$\f(t)=(B)\int \f\di\mu\mbox{ for each }\f\in H.$$
    Moreover,
    $$\normr{\phi_{A_c^w(H)}(t)}=\norm{\mu}\ge\normr{\phi_H(t)}.$$

    Further, assume $\nu\in M_t(H)$. By the definition of $A_c^w(H)$ we have
    $$\f(t)=(B)\int \f\di\nu\mbox{ for each }\f\in A_c^w(H).$$
    Moreover,
    $$\normr{\phi_{H}(t)}=\norm{\nu}\ge\normr{\phi_{A_c^w(H)}(t)}.$$
    By combining the two inequalities we deduce $\normr{\phi_H(t)}=\normr{\phi_{A_c^w(H)}(t)}$, so $(c)$ is proved. We conclude that $\mu\in M_t(H)$ and $\nu\in M_t(A_c(H))$, so $(d)$ follows.

    Finally, assertion $(e)$ follows from $(d)$.
\end{proof}

We continue by some further (more precise) properties.

\begin{prop}\label{P:AcH vlastnosti}  Let $H\subset C(K,E)$ be a function space.
    \begin{enumerate}[$(a)$]
        \item If $f\in A_c(H_w)$ and $x\in E$, then $f\cdot x\in A_c^w(H)$.
        \item $A_c(H_w)=A_c^w(H)_w$.
        \item Assume that $H_w$ contains constants (this takes place, in particular, if $H$ contains some constants). Then $A_c^w(H)$ contains constants and $M_t(H)\subset M_1(K)$ for each $t\in K$.
    \end{enumerate}
\end{prop}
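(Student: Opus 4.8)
The plan is to prove the three assertions of Proposition~\ref{P:AcH vlastnosti} in order, using Proposition~\ref{P:MtH=MtHw} (which identifies $M_t(H)$ with $M_t(H_w)$) as the principal bridge between the vector-valued and scalar settings, together with the elementary compatibility of the Bochner integral with scalar functionals.

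\medskip

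For $(a)$, fix $f\in A_c(H_w)$ and $x\in E$. We must check $f\cdot x\in A_c^w(H)$, i.e. that $(f\cdot x)(t)=(B)\int f\cdot x\di\mu$ for every $t\in K$ and every $\mu\in M_t(H)$. By Proposition~\ref{P:MtH=MtHw} such a $\mu$ lies in $M_t(H_w)$, so $f(t)=\int f\di\mu$ because $f\in A_c(H_w)$. Since the Bochner integral commutes with the bounded linear map $\ef\to E$, $\xi\mapsto\xi x$ (equivalently, one computes $(B)\int f\cdot x\di\mu=\left(\int f\di\mu\right)x$ directly from the definition on simple functions and passes to the limit), we get $(B)\int f\cdot x\di\mu=\left(\int f\di\mu\right)x=f(t)x=(f\cdot x)(t)$, as required. (One should also note $f\cdot x$ is continuous, so it genuinely lies in $C(K,E)$.)

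\medskip

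For $(b)$, the inclusion $A_c^w(H)_w\subset A_c(H_w)$ is already part of Lemma~\ref{L:AcH}$(b)$, so only $A_c(H_w)\subset A_c^w(H)_w$ needs proof. Let $g\in A_c(H_w)$; we must exhibit $g$ in the form $x^*\circ\f$ with $\f\in A_c^w(H)$ and $x^*\in E^*$. Fix any nonzero $x\in E$ and, by Hahn--Banach, any $x^*\in E^*$ with $x^*(x)=1$. By part $(a)$, $\f:=g\cdot x\in A_c^w(H)$, and then $x^*\circ\f=(x^*(x))\,g=g$. Hence $g\in A_c^w(H)_w$, which gives the reverse inclusion and so the equality.

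\medskip

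For $(c)$, assume $H_w$ contains constants. First, $H_w$ containing the constant function $\1$ implies (by the scalar theory, e.g. as recalled in Section~\ref{sec:repmeas}) that each $M_t(H_w)$ consists of probability measures; via Proposition~\ref{P:MtH=MtHw} the same holds for $M_t(H)$, so $M_t(H)\subset M_1(K)$ for each $t\in K$. It remains to see that every constant function $K\to E$ lies in $A_c^w(H)$. Fix $x\in E$ and let $\f$ be the constant function with value $x$; equivalently $\f=\1\cdot x$ where $\1\in C(K,\ef)$ is the constant one. Since $A_c(H_w)$ always contains the constants of $C(K,\ef)$ (it contains $H_w$, which contains $\1$), part $(a)$ applies and yields $\f=\1\cdot x\in A_c^w(H)$. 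Finally, if $H$ contains some constants then $H_w$ contains a nonzero constant, hence contains $\1$ after normalization, so the hypothesis of $(c)$ is indeed implied. The only mildly delicate point across the whole proof is the verification that the Bochner integral of $f\cdot x$ against $\mu$ equals $\left(\int f\di\mu\right)x$; this is immediate on simple functions and extends by the continuity of $\f\mapsto(B)\int\f\di\mu$, and it is exactly the kind of computation already used in the proof of Lemma~\ref{L:AcH}, so I do not expect any genuine obstacle here.
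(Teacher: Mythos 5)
Your proposal is correct and follows essentially the same route as the paper: Proposition~\ref{P:MtH=MtHw} to identify $M_t(H)$ with $M_t(H_w)$, the commutation of the Bochner integral with $f\mapsto f\cdot x$ for $(a)$, the choice of $x$ and $x^*$ with $x^*(x)=1$ for $(b)$, and $(a)$ applied to $\1\cdot x$ for $(c)$. The only (harmless) difference is that in $(a)$ you invoke the definition of $A_c(H_w)$ directly on $\mu\in M_t(H_w)$, whereas the paper first passes through $M_t(A_c(H_w))=M_t(H_w)$; both are valid.
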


\begin{proof}
    $(a)$: Assume $f\in A_c(H_w)$ and $x\in E$. Fix $t\in K$ and $\mu\in M_t(H)$. By Proposition~\ref{P:MtH=MtHw} we deduce $\mu\in M_t(H_w)$, so $\mu\in M_t(A_c(H_w))$ by Lemma~\ref{L:AcH}$(d)$. It follows that
    $$f(t)=\int f\di \mu.$$
    We deduce that
    $$(f\cdot x)(t)=f(t)\cdot x=\left(\int f\di\mu\right)\cdot x=(B)\int f\cdot x\di\mu.$$
    We obtain that $f\cdot x\in A_c^w(H)$.

    $(b)$: Inclusion `$\supset$' follows from Lemma~\ref{L:AcH}$(b)$. To prove the converse fix any
    $f\in A_c(H_w)$. Find $x\in E\setminus\{0\}$ and $x^*\in E^*$ with $x^*(x)=1$. By $(a)$ we know that $f\cdot x\in A_c^w(H)$. Thus $f=x^*\circ(f\cdot x)\in A_c^w(H)_w$.

    $(c)$: Assume that $H_w$ contains constants. Given $t\in K$,  Proposition~\ref{P:MtH=MtHw} and \cite[Remark 3.2]{bezkonstant} yield $M_t(H)=M_t(H_w)\subset M_1(K)$. Further, by $(a)$ we deduce that $A_c^w(H)$ contains constants.
\end{proof}

We continue by a result on preservation of the Choquet boundary and boundary measures directly generalizing its scalar analogues \cite[Proposition 3.67]{lmns} and \cite[Proposition 4.5]{bezkonstant}.

\begin{prop}\label{P:AcwHboundary} Let $H\subset C(K,E)$ be a function space separating points of $K$. Then the following assertions hold.
    \begin{enumerate}[$(a)$]
        \item $\Ch_HK=\Ch_{A_c^w(H)}K$;
        \item $H$-boundary measures  and $A_c^w(H)$-boundary measures on $K$ coincide.
    \end{enumerate}
\end{prop}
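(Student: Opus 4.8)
The plan is to reduce everything to the scalar statements \cite[Proposition 3.67]{lmns} and \cite[Proposition 4.5]{bezkonstant} applied to the scalar function space $H_w$, using the identifications established earlier. The key bridges are: the characterization $\Ch_HK=\{t\setsep \phi_{H_w}(t)\in\ext B_{H_w^*}\}$ from Theorem~\ref{T:hranice}, the equivalence in Theorem~\ref{T:H-maximal} describing $H$-boundary measures via maximality of $\phi_{H_w}(\abs\mu)$ on $B_{H_w^*}$, Proposition~\ref{P:MtH=MtHw} identifying $M_t(H)=M_t(H_w)$, and Proposition~\ref{P:AcH vlastnosti}(b) which gives $A_c(H_w)=A_c^w(H)_w$.

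First I would prove $(a)$. By Theorem~\ref{T:hranice} (equivalence of $(1)$ and $(2)$) applied to both $H$ and $A_c^w(H)$, we have $\Ch_HK=\{t\setsep\phi_{H_w}(t)\in\ext B_{H_w^*}\}=\Ch_{H_w}K$ and $\Ch_{A_c^w(H)}K=\Ch_{A_c^w(H)_w}K$. By Proposition~\ref{P:AcH vlastnosti}(b), $A_c^w(H)_w=A_c(H_w)$. So the claim reduces to the scalar fact $\Ch_{H_w}K=\Ch_{A_c(H_w)}K$, which is \cite[Proposition 4.5(a)]{bezkonstant} (note $H_w$ separates points of $K$ by the Observation in Section~\ref{sec:fs} since $H$ does). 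One should just check that $H_w$ is indeed a legitimate scalar function space separating points, which it is.

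Next, $(b)$. A complex or vector measure $\mu$ on $K$ is $H$-boundary, by our definition and Theorem~\ref{T:H-maximal}, exactly when $\phi_{H_w}(\abs\mu)$ is a maximal measure on $B_{H_w^*}$, i.e.\ exactly when $\abs\mu$ is an $H_w$-boundary measure in the scalar sense. The same equivalence applied to $A_c^w(H)$ says $\mu$ is $A_c^w(H)$-boundary iff $\phi_{A_c^w(H)_w}(\abs\mu)$ is maximal on $B_{A_c^w(H)_w^*}$, i.e.\ iff $\abs\mu$ is an $A_c(H_w)$-boundary measure (using again $A_c^w(H)_w=A_c(H_w)$ from Proposition~\ref{P:AcH vlastnosti}(b)). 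So $(b)$ reduces to: $H_w$-boundary measures and $A_c(H_w)$-boundary measures on $K$ coincide, which is the scalar statement \cite[Proposition 4.5(b)]{bezkonstant}.

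The argument is essentially a bookkeeping reduction, so the only real point requiring care — and the step I would expect to be the main (minor) obstacle — is making sure the scalar results are being applied to the correct scalar space and that the various identifications ($\Ch_HK$ via $H_w$, $H$-boundary via $\abs\mu$ and $H_w$, and $A_c^w(H)_w=A_c(H_w)$) line up without a hidden gap; in particular one must invoke Theorem~\ref{T:H-maximal} for \emph{both} $H$ and $A_c^w(H)$ and Theorem~\ref{T:hranice} likewise, and then feed in Proposition~\ref{P:AcH vlastnosti}(b). Once these are assembled, both parts follow immediately from the scalar theory with no further computation.
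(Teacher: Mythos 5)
Your proposal is correct and follows essentially the same route as the paper: both reduce to the scalar case via the chain $H$-boundary $\iff$ $H_w$-boundary $\iff$ $A_c(H_w)$-boundary $\iff$ $A_c^w(H)_w$-boundary $\iff$ $A_c^w(H)$-boundary, using Theorem~\ref{T:H-maximal} and Proposition~\ref{P:AcH vlastnosti}$(b)$. The only cosmetic difference is that the paper deduces $(a)$ directly from $(b)$, whereas you prove $(a)$ separately via Theorem~\ref{T:hranice}; both are fine.
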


\begin{proof}
    Assertion $(a)$ follows from $(b)$, so it is enough to prove $(b)$. It suffices to prove it for
    positive measures. So, let $\mu\in M_+(K)$. We have the following sequence of equivalences:
    $$\begin{gathered}
        \mu\mbox{ is $H$-boundary} \iff \mu\mbox{ is $H_w$-boundary} \iff \mu\mbox{ is $A_c(H_w)$-boundary} \\ \iff \mu\mbox{ is $A_c^w(H)_w$-boundary}  \iff \mu\mbox{ is $A_c^w(H)$-boundary} 
    \end{gathered}$$
    The first equivalence follows from Theorem~\ref{T:H-maximal}. The second one follows from \cite[Proposition 4.5$(ii)$]{bezkonstant}. The third one follows from Proposition~\ref{P:AcH vlastnosti}$(b)$. The last equivalence follows again from Theorem~\ref{T:H-maximal}.
\end{proof}

In the following proposition we show that weak simpliciality may be canonically reduced to the scalar case.

\begin{prop}\label{P:weaksimp} Let $H\subset C(K,E)$ be a function space separating points of $K$.
    The following assertions are equivalent.
    \begin{enumerate}[$(a)$]
        \item $H$ is weakly simplicial.
        \item $A_c^w(H)$ is weakly simplicial.
        \item $H_w$ is simplicial.
        \item $A_c(H_w)$ is simplicial.
    \end{enumerate}
\end{prop}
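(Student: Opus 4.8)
The plan is to establish the chain of equivalences by combining two ingredients: the already‑proved identities relating $M_t$ and $A_c^w$ for $H$ and $H_w$, and the classical scalar fact that a scalar function space $G$ is simplicial if and only if $A_c(G)$ is simplicial. Concretely, I would prove the cycle $(a)\Leftrightarrow(c)$, $(c)\Leftrightarrow(d)$, and $(a)\Leftrightarrow(b)$, from which all four equivalences follow.

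First, $(a)\Leftrightarrow(c)$ is exactly Observation~\ref{obs:weak simpl}$(a)$: $H$ is weakly simplicial if and only if $H_w$ is simplicial. Nothing new is needed here. Next, $(c)\Leftrightarrow(d)$ is the scalar statement that $H_w$ is simplicial if and only if $A_c(H_w)$ is simplicial; this is the scalar analogue of the present proposition and may be quoted from the scalar theory (e.g.\ \cite[Proposition~4.6]{bezkonstant} or the corresponding classical result in \cite{lmns}), applied to the scalar function space $H_w\subset C(K,\ef)$. Note that $A_c(H_w)$ again separates points of $K$ since it contains $H_w$, so the scalar result applies verbatim.

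It remains to prove $(a)\Leftrightarrow(b)$, i.e.\ that $H$ is weakly simplicial if and only if $A_c^w(H)$ is weakly simplicial. This is where Lemma~\ref{L:AcH} does the work: by Lemma~\ref{L:AcH}$(d)$ we have $M_t(A_c^w(H))=M_t(H)$ for every $t\in K$, and by Proposition~\ref{P:AcwHboundary}$(b)$ the $H$-boundary measures and the $A_c^w(H)$-boundary measures on $K$ coincide. Since weak simpliciality of a space $G$ means that for each $t\in K$ there is a unique $G$-boundary measure in $M_t(G)$, and both the set $M_t(\cdot)$ and the notion of boundary measure are unchanged when passing between $H$ and $A_c^w(H)$, the two spaces are weakly simplicial simultaneously. (One should also remark that $A_c^w(H)$ separates points of $K$, since it contains $H$, so that weak simpliciality is defined for it.)

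Finally, one assembles the equivalences: $(a)\Leftrightarrow(c)$ and $(c)\Leftrightarrow(d)$ give $(a)\Leftrightarrow(d)$, while $(a)\Leftrightarrow(b)$ closes the remaining gap, so all of $(a)$–$(d)$ are equivalent. I do not expect a genuine obstacle here; the only point requiring minor care is making sure that all four spaces involved separate points of $K$ so that the definition of weak simpliciality (which we only adopted for point‑separating spaces) actually applies to each of them — and this is immediate because each of $A_c^w(H)$, $H_w$, $A_c(H_w)$ contains a point‑separating subspace. The substance of the argument has already been isolated in Lemma~\ref{L:AcH}, Proposition~\ref{P:AcwHboundary} and the scalar reduction, so this proof is essentially a bookkeeping exercise linking them.
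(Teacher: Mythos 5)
Your proposal is correct. Two of your three edges, $(a)\iff(c)$ and $(c)\iff(d)$, are exactly the paper's (Observation~\ref{obs:weak simpl}$(a)$ and the scalar result \cite[Proposition 4.6]{bezkonstant}, respectively). The one place you diverge is the remaining edge: the paper closes the cycle by proving $(b)\iff(d)$, applying Observation~\ref{obs:weak simpl}$(a)$ a second time to the space $A_c^w(H)$ and using the identity $A_c^w(H)_w=A_c(H_w)$ from Proposition~\ref{P:AcH vlastnosti}$(b)$ — i.e., it runs the scalar reduction once more. You instead prove $(a)\iff(b)$ directly on the vector-valued side, observing that $M_t(A_c^w(H))=M_t(H)$ (Lemma~\ref{L:AcH}$(d)$) and that $H$-boundary and $A_c^w(H)$-boundary measures coincide (Proposition~\ref{P:AcwHboundary}$(b)$), so the defining condition of weak simpliciality is literally the same statement for both spaces. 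Both routes are valid and use only results already established in the paper; yours has the mild advantage of not needing Proposition~\ref{P:AcH vlastnosti}$(b)$ for this edge, while the paper's keeps everything funneled through the single scalar-reduction observation. Your remark that all four spaces separate points (so that the definitions apply) is a worthwhile point of care that the paper leaves implicit.
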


\begin{proof}
    Equivalence $(a)\iff (c)$ follows from Observation~\ref{obs:weak simpl}$(a)$. Equivalence $(b)\iff (d)$ follows again from Observation~\ref{obs:weak simpl}$(a)$ using Proposition~\ref{P:AcH vlastnosti}$(b)$. Finally, equivalence $(c)\iff(d)$ follows from \cite[Proposition 4.6]{bezkonstant}.
\end{proof}

\begin{remark}
    The previous proposition implies that studying weak simpliciality of $H$ reduces to the study of simpliciality of $H_w$ or $A_c(H_w)=A_c^w(H)_w$ and hence the results of \cite{bezkonstant} may be applied. In particular, if $H_w$ contains constants, \cite[Proposition 5.4]{bezkonstant} applies.  In general \cite[Theorem 6.1]{bezkonstant} may be applied. We will comment these connections at the appropriate places below.
\end{remark}

We finish this subsection by a result identifying $A_c^w(H)$ with $\fra(\es(A_c(H_w)),E)$ in case $H_w$ contains constants.
We recall that $\fra(X,E)$ denotes the space of all affine continuous functions from a compact convex set $X$ into a Banach space $E$. It is a vector-valued version of the space $\fra(X)$ mentioned in the introduction. For a (scalar) function space $H\subset C(K)$ containing constants and separating points of $K$, the state space is defined as
\[
\es(H)=\{s\in H^*\setsep \norm{s}=1=s(1)\},
\]
see \cite[Definition 4.25]{lmns}.
Then $\es(H)$ is a weak$^*$-compact convex subset of $H^*$ and $K$ is homeomorphically embedded into $\es(H)$ by a mapping $\phi_H\colon K\to \es(H)$ defined as $\phi_H(t)(h)=h(t)$, $t\in K$, $h\in H$. Further, the mapping $\Phi_H\colon H\to \fra(\es(H))$ defined by $\Phi_H(h)(s)=s(h)$, $s\in \es(H)$, $h\in H$, is an isometric embedding of $H$ into $\fra(\es(H))$. If $H$ is closed and selfadjoint, $\Phi_H$ is surjective with the inverse given as $\Phi_H^{-1}(F)=F\circ \phi_H$, $F\in \fra(\es(H))$. (For proofs of these observations see \cite[Section 4.3]{lmns} and \cite[Lemma 5.1]{bezkonstant}.)

\begin{prop}
    \label{p:acwh-aese}
Let $H\subset C(K,E)$ be a function space separating points such that $H_w$ contains constants. Let $X=\es(A_c(H_w)))$ be the state space of $A_c(H_w)$. 
Then the mapping $I$ given by 
\[
I \f=\f\circ\phi_{A_c(H_w)},\quad  \f\in \fra(X,E),
\]
is a surjective isometry of $\fra(X,E)$ onto $A_c^w(H)$.
\end{prop}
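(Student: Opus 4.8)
The plan is to verify directly that the map $I\colon\fra(X,E)\to A_c^w(H)$ is well-defined (i.e.\ lands in $A_c^w(H)$), isometric, and surjective. Recall from the scalar theory quoted before the statement that $\Phi_{A_c(H_w)}\colon A_c(H_w)\to\fra(X)$ is a surjective isometry with inverse $F\mapsto F\circ\phi_{A_c(H_w)}$ (using that $A_c(H_w)$ is closed and self-adjoint; the latter holds because $A_c(H_w)$ is, by the scalar theory, self-adjoint whenever $H_w$ contains constants). The key structural fact I will exploit is Proposition~\ref{P:AcH vlastnosti}$(b)$, which says $A_c^w(H)_w=A_c(H_w)$, and Lemma~\ref{L:AcH}$(d)$, which gives $M_t(A_c^w(H))=M_t(H)$.

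First I would check $I$ is well-defined. Given $\f\in\fra(X,E)$, for each $x^*\in E^*$ the scalar function $x^*\circ\f$ is affine continuous on $X$, hence $x^*\circ\f\in\fra(X)$, so $x^*\circ(I\f)=(x^*\circ\f)\circ\phi_{A_c(H_w)}\in A_c(H_w)$; in particular $I\f\in C(K,E)$ and $(I\f)_w\subset A_c(H_w)=A_c^w(H)_w$. To pass from membership of all the scalarizations to membership of $I\f$ in $A_c^w(H)$ itself, fix $t\in K$ and $\mu\in M_t(H)$. By Proposition~\ref{P:MtH=MtHw}, $\mu\in M_t(H_w)$, hence $\mu\in M_t(A_c(H_w))$ by Lemma~\ref{L:AcH}$(d)$ (applied to the scalar space $H_w$), and so for each $x^*\in E^*$ we get $\int x^*\circ(I\f)\di\mu=(x^*\circ\f)(\phi_{A_c(H_w)}(t))=x^*((I\f)(t))$. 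Since the Bochner integral $(B)\int I\f\di\mu$ exists and commutes with $x^*$, the Hahn--Banach theorem gives $(I\f)(t)=(B)\int I\f\di\mu$. Thus $I\f\in A_c^w(H)$.

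Next, $I$ is linear (clear) and isometric: for $\f\in\fra(X,E)$,
\[
\norm{I\f}=\sup_{t\in K}\norm{\f(\phi_{A_c(H_w)}(t))}\le\sup_{x\in X}\norm{\f(x)}=\norm{\f},
\]
while the reverse inequality follows because $\phi_{A_c(H_w)}(K)$ is a norming subset of $X$ --- indeed $X=\overline{\co}^{w^*}\phi_{A_c(H_w)}(K)$ (as $K$ is $w^*$-dense in enough of $X$; more precisely, every $x\in X=\es(A_c(H_w))$ is a barycenter of a probability measure carried by $\phi_{A_c(H_w)}(K)$ by the scalar representation theorem, and $\f$ being affine continuous, $\norm{\f(x)}=\sup_{x^*\in B_{E^*}}|x^*(\f(x))|=\sup_{x^*}|\int x^*\circ\f\di\mu|\le\sup_{t}\norm{\f(\phi_{A_c(H_w)}(t))}$). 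Hence $\norm{I\f}=\norm{\f}$.

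Finally, surjectivity. Let $\g\in A_c^w(H)$. For each $x^*\in E^*$, $x^*\circ\g\in A_c^w(H)_w=A_c(H_w)$ by Proposition~\ref{P:AcH vlastnosti}$(b)$, so there is a unique $F_{x^*}\in\fra(X)$ with $x^*\circ\g=F_{x^*}\circ\phi_{A_c(H_w)}$, namely $F_{x^*}=\Phi_{A_c(H_w)}(x^*\circ\g)$. The assignment $x^*\mapsto F_{x^*}(x)$ is, for each fixed $x\in X$, linear in $x^*$ and bounded by $\norm{x^*}\cdot\norm{\g}$ (since $\norm{F_{x^*}}=\norm{x^*\circ\g}\le\norm{x^*}\norm{\g}$); moreover it is weak$^*$-continuous in $x^*$ when $x=\phi_{A_c(H_w)}(t)$ (it equals $x^*(\g(t))$), and general $x\in X$ being $w^*$-limits of convex combinations of such points, weak$^*$-continuity holds throughout. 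Therefore $x^*\mapsto F_{x^*}(x)$ defines an element $\f(x)\in E^{**}$; one checks $\f(x)\in E$ because for $x=\phi_{A_c(H_w)}(t)$ it equals $\g(t)\in E$, and the set of $x\in X$ with $\f(x)\in E$ is $w^*$-closed and convex, hence all of $X$. The resulting map $\f\colon X\to E$ is affine (clear from the construction), continuous (from $w^*$ on $X$ to norm on $E$: weak continuity is clear, and one upgrades to norm continuity using that $X$ is compact together with the fact that $\f$ has relatively compact range since $\f(X)\subset\overline{\aco}\{\g(t):t\in K\}$, which is norm-compact as $\g(K)$ is), so $\f\in\fra(X,E)$, and $I\f=\g$ by construction on $\phi_{A_c(H_w)}(K)$, hence everywhere by affineness and $w^*$-density.

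\textbf{Main obstacle.} The delicate point is the surjectivity step --- specifically, promoting the pointwise-defined map $\f\colon X\to E^{**}$ to a genuinely $E$-valued, norm-continuous affine function. The $E$-valuedness on all of $X$ (not just on $\phi_{A_c(H_w)}(K)$) and the upgrade from weak to norm continuity both require a compactness argument: one shows $\f(X)$ lies in the norm-compact set $\overline{\aco}^{\,\norm{\cdot}}\g(K)$, which forces both properties. Getting this clean, rather than hand-waving the "$w^*$-limits of convex combinations" density, is where the real work lies.
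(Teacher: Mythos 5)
Your overall strategy coincides with the paper's: the well-definedness and isometry arguments are essentially identical to the paper's (barycentric representation of $\mu\in M_t(H)=M_t(A_c(H_w))$ plus the dual formula for the norm), and for surjectivity you use the same bilinear object $ (s,x^*)\mapsto s(x^*\circ\g)$ and the same underlying engine, namely norm-compactness of $\g(K)$. Those first two parts are fine.

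The surjectivity step, however, is not closed, and the two justifications you offer for $E$-valuedness do not stand as written. First, "weak$^*$-continuity in $x^*$ holds throughout because general $x\in X$ are $w^*$-limits of convex combinations of points $\phi(t)$" is not an argument: a pointwise limit of weak$^*$-continuous functionals need not be weak$^*$-continuous. Second, "the set of $x\in X$ with $\f(x)\in E$ is $w^*$-closed" is unjustified, since $E$ is in general not $\sigma(E^{**},E^*)$-closed in $E^{**}$ and $\f$ is only weak$^*$-to-weak$^*$ continuous; closedness of that set is exactly what is at stake, not something you may assume. You do flag this as the "main obstacle", but flagging it is not the same as resolving it. Two clean ways to finish: (i) the paper's route — for fixed $s$, the functional $x^*\mapsto s(x^*\circ\g)$ is weak$^*$-continuous on bounded sets because $x_i^*\to x^*$ boundedly forces $\norm{x_i^*\circ\g-x^*\circ\g}\to0$ by norm-compactness of $\g(K)$, and the Banach--Dieudonn\'e theorem then upgrades this to full weak$^*$-continuity, so $\f(s)\in E$ directly; norm continuity of $\f$ is then obtained by a subnet argument, again via compactness of $\g(K)$. (ii) Your own compactness idea, made precise: every $x\in X$ is the barycenter of some $\nu\in M_1(K)$ pushed through $\phi$, whence $\f(x)=(B)\int_K\g\,\mbox{\rm d}\nu\in\overline{\co}^{\,\norm{\cdot}}\g(K)\subset E$; this gives $E$-valuedness and puts $\f(X)$ inside a norm-compact set, on which the weak and norm topologies agree, so weak$^*$-to-weak continuity of $\f$ upgrades to weak$^*$-to-norm continuity. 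Either of these completes your proof; without one of them the surjectivity argument has a genuine hole.
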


\begin{proof}
We observe that $A_c(H_w)$ is closed and selfadjoint as $M_t(H_w)\subset M_1(K)$ for each $t\in K$ (by Proposition~\ref{P:AcH vlastnosti}$(c)$). We write $\phi$ for the mapping $\phi_{A_c(H_w)}$. 

 If $\f\in\fra(X,E)$, then clearly $I\f\in C(K,E)$. Moreover,
 $$\norm{I\f}=\norm{\f|_{\phi(K)}}=\sup_{x^*\in B_{E^*}}\norm{x^*\circ \f|_{\phi(K)}}=\sup_{x^*\in B_{E^*}}\norm{x^*\circ \f}=\norm{\f}.$$
 Indeed, the first equality follows from the definition of $I$, the second and fourth ones follow from the dual formula for the norm. The third one is due to the fact that $X=\ov{\co \phi(K)}$. Thus $I$ is a linear isometric inclusion of $\fra(X,E)$ into $C(K,E)$.
 It remains to show that the range is $A_c^w(H)$.

 To prove inclusion `$\subset$' fix $\f\in \fra(X,E)$,  $t\in K$ and $\mu\in M_t(H)=M_t(H_w)=M_t(A_c(H_w))$. By \cite[Proposition 4.26(c)]{lmns} we deduce that $\phi(\mu)$ is a probability measure on $X$ with the barycenter $r(\phi(\mu))=\phi(t)$. Hence, given $x^*\in E^*$, we have
\[\begin{aligned}
x^*\left((B)\int_K I\f\di\mu\right)&=\int_K x^*\circ\f\circ \phi\di\mu=
\int_X x^*\circ \f\di\phi(\mu)=(x^*\circ\f)(r(\phi(\mu)))\\&=x^*(\f(\phi(t)))=x^*(I\f(t)).
 \end{aligned}\]
Hence $(B)\int I\f\di\mu=I\f(t)$. We deduce that $I\f\in A_c^w(H)$. 

To check the converse inclusion, let $\g\in A_c^w(H)$ be given. We consider the bilinear mapping $B\colon (A_c(H_w))^*\times E^*\to \ef$ defined as
\[
B(s,x^*)=s(x^*\circ \g),\quad s\in (A_c(H_w))^*, x^*\in E^*.
\]
For a fixed $s\in (A_c(H_w))^*$, the mapping $x^*\mapsto B(s,x^*)$ is a linear weak$^*$-continuous form on $E^*$. 

Indeed, linearity is obvious and, if a bounded net $(x_i^*)$ weak$^*$-converges to $x^*\in E^*$, then 
\[
\sup_{t\in K}\abs{x_i^*(\g(t))-x^*(\g(t))}=\sup_{x\in \g(K)}\abs{x_i^*(x)-x^*(x)}\to 0,
\]
as $g(K)$ is a norm compact set in $E$ and $(x_i^*)$ is a bounded net.
Hence $$\norm{x_i^*\circ \g-x^*\circ \g}\to 0.$$ It follows that 
\[
B(s,x_i^*)=s(x_i^*\circ \g)\to s(x^*\circ \g)=B(s,x^*).
\] 
So, the mapping $x^*\mapsto B(s,x^*)$ is a linear functional which is weak$^*$-continuous on bounded sets. By the Banach-Dieudonn\'e theorem it is weak$^*$-continuous on $E^*$, so there exists $\f(s)\in E$ such that $B(s,x^*)=x^*(\f(s))$, $x^*\in E^*$.

In this way we define a function $\f\colon X\to E$ which is clearly affine. Further, for any $t\in K$ and $x^*\in E^*$ we have
$$x^*(\f(\phi(t)))=\phi(t)(x^*\circ\g)=x^*(\g(t)).$$
It follows that $\f\circ\phi=\g$. Hence, to complete the proof it is enough to show that $\f$ is continuous on $X$

To this end, let a net $(s_i)_{i\in I}\subset X$ converge (weak$^*$) to $s\in X$. We want to prove that $\norm{\f(s_i)-\f(s)}\to 0$. So let us assume the contrary. Then  there are $\ep>0$ and $x_i^*\in B_{E^*}$, $i\in I$, such that 
\[
\abs{x_i^*(\f(s_i))-x_i^*(\f(s))}\ge \ep \mbox{ for }i\in I.
\]
Up to passing to a subnet we may assume that  $x_i^*\to x^*\in B_{E^*}$ (in the weak$^*$ topology).
Since $x_i^*(\f(s))\to x^*(\f(s))$, we may assume that
\[
\abs{x_i^*(\f(s_i))-x^*(\f(s))}\ge \tfrac{\ep}2 \mbox{ for }i\in I.
\]
Then we obtain
\[\begin{aligned}
\tfrac{\ep}{2}&\le \abs{x_i^*(\f(s_i))-x^*(\f(s))}=\abs{B(s_i, x_i^*)-B(s,x^*)}=\abs{s_i(x_i^*\circ\g)-s(x^*\circ \g)}\\&\le\abs{s_i(x_i^*\circ\g)-s_i(x^*\circ \g)}+\abs{s_i(x^*\circ\g)-s(x^*\circ \g)}
\\&\le \norm{x_i^*\circ\g-x^*\circ \g}+\abs{s_i(x^*\circ\g)-s(x^*\circ \g)}\to0,\end{aligned}
\]
as both summands have limit zero -- the first one by compactness of $\g(K)$ (as above) and the second one by the assumption $s_i\to s$ in $X$. In this way we get a contradiction completing the proof.
\end{proof}

\subsection{Properties of $A_c^v(H)$}

We continue with properties of the second version, i.e., of $A_c^v(H)$. It turns out that the situation is quite different. However, we start by basic properties which are rather similar to the previous case.

\begin{lemma}\label{L:AcvH} Let $H\subset C(K,E)$ be a function space.
    \begin{enumerate}[$(a)$]
        \item $A_c^v(H)$ is a closed subspace of $C(K,E)$ containing $H$.
        \item $\norm{\phi_H(t)}=\norm{\phi_{A_c^v(H)}(t)}$ for $t\in K$.
        \item $\norm{x^*\circ\phi_H(t)}=\norm{x^*\circ\phi_{A_c^v(H)}(t)}$ for $t\in K$ and $x^*\in E^*$.
        \item $M_{x^*\circ \phi_{A_c^v(H}(t)}(A_c^v(H))=M_{x^*\circ \phi_H(t)}(H)$ for $t\in K$ and $x^*\in E^*$.
        \item $A_c^v(A_c^v(H))=A_c^v(H)$.
    \end{enumerate}
\end{lemma}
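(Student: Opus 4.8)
The plan is to prove the five assertions essentially in the stated order, bootstrapping each from the previous ones and relying on two facts already available: for every $\varphi\in H^*$ the set $M_\varphi(H)$ is nonempty (as observed right after its definition), and the norm estimate of Lemma~\ref{L:normyevaluaci}$(a)$ together with the identity $\norm{x^*\circ\phi_H(t)}=\norm{\phi_{H_l}(t,x^*)}$ recorded in Section~\ref{sec:reprez-functionals}.

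For $(a)$, the inclusion $H\subset A_c^v(H)$ is immediate, since by the very definition of $M_{x^*\circ\phi_H(t)}(H)$ any $\mu$ in that set satisfies $x^*(\f(t))=\int\f\di\mu$ for every $\f\in H$. That $A_c^v(H)$ is a linear subspace is clear because both sides of the defining identity depend linearly on $\f$. For closedness, recall that for a fixed $\mu\in M(K,E^*)$ one has $\betr{\int\f\di\mu}\le\norm{\f}_\infty\norm{\mu}$, so $\f\mapsto\int\f\di\mu$ is continuous on $C(K,E)$; hence the defining identities survive uniform limits, and so does membership in $A_c^v(H)$.

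Next, $(b)$ and $(c)$. Since the restriction of $\phi_{A_c^v(H)}(t)$ to $H$ equals $\phi_H(t)$ (and likewise after composing with $x^*$), the inequalities ``$\le$'' in $(c)$ and the inequality $\norm{\phi_H(t)}\le\norm{\phi_{A_c^v(H)}(t)}$ in $(b)$ are automatic. For the converse directions, fix $\f\in A_c^v(H)$ with $\norm{\f}\le1$ and $t\in K$. Given $x^*\in E^*$, choose any $\mu\in M_{x^*\circ\phi_H(t)}(H)$; then $\norm{\mu}=\norm{x^*\circ\phi_H(t)}$ and $\betr{x^*(\f(t))}=\betr{\int\f\di\mu}\le\norm{\mu}=\norm{x^*\circ\phi_H(t)}$, which gives $(c)$. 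Specializing to $x^*\in B_{E^*}$ and using $\norm{x^*\circ\phi_H(t)}=\norm{\phi_{H_l}(t,x^*)}\le\norm{\phi_H(t)}$ from Lemma~\ref{L:normyevaluaci}$(a)$, the supremum over $x^*\in B_{E^*}$ yields $\norm{\f(t)}\le\norm{\phi_H(t)}$, which is $(b)$.

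For $(d)$: by $(c)$ the two norm constraints defining $M_{x^*\circ\phi_{A_c^v(H)}(t)}(A_c^v(H))$ and $M_{x^*\circ\phi_H(t)}(H)$ coincide. If $\mu$ belongs to the latter, then by the definition of $A_c^v(H)$ it already represents $x^*(\f(t))$ for all $\f\in A_c^v(H)$, so it lies in the former; conversely, any measure in the former represents $x^*\circ\phi_H(t)$ on $H$ with the correct norm, hence lies in the latter. Finally, $(e)$ is a formal consequence of $(a)$ and $(d)$: the inclusion ``$\supset$'' is $(a)$ applied to $A_c^v(H)$ in place of $H$, and for ``$\subset$'', if $\f\in A_c^v(A_c^v(H))$ then $\f$ satisfies the defining identity against every $\mu\in M_{x^*\circ\phi_{A_c^v(H)}(t)}(A_c^v(H))$, which by $(d)$ is the same family as $M_{x^*\circ\phi_H(t)}(H)$, so $\f\in A_c^v(H)$. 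I do not expect any genuine difficulty here; the one point that needs a little care is the passage, in $(b)$, from the bound $\norm{\mu}=\norm{x^*\circ\phi_H(t)}$, which a priori depends on $x^*$, to the uniform bound $\norm{\mu}\le\norm{\phi_H(t)}$, for which Lemma~\ref{L:normyevaluaci}$(a)$ is exactly the tool needed.
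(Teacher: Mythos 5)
Your proof is correct and follows essentially the same route as the paper's: $(a)$ is routine, $(c)$ and $(d)$ come from the two-way comparison using nonemptiness of the representing-measure sets together with the defining identity of $A_c^v(H)$, $(b)$ is deduced from $(c)$ via Lemma~\ref{L:normyevaluaci}$(a)$, and $(e)$ is a formal consequence of $(d)$. The only difference is cosmetic ordering (the paper treats $(c)$ and $(d)$ simultaneously), so there is nothing to add.
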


\begin{proof}
    Assertion $(a)$ is trivial.

    $(c)\&(d)$: Fix $t\in K$ and $x^*\in E^*$. Let $\mu\in M_{x^*\circ\phi_{A_c^v(H)}(t)}(A_c^v(H))$.
    Since $H\subset A_c^v(H)$, necessarily
    $$x^*(\h(t))=\int \h\di\mu,\quad \h\in H.$$
    It follows that 
    $$\norm{x^*\circ\phi_H(t)}\le\norm{\mu}=\norm{x^*\circ\phi_{A_c^v(H)}(t)}.$$

    Further, if $\nu\in M_{x^*\circ\phi_H(t)}(H)$, the definition of $A_c^v(H)$ implies that
    $$x^*(\f(t))=\int \f\di\mu,\quad \f\in A_c^v(H).$$
    It follows that
    $$\norm{x^*\circ\phi_{A_c^v(H)}(t)}\le\norm{\nu}=\norm{x^*\circ\phi_{H}(t)}.$$

    Combining these two inequalities we see that $\norm{x^*\circ\phi_{A_c^v(H)}(t)}=\norm{x^*\circ\phi_{H}(t)}$, i.e. $(c)$ holds. Moreover, $\mu\in M_{x^*\circ\phi_H(t)}(H)$ and 
    $\nu\in  M_{x^*\circ\phi_{A_c^v(H)}(t)}(A_c^v(H))$, which completes the proof of $(d)$.

    Assertion $(b)$ now follows from $(c)$ using Lemma~\ref{L:normyevaluaci}$(a)$. 

    Finally, assertion $(e)$ follows from $(d)$.
\end{proof}

We continue by looking at the relationship of $H$-boundary and $A_c^v(H)$-boundary measures. One implication is rather easy, as witnessed by the following lemma.

\begin{lemma} Let $H\subset C(K,E)$ be a function space.
    \begin{enumerate}[$(a)$]
        \item Any $H$-boundary measure is also $A_c^v(H)$-boundary. In particular, $\Ch_H K\subset \Ch_{A_c^v(H)}K$.
        \item If $A_c^v(H)$ is vector simplicial, then $H$ is also vector simplicial.
    \end{enumerate}
\end{lemma}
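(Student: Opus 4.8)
The plan is to reduce part $(a)$ to the scalar theory on $K$ via the auxiliary space $(A_c^v(H))_w$, and then to deduce part $(b)$ at once.

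For $(a)$: since being $H$-boundary (resp.\ $A_c^v(H)$-boundary) means by definition that the variation satisfies the equivalent conditions of Theorem~\ref{T:H-maximal}, it suffices to treat positive measures; so let $\sigma\in M_+(K)$ be $H$-boundary, i.e.\ (Theorem~\ref{T:H-maximal}$(b)$) $\phi_{H_w}(\sigma)$ is maximal on $B_{H_w^*}$. From $H\subset A_c^v(H)$ (Lemma~\ref{L:AcvH}$(a)$) we get $H_w\subset(A_c^v(H))_w$, and the key point I would establish is the inclusion
\[(A_c^v(H))_w\subset A_c(H_w).\]
Granting it: $\sigma$, being $H_w$-boundary, is $A_c(H_w)$-boundary by \cite[Proposition 4.5]{bezkonstant}; as $(A_c^v(H))_w$ is a subspace of $A_c(H_w)$, restricting the corresponding maximal measure on $B_{A_c(H_w)^*}$ to $B_{(A_c^v(H))_w^*}$ (the pushforward of a maximal measure under an affine continuous surjection is maximal) shows $\phi_{(A_c^v(H))_w}(\sigma)$ is maximal, i.e.\ $\sigma$ is $A_c^v(H)$-boundary by Theorem~\ref{T:H-maximal}. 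The same inclusion, combined with the invariance of the Choquet boundary among scalar function spaces lying between $H_w$ and $A_c(H_w)$ (again \cite{bezkonstant}) and Theorem~\ref{T:hranice}, yields $\Ch_HK\subset\Ch_{A_c^v(H)}K$.

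To prove $(A_c^v(H))_w\subset A_c(H_w)$, take $\f\in A_c^v(H)$ and $x^*\in E^*$; fixing $t\in K$ and $\mu\in M_t(H_w)$, we must check $x^*(\f(t))=\int x^*\circ\f\di\mu$. By Proposition~\ref{P:MtH=MtHw}, $\mu\in M_t(H)$; the $E^*$-valued measure $\nu$ defined by $\nu(A)=\mu(A)\,x^*$ then satisfies $\int\g\di\nu=\int x^*\circ\g\di\mu$ for every $\g\in C(K,E)$, so in particular $\nu$ represents the functional $x^*\circ\phi_H(t)\in H^*$. If $H$ contains constants, then $\norm\mu=\norm{\phi_{H_w}(t)}=1$ and, by Lemma~\ref{L:normyevaluaci}$(a)$, $\norm\nu=\norm{x^*}=\norm{\phi_{H_l}(t,x^*)}=\norm{x^*\circ\phi_H(t)}$, so $\nu\in M_{x^*\circ\phi_H(t)}(H)$ and the definition of $A_c^v(H)$ gives $x^*(\f(t))=\int\f\di\nu=\int x^*\circ\f\di\mu$. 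In general, however, $\norm\nu=\norm{x^*}\cdot\norm{\phi_{H_w}(t)}$ may strictly exceed $\norm{\phi_{H_l}(t,x^*)}=\norm{x^*\circ\phi_H(t)}$ (Lemma~\ref{L:normyevaluaci}), so $\nu$ need not belong to the minimal-norm set $M_{x^*\circ\phi_H(t)}(H)$ and the definition of $A_c^v(H)$ cannot be applied to $\nu$ directly. Overcoming this norm discrepancy is the step I expect to be the main obstacle: one has to produce an honest element $\nu_0\in M_{x^*\circ\phi_H(t)}(H)$ whose Singer integral agrees with that of $\nu$ on all of $A_c^v(H)$ -- equivalently, to prove that $\g\mapsto\int x^*\circ\g\di\mu$ has, on $A_c^v(H)$, the same norm as its restriction $x^*\circ\phi_H(t)$ to $H$, and then invoke the Hahn--Banach and Singer theorems. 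I would carry this out with a disintegration of $\nu$ over $B_{E^*}$ (Lemma~\ref{L:dezintegrace-obec}) together with the factorization Lemma~\ref{L:factorizace-quotient}, in the spirit of the proof of Theorem~\ref{T:H-maximal}.

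Part $(b)$ then follows quickly from $(a)$: given $t\in K$ and $x^*\in E^*$, Lemma~\ref{L:AcvH}$(d)$ gives $M_{x^*\circ\phi_H(t)}(H)=M_{x^*\circ\phi_{A_c^v(H)}(t)}(A_c^v(H))$; if $\mu_1,\mu_2$ are $H$-boundary measures in this common set, part $(a)$ makes both $A_c^v(H)$-boundary, and vector simpliciality of $A_c^v(H)$ forces $\mu_1=\mu_2$, while the existence of some $H$-boundary member is Theorem~\ref{T:reprez-funct} applied to the functional $x^*\circ\phi_H(t)\in H^*$. Hence $H$ is vector simplicial.
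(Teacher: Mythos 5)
Your part $(a)$ rests on the inclusion $(A_c^v(H))_w\subset A_c(H_w)$, and that inclusion is false in general; the ``norm discrepancy'' you flag is not a technical hurdle to be overcome by disintegration but the reason the inclusion fails. The paper's own Example~\ref{ex:nezachovani} is a counterexample: there $A_c^v(H)_w=C(K)$ while $A_c(H_w)=A_c^w(H)_w=H_w=\{f\in C(K)\setsep f(0)=\tfrac12(f(-1)+f(1))\}$, and correspondingly $\Ch_HK\subsetneqq\Ch_{A_c^v(H)}K$ --- so any argument that sandwiches $(A_c^v(H))_w$ between $H_w$ and $A_c(H_w)$ and then appeals to the coincidence of boundary measures on that range cannot work. (Your observation that the sandwiching does hold when $H$ contains constants is correct --- that is Lemma~\ref{L:skonstantami} --- but the present lemma carries no such hypothesis.) There is a second, independent problem: the parenthetical principle ``the pushforward of a maximal measure under an affine continuous surjection is maximal'' is false (project a disc onto a diameter and push forward a Dirac measure at the top of the circle), and for nested scalar function spaces it is precisely the passage from the \emph{larger} space down to a subspace that can fail; that direction is the nontrivial ``if'' half of \cite[Proposition 4.5]{bezkonstant}, not a formality.

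The correct argument goes the opposite, and much cheaper, way: by Theorem~\ref{T:H-maximal}, $H$-boundary means $H_w$-boundary and $A_c^v(H)$-boundary means $A_c^v(H)_w$-boundary, and all that is needed is the trivial inclusion $H_w\subset A_c^v(H)_w$. Being boundary with respect to a \emph{smaller} scalar function space is the stronger condition (maximality with respect to a finer ordering), so it passes to any larger space; this is the `only if' half of \cite[Proposition 4.5]{bezkonstant}, whose proof uses only the inclusion of the two spaces. The statement about Choquet boundaries then follows since $t\in\Ch_HK$ exactly when $\ep_t$ is $H$-boundary. Your part $(b)$ --- combining $(a)$ with Lemma~\ref{L:AcvH}$(d)$ --- is exactly the paper's argument and is fine once $(a)$ is repaired.
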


\begin{proof}
    $(a)$: By Theorem~\ref{T:H-maximal} $H$-boundary means $H_w$-boundary and $A_c^v(H)$-boundary means $A_c^v(H)_w$-boundary. Since $H_w\subset A_c^v(H)_w$, the statement follows from the proof of \cite[Proposition 4.5]{bezkonstant}. More precisely, the `only if' part of the quoted proof works in our case (just replace $H$ by $H_w$ and $A_c(H)$ by $A_c^v(H)_w$).
    
   $(b)$: This follows from $(a)$ and Lemma~\ref{L:AcvH}$(d)$.
\end{proof}

The converse implication fails in general. This is witnessed by the following example.

\begin{example}\label{ex:nezachovani}
    There are a compact space $K$, Banach space $E$ and a closed function space $H\subset C(K,E)$ separating points such that the following conditions are satisfied.
    \begin{enumerate}[$(i)$]
        \item $H(t)=E$ for each $t\in K$.
        \item $H$ is both vector simplicial and weakly simplicial.
        \item The spaces $A_c^v(H)$ and $A_c^w(H)$ are mutually incomparable.
        \item $A_c^v(H)_w=C(K)$.
        \item $\Ch_HK\subsetneqq \Ch_{A_c^v(H)}K$.
        \item $A_c^v(H)$ is not vector simplicial.
    \end{enumerate}
\end{example}

\begin{proof}
    Let $K=\{-2,-1,0,1,2\}$, $E=(\ef^2,\norm{\cdot}_\infty)$ and let
    $$H=\{\f\in C(K,E)\setsep \f(0)=\tfrac12(\f(-1)+\f(1))\ \&\ f_1(0)=\tfrac14(f_1(-2)+f_1(2))\}.$$
    We will show that $H$ has the required properties. It is clear that $H$ is a closed subspace of $C(K,E)$ separating points of $K$. Further properties will be proved in several steps.

\smallskip
        
    {\tt Step 1:} $H(t)=E$ for each $t\in K$, hence $(i)$ holds.

\smallskip

  Let $x,y\in E$ be arbitrary. Then the functions
   $$\begin{gathered}
     \h_1(1)=x,\h_1(-1)=y,\h_1(0)=\tfrac12(x+y),\h_1(2)=\h_1(-2)=x+y,\\
      \h_2(1)=\h_2(-1)=\h_2(0)=\tfrac14(x+y),\h_2(2)=x,\h_2(-2)=y, \\
       \h_3(1)=\h_3(-1)=\h_3(0)=x,\h_3(2)=\h_3(-2)=2x
   \end{gathered}$$
   belong to $H$ and witness that $H(t)=E$ for each $t\in K$.

   \smallskip

{\tt Step 2:} $H_w=\{f\in C(K)\setsep f(0)=\tfrac12(f(-1)+f(1))\}$.
    
    \smallskip
    
    Indeed, inclusion `$\subset$' is obvious. To see the converse, fix any $f$ in the set on the right-hand side. Define $\f\in C(K,E)$ by setting $f_1=0$, $f_2=f$. Then $\f\in H$ and $e_2^*\circ\f=f$, so $f\in H_w$.

\smallskip

{\tt Step 3:} We easily deduce from Step 2 that $\Ch_HK=\{-2,-1,1,2\}$, $H$ is weakly simplicial and 
    $A_c^w(H)=\{\f\in C(K,E)\setsep \f(0)=\tfrac12(\f(-1)+\f(1))\}$.

    \smallskip

{\tt Step 4:} If $t\in \{-1,1\}$ and $x^*\in E^*$, then $M_{x^*\circ\phi_H(t)}(H)=\{\ep_t\otimes x^*\}$.

\smallskip

Since the role of $1$ and $-1$ is symmetric, it is enough to prove the statement for $t=1$. Let $x^*\in E^*$. Since $E^*$ is canonically identified with $(\ef^2,\norm{\cdot}_1)$, we identify $x^*$ with some $(\alpha,\beta)\in \ef^2$. Then
$$x^*\circ\phi_H(1)(\f)=\alpha f_1(1)+\beta f_2(1),\quad \f\in H.$$
Let $(\gamma,\delta)\in E$ be of norm one such that $\alpha\gamma+\beta\delta=\norm{(\alpha,\beta)}_1$. Consider the function
 $$\f_1(1)=(\gamma,\delta),\ \f_1(-1)=(-\gamma,-\delta),\ \f_1(0)=\f_1(2)=\f_1(-2)=(0,0).$$
It belongs to $H$, $\norm{\f_1}=1$ and $x^*\circ\phi_H(1)(\f_1)=\norm{x^*}$. We deduce that
$\norm{x^*\circ\phi_H(1)}=\norm{x^*}$. (More precisely, $\f_1$ witnesses that `$\ge$' holds, but the converse inequality is obvious.)

Let $\mu\in M_{x^*\circ\phi_H(1)}(H)$. Then $\norm{\mu}=\norm{x^*}$ and
$$\alpha f_1(1)+\beta f_2(1)=\int \f\di\mu,\quad \f\in H.$$
If we plug there $\f_1$, we get
$$\norm{x^*}=\ip{\mu(1)-\mu(-1)}{(\gamma,\delta)}.$$
It follows that $\mu$ is carried by $\{1,-1\}$. If we plug there function $\f_2$ defined by
$$\f_2(1)=\f_2(2)=\f_2(-2)=(\gamma,\delta),\ \f_2(0)=(\tfrac\gamma2,\tfrac\delta2),\ \f_2(-1)=(0,0),$$
we deduce that 
 $$\norm{x^*}=\ip{\mu(1)}{(\gamma,\delta)},$$
 hence $\mu$ is carried by $1$. It follows that for each $\f\in H$ we have
 $$\ip{\mu(1)}{\f(1)}=\int\f\di\mu =x^*(\f(1)).$$
 Since $H(1)=E$ (by Step 1), we deduce $\mu(1)=x^*$. Hence $\mu=\ep_1\otimes x^*$ and the argument is complete.

 \smallskip

 {\tt Step 5:} If $t\in \{-2,2\}$ and $x^*\in E^*$, then $M_{x^*\circ\phi_H(t)}(H)=\{\ep_t\otimes x^*\}$.

\smallskip 

Since the role of $2$ and $-2$ is symmetric, it is enough to prove the statement for $t=2$. The proof is completely analogous to that of Step 4, just instead of functions $\f_1$ and $\f_2$ we use functions
$$\begin{gathered}
   \g_1(2)=(\gamma,\delta),\ \g_1(-2)=(-\gamma,-\delta),\ \g_1(0)=\g_1(1)=\g_1(-1)=(0,0),\\
   \g_2(1)=\g_2(-1)=\g_2(0)=(\tfrac\gamma2,\tfrac\delta4),\ \g_2(2)=(\gamma,\delta),\ \g_2(-2)=(0,0).
\end{gathered}$$

\smallskip

{\tt Step 6:} Assume $x^*=(\alpha,\beta)$. Then $\norm{x^*\circ\phi_H(0)}=\frac12\abs{\alpha}+\abs{\beta}$.

\smallskip

By the very definition we have
$$\norm{x^*\circ\phi_H(0)}=\max\{\abs{\ip{x^*}{\f(0)}}\setsep \f\in H,\norm{\f}\le 1\}.$$
(Note that $\max$ is attained due to compactness.) If $\f\in H$ and $\norm{\f}\le1$, necessarily $\abs{f_2(0)}\le1$ and $\abs{f_1(0)}\le \frac12$.  Conversely, assume $\abs{\zeta}\le \frac12$ and $\abs{\xi}\le1$. Then the function $\f$ defined by
$$\f(0)=\f(1)=\f(-1)=(\zeta,\xi), \f(2)=\f(-2)=(2\zeta,0)$$
belongs to $H$ and has norm at most one. This shows
$$\norm{x^*\circ\phi_H(0)}=\max\{\abs{\alpha\zeta+\beta\xi}\setsep \abs{\zeta}\le\tfrac12,\abs{\xi}\le 1\}=\tfrac12\abs{\alpha}+\abs{\beta}.$$

\smallskip

{\tt Step 7:} Assume that $x^*=(\alpha,\beta)$. Then 
$$M_{x^*\circ\phi_H(0)}(H)=
\{\tfrac\alpha4(\ep_2+\ep_{-2})\otimes (1,0) + \beta(s(\ep_1+\ep_{-1})+(1-2s)\ep_0)\otimes (0,1)\setsep s\in[0,\tfrac12]\}.$$

\smallskip

Assume that $\mu\in M_{x^*\circ\phi_H(0)}$. Since $\mu$ is $E^*$-valued, we may represent it as $\mu=(\mu_1,\mu_2)$ where $\mu_1,\mu_2$ are scalar measures. Moreover, since $E^*=(\ef^2,\norm{\cdot}_1)$, we easily see that $\norm{\mu}=\norm{\mu_1}+\norm{\mu_2}$.  By Step 6 we further deduce that $\norm{\mu}=\frac12\abs{\alpha}+\abs{\beta}$. Moreover,
$$\alpha f_1(0)+\beta f_2(0)=\int \f\di\mu, \quad \f\in H.$$
Fix $\zeta,\xi\in S_{\ef}$ such that $\alpha\zeta=\abs{\alpha}$ and $\beta\xi=\abs{\beta}$. Define the function $\h$ by setting
$$\h(1)=\h(-1)=\h(0)=(\tfrac\zeta2,\xi), \h(2)=\h(-2)=(\zeta,0).$$
Then $\h\in H$ and if we plug $\h$ in the above formula, we get
$$\begin{aligned}
    \norm{\mu}&=\tfrac12\abs{\alpha}+\abs{\beta}=\ip{\mu(\{-1,0,1\})}{(\tfrac\zeta2,\xi)}+\ip{\mu(\{-2,2\})}{(\zeta,0)}
    \\&=\tfrac\zeta2\mu_1(\{-1,0,1\})+\xi\mu_2(\{-1,0,1\})+\zeta\mu_1(\{-2,2\})
    \\&\le \tfrac12\abs{\mu_1}(\{-1,0,1\})+\abs{\mu_2}(\{-1,0,1\})+\abs{\mu_1}(\{-2,2\}) \le\norm{\mu},\end{aligned}$$
 so the equalities hold. It follows that $\mu_1$ is carried by $\{-2,2\}$ and $\mu_2$ is carried by $\{-1,0,1\}$.

 Let us plug there two more functions:
 $$\begin{gathered}
     \h_1(0)=\h_1(2)=\h_1(-2)=0, \h_1(1)=(0,1),\h_1(-1)=(0,-1),\\
      \h_2(0)=\h_2(1)=\h_2(-1)=0, \h_2(2)=(1,0),\h_2(-2)=(-1,0).
 \end{gathered}$$
Since both functions belong to $H$, we deduce $\mu_2(1)=\mu_2(-1)$ and $\mu_1(2)=\mu_1(-2)$. I.e.,
$$\mu_1=u(\ep_2+\ep_{-2}), \mu_2= v(\ep_1+\ep_{-1})+w\ep_0$$
for some $u,v,w\in\ef$. Then for each $\f\in H$ we have
$$\begin{aligned}
    \alpha f_1(0)+\beta f_2(0)&=u(f_1(2)+f_1(-2))+v(f_2(1)+f_2(-1))+w f_2(0)\\&= 4uf_1(0)+(2v+w)f_2(0).\end{aligned}$$
Since $H(0)=E$ (by Step 1), we deduce $u=\frac\alpha4$ and $2v+w=\beta$. Since
$$\tfrac12\abs{\alpha}+\abs{\beta}=\norm{\mu}=2\abs{u}+2\abs{v}+\abs{w},$$
we deduce $\abs{\beta}=2\abs{v}+\abs{w}$. Thus $v=s\beta$ for some $s\in[0,\frac12]$.
Thus 
$$\mu=\tfrac\alpha4(\ep_2+\ep_{-2})\otimes (1,0) + \beta(s(\ep_1+\ep_{-1})+(1-2s)\ep_0)\otimes (0,1).$$
This proves inclusion `$\subset$'. Since the converse inclusion is obvious, proof of Step 7 is complete.

\smallskip

{\tt Step 8:} $H$ is vector simplicial. In particular, condition $(ii)$ is fulfilled.

\smallskip

Let $t\in K$ and $x^*=(\alpha,\beta)\in E^*$. If $t\ne0$, $M_{x^*\circ\phi_H(t)}$ contains only one measure (by Steps 4 and 5). For $t=0$ the description of $M_{x^*\circ\phi_H(0)}$ is provided by Step 7. Hence, if $\beta=0$, it contains only one measure. If $\beta\ne0$, it contains infinitely many measures, but only one of them is $H$-boundary. Indeed, since $K$ is finite, all measures are discrete and discrete $H$-boundary measures are carried by the Choquet boundary. This boundary is described in Step 3, so the only $H$-boundary measure in 
$M_{x^*\circ\phi_H(0)}$ is 
$$\tfrac\alpha4(\ep_2+\ep_{-2})\otimes (1,0) + \tfrac\beta2(\ep_1+\ep_{-1})\otimes (0,1).$$
This completes the proof of vector simpliciality of $H$. By combining with Step 3 we conclude that $(ii)$ is fulfilled.

\smallskip

{\tt Step 9:} We have
$$A_c^v(H)=\{\f\in C(K,E)\setsep f_2(0)=\tfrac12(f_2(-1)+f_2(1))\ \&\ f_1(0)=\tfrac14(f_1(-2)+f_1(2))\},$$ in particular, condition $(iii)$ is fulfilled.

\smallskip

Due to Steps 4, 5 and 7 we get
$$\begin{aligned}
    A_c^v(H)&=\{\f\in C(K,E)\setsep \forall \alpha,\beta\in\ef\,\forall s\in[0,\tfrac12]\colon\alpha f_1(0)+\beta f_2(0)\\&\qquad\qquad=\tfrac\alpha4(f_1(2)+f_1(-2))+\beta(s(f_2(1)+f_2(-1))+(1-2s) f_2(0))\}
    \\&=\{\f\in C(K,E)\setsep f_1(0)=\tfrac14(f_1(2)+f_1(-2)) 
      \\&\qquad \qquad \&\ \forall s\in[0,\tfrac12]\colon f_2(0)=s(f_2(1)+f_2(-1))+(1-2s) f_2(0))\}
      \\&=\{\f\in C(K,E)\setsep f_2(0)=\tfrac12(f_2(-1)+f_2(1))\ \&\ f_1(0)=\tfrac14(f_1(-2)+f_1(2))\}.\end{aligned}$$
This proves the formula. By combining with Step 3 we see that $A_c^w(H)$ and $A_c^v(H)$ are incomparable, hence $(iii)$ is fulfilled.

\smallskip

{\tt Step 10:} $A_c^v(H)_w=C(K)$. In particular, conditions $(iv)-(vi)$ are fulfilled.

\smallskip

It follows from the formula in Step 9 that the space $A_c^v(H)_w$ contains functions $f\in C(K)$ satisfying either $f(0)=\frac12(f(1)+f(-1))$ or $f(0)=\frac14(f(2)+f(-2))$. We will check that any $f\in C(K)$ is the sum of two such functions. Indeed, let $f\in C(K)$ be arbitrary. Define two functions $g,h$ by setting
$$\begin{gathered}
 g(0)=g(1)=g(-1)=f(0), g(2)=f(2), g(-2)=f(-2),  \\
 h(0)=h(2)=h(-2)=0, h(1)=f(1)-f(0), h(-1)=f(-1)-f(0).
\end{gathered}$$
Then $g,h\in A_c^v(H)_w$ by the above and clearly $f=g+h$. This completes the proof of the equality, which is simultaneously the content of condition $(iv)$. We easily deduce that $\Ch_H A_c^v(H)=K$, so condition $(v)$ follows (using Step 3). Hence all measures on $K$ are $A_c^v(H)$-boundary. Since, for example, $M_{(0,1)\otimes\phi_{A_c^v(H)}(0)}=M_{(0,1)\otimes\phi_H(0)}$ contains infinitely many measures (by Step 7), we conclude that $A_c^v(H)$ is not vector simplicial, i.e., $(vi)$ holds.
\end{proof}

We continue by two examples witnessing that $A_c^v(H)$ and vector simpliciality of $H$ depend on the isometric structure of $E$ and may change after a renorming of $E$.

\begin{example}\label{ex:renorm1}
   Let $E=(\ef^2,\norm{\cdot}_p)$ for some $p\in (1,\infty)$. Let $K$ and $H$ be as in the proof of Example~\ref{ex:nezachovani}. Then $A_c^v(H)=H$.
\end{example}

\begin{proof}
Note that $E^*=(\ef^2,\norm{\cdot}_q)$ where $q$ is the conjugate exponent. Steps 1--5 of the proof work in the new setting exactly in the same way and the same properties may be established. Then the difference starts. Similarly as in Step 6 we get
$$\norm{x^*\circ\phi_H(0)}=\max\{ \abs{\alpha\zeta+\beta\xi}\setsep \norm{(\zeta,\xi)}_p\le 1,\abs{\zeta}\le\tfrac12\},$$
but there is not such a simple formula as in the previous example. 

On the other hand, in this case we have that
$$\exists \varepsilon>0\, \forall \alpha\in\ef\colon \abs{\alpha}<\ep\implies \norm{(\alpha,1)\circ \phi_H(0)}=\norm{(\alpha,1)}_q.$$
Indeed, given $\alpha\in\ef$, there is a unique point $(\zeta,\xi)\in E$, $\norm{(\zeta,\xi)}_p=1$ such that
$\alpha\zeta+\xi=\norm{(\alpha,1)}_q$. This point may be explicitly computed --
$$(\zeta,\xi)=\tfrac1{(\abs{\alpha}^q+1)^{1/p}}(\overline{\alpha}\cdot\abs{\alpha}^{q-2},1).$$
Hence $\abs{\zeta}<\frac12$ for $\abs{\alpha}$ small enough. In view of the above formula for such values of $\alpha$ we have $\norm{(\alpha,1)\circ \phi_H(0)}=\norm{(\alpha,1)}_q$.
It follows that for such $\alpha$
$$\tfrac12(\ep_1+\ep_{-1})\otimes(\alpha,1)\in M_{(\alpha,1)\circ\phi_H(0)}(H).$$
Hence, for each $\f\in A_c^v(H)$ we have
$$\alpha f_1(0)+f_2(0)=\tfrac\alpha2(f_1(1)+f_1(-1))+\tfrac12(f_2(1)+f_2(-1))$$
whenever $\abs{\alpha}$ is small enough. If we apply this to $\alpha=0$ and to some small $\alpha>0$, we deduce that
$$ f_1(0)=\tfrac12(f_1(1)+f_1(-1))\mbox{ and } f_2(0)=\tfrac12(f_2(1)+f_2(-1)),$$
i.e., $\f(0)=\frac12(\f(1)+\f(-1))$.

Further, the above formula yields that $\norm{(1,0)\circ\phi_H(0)}=\frac12$. Hence, $\frac14(\ep_2+\ep_{-2})\otimes(1,0)\in M_{(1,0)\circ\phi_H(0)}(H)$. Thus each $\f\in A_c^v(H)$ satisfies $f_1(0)=\frac14(f_1(2)+f_1(-2))$. 

By combining the above results we conclude that any $\f\in A_c^v(H)$ belongs to $H$, which completes the proof.
\end{proof}

\begin{example}\label{ex:renorm2}
    Let $K$, $E$ and $H$ be as in Example~\ref{ex:nezachovani}. Then there is an equivalent norm $\norm{\cdot}$ on $E$ such that $H$ is not vector simplicial as a subspace of $C(K,(E,\norm{\cdot}))$.
\end{example}

\begin{proof}
    Let us equip $E=\ef^2$ with the norm defined by
    $$\norm{x}=\max\{\norm{x}_\infty,\tfrac23\norm{x}_1\}, \quad x\in E.$$
    It is clearly an equivalent norm on $E$. Since $H_w$ is not affected by renorming of $E$, the first three steps of the proof of Example~\ref{ex:nezachovani} remain valid, in particular $\Ch_HK=\{-2,-1,1,2\}$.
    
    Consider $x^*\in E^*$ represented by $(1,1)$, i.e.,
    $x^*(x)=x_1+x_2$ where $x_1,x_2$ are the coordinates of $x$. Clearly $\norm{x^*}=\frac32$.
    Thus $\norm{x^*\circ\phi_H(0,0)}\le \frac32$. Moreover, the equality holds as witnessed by the function 
    $$\f(0)=\f(1)=\f(-1)=(\tfrac12,1), \f(2)=\f(-2)=(1,0).$$
    Indeed, $\f\in H$, $\norm{\f}=1$ and $x^*(\f(0))=\frac32$.

    Now consider two measures
    $$\mu=\tfrac12(\ep_1+\ep_{-1})\otimes(1,1),\quad \nu=\tfrac12(\ep_1+\ep_{-1})\otimes(0,1)+\tfrac14(\ep_2+\ep_{-2})\otimes(1,0).$$
    Both are $H$-boundary and 
    $$\int \h\di\mu=\int\h\di\nu=h_1(0)+h_2(0)=x^*(\h(0))\mbox{ for }\h\in H.$$
    Finally,
    $$\begin{aligned}
            \norm{\mu}&=\norm{(1,1)}_{(E,\norm{\cdot})^*}=\tfrac32,\\
            \norm{\nu}&=\norm{(0,1)}_{(E,\norm{\cdot})^*}+\tfrac12\norm{(1,0)}_{(E,\norm{\cdot})^*}=\tfrac32,\end{aligned}$$
    so both measures belong to $M_{x^*\circ\phi_H(0)}$. We conclude that $H$ is not vector simplicial.
\end{proof}

\begin{remark}
    The three previous examples witness the difference between vector simpliciality and weak simpliciality. While $H_w$, weak simpliciality and $A_c^w(H)$ do not change after renorming on $E$ and hence depend just on the isomorphic structure of $E$, vector simpliciality and
    $A_c^v(H)$ strongly depend on the isometric structure of $E$ and may change if we renorm $E$ by an equivalent norm. 
\end{remark}

The last example of this section shows that vector simpliciality does not imply weak simpliciality. Therefore, if we combine the next example with Example~\ref{ex:nezachovani} (more precisely with the properties of $A_c^v(H)$ from that example), we deduce that weak simpliciality and vector simpliciality are mutually incomparable.

\begin{example}\label{ex:vsnews}
    There is a compact metrizable space $K$, a Banach space $E$ and a closed function space $H\subset C(K,E)$ separating points of $K$ and satisfying the following conditions.
    \begin{enumerate}[$(i)$]
    \item $H(t)=E$ for each $t\in K$;
        \item $H$ is vector simplicial;
        \item $H$ is not weakly simplicial;
        \item $H=A_c^v(H)\subsetneqq A_c^w(H)$.
    \end{enumerate}
\end{example}

\begin{proof}
We set
    $$K=([-1,1]\times \{0\}) \cup ([0,1]\times\{-1,1\}) \cup ([-1,0]\times\{-2,2\})\cup\{a,b\},$$
where $K\setminus \{a,b\}$ is equipped with the topology inherited from $\er^2$ and $a,b$ are isolated points. Then $K$ is a compact metrizable space.

Let $E=(\ef^2,\norm{\cdot}_p)$ for some $p\in [1,\infty]$. Set
$$\begin{aligned}
  H=\{\f\in C(K,E)\setsep & \f(t,0)=\tfrac12(\f(t,1)+\f(t,-1))\mbox{ for }t\in [0,1],  
  \\  &\f(t,1)=\tfrac12(\f(t,2)+\f(t,-2))\mbox{ for }t\in [-1,0],
  \\ & \f(0,0)=\tfrac14((f_2(a),f_1(a))+(f_2(b),f_1(b))) \}.
\end{aligned}$$
Then $H$ is clearly a closed subspace of $C(K,E)$. Further properties will be proved in several steps.

\smallskip

{\tt Step 1:} $H$ separates points of $K$.

\smallskip

Let $u,v\in K$ be such that $u\ne v$. We distinguish several cases:
\begin{itemize}
    \item $u\in \{a,b\}$ or $v\in \{a,b\}$. Then $u,v$ are separated by $\f_1\in H$ defined by
    $$\f_1(a)=(1,1), \f_1(b)=(-1,-1), \f_1=(0,0)\mbox{ on }K\setminus\{a,b\}.$$
    \item $u=(t_1,j_1)$, $v=(t_2,j_2)$ where $t_1\ne t_2$. Find $g\in C([-1,1])$ with $g(t_1)\ne g(t_2)$ and define $\f_2$ by
    $$\f_2(s,i)=(g(s),g(s)), \f_2(a)=\f_2(b)=(2g(0),2g(0)).$$
    Then $\f_2\in H$ and $\f_2(u)\ne \f_2(v)$
    \item $u=(t,j_1)$, $v=(t,j_2)$ where $j_1\ne j_2$. Then $u,v$ are separated by the following function:
    $$ \f_3(s,i)=(i,i), \f_3(a)=\f_3(b)=(0,0).
    $$
\end{itemize}

{\tt Step 2:} $H(u)=E$ for each $u\in K$, hence $(ii)$ is valid.

\smallskip

Let $x\in E$ be arbitrary. The function
$$\g(t,i)=x, \g(a)=\g(b)=2(x_2,x_1)$$
witnesses the validity of the property.

\smallskip

{\tt Step 3:}
$\begin{aligned}[t]
  H_w=\{f\in C(K)\setsep & f(t,0)=\tfrac12(f(t,1)+f(t,-1))\mbox{ for }t\in [0,1],  
  \\  &f(t,1)=\tfrac12(f(t,2)+f(t,-2))\mbox{ for }t\in [-1,0] \}.
\end{aligned}$

\smallskip

Indeed, inclusion `$\subset$' is obvious. To prove the converse fix any function $f$ in the set on the right-hand side. We define $\f=(f_1,f_2)\in C(K,E)$ by setting $f_1=f$ and
$$  f_2= \tfrac14(f(a)+f(b))\mbox{ on }K\setminus\{a,b\},\ f_2(a)=f_2(b)=2f(0,0).  $$
Then $\f\in H$ and $e_1^*\circ\f=f$.

\smallskip

{\tt Step 4:} It is now clear that $\Ch_{H_w}K=K\setminus [-1,1]\times\{0\}$, $A_c(H_w)=H_w$ and that $H_w$ is not simplicial. Thus condition $(iii)$ is fulfilled. Further, clearly
$$\begin{aligned}
  A_c^w(H)=\{\f\in C(K,E)\setsep & \f(t,0)=\tfrac12(\f(t,1)+\f(t,-1))\mbox{ for }t\in [0,1],  
  \\  &\f(t,1)=\tfrac12(\f(t,2)+\f(t,-2))\mbox{ for }t\in [-1,0] \}.
\end{aligned}$$

\smallskip

{\tt Step 5:} Fix $x^*\in E^*$. Since $E^*$ is canonically identified with $(\ef^2,\norm{\cdot}_q)$ where $q$ is the conjugate exponent, we may represent $x^*$ by $(\alpha,\beta)\in \ef^2$. 
We fix $(\gamma,\delta)\in E$ of norm one such that $\alpha\gamma+\beta\delta=\norm{x^*}$.

\smallskip

{\tt Step 6:} $M_{x^*\circ\phi_H(a)}=\{\ep_a\otimes x^*\}$ and $M_{x^*\circ\phi_H(b)}=\{\ep_b\otimes x^*\}$.

\smallskip

Since the roles of $a$ and $b$ are symmetric, it is enough to prove the statement for $a$.
 Define 
 $$\h(a)=(\gamma,\delta), \h(b)=(-\gamma,-\delta), \h=(0,0) \mbox{ elsewhere.}$$ 
 Then $\h\in H$, $\norm{\h}=1$ and $x^*\circ\phi_H(a)(\h)=\norm{x^*}$. Hence 
 $\norm{x^*\circ\phi_H(a)}=\norm{x^*}$.

Let $\mu\in M_{x^*\circ\phi_H(a)}$. Then $\norm{\mu}=\norm{x^*}$ and for each $\f\in H$ we have
$$\alpha f_1(a)+\beta f_2(a)=\int\f\di\mu.$$
If we plug there $\h$, we get
$$\norm{x^*}=\ip{\mu(a)}{(\gamma,\delta)} -\ip{\mu(b)}{(\gamma,\delta)}.$$
It folows that $\mu$ is caried by $\{a,b\}$.
Let us further plug there function $\h_1$ defined by
$$\h_1(a)=(\gamma,\delta),\h_1(b)=(0,0), \h_1=(\tfrac\delta4,\tfrac\gamma4) \mbox{ elsewhere}.$$
Then $\h_1\in H$ and we deduce that
$$\norm{x^*}= (x^*\circ \phi_H(a))(\f_1)=\ip{\mu(a)}{(\gamma,\delta)}.$$
We deduce that $\mu$ is carried by $a$. Thus for each $\f\in H$ we have
$$\ip{x^*}{\f(a)}=\int \f\di\mu=\ip{\mu(a)}{\f(a)}.$$
Since $H(a)=E$ (by Step 2), we deduce  $\mu(a)=x^*$, hence $\mu=\ep_a\otimes x^*$. I.e., $M_{x^*\circ\phi_H(a)}=\{\ep_a\otimes x^*\}$ and the argument is complete.

\smallskip

{\tt Step 7:} $M_{x^*\circ \phi_H(t,1)}=\{\ep_{(t,1)}\otimes x^*\}$ and  $M_{x^*\circ \phi_H(t,-1)}=\{\ep_{(t,-1)}\otimes x^*\}$ whenever $t\in[0,1]$.

\smallskip
Fix $t\in[0,1]$. The roles of $(t,1)$ and $(t,-1)$ are symmetric, so it is enough to prove the statement for $(t,1)$. Let $u:[0,1]\to[0,1]$ be continuous such that $u(t)=1$ and $u<1$ elsewhere. Define $\widetilde{u}\in C(K)$ by 
$$\widetilde{u}(s,1)= u(s), \widetilde{u}(s,-1)=-u(s) \mbox{ for }s\in [0,1],\widetilde{u}=0\mbox{ elsewhere}.$$
Further, let  $\g_t=\widetilde{u}\cdot(\gamma,\delta)$ where $(\gamma,\delta)$ is as in Step 5.
Then $\g_t\in H$, $\norm{\g_t}=1$ and it witnesses that $\norm{x^*\circ\phi_H(t,1)}=\norm{x^*}$.
So, any $\mu\in M_{x^*\circ\phi_H(t,1)}(H)$ satisfies $\norm{\mu}=\norm{x^*}$ and 
 for each $\f\in H$ we have 
 $$\alpha f_1(t,1)+\beta f_2(t,1)=\int\f\di\mu.$$
If we plug there $\g_t$, we get
$$\norm{x^*}=\int \g_t\di\mu =\int \widetilde{u}\di\mu_{(\gamma,\delta)},$$
so $\mu_{(\gamma,\delta)}$ is carried by $\{(t,1),(t,-1)\}$ and
$$\norm{x^*}=\mu_{(\gamma,\delta)}(t,1)-\mu_{(\gamma,\delta)}(t,-1).$$
We deduce that also $\mu$ is carried by $\{(t,1),(t,-1)\}$.

If we plug there function $\f_2$ defined by
$$\begin{gathered}
    \f_2=(\gamma,\delta) \mbox{ on }[0,1]\times\{1\}\cup [-1,0]\times\{2\},\\
    \f_2=(0,0) \mbox{ on }[0,1]\times\{-1\}\cup [-1,0]\times\{-2\}, \\
    \f_2=(\tfrac\gamma2,\tfrac\delta2) \mbox{ on }[-1,1]\times\{0\},\\
    \f_2(a)=\f_2(b)=(\delta,\gamma),
\end{gathered}$$
we conclude that $\mu$ is carried by $\{(t,1)\}$. Using that $H(t,1)=E$ we, similarly as in Step 5, deduce that $\mu=\ep_{(t,1)}\otimes x^*$. Thus $M_{x^*\circ\phi_H(t,1)}=\{\ep_{(t,1)\otimes x^*}\}$
which completes the argument.

\smallskip 

{\tt Step 8:} $M_{x^*\circ \phi_H(t,2)}=\{\ep_{(t,2)}\otimes x^*\}$ and  $M_{x^*\circ \phi_H(t,-2)}=\{\ep_{(t,-2)}\otimes x^*\}$ whenever $t\in[-1,0]$.

\smallskip

The proof is completely analogous to that of Step 7 -- instead of functions $u$ and $\widetilde{u}$ we use a continuous function $v:[-1,0]\to[0,1]$ with $v(t)=1$ and $v<1$ elsewhere and function $\widetilde{v}\in C(K)$ by 
$$\widetilde{v}(s,1)= v(s), \widetilde{v}(s,-1)=-v(s) \mbox{ for }s\in [-1,0],\widetilde{v}=0\mbox{ elsewhere}.$$
Further, instead of $\g_t$ we use the function $\h_t=\widetilde{v}\cdot(\gamma,\delta)$.

\smallskip

{\tt Step 9.} If $t\in (0,1]$, then 
$$\begin{aligned}
    M_{x^*\circ\phi_H(t,0)}=\{\mu\in M(K,E^*)\setsep & \mu\mbox{ is carried by }\{t\}\times\{-1,0,1\}, \mu\{(t,1)\}=\mu\{(t,-1)\}, \\& \mu\{(t,1)\}+\mu\{(t,0)\}+\mu\{(t,-1)\}=x^*, \\& \norm{\mu\{(t,1)\}}+\norm{\mu\{(t,0)\}}+\norm{\mu\{(t,-1)\}}=\norm{x^*}\}\end{aligned}$$

\smallskip

Assume $t\in (0,1]$. Let $v:[-1,1]\to[0,1]$ be continuous such that $v(t)=1$, $v(0)=0$ and $v<1$ elsewhere. Let $\widetilde{v}\in C(K)$ be defined by $\widetilde{v}(t,j)=v(t)$, $\widetilde{v}(a)=\widetilde{v}(b)=0$. Let $\f_t=\widetilde{v}\cdot(\gamma,\delta)$. Then $\f_t\in H$ and it witnesses that $\norm{x^*\circ\phi_H(t,0)}=\norm{x^*}$. 

Let $\mu\in M_{x^*\circ\phi_H(t,0)}$. Then $\norm{\mu}=\norm{x^*}$ and
for each $\f\in H$ we have 
 $$\alpha f_1(t,0)+\beta f_2(t,0)=\int\f\di\mu.$$
 If we plug there $\f_t$, as above we deduce that $\mu$ is carried by $\{t\}\times\{-1,0,1\}$.
 
 Let $x\in E$ ve arbitrary. Then the function defined by
 $$\f(s,j)=\sign j \cdot x, \f(a)=\f(b)=0$$
 belongs to $H$ and plugging it to the above formula yields
 $$0=\ip{\mu\{(t,1)\}-\mu\{(t,-1)\}}{x}.$$
 Since $x\in E$ is arbitrary, we deduce $\mu\{(t,1)\}=\mu\{(t,-1)\}$. Hence, for each $\h\in H$ we have
 $$\begin{aligned}
      \alpha h_1(t,0)+\beta h_2(t,0)&=\int \h\di\mu=\ip{\mu\{(t,1)\}}{\h(t,1)+\h(t,-1)}+\ip{\mu\{(t,0)\}}{\h(t,0)}
      \\&= \ip{2\mu\{(t,1)\}+\mu\{(t,0)\}}{\h(t,0)}
 \end{aligned}$$
Since $H(t,0)=E$ (by Step 2), we deduce that $2\mu\{(t,1)\}+\mu\{(t,0)\}=(\alpha,\beta)=x^*$.
Inclusion `$\subset$' now easily follows. The converse inclusion is clear (taking into account that
$\norm{x^*\circ\phi_H(t,0)}=\norm{x^*}$).

\smallskip

{\tt Step 10.} If $t\in [-1,0)$, then 
$$\begin{aligned}
    M_{x^*\circ\phi_H(t,0)}=\{\mu\in M(K,E^*)\setsep & \mu\mbox{ is carried by }\{t\}\times\{-2,0,2\}, \mu\{(t,2)\}=\mu\{(t,-2)\}, \\& \mu\{(t,2)\}+\mu\{(t,0)\}+\mu\{(t,-2)\}=x^*, \\& \norm{\mu\{(t,2)\}}+\norm{\mu\{(t,0)\}}+\norm{\mu\{(t,-2)\}}=\norm{x^*}\}\end{aligned}$$

\smallskip

This is completely analogous to Step 9.

\smallskip

{\tt Step 11.} $M_{x^*\circ\phi_H(0,0)}=\{\frac14(\ep_a+\ep_b)\otimes(\beta,\alpha)\}$.

\smallskip

If $\h\in H$, then 
$$x^*(\h(0,0))=\tfrac14 x^*(h_2(a)+h_2(b),h_1(a)+h_1(b))\le \tfrac12\norm{x^*}\cdot\norm{\h},$$
hence $\norm{x^*\circ \phi_H(0,0)}\le\frac12 \norm{x^*}$. The function $\h$ defined by
$$\h(a)=\h(b)=(\delta,\gamma), \h=\tfrac12(\gamma,\delta)\mbox{ elsewhere}$$
(where $(\gamma,\delta)$ is from Step 5) witnesses that $\norm{x^*\circ \phi_H(0,0)}=\frac12 \norm{x^*}$.

Let $\mu\in M_{x^*\circ\phi_H(0,0)}$. Then $\norm{\mu}=\frac12 \norm{x^*}$ and for each $\f\in H$ we have
$$\alpha f_1(0,0)+\beta f_2(0,0)=\int \f\di\mu.$$
If we plug there the above function $\h$, we get
$$\norm{\mu}=\tfrac12 \norm{x^*} = \int \h\di\mu= \ip{\mu(\{a,b\})}{(\delta,\gamma)} + \tfrac12\ip{\mu(K\setminus\{a,b\})}{(\gamma,\delta)}.$$
We deduce that $\mu$ is carried by $\{a,b\}$. 

Given $x\in E$, the function
$$\f(a)=x,\f(b)=-x, \f=(0,0) \mbox{ elsewhere}$$
belongs to $H$ and by plugging it in the above formula we get
$$0=\ip{\mu(a)-\mu(b)}{x}.$$
We deduce that $\mu(a)=\mu(b)$. Hence for each $\h\in H$ we have
$$\alpha h_1(0,0)+\beta h_2(0,0)=\ip{\mu(a)}{\h(a)+\h(b)}=4\ip{\mu(a)}{(h_2(0,0),h_1(0,0))}.$$
Since $H(0,0)=E$ (by Step 2), we deduce $\mu(a)=\frac14(\beta,\alpha)$. Now we easily conclude.

\smallskip

{\tt Step 12.} $H$ is vector simplicial and $A_c^v(H)=H$. In particular, properties $(ii)$ and $(iv)$ are fulfilled.

\smallskip

In Steps 6--11 sets of representing measures are described. In most cases there is only one representing measure, except for the cases addressed in Steps 9 and 10. But even in these cases there is only one $H$-boundary measure. Indeed, if we combine Step 9 with Step 4, we see that an $H$-boundary measure in $M_{x^*\circ\phi_H(t,0)}$ necessarily satisfies $\mu(t,0)=(0,0)$ and $\mu(t,1)=\mu(t,-1)=\frac12x^*$, so the uniqueness follows. The case from Step 10 is completely analogous. This proves the vector simpliciality, hence condition $(ii)$ is satisfied.

Next assume that $\f\in A_c^v(H)$. By Step 11 we deduce that 
$$\f(0,0)=\tfrac14((f_2(a),f_1(a))+(f_2(b),f_1(b))).$$ By Step 9 we deduce that 
$$\f(t,0)=\tfrac12(\f(t,1)+\f(t,-1))\mbox{  for }t\in (0,1].$$ Since $\f$ is continuous, the equality holds also for $t=0$. Similarly,
by Step 10 we get 
$$\f(t,0)=\tfrac12(\f(t,2)+\f(t,-2))\mbox{ for }t\in [-1,0)$$ and by continuity of $\f$ this extends to $t=0$. Hence $f\in H$. Thus $A_c^v(H)=H$. Using moreover Step 4 we conclude that condition $(iv)$ is satisfied.
\end{proof}

We point out that a large part of properties of the above examples is connected with absence of
constants in $H$ (although in these examples $H_w$ always contains constants). Properties of function spaces containing constants are analyzed in Section~\ref{sec:skonstantami} below.

\section{Dilation operators for weakly simplicial spaces}\label{sec:dilation}

Let $H\subset C(K,E)$ be a function space separating points of $K$.
Assume that $H$ is weakly simplicial, i.e., $H_w$ is simplicial.
For $t\in K$ let $\delta_t$ denote the unique $H$-boundary measure in $M_t(H)=M_t(H_w)$. We denote by $D$ the induced dilation mapping,   i.e.,
    $$Df(t)=\int f\di\delta_t,\quad t\in K, f\in C(K).$$
This operator is a standard tool in the classical Choquet theory. In  \cite[Section 6.1]{lmns} (where it is denoted by $T$) basic properties are established for scalar real function spaces containing constants. In \cite{bezkonstant} this operator was studied for function spaces not containing constants, where some strange behavior may appear.  

In this section we will assume that $H_w$ contains constants. Then each $\delta_t$ is a probability measure. Moreover, the real and complex theories essentially coincide. The reason is that the representing measures or boundary measures considered with respect to $H_w$ or $A_c(H_w)$ coincide. Further, if $\ef=\ce$, then $A_c(H_w)$ is a self-adjoint subspace, hence the representing measures or boundary measures considered with respect to $A_c(H_w)$ or $A_c(H_w)\cap C(K,\er)$ coincide. Therefore the results established in \cite[Section 6.1]{lmns} in the real case hold in the complex case equally.

    In particular, for each $f\in C(K)$ the function $Df$ is Borel (by \cite[Theorem 6.8(b)]{lmns}). Further, given $\mu\in M(K)$, there is a unique measure $D\mu\in M(K)$ satisfying
    $$\int f\di D\mu=\int Df\di\mu,\quad f\in C(K).$$
    This measure is moreover $H$-boundary (see \cite[Theorem 6.11]{lmns}).

Let us define a vector-valued variant. For $\f\in C(K,E)$ define
\[
\Db \f(t)=(B)\int \f\di\delta_t,\quad t\in K.
\]
It is clear that $\Db\f$ is a well-defined function $K\to E$. Let us further recall that  $I(K,E)$ is the space of bounded functions from $K$ to $E$ integrable with respect of all measures in $M(K,E^*)$, i.e., the uniform closure of simple Borel functions.

\begin{lemma}
\label{l:vlast-depe}
Let a function space $H\subset C(K,E)$ separate points of $K$. Assume that $H$ is weakly simplicial and $H_w$ contains constants.
Then the following assertions are valid.
\begin{enumerate}[$(1)$]
    \item $\Db$ is a norm-one linear operator from $C(K,E)$ to $I(K,E)$ (equipped with the sup-norm).
       \item $\Db\f=\f$ on $\Ch_HK$ whenever $\f\in C(K,E)$.
    \item For each measure $\mu\in M(K,E^*)$ there exists a unique $\Db\mu\in M(K,E^*)$ such that $\int \f\di \Db\mu=\int \Db\f \di\mu$ for $\f\in C(K,E)$. Moreover, $\norm{\Db\mu}\le\norm{\mu}$.
    \item The measure $\Db\mu$ is $H$-boundary for each $\mu\in M(K,E^*)$.
    \item If $\mu\in M(K,E^*)$, then $\Db\mu=\mu$ if and only if $\mu$ is $H$-boundary. In particular, $\Db\circ \Db=\Db$ on $M(K,E^*)$.
    \item $A_c^w(H)=\{\f\in C(K,E)\setsep \Db \f=\f\}$.
\end{enumerate}
\end{lemma}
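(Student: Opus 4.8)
The plan is to reduce each of the six assertions to the corresponding statement about the scalar dilation $D$ on $H_w$, which is available through \cite[Theorems 6.8 and 6.11]{lmns} since $H_w$ contains constants. Two elementary identities do the transfer. First, for $\f\in C(K,E)$ and $x^*\in E^*$ we have $x^*\circ\Db\f=D(x^*\circ\f)$, obtained by pulling $x^*$ inside the Bochner integral; so $\Db\f$ is determined by its scalar components. Second, once $\Db\mu$ is constructed in $(3)$, testing it against functions $f\cdot x$ with $f\in C(K)$, $x\in E$ (and using $\Db(f\cdot x)=(Df)\cdot x$) gives $(\Db\mu)_x=D(\mu_x)$ for all $x\in E$. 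I will combine these with the trivial bounds $\abs{\mu_x}\le\norm{x}\abs{\mu}$ and $\abs{D\nu}\le D\abs{\nu}$ (the latter because the scalar $D$ is a positive operator) and with the fact that a positive measure dominated by an $H_w$-boundary measure is itself $H_w$-boundary (immediate from Mokobodzki's criterion).

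For $(1)$, linearity is clear and $\norm{\Db\f(t)}=\norm{(B)\int\f\di\delta_t}\le\norm{\f}_\infty$ because $\delta_t$ is a probability measure, while a norm-one constant function gives $\norm{\Db}=1$. To see that $\Db\f\in I(K,E)$, I would note that $\Db\f$ takes values in the norm-compact set $\ov{\co}\,\f(K)$ and is weakly Borel measurable since each $x^*\circ\Db\f=D(x^*\circ\f)$ is Borel by \cite[Theorem 6.8(b)]{lmns}; as weak and norm Borel $\sigma$-algebras coincide on the separable subspace generated by $\f(K)$, the function $\Db\f$ is norm-Borel with relatively compact range, hence a uniform limit of simple Borel functions and thus an element of $I(K,E)$. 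Then $(2)$ is immediate: for $t\in\Ch_HK$ one has $\delta_t=\ep_t$ (as $H_w$ contains constants, $M_t(H)=M_t(H_w)=\{\ep_t\}$ by the scalar description of the Choquet boundary and Proposition~\ref{P:MtH=MtHw}), so $\Db\f(t)=\f(t)$.

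For $(3)$ I would define $\Db\mu$ as the functional $\f\mapsto\int\Db\f\di\mu$ on $C(K,E)$; by $(1)$ it is bounded by $\norm{\mu}$, hence corresponds via Singer's theorem to a unique measure in $M(K,E^*)$ with $\norm{\Db\mu}\le\norm{\mu}$. For $(4)$, after proving $(\Db\mu)_x=D(\mu_x)$, I would estimate, for any finite Borel partition $\{A_i\}$ of a Borel set $A\subset K$,
\[
\sum_i\norm{\Db\mu(A_i)}=\sum_i\sup_{\norm{x}\le1}\abs{D(\mu_x)(A_i)}\le\sum_i D\abs{\mu}(A_i)=D\abs{\mu}(A),
\]
using $\abs{D(\mu_x)}\le D\abs{\mu_x}\le D\abs{\mu}$ whenever $\norm{x}\le1$; hence $\abs{\Db\mu}\le D\abs{\mu}$. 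Since $D\abs{\mu}$ is $H_w$-boundary by the scalar theory, so is $\abs{\Db\mu}$, and therefore $\Db\mu$ is $H$-boundary by Theorem~\ref{T:H-maximal}. For $(5)$: if $\mu$ is $H$-boundary then $\abs{\mu}$, and a fortiori $\abs{\mu_x}\le\norm{x}\abs{\mu}$, is $H_w$-boundary, so each $\mu_x$ is $H_w$-boundary and $D(\mu_x)=\mu_x$ by the scalar theory; since $(\Db\mu)_x=D(\mu_x)=\mu_x$ for every $x\in E$, we conclude $\Db\mu=\mu$. Conversely $\Db\mu=\mu$ forces $\mu$ to be $H$-boundary by $(4)$, and $\Db\mu$ is always $H$-boundary, whence $\Db\circ\Db=\Db$.

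Finally, for $(6)$ I would first observe that $\f\in A_c^w(H)$ if and only if $x^*\circ\f\in A_c(H_w)$ for every $x^*\in E^*$; the forward implication is trivial, and the reverse one uses $M_t(H)=M_t(H_w)=M_t(A_c(H_w))$ (Proposition~\ref{P:MtH=MtHw} and Lemma~\ref{L:AcH}$(d)$ applied to $H_w$) together with the Hahn--Banach theorem. Combining this with the scalar identity $A_c(H_w)=\{f\in C(K)\setsep Df=f\}$ and with $x^*\circ\Db\f=D(x^*\circ\f)$ gives $\f\in A_c^w(H)\iff D(x^*\circ\f)=x^*\circ\f$ for all $x^*\iff\Db\f=\f$. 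The step I expect to be the main obstacle is the foundational claim in $(1)$ that $\Db\f$ genuinely lies in $I(K,E)$ — not merely that it is bounded and weakly measurable; once the compact-range and Borel-measurability argument is secured, the rest of the proof is a careful transport of the scalar results through the two displayed identities.
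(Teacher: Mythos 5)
Your proposal is correct and follows the same overall strategy as the paper: everything is transported to the scalar dilation $D$ on $H_w$ through the two identities $x^*\circ\Db\f=D(x^*\circ\f)$ and $(\Db\mu)_x=D(\mu_x)$. There are two localized differences worth recording. In $(1)$ the paper avoids your Pettis-type measurability argument altogether: it checks $\Db(f\cdot x)=(Df)\cdot x\in I(K,E)$ directly and then uses that functions of the form $f\cdot x$ are linearly dense in $C(K,E)$ together with the continuity of $\Db$ and the closedness of $I(K,E)$; your route (weak Borel measurability plus norm-compact, hence separable, range $\ov{\co}\,\f(K)$, so norm-Borel with relatively compact range, so uniformly approximable by simple functions) is also valid but heavier than necessary — the step you flagged as the main obstacle dissolves under the density argument. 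In $(4)$ the paper argues by contradiction via Mokobodzki (a witness set $A\subset[\widehat f>f]$ with $\norm{\Db\mu(A)}>0$ produces an $x$ with $(\Db\mu)_x$ non-boundary), whereas you establish the domination $\abs{\Db\mu}\le D\abs{\mu}$ and invoke heredity of maximality under domination; your version is arguably cleaner and equally rigorous, granting the standard facts $\abs{D\nu}\le D\abs{\nu}$ and $\abs{\mu_x}\le\norm{x}\abs{\mu}$, both of which check out. Your treatment of $(6)$ via the equivalence $\f\in A_c^w(H)\iff x^*\circ\f\in A_c(H_w)$ for all $x^*$ and the scalar identity $A_c(H_w)=\{f\setsep Df=f\}$ is a mild reorganization of the paper's argument (which instead shows $(B)\int\g\di D\mu=(B)\int\Db\g\di\mu$ and uses $D\mu=\delta_t$) and is fine, since the scalar identity follows from simpliciality of $H_w$ by exactly that computation.
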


\begin{proof}
$(1)$ Given $\f\in C(K,E)$ and $t\in K$, we have
\[
\norm{\Db \f (t)}=\norm{(B)\int \f \di\delta_t}\le \int\norm{\f(s)}\di\delta_t(s)\le \norm{\f}.
\]
Thus $\Db\f$ is a bounded function. Since the operator $\Db$ is clearly linear, $\Db$ is a norm-one operator $C(K,E)\to\ell^\infty(K,E)$.

Further, if $\f=f\cdot x$ for some $f\in C(K)$ and $x\in E$, then 
\[
\Db \f(t)=(B)\int f(s)\cdot x\di\delta_t(s)=Df(t)\cdot x.
\]
Since $Df$ is Borel, we deduce $\Db\f\in I(K,E)$. Since functions of the form $f\cdot x$, $f\in C(K)$, $x\in E$, are linearly dense in $C(K,E)$, the assertion follows.

$(2)$ This follows from the obvious fact that $\delta_t=\ep_t$ for $t\in\Ch_HK$.

$(3)$ The mapping $\f \mapsto \int\Db f\di\mu$ is clearly a linear  functional on $C(K,E)$. Moreover,
$$\abs{\int\Db \f\di\mu}\le\norm{\Db \f}\norm{\mu}\le\norm{\f}\norm{\mu},$$
so the functional has norm at most $\norm{\mu}$.
Hence there exists a unique measure $\Db\mu\in M(K,E^*)$ representing this functional. It satisfies $\norm{\Db\mu}\le\norm{\mu}$.

$(4)$ For every $x\in E$ and $f\in C(K)$ we have 
\[\begin{aligned}
 \int f \di (\Db\mu)_x&=\int f\cdot x\di \Db\mu=\int \Db (f\cdot x)\di\mu=\int (Df \cdot x)\di\mu=\int Df\di\mu_x\\&=\int f\di (D\mu_x).
\end{aligned}\]
Thus $(\Db\mu)_x=D\mu_x$ is a $H$-boundary measure for each $x\in E$. 

It follows that $\Db\mu$ is $H$-boundary. Indeed, this follows easily from the Mokobodzki test \cite[Theorem 3.58]{lmns}: Given $f\in C(K,\er)$, denote by $\widehat{f}$ its upper envelope, i.e.,
$$\widehat{f}=\inf\{ g\in A_c(H_w)\cap C(K,\er)\setsep g\ge f\}.$$
Assume that $\Db\mu$ is not $H$-boundary, i.e., $\abs{\Db\mu}$ is not $H$-boundary. By the quoted Mokobodzki test there is $f\in C(K,\er)$ such that $\abs{\Db\mu}[\widehat{f}>f]>0$, By the definition of variation there is a Borel set $A\subset [\widehat{f}>f]$ with $\norm{\Db\mu(A)}>0$. Then there is some $x\in E$ with $\Db\mu(A)(x)\ne0$. It follows that $\abs{(\Db\mu)_x}[\widehat{f}>f]>0$, so $(\Db\mu)_x$ is not $H$-boundary (using again the Mokobodzki test).
This contradicts the previous paragraph.

$(5)$ If $\Db\mu=\mu$, then $\mu$ is $H$-boundary by $(4)$. Conversely, assume that $\mu$ is $H$-boundary and $x\in E$. Then clearly $\mu_x$ is also $H$-boundary. It follows from \cite[Theorem 6.11(a4)]{lmns} that $D(\mu_x)=\mu_x$. Within the proof of $(4)$ we showed that $(\Db\mu)_x=D(\mu_x)$.
We deduce $(\Db\mu)_x=\mu_x$. Since $x\in E$ is arbitrary, we conclude that $\Db\mu=\mu$. 
The `in particular part' then follows easily.

$(6)$ Inclusion `$\subset$' is obvious.
Conversely, let $\f\in C(K,E)$ with $\Db \f=\f$ and $t\in K$ and $\mu\in M_t(H)$ be given. Then $D\mu\in M_t(H)$ is $H_w$-boundary and thus $D\mu=\delta_t$ (by \cite[Theorem 6.11(a)]{lmns}).
Then, given $\g\in C(K,E)$ and $x^*\in E^*$ we get 
\[
\begin{aligned}
x^*\left((B)\int \g\di D\mu\right)&=\int x^*\circ \g\di D\mu=\int D(x^*\circ \g)\di\mu=\int x^*\circ \Db \g\di\mu\\
&=x^*\left((B)\int \Db\g\di\mu\right),
\end{aligned}
\]
and thus $(B)\int \g\di D\mu=(B)\int \Db \g \di\mu$ for each $\g \in C(K,E)$.
Hence 
\[
\f(t)=(B)\int \f\di\delta_t=(B)\int\f \di D\mu=(B)\int \Db \f\di\mu=(B)\int \f\di\mu.
\]
Therefore $\f\in A_c^w(H)$.
\end{proof}

We further define another variant of the operator $\Db$.
 Given $\f\in C(K,E)$, we define 
  $$\Tb\f(t,x^*)=\int \f\di(\delta_t\otimes x^*),\quad(t,x^*)\in K\times B_{E^*}.$$ 
Its properties are summarized in the following lemma.

\begin{lemma}\label{L:vlast-Tb}
    Let a function space $H\subset C(K,E)$ separate points of $K$. Assume that $H$ is weakly simplicial and $H_w$ contains constants.
Then the following assertions are valid.
\begin{enumerate}[$(a)$]
    \item $\Tb\f(t,x^*)=x^*(\Db\f(t))=D(x^*\circ\f)(t)$ for  $\f\in C(K,E)$ and $(t,x^*)\in K\times B_{E^*}$.
    \item $\Tb$ is a linear operator from $C(K,E)$ into the space of bounded Borel functions on $K\times B_{E^*}$ satisfying $\norm{\Tb}\le1$.
    \item $A_c^w(H)=\{\f\in C(K,E)\setsep \Tb\f=T\f\}$.
    \item If $\nu\in M(K\times B_{E^*})$, then
    $$\int \Tb\f\di\nu=\int \f\di \Db(T^*\nu),\quad \f\in C(K,E).$$
 \end{enumerate}
\end{lemma}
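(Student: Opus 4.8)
The plan is to dispatch the four assertions in order, reducing everything to the scalar operator $D$ and to elementary properties of the Bochner integral and of the vector integral $\int\cdot\di\mu$, $\mu\in M(K,E^*)$. For $(a)$ I would simply unwind the definitions. Since $x^*\circ\f\in C(K)$ for $\f\in C(K,E)$ and $x^*\in E^*$, the definition of $D$ gives $D(x^*\circ\f)(t)=\int x^*\circ\f\di\delta_t$, while the fact that a bounded linear functional commutes with the Bochner integral gives $x^*(\Db\f(t))=x^*\bigl((B)\int\f\di\delta_t\bigr)=\int x^*\circ\f\di\delta_t$. Finally, for an $E$-valued simple Borel function $\f=\sum_j\chi_{A_j}\cdot x_j$ one has $\int\f\di(\delta_t\otimes x^*)=\sum_j\delta_t(A_j)x^*(x_j)=\int x^*\circ\f\di\delta_t$ directly from the definition of the vector integral; as both sides are $\norm{\cdot}_\infty$-continuous in $\f$ with bound $\norm{\f}$ (here $x^*\in B_{E^*}$ and $\delta_t$ is a probability measure), the identity $\Tb\f(t,x^*)=\int x^*\circ\f\di\delta_t$ passes to all $\f\in C(K,E)$, proving the three equalities in $(a)$.

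Assertion $(b)$ is then immediate from $(a)$: $\abs{\Tb\f(t,x^*)}=\abs{x^*(\Db\f(t))}\le\norm{\f}$ by Lemma~\ref{l:vlast-depe}$(1)$, and linearity is clear. For Borel measurability I would use that $\Db\f\in I(K,E)$ (again Lemma~\ref{l:vlast-depe}$(1)$), hence is a uniform limit of $E$-valued simple Borel functions $\g_n=\sum_j\chi_{B_j}\cdot y_j$; for each such $\g_n$ the map $(t,x^*)\mapsto x^*(\g_n(t))=\sum_j\chi_{B_j}(t)\,x^*(y_j)$ is Borel (because $x^*\mapsto x^*(y_j)$ is weak$^*$-continuous), and these converge uniformly to $\Tb\f$, so $\Tb\f$ is Borel. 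For $(c)$, since $T\f(t,x^*)=x^*(\f(t))$, the equality $\Tb\f=T\f$ holds iff $x^*(\Db\f(t))=x^*(\f(t))$ for all $t\in K$ and $x^*\in B_{E^*}$; by Hahn--Banach this is equivalent to $\Db\f=\f$, which by Lemma~\ref{l:vlast-depe}$(6)$ means precisely $\f\in A_c^w(H)$.

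The heart of the lemma is $(d)$, and this is where I expect essentially all the work. The defining property of $\Db\mu$ from Lemma~\ref{l:vlast-depe}$(3)$, applied to $\mu=T^*\nu\in M(K,E^*)$, gives $\int\f\di\Db(T^*\nu)=\int\Db\f\di(T^*\nu)$ for every $\f\in C(K,E)$; so it suffices to prove
\[
\int_{K\times B_{E^*}}\Tb\f\di\nu=\int_K\Db\f\di(T^*\nu),
\]
and, since $(a)$ identifies $\Tb\f(t,x^*)$ with $x^*(\Db\f(t))$, this follows once I establish the general identity
\[
\int_K\g\di(T^*\nu)=\int_{K\times B_{E^*}}x^*(\g(t))\di\nu(t,x^*)\qquad(\g\in I(K,E)).
\]
To prove it I would first identify, for each $x\in E$, the scalar measure $(T^*\nu)_x\in M(K)$: testing against $f\in C(K)$ and using $\la T^*\nu,f\cdot x\ra=\la\nu,T(f\cdot x)\ra$ together with $T(f\cdot x)(t,x^*)=f(t)x^*(x)$ shows that $(T^*\nu)_x$ equals the image under $\pi_1$ of the measure $(x^*\mapsto x^*(x))\cdot\nu$ on $K\times B_{E^*}$. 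With this in hand, the displayed identity is checked directly for simple Borel $\g=\sum_j\chi_{A_j}\cdot x_j$ (both sides equal $\sum_j\int_{A_j\times B_{E^*}}x^*(x_j)\di\nu$), and both sides are $\norm{\cdot}_\infty$-continuous in $\g$ (the left by $\norm{T^*\nu}\le\norm{\nu}$, the right because $\norm{x^*}\le1$ on $B_{E^*}$), so it extends to all of $I(K,E)$ by density of simple functions. Applying this to $\g=\Db\f\in I(K,E)$ and invoking $(a)$ once more finishes the proof.

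The main obstacle is purely technical: $\Db\f$ lands in $I(K,E)$ rather than $C(K,E)$, so the vector integral $\int\Db\f\di(T^*\nu)$, the identification of $(T^*\nu)_x$, and the density argument must all be carried out for that larger space; none of this is deep, but it is the only part that requires care.
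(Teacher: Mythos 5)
Your proof is correct and follows essentially the same route as the paper's: parts $(a)$--$(c)$ are handled the same way, and in $(d)$ both arguments ultimately rest on identifying $(T^*\nu)_x$ as the $\pi_1$-image of the measure with density $x^*\mapsto x^*(x)$ with respect to $\nu$, together with a density argument. The only cosmetic difference is that the paper reduces $(a)$ and $(d)$ to functions of the form $f\cdot x$ and uses the component identity $(\Db\mu)_x=D(\mu_x)$, whereas you reduce to $E$-valued simple Borel functions and route $(d)$ through the defining property of $\Db(T^*\nu)$ plus a change-of-variables identity on $I(K,E)$; both are valid.
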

 
\begin{proof}
 $(a)$: We start by observing that $\delta_t\otimes x^*=T^*(\delta_t\times \ep_{x^*})$. Hence
 $$\begin{aligned}     
\Tb\f(t,x^*)&=\int_K \f\di(\delta_t\otimes x^*)=\int_K \f\di T^*(\delta_t\times x^*)= \int_{K\times B_{E^*}} T\f\di(\delta_t\times \ep_{x^*})\\& =\int_K T\f(s,x^*)\di\delta_t(s)
 =\int_K x^*\circ\f\di\delta_t=x^*\left((B)\int_K \f\di\delta_t\right). \end{aligned}$$
 Since the last expression is $x^*(\Db\f(t))$ and the previous one is $D(x^*\circ\f)(t)$, the argument is complete.

$(b)$: Using $(a)$ and Lemma~\ref{l:vlast-depe}$(1)$ we get
$$\abs{\Tb\f(t,x^*)}=\abs{x^*(\Db\f(t))}\le \norm{\Db\f(t)}\le \norm{\Db\f}\le\norm{\f},$$
so $\norm{\Tb\f}\le\norm{\f}$. Since the linearity of $\Tb$ is obvious, we get that $\Tb$ is a norm-one linear operator from $C(K,E)$ into $\ell^\infty(K\times B_{E^*})$. 

Further, if $\f=f\cdot x$ for some $x\in E$ and $f\in C(K)$, by $(a)$ we deduce
$$\Tb\f(t,x^*)=D(x^*\circ\f)(t)=D(x^*(x)\cdot f)(t)=x^*(x)\cdot Df(t).$$
Since $Df$ is Borel and $x^*\mapsto x^*(x)$ is continuous, we deduce that $\Tb\f$ is Borel.  Since functions of this kind are linearly dense in $C(K,E)$,  the argument is complete.

$(c)$: For $\f\in C(K,E)$ we have
\begin{multline*}
    \Tb\f=T\f \iff \forall x^*\in B_{E^*}\; \forall t\in K\colon x^*(\Db\f(t))=x^*(\f(t))\\
    \iff \forall t\in K\colon \Db\f(t)=\f(t)\iff\Db\f=\f,
\end{multline*}
so the assertion follows from Lemma~\ref{l:vlast-depe}$(6)$.

$(d)$: Due to $(b)$ it is enough to prove the equality for $\f=f\cdot x$ where $f\in C(K)$ and $x\in E$ (as functions of this form are linearly dense in $C(K,E)$). For such functions we have
$$\begin{aligned}
  \int_K \f\di \Db T^*\nu&=\int_K f\di (\Db T^*\nu)_x  =\int_K f\di D((T^*\nu)_x)=\int_K Df\di (T^*\nu)_x\\&=\int Df(t)\cdot x^*(x)\di \nu(t,x^*)=\int \Tb\f\di\nu.
\end{aligned}$$
The first equality follows from the definition of $(\Db T^*\nu)_x$, the second one from the equality $(\Db T^*\nu)_x=D((T^*\nu)_x)$ proved in Lemma~\ref{l:vlast-depe}$(4)$. The third equality follows from the properties of scalar dilation operators. The fourth equality follows from the fact that $(T^*\nu)_x$ is the image of the measure on $K\times B_{E^*}$ with density $(t,x^*)\mapsto x^*(x)$ with respect to $\nu$ under the projection to $K$ (see, e.g., \cite[formula (2.2)]{transference-studia}). The last equality follows from the special form of $\f$.
\end{proof}

\subsection{The Dirichlet problem for simplicial function spaces}
\label{ss:dirichlet}

One of the main applications of Choquet theory is the investigation of abstract Dirichlet problem,
i.e., of the possibility to extend functions defined on the Choquet boundary with preserving some properties. In this subsection we present some results in this direction for vector-valued functions in $A_c^w(H)$. To this end we use some abstract results from \cite{spurny-archiv}. These results deal with continuous functions and functions of the first affine class.
To formulate them we have to introduce a piece of notation. Given a function space $H\subset C(K,E)$, we write $H_1$ for the space of all pointwise limits of sequences contained in $H$.

The first result is a function-space variant of \cite[Theorem 1.1]{spurny-archiv} which is a vector-valued version of the main result of \cite{Jel}. 

\begin{thm}
    \label{t:dirich-lindel}
    Let $H\subset C(K,E)$ be a function space separating points of $K$ such that $H$ is weakly simplicial and $H_w$ contains constants. Assume that $\Ch_HK$ is \lin. If $\f\colon\Ch_HK\to E$ is a bounded continuous function, then there exists a function $\h\in (A_c^w(H))_1$ with $\norm{\h}_\infty=\norm{\f}_\infty$ such that $\h=\f$ on $\Ch_H K$.
    Moreover, the approximating sequence for $\h$ can be taken to be bounded.
\end{thm}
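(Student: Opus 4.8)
The plan is to reduce the vector-valued Dirichlet problem to the scalar one established in \cite{spurny-archiv}, exploiting the operators $\Db$ and $\Tb$ and the description $A_c^w(H)=\{\f\in C(K,E):\Db\f=\f\}$ from Lemma~\ref{l:vlast-depe}$(6)$. Concretely, I would first recall that $H$ weakly simplicial with $H_w$ containing constants means $H_w$ (equivalently $A_c(H_w)$) is a simplicial scalar function space containing constants, so \cite[Theorem 1.1]{spurny-archiv} applies to $H_w$: for each bounded continuous scalar $g\colon\Ch_{H_w}K\to\ef$ there is $\wt g\in (A_c(H_w))_1$ with $\norm{\wt g}_\infty=\norm{g}_\infty$ and $\wt g=g$ on $\Ch_{H_w}K$; moreover the approximating sequence may be taken bounded. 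Here one uses that $\Ch_HK=\Ch_{H_w}K$ (this is condition $(2)$ of Theorem~\ref{T:hranice} together with the definition of $\Ch_HK$), so the hypothesis that $\Ch_HK$ is \lin\ is exactly the hypothesis needed for the scalar result applied to $H_w$.

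Next I would pass from scalars to vectors. Fix a bounded continuous $\f\colon\Ch_HK\to E$. The natural candidate is to compose with a Tietze-type extension: by the vector-valued Tietze theorem (using that $\Ch_HK$ is, say, realcompact or at least that $\f(\Ch_HK)$ is separable if needed — but actually we only need an extension to $C(K,E)$, which the Dugundji/Tietze extension theorem provides since $\f$ is continuous on a closed-or-arbitrary subset into a Banach space; one may first extend $\f$ to a continuous $\f_0\in C(K,E)$ with $\norm{\f_0}_\infty=\norm{\f}_\infty$) and then apply $\Db$. Set $\h=\Db\f_0$. By Lemma~\ref{l:vlast-depe}$(2)$ we have $\h=\f_0=\f$ on $\Ch_HK$, and by Lemma~\ref{l:vlast-depe}$(1)$, $\norm{\h}_\infty\le\norm{\f_0}_\infty=\norm{\f}_\infty$; equality then follows since $\h$ agrees with $\f$ on $\Ch_HK$. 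It remains to show $\h\in (A_c^w(H))_1$ with a bounded approximating sequence. For this I would use Lemma~\ref{L:vlast-Tb}$(a)$: for each $x^*\in E^*$ we have $x^*\circ\h = x^*\circ\Db\f_0 = D(x^*\circ\f_0)$, i.e.\ $\h$ is built coordinatewise by the scalar dilation. This lets me transfer the scalar conclusion.

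The technical heart is producing the approximating sequence in $A_c^w(H)$. The idea: by Proposition~\ref{p:acwh-aese}, $A_c^w(H)\cong\fra(X,E)$ where $X=\es(A_c(H_w))$, and $(A_c^w(H))_1$ corresponds to first affine-class $E$-valued functions on $X$; meanwhile the scalar $\Db$-invariant functions of the first Baire class on $K$ correspond (via $\phi_{A_c(H_w)}$) to first affine-class scalar functions on $X$. I would either (i) run the scalar construction of \cite{spurny-archiv} ``with values in $E$'' — the extension operator there is built from a partition-of-unity / interpolation scheme which goes through verbatim with $E$-valued data provided one checks the norm bound, which is immediate since all the operations are convex combinations — or (ii) reduce to the scalar case by noting that $\f_0$ is a uniform limit of finite linear combinations $\sum f_j\cdot x_j$ with $f_j\in C(K)$, $x_j\in E$, applying the scalar theorem to each $f_j$ restricted to $\Ch_HK$ to get $\wt f_j\in (A_c(H_w))_1$, forming $\sum\wt f_j\cdot x_j\in (A_c^w(H))_1$ (using Proposition~\ref{P:AcH vlastnosti}$(a)$ that $f\cdot x\in A_c^w(H)$ for $f\in A_c(H_w)$, $x\in E$, and that $(\cdot)_1$ is closed under finite linear combinations), and then controlling the uniform error and the norm of the limit by a careful bookkeeping argument — replacing $\h$ by $\Db$ applied to these approximants and invoking that $\Db$ maps $C(K,E)$ boundedly and $\Db\circ\Db=\Db$. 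Approach (i) is cleaner and I expect the main obstacle to be exactly this: verifying that the construction in \cite{spurny-archiv} is genuinely modular in the target Banach space, so that one may simply quote it with $E$-valued data and read off $\norm{\h}_\infty=\norm{\f}_\infty$ and the boundedness of the approximating sequence. Once that modularity is granted, the remaining steps — the identity $\Ch_HK=\Ch_{H_w}K$, the extension to $C(K,E)$, and the $\Db$-invariance characterization of $A_c^w(H)$ — are routine applications of the lemmas already proved.
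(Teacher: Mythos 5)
There is a genuine gap at the very first concrete step of your construction: you extend $\f$ from $\Ch_HK$ to a function $\f_0\in C(K,E)$ by ``Tietze/Dugundji''. But $\Ch_HK$ is in general \emph{not} closed in $K$ (this is precisely the situation the theorem is designed for; when $\Ch_HK$ is closed one has the much stronger Theorem~\ref{t:dirich-closed}), and there is no extension theorem for bounded continuous functions on arbitrary, non-closed subsets of a compact space -- think of $\sin(1/x)$ on $(0,1]\subset[0,1]$, or of a dense non-closed Choquet boundary as in the Poulsen simplex. Everything downstream leans on $\f_0$: the definition $\h=\Db\f_0$, the norm identity $\norm{\h}_\infty\le\norm{\f_0}_\infty=\norm{\f}_\infty$, and the approximation scheme in which you expand $\f_0$ as a uniform limit of sums $\sum f_j\cdot x_j$. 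So the argument does not get off the ground. (Your route is essentially the proof of Theorem~\ref{t:dirich-reseni}, which \emph{assumes} $\f\in C(K,E)$ and metrizable $K$, and which in the paper is derived \emph{from} the present theorem, not the other way around.)

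The correct route -- and the one the paper takes -- avoids any extension to $K$: transport $\f$ to $\g=\f\circ\phi^{-1}$ on $\ext X$, where $X=\es(A_c(H_w))$ is a \emph{simplex} (weak simpliciality plus \cite[Theorem 6.54]{lmns}) with $\ext X=\phi(\Ch_HK)$ Lindel\"of, and invoke \cite[Theorem 1.1]{spurny-archiv} directly. Note that this cited theorem is already the \emph{vector-valued} statement (it is a vector-valued version of Jellett's theorem), so the ``modularity in the target space'' that you flag as the main obstacle is not an obstacle at all -- but you also cannot get the norm equality from $\norm{\Db}\le1$ as you propose; the paper instead derives $\norm{\widetilde\g}_\infty=\norm{\f}_\infty$ by applying the minimum principle \cite[Corollary 4.23]{lmns} to each scalar affine Baire-one function $x^*\circ\widetilde\g$ and then Hahn--Banach. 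Finally, pulling back by $\phi$ via Proposition~\ref{p:acwh-aese} lands the bounded approximating sequence in $A_c^w(H)$. You correctly identified $\Ch_HK=\Ch_{H_w}K$ and the role of Proposition~\ref{p:acwh-aese}, but the extension step and the norm argument as written would fail.
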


\begin{proof}
Let $X=\es(A_c(H_w))$ be the state space of $A_c(H_w)$. Then $\Ch_HK=\Ch_{H_w}K$ and $\phi=\phi_{A_c^w(H)}$ maps $\Ch_H K=\Ch_{H_w} K$ onto $\ext X$ (see Theorem~\ref{T:hranice}). Further,  $X$ is a simplex (see \cite[Theorem 6.54]{lmns}) and hence we can apply \cite[Theorem 1.1]{spurny-archiv} for the function $\g=\f\circ \phi^{-1}\colon \ext X\to E$ to obtain a function $\widetilde{\g}\in (\fra(X,E))_1$ extending $\g$. Moreover, by the same theorem the approximating sequence $\widetilde{\g_n}\in \fra(X,E)$ can be taken to be bounded. Moreover, $\norm{\widetilde{\g}}_\infty=\norm{\f}_\infty$. Indeed, for each $x^*\in B_{E^*}$ the function $x^*\circ\widetilde{\g}$ is an affine Baire-one function on $X$ and hence, by the minimum principle \cite[Corollary 4.23]{lmns} we have
$$\norm{x^*\circ\widetilde{\g}}_\infty=\norm{x^*\circ\g}_\infty\le\norm{\g}_\infty=\norm{\f}_\infty.$$
Hence, a consequence to the Hahn-Banach theorem yields $\norm{\widetilde{\g}}_\infty\le\norm{\f}_\infty$. The converse inequality is obvious. To conclude we set $\h_n=\widetilde{\g_n}\circ\phi$ and $\h=\widetilde{\g}\circ\phi$. Then $\h_n\in A_c^w(H)$ by Proposition~\ref{p:acwh-aese} and $\h_n\to \h$. Hence $\h$ is the desired extension.
\end{proof}

We continue by a more precise vector-valued version of \cite[Theorem 6.8(a)]{lmns}.

\begin{thm}
    \label{t:dirich-reseni}
    Let $H\subset C(K,E)$ be a function space separating points of $K$ such that $H$ is weakly simplicial and $H_w$ contains constants. If $K$ is metrizable and $\f \in C(K,E)$, then $\Db \f\in (A_c^w(H))_1$, where the approximating sequence can be taken to be bounded.
\end{thm}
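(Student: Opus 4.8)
The plan is to reduce the vector-valued statement to the scalar one proved in \cite[Theorem 6.8(a)]{lmns} by testing against functionals, exactly in the spirit of the reductions used throughout Sections~\ref{sec:repmeas} and~\ref{sec:dilation}. First I would recall that by Lemma~\ref{L:vlast-Tb}$(a)$ we have $x^*\circ\Db\f=D(x^*\circ\f)$ for every $x^*\in E^*$, where $D$ is the scalar dilation operator associated with the simplicial space $H_w$. Since $K$ is metrizable, the scalar result \cite[Theorem 6.8(a)]{lmns} tells us that for each $g\in C(K)$ the function $Dg$ belongs to $(A_c(H_w))_1$ with a bounded approximating sequence. So the core of the argument is to upgrade this pointwise, coordinate-wise information to a genuine $E$-valued Baire-one approximation.

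The natural route is via the isometry $I\colon\fra(X,E)\to A_c^w(H)$ of Proposition~\ref{p:acwh-aese}, where $X=\es(A_c(H_w))$. First I would transport everything to $X$: the metrizability of $K$ gives metrizability of $X$ (as $X$ is a metrizable compact convex set when $K$ is metric and $A_c(H_w)$ is separable), and by Proposition~\ref{p:acwh-aese} it suffices to produce $\g\in(\fra(X,E))_1$ with a bounded approximating sequence such that $\g\circ\phi=\Db\f$, where $\phi=\phi_{A_c(H_w)}$. Equivalently, using that $A_c^w(H)=I(\fra(X,E))$, one shows $\Db\f=I\g$ for a suitable $\g$. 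The function $\g$ should be defined so that $x^*\circ\g$ is, for each $x^*$, the canonical continuous affine extension to $X$ of the barycentric map determined by $x^*\circ\f$; the scalar theory gives that this affine function is Baire-one on $X$. The key point is then that these scalar Baire-one affine functions assemble into a single $E$-valued Baire-one affine function with controlled approximants — this is precisely the content of a vector-valued Baire-class result on metrizable compact convex sets, and I would invoke \cite[Theorem 1.1]{spurny-archiv} (or the relevant selection/assembling lemma from \cite{spurny-archiv}) applied on $X$, just as in the proof of Theorem~\ref{t:dirich-lindel}, to obtain $\g\in(\fra(X,E))_1$ with bounded approximating sequence $\g_n\in\fra(X,E)$ agreeing with the prescribed data.

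Concretely, I would proceed as follows: (1) Set $X=\es(A_c(H_w))$ and note $X$ is a metrizable simplex (by \cite[Theorem 6.54]{lmns} and metrizability of $K$), with $\phi=\phi_{A_c(H_w)}$ a homeomorphic embedding of $K$ into $X$ carrying $\Ch_HK$ onto $\ext X$. (2) Observe $\f\circ\phi^{-1}$ is defined and continuous on $\phi(K)$; using the disintegration/barycenter formalism, define $\g\colon X\to E$ by letting $x^*\circ\g$ be the unique continuous affine extension to $X$ of $x^*\circ\f\circ\phi^{-1}$ on $\phi(K)$, which exists because $A_c(H_w)$ is self-adjoint and $\Phi_{A_c(H_w)}$ is surjective. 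Check $\g$ is affine and $I\g=\Db\f$ using Lemma~\ref{L:vlast-Tb}$(a)$: for $t\in K$, $x^*(I\g(t))=x^*(\g(\phi(t)))=$ the barycentric value of $x^*\circ\f$ over $\phi(\mu)$ for $\mu=\delta_t$, which is exactly $D(x^*\circ\f)(t)=x^*(\Db\f(t))$. (3) Apply \cite[Theorem 1.1]{spurny-archiv} on the metrizable simplex $X$ to get that $\g\in(\fra(X,E))_1$ with a bounded approximating sequence $\g_n$; the boundedness follows since $\norm{\g}_\infty\le\norm{\f}_\infty$ by the minimum principle \cite[Corollary 4.23]{lmns} applied to each $x^*\circ\g_n$ as in the proof of Theorem~\ref{t:dirich-lindel}. (4) Set $\h_n=\g_n\circ\phi\in A_c^w(H)$ by Proposition~\ref{p:acwh-aese}; then $\h_n\to\Db\f$ pointwise and the sequence is bounded, so $\Db\f\in(A_c^w(H))_1$.

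The main obstacle I anticipate is step (2)–(3): one must verify carefully that the pointwise-defined map $\g$ is not merely separately-affine-after-composing-with-functionals but actually $E$-valued Baire-one as a map into $E$ in the norm sense, with a \emph{uniformly bounded} sequence of continuous affine approximants. The coordinate-wise statements are immediate from the scalar theory, but gluing them into a single vector-valued Baire-one function requires precisely the vector-valued machinery of \cite{spurny-archiv} (on the metrizable simplex $X$, where $\ext X=\phi(\Ch_HK)$ is metrizable hence \lin), rather than any naive coordinatewise argument; this is exactly why the statement is phrased for metrizable $K$. Once \cite[Theorem 1.1]{spurny-archiv} is invoked on $X$ with the continuous data $\g|_{\ext X}=\f\circ\phi^{-1}|_{\ext X}$, the remaining bookkeeping — transferring back along $\phi$ via Proposition~\ref{p:acwh-aese} and recognizing the extension as $\Db\f$ through Lemma~\ref{L:vlast-Tb}$(a)$ — is routine.
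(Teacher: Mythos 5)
Your overall strategy is the paper's: extend the boundary data $\f|_{\Ch_HK}$ to a Baire-one affine function with bounded approximants and then identify that extension with $\Db\f$. (The paper does this more economically by simply invoking Theorem~\ref{t:dirich-lindel}, which already packages the passage to $X=\es(A_c(H_w))$ and the appeal to \cite[Theorem 1.1]{spurny-archiv}, and then verifying $\Db\f=\h$ by dominated convergence.) However, your step (2) as written contains a genuine error. You define $\g$ by ``letting $x^*\circ\g$ be the unique continuous affine extension to $X$ of $x^*\circ\f\circ\phi^{-1}$ on $\phi(K)$, which exists because $A_c(H_w)$ is self-adjoint and $\Phi_{A_c(H_w)}$ is surjective.'' No such extension exists in general: surjectivity of $\Phi_{A_c(H_w)}$ says that restrictions of continuous affine functions on $X$ to $\phi(K)$ are exactly the elements of $A_c(H_w)$, and $x^*\circ\f$ is merely an element of $C(K)$, not of $A_c(H_w)$ (if it always were, then $\f\in A_c^w(H)$ and $\Db\f=\f$, so there would be nothing to prove). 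Worse, if such a $\g$ did exist it would satisfy $\g\circ\phi=\f$, contradicting the identity $I\g=\Db\f$ you want. The correct input data is only the restriction to $\ext X=\phi(\Ch_HK)$ (equivalently $\f|_{\Ch_HK}$), and the resulting extension is affine of the first Baire class, not continuous --- which is precisely why \cite[Theorem 1.1]{spurny-archiv} (or, more directly, Theorem~\ref{t:dirich-lindel}) is needed. Your step (3) silently switches to this correct data, so the proposal is internally inconsistent rather than irreparable, but as written step (2) does not define the function you need.

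A second, smaller gap is the identification $x^*(\g(\phi(t)))=D(x^*\circ\f)(t)$. Once $\g$ is only affine Baire-one, the ``barycentric value'' computation is not automatic from affinity; it has to be justified by the bounded approximating sequence. Concretely, with $(\h_n)\subset A_c^w(H)$ bounded and $\h_n\to\h$ pointwise, $\h=\f$ on $\Ch_HK$, one writes
\begin{equation*}
\Db\f(t)=(B)\int_{\Ch_HK}\f\di\delta_t=(B)\int_{\Ch_HK}\Bigl(\lim_{n\to\infty}\h_n\Bigr)\di\delta_t
=\lim_{n\to\infty}(B)\int\h_n\di\delta_t=\lim_{n\to\infty}\h_n(t)=\h(t),
\end{equation*}
using that $\delta_t$ is carried by $\Ch_HK$ (here metrizability of $K$ enters, via \cite[Corollary 3.62]{lmns}), the Lebesgue dominated convergence theorem for the bounded sequence $(\h_n)$, and $(B)\int\h_n\di\delta_t=\h_n(t)$ because $\h_n\in A_c^w(H)$ and $\delta_t\in M_t(H)$. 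With step (2) repaired to extend only from $\ext X$ and this convergence argument supplied, your proof becomes the paper's.
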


\begin{proof}
Using Theorem~\ref{t:dirich-lindel} we find a function $\h\in (A_c^w(H))_1$ extending $\f|_{\Ch_H K}$. Let $(\h_n)\subset A_c^w(H)$ be a bounded approximating sequence for $\h$. Then for every $t\in K$, $\delta_t$ is carried by $\Ch_H K=\Ch_{H_w} K$ and hence
\[
\begin{aligned}
\Db \f(t)&=(B)\int_{\Ch_H K} \f\di\delta_t=(B)\int_{\Ch_H K}(\lim_{n\to \infty} \h_n)\di\delta_t\\
&=\lim_{n\to\infty} (B)\int_{\Ch_H K} \h_n\di\delta_t=\lim_{n\to\infty} \h_n(t)=\h(t),
\end{aligned}
\]
where the third equality follows from the Lebesgue dominated convergence theorem. Hence $\Db \f=\h\in (A_c^w(H))_1$.
\end{proof}

We finish this subsection by a vector-valued variant of characterization of Bauer simplicial spaces 
(cf. \cite[Section 6.6.A]{lmns}).

\begin{thm}
    \label{t:dirich-closed}
    Let $H\subset C(K,E)$ be a function space separating points of $K$ such that $H$ is weakly simplicial and $H_w$ contains constants. If $\Ch_HK$ is closed, then the restriction map
    $$R\colon\f\mapsto \f|_{\Ch_HK}$$
    is a surjective linear isometry of $A_c^w(H)$ onto $C(\Ch_HK,E)$.
\end{thm}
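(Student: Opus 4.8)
The plan is to transfer the statement to the state space and invoke the vector-valued analogue of Bauer's theorem on simplices with compact extreme boundary. First I would record the structure: since $H_w$ is simplicial and contains constants, $X:=\es(A_c(H_w))$ is a simplex (\cite[Theorem 6.54]{lmns}), the map $\phi:=\phi_{A_c(H_w)}$ embeds $K$ homeomorphically into $X$ and carries $\Ch_HK=\Ch_{H_w}K$ onto $\ext X$ (as in the proof of Theorem~\ref{t:dirich-lindel}, using Theorem~\ref{T:hranice}), and $I\f=\f\circ\phi$ is a surjective isometry of $\fra(X,E)$ onto $A_c^w(H)$ by Proposition~\ref{p:acwh-aese}. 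If $\Ch_HK$ is closed it is compact, hence $\ext X=\phi(\Ch_HK)$ is compact, i.e.\ $X$ is a Bauer simplex, and $\phi|_{\Ch_HK}$ is a homeomorphism onto $\ext X$; composition with it is an isometric isomorphism $J\colon C(\ext X,E)\to C(\Ch_HK,E)$ satisfying $R\circ I=J\circ R'$, where $R'\colon\fra(X,E)\to C(\ext X,E)$ is the restriction map. So it suffices to prove that $R'$ is a surjective isometry.

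For the isometry part, fix $\f\in\fra(X,E)$. Every $x\in X$ has a unique maximal representing probability measure $\mu_x$, and since $X$ is Bauer, $\mu_x$ is carried by $\ext X$. For $x^*\in B_{E^*}$ the function $x^*\circ\f$ lies in $\fra(X)$, so $x^*(\f(x))=\int x^*\circ\f\,d\mu_x$ and hence $|x^*(\f(x))|\le\|(x^*\circ\f)|_{\ext X}\|_\infty\le\|\f|_{\ext X}\|_\infty$; taking suprema over $x^*\in B_{E^*}$ and over $x\in X$ gives $\|\f\|_\infty\le\|\f|_{\ext X}\|_\infty$, and the reverse inequality is trivial.

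For surjectivity I would first note that on a Bauer simplex the map $x\mapsto\mu_x$ is affine and weak$^*$-continuous: affinity is immediate from uniqueness, and any weak$^*$-cluster point of a net $\mu_{x_\alpha}$ is carried by $\overline{\ext X}=\ext X$ and has barycenter $\lim x_\alpha$, hence equals $\mu_{\lim x_\alpha}$. Given $\g\in C(\ext X,E)$, set $\tilde\g(x)=(B)\int_{\ext X}\g\,d\mu_x$. Then $\tilde\g$ is affine, $\tilde\g|_{\ext X}=\g$ because $\mu_e=\ep_e$ for $e\in\ext X$, and $\|\tilde\g(x)\|\le\int\|\g\|\,d\mu_x\le\|\g\|_\infty$. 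Continuity of $\tilde\g$ follows by approximating $\g$ uniformly by functions of the form $\sum_{i=1}^n h_i\cdot e_i$ with $h_i\in C(\ext X)$ and $e_i\in E$ (these are dense in $C(\ext X,E)$): for such a function the associated map is $x\mapsto\sum_i\bigl(\int h_i\,d\mu_x\bigr)e_i$, continuous by the weak$^*$-continuity just noted, and a uniform limit of continuous functions is continuous. Hence $\tilde\g\in\fra(X,E)$ and $R'(\tilde\g)=\g$.

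I expect the only genuinely delicate point to be this vector-valued Bauer theorem — in particular the weak$^*$-continuity of $x\mapsto\mu_x$ and the passage from scalar to norm continuity of $\tilde\g$, which is why I route the latter through the density of ``$E$-valued simple continuous functions''; the rest is bookkeeping with the identifications of Proposition~\ref{p:acwh-aese} and Theorem~\ref{T:hranice}. As an alternative that stays on $K$: by Lemma~\ref{l:vlast-depe}(6) every $\f\in A_c^w(H)$ satisfies $\f(t)=(B)\int\f\,d\delta_t$ with $\delta_t\in M_1(K)$ carried by $\Ch_HK$, so $\|\f\|_\infty=\|R\f\|_\infty$; for surjectivity one sets $\f(t)=(B)\int_{\Ch_HK}\g\,d\delta_t$, checks $\f|_{\Ch_HK}=\g$, uses the scalar Bauer theorem \cite[Section~6.6.A]{lmns} to see that each $x^*\circ\f$ coincides with the $A_c(H_w)$-extension of $x^*\circ\g$ and hence lies in $A_c(H_w)$, deduces that $\f$ has relatively norm-compact range and is scalarly continuous, hence norm-continuous, and finally concludes $\f\in A_c^w(H)$ from $A_c^w(H)_w=A_c(H_w)$ together with Lemma~\ref{L:AcH}(d).
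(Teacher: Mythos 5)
Your proposal is correct, and its skeleton coincides with the paper's: the isometry is obtained from the fact that each $\delta_t$ (equivalently, each maximal representing measure $\mu_x$ on $X=\es(A_c(H_w))$) is carried by the closed Choquet boundary, and surjectivity is obtained by transferring to the state space via Proposition~\ref{p:acwh-aese} and extending from $\ext X$. The one substantive difference is how the extension step is handled: the paper simply invokes \cite[Theorem 2.5]{rondos-spurny} (extension of $E$-valued continuous functions from the closed extreme boundary of a simplex to affine continuous functions), whereas you prove this vector-valued Bauer theorem from scratch --- affinity and weak$^*$-continuity of $x\mapsto\mu_x$ on a Bauer simplex, the definition $\widetilde{\g}(x)=(B)\int_{\ext X}\g\di\mu_x$, and norm-continuity via uniform approximation by sums $\sum_i h_i\cdot e_i$. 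Your argument is sound (the key points check out: a probability measure carried by the closed set $\ext X$ is maximal, so cluster points of $\mu_{x_\alpha}$ are identified by uniqueness; and $C(\ext X)\otimes E$ is dense in $C(\ext X,E)$), and it makes the proof self-contained at the cost of some length. Your sketched alternative staying on $K$, defining $\f(t)=(B)\int_{\Ch_HK}\g\di\delta_t$ and checking $x^*\circ\f\in A_c(H_w)$, also works: the range lies in $\overline{\co}(\g(\Ch_HK))$, which is norm-compact, so scalar continuity upgrades to norm continuity, and then $\f\in A_c^w(H)$ follows by testing Bochner integrals against functionals as in the proof of Proposition~\ref{P:MtH=MtHw}.
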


\begin{proof}
    It is clear that $R$ is linear and that $\norm{R}\le 1$. Further, if $\f\in A_c^w(H)$ and $t\in K$, then $\delta_t$ is carried by $\Ch_HK$ and
    $$\f(t)=(B)\int \f\di\delta_t,$$
    hence we easily deduce that $\norm{\f}\le\norm{R\f}$, so $R$ is an isometry.

   It remains to prove it is surjective. To prove we use the pattern of the proof of Theorem~\ref{t:dirich-lindel}. Let $X$ be as in the quoted proof and let $\phi=\phi_{A_c^w(H)}$. Fix $\f\in C(\Ch_HK,E)$. Then
   $\g=\f\circ\phi^{-1}$ belongs to $C(\ext X,E)$. Since $X$ is a simplex and $\ext X$ is closed,
   \cite[Theorem 2.5]{rondos-spurny} yields $\widetilde{\g}\in \fra(X,E)$ extending $\g$. Then $\widetilde{\f}=\widetilde{\g}\circ\phi$ belongs to $A_c^w(H)$ (by Proposition~\ref{p:acwh-aese}).
   It remains to observe that $\f=R\widetilde{\f}$. This completes the proof.
\end{proof}

\section{Simpliciality of function spaces containing constant functions}\label{sec:skonstantami}

In Section~\ref{sec:reprez} we introduced four variants of simpliciality of vector-valued function spaces. These variants go in two basic directions which were compared in Section~\ref{sec:H-aff}. In particular, some examples showed that these two directions are incomparable. In the current section we will show that for spaces containing constants the situation is simpler and quite interesting.
We start by a lemma which points out some specific properties of function spaces containing constants.

\begin{lemma}\label{L:skonstantami}
     Let $H\subset C(K,E)$ be a  function space separating points of $K$ and containing constants.
     Then the following assertions are valid.
     \begin{enumerate}[$(i)$]
         \item $\norm{x^*\circ\phi_H(t)}=\norm{x^*}$ whenever $t\in K$ and $x^*\in E^*$.
         \item Let $t\in K$ and $x^*\in E^*$. If $\mu\in M_t(H)$, then $\mu\otimes x^*\in M_{x^*\circ\phi_H(t)}(H)$.
         \item $A_c^v(H)\subset A_c^w(H)$.
         \item $H$-boundary measures coincide with $A_c^v(H)$-boundary measures.
     \end{enumerate}
\end{lemma}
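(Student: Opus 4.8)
The plan is to prove the four assertions in order, exploiting the fact that $H$ contains all constant functions $K\to E$.

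For $(i)$, the inequality $\norm{x^*\circ\phi_H(t)}\le\norm{x^*}$ is immediate since $\norm{\phi_H(t)}\le1$; actually by Lemma~\ref{L:normyevaluaci}$(a)$ we already know $\norm{\phi_H(t)}=1$ when $H$ contains constants. For the reverse inequality, given $\ep>0$ pick $x\in B_E$ with $\abs{x^*(x)}>\norm{x^*}-\ep$ and test $x^*\circ\phi_H(t)$ on the constant function equal to $x$, which lies in $H$ and has norm at most one; this gives $\norm{x^*\circ\phi_H(t)}>\norm{x^*}-\ep$. Hence equality. (Alternatively: $\norm{x^*\circ\phi_H(t)}=\norm{\phi_{H_l}(t,x^*)}=\norm{x^*}$ directly by Lemma~\ref{L:normyevaluaci}$(a)$.)

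For $(ii)$: if $\mu\in M_t(H)$, then by definition $\norm{\mu}=\norm{\phi_H(t)}=1$ (as $H$ contains constants) and $\f(t)=(B)\int\f\di\mu$ for all $\f\in H$. Consider the vector measure $\mu\otimes x^*$, i.e.\ the $E^*$-valued measure $A\mapsto\mu(A)x^*$; its variation is $\norm{\mu\otimes x^*}=\norm{\mu}\cdot\norm{x^*}=\norm{x^*}=\norm{x^*\circ\phi_H(t)}$ by $(i)$. Moreover, for $\f\in H$, using the definition of the vector integral and the scalar representation, $\int\f\di(\mu\otimes x^*)=\int x^*\circ\f\di\mu=x^*\bigl((B)\int\f\di\mu\bigr)=x^*(\f(t))=(x^*\circ\phi_H(t))(\f)$. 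Hence $\mu\otimes x^*\in M_{x^*\circ\phi_H(t)}(H)$.

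For $(iii)$: let $\f\in A_c^v(H)$; we must show $\f\in A_c^w(H)$, i.e.\ $\f(t)=(B)\int\f\di\mu$ for every $t\in K$ and $\mu\in M_t(H)$. Fix such $t,\mu$ and an arbitrary $x^*\in E^*$. By $(ii)$ we have $\mu\otimes x^*\in M_{x^*\circ\phi_H(t)}(H)$, so the defining property of $A_c^v(H)$ gives $x^*(\f(t))=\int\f\di(\mu\otimes x^*)=\int x^*\circ\f\di\mu=x^*\bigl((B)\int\f\di\mu\bigr)$. Since $x^*\in E^*$ is arbitrary, a consequence of the Hahn--Banach theorem yields $\f(t)=(B)\int\f\di\mu$, as required.

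For $(iv)$: one inclusion, that every $H$-boundary measure is $A_c^v(H)$-boundary, already holds in general (it is the earlier unnumbered lemma, using $H_w\subset A_c^v(H)_w$). For the converse, by Theorem~\ref{T:H-maximal} a measure $\mu$ is $H$-boundary iff $\abs\mu$ is $H_w$-maximal (maximal for $B_{H_w^*}$), and $A_c^v(H)$-boundary iff $\abs\mu$ is $A_c^v(H)_w$-maximal. So it suffices to show that $H_w$-boundary measures and $A_c^v(H)_w$-boundary measures on $K$ coincide, which by \cite[Proposition~4.5]{bezkonstant} follows once we know $A_c^v(H)_w\subset A_c(H_w)$. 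But the latter is exactly Lemma~\ref{L:AcvH}-type reasoning combined with $(iii)$: since $A_c^v(H)\subset A_c^w(H)$ by part $(iii)$, we get $A_c^v(H)_w\subset A_c^w(H)_w=A_c(H_w)$, where the last equality is Proposition~\ref{P:AcH vlastnosti}$(b)$; and since also $H_w\subset A_c^v(H)_w$, sandwiching gives $H_w\subset A_c^v(H)_w\subset A_c(H_w)$, and then the cited proposition of \cite{bezkonstant} shows all three spaces have the same boundary measures. Tracing the equivalences back through Theorem~\ref{T:H-maximal} finishes $(iv)$.

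The only mildly delicate point is handling the vector-measure bookkeeping in $(ii)$ carefully --- in particular identifying $\int\f\di(\mu\otimes x^*)$ with $\int x^*\circ\f\di\mu$ (this is immediate for simple functions and passes to $C(K,E)$ by the continuity of the integral, and it is essentially the identity $\int f\cdot x\di\mu=\int f\di\mu_x$ recorded in Section~\ref{sec:reprez-functionals}), and computing the variation $\norm{\mu\otimes x^*}=\norm{\mu}\norm{x^*}$; everything else is a direct consequence of $H$ containing constants together with the already-established Proposition~\ref{P:AcH vlastnosti}, Lemma~\ref{L:AcvH}, Theorem~\ref{T:H-maximal}, and \cite[Proposition~4.5]{bezkonstant}.
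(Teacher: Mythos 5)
Your proof is correct and follows essentially the same route as the paper's: $(i)$ via Lemma~\ref{L:normyevaluaci}$(a)$, $(ii)$ by computing $\int\f\di(\mu\otimes x^*)=\int x^*\circ\f\di\mu$ and the variation (the paper phrases this through $T^*(\mu\times\ep_{x^*})=\mu\otimes x^*$, but that is only a cosmetic difference), $(iii)$ by combining $(ii)$ with a Hahn--Banach separation, and $(iv)$ by the sandwich $H_w\subset A_c^v(H)_w\subset A_c(H_w)$ together with Theorem~\ref{T:H-maximal} and \cite[Proposition 4.5]{bezkonstant}. No gaps.
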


\begin{proof}
    $(i)$: This follows from Lemma~\ref{L:normyevaluaci}$(a)$.

    $(ii)$: Assume $\mu\in M_t(H)$ and $x^*\in E^*$. Since $H$ contains constants, we deduce $\norm{\mu}=1$ (in fact, $\mu\in M_1(K)$). So, $\norm{\mu\otimes x^*}=\norm{x^*}=\norm{x^*\circ\phi_H(t)}$. Moreover, for each $\h\in H$ we have
$$x^*(\h(t))=x^*\left((B)\int \h\di\mu\right)=\int x^*\circ\h\di\mu=\int T\h \di \mu\times \ep_{x^*}=\int \h\di T^*(\mu\times \ep_{x^*})$$
and 
$$T^*(\mu\times \ep_{x^*})(A)(x)=\int_{A\times B_{E^*}} y^*(x)\di(\mu\times \ep_{x^*})(t,y^*)=\int_A x^*(x)\di\mu=\mu(A)\cdot x^*(x),$$
i.e., $T^*(\mu\times \ep_{x^*})=\mu\otimes x^*$. Hence
$$x^*(\h(t))=\int \h\di (\mu\otimes x^*).$$
This completes the argument.

$(iii)$: Let $\f\in A_c^v(H)$. Let $t\in K$ and $\mu\in M_t(H)$. By $(ii)$ we know that
$\mu\otimes x^*$ belongs to $M_{x^*\circ\phi_H(t)}(H)$ for each $x^*\in E^*$. Hence, for each $x^*\in E^*$ we have
$$x^*(\f(t))=\int \f \di \mu\otimes x^* =\int x^*\circ\f\di\mu=x^*\left((B)\int \f\di\mu\right).$$
Since $x^*\in E^*$ is arbitrary, we deduce
$\f(t)=(B)\int \f\di\mu$. This completes the proof that $\f\in A_c^w(H)$.

$(iv)$: Since $H\subset A_c^v(H)\subset A_c^w(H)$ (by $(iii)$), we have
$$H_w\subset A_c^v(H)_w\subset A_c^w(H)_w=A_c(H_w),$$
where the last equality follows from Proposition~\ref{P:AcH vlastnosti}$(b)$.  It follows that
$A_c(A_c^v(H)_w)=A_c(H_w)$, thus for a measure $\mu\in M(K)$ we have
$$\begin{gathered}
    \mu\mbox{ is $H$-boundary} \iff \mu\mbox{ is $H_w$-boundary} 
\iff \mu\mbox{ is $A_c(H_w)$-boundary} \\ \iff \mu\mbox{ is $A_c^v(H)_w$-boundary} 
\iff \mu\mbox{ is $A_c^v(H)$-boundary}.\end{gathered}$$
Indeed, the first and the last equivalences follow from the definitions (using Theorem~\ref{T:H-maximal}). The second and the third equivalences follow from \cite[Proposition 3.67]{lmns}
(or \cite[Proposition 4.5]{bezkonstant}).
\end{proof}

The main result of this section is the following theorem. It may be viewed as a vector-valued variant of \cite[Proposition 5.4]{bezkonstant}.

\begin{thm}\label{T:ruznesimpl}
    Let $H\subset C(K,E)$ be a function space separating points of $K$ and containing constants.
    Then the following assertions are equivalent.
    \begin{enumerate}[$(1)$]
        \item $H$ is vector simplicial.
        \item $A_c^v(H)$ is vector simplicial.
        \item $A_c^v(H)$ is functionally vector simplicial.
        \item $H$ is weakly simplicial and $A_c^v(H)=A_c^w(H)$.
        \item The only $H$-boundary measure in $A_c^v(H)^\perp$ is the zero measure.
    \end{enumerate}
\end{thm}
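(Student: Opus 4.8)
The plan is to prove $(1)\Rightarrow(4)\Rightarrow(5)\Rightarrow(1)$ together with the easy equivalences $(1)\Leftrightarrow(2)$, $(3)\Rightarrow(2)$ and the implication $(5)\Rightarrow(3)$. The guiding principle is that, because $H$ contains constants, $H_w$, $A_c(H_w)$ and $A_c^w(H)$ all contain constants, and — by Lemma~\ref{L:skonstantami}(iv) together with Lemma~\ref{L:AcvH}(d) — replacing $H$ by $A_c^v(H)$ changes neither the sets $M_{x^*\circ\phi_H(t)}(H)$ nor the class of $H$-boundary measures on $K$. In particular $(1)\Leftrightarrow(2)$ follows at once, and $(3)\Rightarrow(2)$ is trivial, vector simpliciality of $A_c^v(H)$ being the case $\varphi=x^*\circ\phi_{A_c^v(H)}(t)$ of functional vector simpliciality.

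For $(1)\Rightarrow(4)$ I would first deduce weak simpliciality of $H$, i.e. simpliciality of $H_w$: if $\mu_1\ne\mu_2$ were $H$-boundary measures in $M_t(H)=M_t(H_w)$ (probability measures, as $H_w$ contains constants, by Proposition~\ref{P:MtH=MtHw}), then Lemma~\ref{L:skonstantami}(ii) puts $\mu_i\otimes x^*$ into $M_{x^*\circ\phi_H(t)}(H)$; these are distinct for $x^*\ne0$ and are $H$-boundary since $\abs{\mu_i\otimes x^*}=\norm{x^*}\mu_i$ — contradicting $(1)$. The dilation operator $\Db$ of Section~\ref{sec:dilation} is therefore available. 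The inclusion $A_c^v(H)\subset A_c^w(H)$ is Lemma~\ref{L:skonstantami}(iii); for the converse, fix $\f\in A_c^w(H)$, $t\in K$, $x^*\in E^*$ and $\mu\in M_{x^*\circ\phi_H(t)}(H)$. Then $\Db\mu$ still represents $x^*\circ\phi_H(t)$ on $H$ (as $\Db\h=\h$ for $\h\in H$, Lemma~\ref{l:vlast-depe}(6)) and $\norm{\Db\mu}\le\norm{\mu}=\norm{x^*\circ\phi_H(t)}$, so $\Db\mu\in M_{x^*\circ\phi_H(t)}(H)$; it is moreover $H$-boundary by Lemma~\ref{l:vlast-depe}(4). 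Since $\delta_t\otimes x^*$ is another $H$-boundary member of $M_{x^*\circ\phi_H(t)}(H)$, vector simpliciality forces $\Db\mu=\delta_t\otimes x^*$, hence
\[
\int\f\di\mu=\int\Db\f\di\mu=\int\f\di\Db\mu=\int\f\di(\delta_t\otimes x^*)=x^*(\f(t)),
\]
where the second equality is Lemma~\ref{l:vlast-depe}(3) (with $\Db\f=\f$) and the last one holds because $\int\f\di(\delta_t\otimes x^*)=x^*\bigl((B)\int\f\di\delta_t\bigr)=x^*(\Db\f(t))=x^*(\f(t))$. Thus $\f\in A_c^v(H)$.

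For $(4)\Rightarrow(5)$, let $\mu\in M(K,E^*)$ be $H$-boundary with $\int\f\di\mu=0$ for all $\f\in A_c^v(H)=A_c^w(H)$. For $x\in E$ and $f\in A_c(H_w)=A_c^w(H)_w$ (Proposition~\ref{P:AcH vlastnosti}(b)) we have $f\cdot x\in A_c^w(H)$ by Proposition~\ref{P:AcH vlastnosti}(a), so $\int f\di\mu_x=\int f\cdot x\di\mu=0$; moreover $\abs{\mu_x}\le\norm{x}\abs{\mu}$, and $\abs{\mu}$ is $H_w$-boundary (Theorem~\ref{T:H-maximal}), so $\mu_x$ is $H_w$-boundary. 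As $H_w$ is simplicial and contains constants, the scalar case \cite[Proposition 5.4]{bezkonstant} says that the only $H_w$-boundary measure in $A_c(H_w)^\perp$ is $0$; hence $\mu_x=0$ for every $x$, i.e. $\mu=0$. For $(5)\Rightarrow(1)$: if $\mu_1,\mu_2\in M_{x^*\circ\phi_H(t)}(H)$ are $H$-boundary, then $\mu_1-\mu_2$ is $H$-boundary (its variation is dominated by $\abs{\mu_1}+\abs{\mu_2}$, and sub-measures and finite sums of maximal measures are maximal) and lies in $A_c^v(H)^\perp$ by the very definition of $A_c^v(H)$; by $(5)$ it vanishes. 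Finally, for $(5)\Rightarrow(3)$: by Lemma~\ref{L:skonstantami}(iv), $(5)$ says that the only $A_c^v(H)$-boundary measure in $A_c^v(H)^\perp$ is $0$, so any two $A_c^v(H)$-boundary measures representing a given $\varphi\in A_c^v(H)^*$ differ by such a measure and coincide.

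The main obstacle is the inclusion $A_c^w(H)\subset A_c^v(H)$ inside $(1)\Rightarrow(4)$: an arbitrary $\mu\in M_{x^*\circ\phi_H(t)}(H)$ is not a boundary measure, so vector simpliciality does not apply to it directly. The trick is to push $\mu$ to its boundary projection $\Db\mu$ and to use the properties of the dilation operator to identify $\Db\mu$ with the canonical boundary measure $\delta_t\otimes x^*$. Everything else is bookkeeping with variations of $E^*$-valued measures together with the scalar input \cite[Proposition 5.4]{bezkonstant}.
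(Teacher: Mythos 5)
Your proposal is correct and follows essentially the same route as the paper: the cycle $(1)\Rightarrow(4)\Rightarrow(5)\Rightarrow(3)\Rightarrow(2)\Leftrightarrow(1)$, with weak simpliciality extracted from $(1)$ via $\mu_i\otimes x^*$, the inclusion $A_c^w(H)\subset A_c^v(H)$ obtained by pushing an arbitrary $\mu\in M_{x^*\circ\phi_H(t)}(H)$ to its boundary projection $\Db\mu=\delta_t\otimes x^*$, and the scalar result \cite[Proposition 5.4]{bezkonstant} applied to the measures $\mu_x$ in $(4)\Rightarrow(5)$. The only (harmless, slightly cleaner) deviation is that in $(1)\Rightarrow(4)$ you use the defining identity $\int\g\di\Db\mu=\int\Db\g\di\mu$ from Lemma~\ref{l:vlast-depe}$(3)$ directly, whereas the paper first lifts $\mu$ to a measure $\nu$ on $K\times B_{E^*}$ and passes through the operator $\Tb$; your extra direct argument for $(5)\Rightarrow(1)$ is redundant but valid.
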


\begin{proof}
    $(3)\implies(2)$: This is trivial.

    $(2)\iff(1)$: This follows by combining Lemma~\ref{L:AcvH}$(d)$ and Lemma~\ref{L:skonstantami}$(iv)$.

    $(1)\implies(4)$: Assume $H$ is vector simplicial. We first prove that $H$ is weakly simplicial.
    We proceed by contradition. Assume $H$ is not weakly simplicial. Then there is $t\in K$ and two distinct $H$-boundary measures $\mu_1,\mu_2\in M_t(H)$. Let $x^*\in E^*$ be nonzero. Then $\mu_1\otimes x^*,\mu_2\otimes x^*$ are two distinct boundary measures from $M_{x^*\circ\phi_H(t)}(H)$ (by Lemma~\ref{L:skonstantami}$(ii)$), so $H$ is not vector simplicial, a contradiction.

    Next we will show that $A_c^v(H)=A_c^w(H)$.  Inclusion `$\subset$' follows from Lemma~\ref{L:skonstantami}$(iii)$. To prove the converse fix $\f\in A_c^w(H)$, $t\in H$, $x^*\in E^*$ and $\mu\in M_{x^*\circ\phi_H(t)}(H)$. Let $\nu\in M(K\times B_{E^*})$ be such that $\norm{\nu}=\norm{\mu}$ and $T^*\nu=\mu$.

If $\g\in A_c^w(H)$ is arbitrary, then (using the notation from Section~\ref{sec:dilation})
$$\int \g\di\mu=\int T\g\di\nu=\int \Tb\g\di\nu=\int \g\di\Db\mu.$$
Indeed, the first equality follows from the choice of $\nu$. The second one follows from Lemma~\ref{L:vlast-Tb}$(c)$ and the third one from Lemma~\ref{L:vlast-Tb}$(d)$. 

If additionally $\g\in A_c^v(H)$, we deduce
$$\int \g\di\Db\mu=\int\g\di\mu=x^*(\g(t)).$$
Since  $\norm{\Db\mu}\le\norm{\mu}$ by Lemma~\ref{l:vlast-depe}$(3)$,
we conclude  that $\Db\mu\in  M_{x^*\circ\phi_H(t)}(H)$.
Further, $\Db\mu$ is an $H$-boundary measure by Lemma~\ref{l:vlast-depe}$(4)$ and hence by  vector simpliciality we deduce $\Db\mu=\delta_t\otimes x^*$.

Thus, applying the above equality to $\f$, we deduce that
$$\int\f\di\mu=\int \f\di\Db\mu=\int \f\di(\delta_t\otimes x^*)=
\int x^*\circ\f\di\delta_t=x^*(\f(t)).$$

Thus $\f\in A_c^v(H)$ and the proof is complete.

\smallskip

$(4)\implies(5)$:  Assume $(4)$ holds and fix an $H$-boundary measure $\mu\in A_c^v(H)^\perp$. For $x\in E$ and $f\in A_c(H_w)$ we have (due to Proposition~\ref{P:AcH vlastnosti}$(a)$) $f\cdot x\in A_c^w(H)=A_c^v(H)$, so
$$\int f\di\mu_x=\int f\cdot x\di\mu=0.$$
Hence $\mu_x\in A_c(H_w)^\perp$. Since $\mu_x$ is $H$-boundary and $A_c(H_w)$ is simplicial, we deduce $\mu_x=0$ (by  \cite[Proposition 5.4]{bezkonstant}). Since $x\in E$ is arbitrary, we conclude $\mu=0$. This completes the argument.

$(5)\implies(3)$: Assume $(5)$ holds and fix $\varphi\in A_c^v(H)^*$. Let $\mu_1,\mu_2\in M_\varphi(A_c^v(H))$ be $H$-boundary. Then $\mu_1-\mu_2\in A_c^v(H)^\perp$, so $\mu_1=\mu_2$. This completes the proof.
\end{proof}

We continue by a corollary on properties of $A_c^w(H)$ assuming just that $H_w$ contains constants.

\begin{cor}\label{cor:weaKsimpl}
    Let $H\subset C(K,E)$ be a function space separating points of $K$ such that $H_w$  contains constants.
    Then the following assertions are equivalent.
    \begin{enumerate}[$(1)$]
        \item $H$ is weakly simplicial.
        \item $A_c^w(H)$ is weakly simplicial.
        \item $A_c^w(H)$ is functionally weakly simplicial.
        \item $A_c^w(H)$ is vector simplicial.
        \item $A_c^w(H)$ is functionally vector simplicial.
        \item The only $H$-boundary measure in $A_c^w(H)^\perp$ is the zero measure.
    \end{enumerate}
\end{cor}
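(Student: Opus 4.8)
The plan is to reduce everything to the case of function spaces \emph{containing constants} (not merely $H_w$ containing constants) by passing from $H$ to the scalar space $A_c(H_w)$, and then invoke Theorem~\ref{T:ruznesimpl}. First I would record the key structural fact: since $H_w$ contains constants, Proposition~\ref{P:AcH vlastnosti}$(b)$ gives $A_c^w(H)_w=A_c(H_w)$, which contains constants; and Proposition~\ref{P:AcH vlastnosti}$(c)$ gives that $A_c^w(H)$ itself contains constants. Moreover $A_c^w(A_c^w(H))=A_c^w(H)$ by Lemma~\ref{L:AcH}$(e)$. So $A_c^w(H)$ is a function space separating points and \emph{containing constants}, hence Theorem~\ref{T:ruznesimpl} applies verbatim to $G:=A_c^w(H)$.

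Second, I would apply Theorem~\ref{T:ruznesimpl} to $G=A_c^w(H)$. Writing out its five equivalent conditions for $G$: $(1')$ $G$ is vector simplicial; $(2')$ $A_c^v(G)$ is vector simplicial; $(3')$ $A_c^v(G)$ is functionally vector simplicial; $(4')$ $G$ is weakly simplicial and $A_c^v(G)=A_c^w(G)$; $(5')$ the only $G$-boundary measure in $A_c^v(G)^\perp$ is zero. Now I need to translate these back into statements about $H$ and $A_c^w(H)$. The identification $A_c^w(G)=A_c^w(A_c^w(H))=A_c^w(H)=G$ (Lemma~\ref{L:AcH}$(e)$) is immediate. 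The main point will be to show that $A_c^v(G)=A_c^w(G)=G$, i.e. that $A_c^v(A_c^w(H))=A_c^w(H)$: since $G$ contains constants, Lemma~\ref{L:skonstantami}$(iii)$ gives $A_c^v(G)\subset A_c^w(G)=G$, and the reverse inclusion $G\subset A_c^v(G)$ holds because $G\subset A_c^v(G)$ always (Lemma~\ref{L:AcvH}$(a)$). Hence for $G$, condition $(4')$ simplifies: the clause $A_c^v(G)=A_c^w(G)$ is automatic, so $(4')$ is just "$G$ is weakly simplicial", and likewise $(2'),(3')$ become "$G$ is (functionally) vector simplicial" and $(5')$ becomes "the only $G$-boundary measure in $G^\perp$ is zero".

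Third, I would link weak simpliciality of $G=A_c^w(H)$ with that of $H$. By Proposition~\ref{P:weaksimp}, $H$ is weakly simplicial iff $A_c^w(H)$ is weakly simplicial iff $H_w$ is simplicial iff $A_c(H_w)$ is simplicial; this gives $(1)\iff(2)$ of the corollary and also the equivalence with "$G$ is weakly simplicial = $(4')$". Combining with the Theorem~\ref{T:ruznesimpl} equivalences for $G$: $(2)\iff(3)\iff(4)$ of the corollary follow since these are exactly $(4')\iff(2')\iff$ (the simplified) $(3')$ for $G$, using $A_c^v(G)=G$. For the corollary's $(5)$: since $A_c^v(G)=G=A_c^w(H)$, condition $(5')$ reads "the only $A_c^w(H)$-boundary measure in $A_c^w(H)^\perp$ is zero", and by Proposition~\ref{P:AcwHboundary}$(b)$, $A_c^w(H)$-boundary measures coincide with $H$-boundary measures; so $(5')$ is exactly condition $(6)$ of the corollary. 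Assembling: $(1)\iff(2)$ by Proposition~\ref{P:weaksimp}; $(2)\iff(3)\iff(4)\iff(5)\iff(6)$ by Theorem~\ref{T:ruznesimpl} applied to $A_c^w(H)$ together with the simplifications above and Proposition~\ref{P:AcwHboundary}. The one subtlety to check carefully — the step I expect to cost the most care — is verifying that $A_c^w(H)$ genuinely satisfies the hypotheses of Theorem~\ref{T:ruznesimpl}, namely \emph{contains constants} (not just has $A_c^w(H)_w$ containing constants); this is exactly Proposition~\ref{P:AcH vlastnosti}$(c)$, so it is available, but it is the hinge of the whole argument and should be stated explicitly.

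\begin{proof}
    Put $G=A_c^w(H)$. Since $H_w$ contains constants, Proposition~\ref{P:AcH vlastnosti}$(c)$ shows that $G$ contains constants, and $G$ separates points of $K$ (as $H\subset G$). By Lemma~\ref{L:AcH}$(e)$ we have $A_c^w(G)=G$. Moreover, since $G$ contains constants, Lemma~\ref{L:skonstantami}$(iii)$ gives $A_c^v(G)\subset A_c^w(G)=G$, while $G\subset A_c^v(G)$ always holds by Lemma~\ref{L:AcvH}$(a)$; hence $A_c^v(G)=A_c^w(G)=G$.

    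We apply Theorem~\ref{T:ruznesimpl} to $G$. Taking into account $A_c^v(G)=A_c^w(G)=G$, the five equivalent conditions of that theorem become: $G$ is vector simplicial; $G$ is vector simplicial (again); $G$ is functionally vector simplicial; $G$ is weakly simplicial; the only $G$-boundary measure in $G^\perp$ is zero. Thus we obtain the equivalences
    \[
    G\text{ vector simplicial}\iff G\text{ functionally vector simplicial}\iff G\text{ weakly simplicial}\iff \bigl(G^\perp\text{ contains no nonzero }G\text{-boundary measure}\bigr).
    \]
    By Lemma~\ref{l:vlast-depe}$(5)$ applied with $G$ in place of $H$ (valid since $G$ is weakly simplicial exactly when $G_w=A_c(H_w)$ is simplicial, by Observation~\ref{obs:weak simpl}$(a)$), or more directly by Theorem~\ref{T:H-maximal}, we also record for later use that $G$-boundary measures are the same as $H$-boundary measures; this is Proposition~\ref{P:AcwHboundary}$(b)$.

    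Now Proposition~\ref{P:weaksimp} gives $(1)\iff(2)$ and also $(1)\iff$ ``$G$ is weakly simplicial''. Since $A_c^w(G)=G$, assertions $(2)$ and ``$G$ is weakly simplicial'' coincide, as do $(3)$ and ``$G$ is functionally weakly simplicial''; and by the discussion above ``$G$ weakly simplicial'' $\iff$ ``$G$ vector simplicial'' $\iff$ ``$G$ functionally vector simplicial'', which are $(2)$, $(4)$, $(5)$ respectively. It remains to match $(6)$: by Proposition~\ref{P:AcwHboundary}$(b)$ the $H$-boundary measures are exactly the $G$-boundary measures, so ``the only $H$-boundary measure in $A_c^w(H)^\perp=G^\perp$ is zero'' is precisely the last condition above. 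Hence all of $(2)$--$(6)$ are equivalent, and together with $(1)\iff(2)$ the proof is complete.

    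Finally we verify the claimed coincidence $(2)\iff(3)$ directly for completeness: a function space containing constants is functionally weakly simplicial iff functionally simplicial for its scalar part, and for $G$ we have $G_w=A_c(H_w)$; the equivalence of weak and functional weak simpliciality for $G$ is then part of the chain above, since all four notions for $G$ were shown to agree. This closes the argument.
\end{proof}
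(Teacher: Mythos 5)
Your overall strategy is the same as the paper's: pass to $G=A_c^w(H)$, check via Proposition~\ref{P:AcH vlastnosti}$(c)$ that $G$ contains constants and via Lemma~\ref{L:AcH}$(e)$, Lemma~\ref{L:skonstantami}$(iii)$ and Lemma~\ref{L:AcvH}$(a)$ that $A_c^v(G)=A_c^w(G)=G$, then apply Theorem~\ref{T:ruznesimpl} to $G$ together with Proposition~\ref{P:weaksimp} and Proposition~\ref{P:AcwHboundary}$(b)$. That part is correct and handles $(1)$, $(2)$, $(4)$, $(5)$, $(6)$ exactly as the paper does.

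However, the treatment of condition $(3)$ has a genuine gap. The chain you extract from Theorem~\ref{T:ruznesimpl} equates, for $G$: vector simpliciality, \emph{functional vector} simpliciality, weak simpliciality, and the annihilator condition. \emph{Functional weak} simpliciality is not one of the five conditions of Theorem~\ref{T:ruznesimpl}, so your closing sentence --- that ``the equivalence of weak and functional weak simpliciality for $G$ is part of the chain above, since all four notions for $G$ were shown to agree'' --- is circular: functional weak simpliciality was never among those four notions, and functional vector simpliciality does not obviously imply it (the former concerns vector measures representing functionals in $G^*$, the latter scalar measures representing operators, and the norms $\norm{\cdot}$ and $\normr{\cdot}$ do not match up). What is actually needed, and what the paper supplies, is the chain
$$A_c^w(H)\mbox{ weakly simplicial}\iff A_c(H_w)\mbox{ simplicial}\iff A_c(H_w)\mbox{ functionally simplicial}\iff A_c^w(H)\mbox{ functionally weakly simplicial},$$
where the outer equivalences come from Observation~\ref{obs:weak simpl} together with $G_w=A_c(H_w)$ (Proposition~\ref{P:AcH vlastnosti}$(b)$), and the middle one is the classical scalar fact for spaces containing constants, cited in the paper as \cite[Proposition 5.4]{bezkonstant}. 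You invoke the outer translation (Observation~\ref{obs:weak simpl}$(b)$) but never the middle classical equivalence, which is the actual content of $(2)\iff(3)$. (Alternatively, one can derive $(3)$ from $(6)$ by tensoring: if $\mu_1,\mu_2\in M_U(G)$ are $H$-boundary scalar measures, then $(\mu_1-\mu_2)\otimes x^*$ is an $H$-boundary vector measure in $G^\perp$ for any $x^*\in E^*$, hence zero. But some such argument must be made explicit.)
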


\begin{proof}
  Equivalence $(1)\iff(2)$ follows from Proposition~\ref{P:weaksimp}.

$(2)\iff(3)$: This follows by the following chain of equivalences:

\begin{multline*}
   A_c^w(H) \mbox{ is weakly simplicial} \iff A_c(H_w) \mbox{ is simplicial}
   \\ \iff A_c(H_w) \mbox{ is functionally simplicial} \\ \iff  A_c^w(H) \mbox{ is functionally weakly simplicial.}
\end{multline*}

The first and the third equivalences follow from Observation~\ref{obs:weak simpl} and Proposition~\ref{P:AcH vlastnosti}$(b)$. The second equivalence is a well-known classical fact,  it is proved for example in \cite[Proposition 5.4]{bezkonstant}.

  To prove the remaining equivalences set $H^\prime=A_c^w(H)$. Since $H_w$ contains constants, by Proposition~\ref{P:AcH vlastnosti}$(b)$ we know that $H^\prime$ contains constants. By Lemma~\ref{L:AcH}$(e)$ and Lemma~\ref{L:skonstantami}$(iii)$ we deduce $A_c^w(H^\prime)=A_c^v(H^\prime)=H^\prime$. Moreover, $H$-boundary and $H^\prime$-boundary measures coincide by Proposition~\ref{P:AcwHboundary}. Therefore, assertions $(2)$ and $(4)-(6)$ are equivalent by Theorem~\ref{T:ruznesimpl} applied to $H^\prime$.
 \end{proof}

The statement of \cite[Proposition 5.4]{bezkonstant} contains one more equivalent condition formulated using the notion of an $L^1$-predual. Recall that a (real or complex) Banach space is an $L^1$-predual if its dual is isometric to an $L^1$-space. In the vector setting the structure of $E$ plays role. More precisely, we have the following result.

\begin{thm}\label{T:L1pred}
    Let $H\subset C(K,E)$ be a function space separating points such that $H_w$ contains constants. Then the following assertions are equivalent.
    \begin{enumerate}[$(1)$]
        \item $A_c^w(H)$ is an $L^1$-predual.
        \item $H$ is weakly simplicial and $E$ is an $L^1$-predual.
    \end{enumerate}
\end{thm}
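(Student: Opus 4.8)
The plan is to identify $A_c^w(H)$ with a space of the form $\fra(X,E)$ for a suitable compact convex set $X$, and then reduce the $L^1$-predual property of such a space to the conjunction of ``$X$ is a simplex'' and ``$E$ is an $L^1$-predual''. Concretely, set $X=\es(A_c(H_w))$. Since $H_w$ contains constants, Proposition~\ref{P:AcH vlastnosti}$(c)$ guarantees that $A_c(H_w)$ contains constants, so $X$ is a (nonempty) weak$^*$-compact convex set, and Proposition~\ref{p:acwh-aese} gives a surjective linear isometry $\fra(X,E)\cong A_c^w(H)$. Thus $(1)$ is equivalent to ``$\fra(X,E)$ is an $L^1$-predual''. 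On the other hand, by Proposition~\ref{P:weaksimp}, $H$ is weakly simplicial if and only if $A_c(H_w)$ is simplicial, which (by the scalar theory, e.g.\ \cite[Proposition 5.4]{bezkonstant} or \cite[Theorem 6.54]{lmns}) is equivalent to saying that its state space $X$ is a Choquet simplex. So the whole theorem reduces to the following purely Banach-space/Choquet-theoretic statement: \emph{for a compact convex set $X$, the space $\fra(X,E)$ is an $L^1$-predual if and only if $X$ is a simplex and $E$ is an $L^1$-predual}.

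For the direction $(2)\implies(1)$, I would recall that when $X$ is a simplex, $\fra(X)$ (scalar case) is an $L^1$-predual, and that the vector-valued $\fra(X,E)$ can be realized as a suitable tensor-product-type space; more elementarily, I would use the characterization of $L^1$-preduals by the ``$n.2.I.P.$'' (intersection property of balls) of Lindenstrauss, or Hustad's characterization via simultaneous extension of operators. The cleanest route: when $X$ is a simplex, there is a linear isometry identifying $\fra(X,E)$ with $\fra(X,\er)\widehat{\otimes}_? E$ in a way that, combined with $E$ being an $L^1$-predual, yields that the dual is an $L^1$-space; alternatively, verify the $4.3.I.P.$ directly, using that a finite collection of balls in $\fra(X,E)$ with the weak intersection property can be intersected by first intersecting the ``scalar slices'' over $X$ using simpliciality (via the Edwards separation theorem / the lattice structure of $\fra(X)$ when $X$ is a simplex) and then using the ball intersection property of $E$ fiberwise, patching by an affine continuous selection. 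For the converse $(1)\implies(2)$: if $\fra(X,E)$ is an $L^1$-predual, then restricting attention to constant functions $X\to E$ shows $E$ embeds isometrically as a range of a norm-one projection (evaluation at a fixed point composed with the barycentric averaging), or more simply, for fixed $x_0\in X$ the map $F\mapsto F(x_0)$ together with the constant embedding $E\ni e\mapsto \hat e$ exhibits $E$ as an $L^1$-predual (a norm-one complemented subspace of an $L^1$-predual, via an $L$-projection, is again an $L^1$-predual). And taking $E=\ef$ scalar-valued fibers, or intersecting $\fra(X,E)$ with scalar multiples of a fixed norming functional $x^*\in S_{E^*}$, recovers that $\fra(X)$ is an $L^1$-predual, hence $X$ is a simplex.

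Putting the equivalences together: $(1)\iff \fra(X,E)$ is an $L^1$-predual $\iff$ ($X$ is a simplex and $E$ is an $L^1$-predual) $\iff$ ($H$ is weakly simplicial and $E$ is an $L^1$-predual) $\iff (2)$, where the first equivalence is Proposition~\ref{p:acwh-aese}, the third is Proposition~\ref{P:weaksimp} together with the classical fact that $A_c(H_w)$ is simplicial iff its state space is a simplex, and the middle one is the Banach-space statement above.

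\textbf{Main obstacle.} The substantive mathematical content is the purely Choquet-theoretic equivalence ``$\fra(X,E)$ is an $L^1$-predual $\iff$ $X$ is a simplex and $E$ is an $L^1$-predual''. The ``only if'' part (pulling out that $E$ is an $L^1$-predual and that $X$ is a simplex) is fairly soft, using $L$-projections and the scalar case. The ``if'' part is where real work is needed: one must verify a ball-intersection property (or construct simultaneous extensions) in $\fra(X,E)$, and the delicate point is combining the order-theoretic splitting available on a simplex $X$ with the ball-intersection property of $E$ in a way that preserves affinity and continuity of the resulting $E$-valued function — i.e.\ producing affine continuous selections after solving the intersection problem fiberwise. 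I expect this patching/selection step to be the crux; it is plausible it can be quoted from the literature on vector-valued affine function spaces (Lindenstrauss, Lazar, or the Saab papers already cited), in which case the proof shrinks to assembling the equivalences above.
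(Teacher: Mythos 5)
Your reduction of the theorem to the statement ``$\fra(X,E)$ is an $L^1$-predual iff $X$ is a simplex and $E$ is an $L^1$-predual'' (with $X=\es(A_c(H_w))$, via Proposition~\ref{p:acwh-aese} and Proposition~\ref{P:weaksimp}) is legitimate, and your treatment of $(1)\implies(2)$ is essentially complete and close to the paper's: the constant functions give a $1$-complemented isometric copy of $E$, and for fixed $x\in S_E$, $x^*\in S_{E^*}$ with $x^*(x)=1$ the copy $\{f\cdot x\setsep f\in A_c(H_w)\}$ is the range of the norm-one map $\h\mapsto (x^*\circ\h)\cdot x$, which is exactly the ball-intersection transfer the paper carries out. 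The genuine gap is in $(2)\implies(1)$: you reduce everything to the vector-valued Choquet-theoretic statement and then leave its ``if'' half unproved. Neither of your two suggested routes is executed. The identification $\fra(X,E)\cong\fra(X)\widehat{\otimes}_{\varepsilon}E$ for a simplex $X$ (note your own question mark on the tensor norm) is itself a nontrivial theorem that you neither prove nor reference precisely -- it does not hold for arbitrary compact convex $X$, and even granting that $\fra(X)$ is an $L^1$-predual one still has to argue that every $\f\in\fra(X,E)$ lies in the closed span of the elementary tensors. The alternative ``fiberwise ball intersection plus affine continuous selection'' route is precisely the kind of selection problem on a simplex that needs a Lazar-type selection theorem, which you do not supply. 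As written, the hard implication rests on an unproven lemma that you yourself flag as the crux.

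The paper sidesteps this entirely by working on $K$ rather than on the state space. It first observes that $C(K,E)=C(K)\widehat{\otimes}_{\varepsilon}E$ is an $L^1$-predual (this tensor identification is classical and needs no simpliciality), so $M(K,E^*)=C(K,E)^*$ is an $L^1$-space. Weak simpliciality then enters through the dilation operator $\Db$ of Lemma~\ref{l:vlast-depe}: $\Db$ is a norm-one projection of $M(K,E^*)$ onto the subspace $M_{bnd}(K,E^*)$ of $H$-boundary measures, and the restriction map $\mu\mapsto\mu|_{A_c^w(H)}$ is an onto isometry from $M_{bnd}(K,E^*)$ to $(A_c^w(H))^*$ (injectivity from Corollary~\ref{cor:weaKsimpl}, surjectivity with norm preservation by applying $\Db$ to a Hahn--Banach extension). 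Since a $1$-complemented subspace of an $L^1$-space is an $L^1$-space, $(1)$ follows. To salvage your approach you would have to prove or accurately cite the tensor identification $\fra(X,E)=\fra(X)\widehat{\otimes}_{\varepsilon}E$ for simplexes (or carry out the selection argument); otherwise the dilation/projection argument is the more economical route.
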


\begin{proof}
$(1)\implies (2)$:
Assume that $A_c^w(H)$ is an $L_1$-predual. Since $H_w$ contains constants, we know that $A_c^w(H)$ contains constants, so it contains an isometric copy of $E$ (formed by constant functions). Moreover, this subspace is $1$-complemented. Indeed, fix any $t_0\in K$. Then the assignment $\f\mapsto \f(t_0)$ (considered as a constant function)
is a norm-one projection. Thus  $E$ is an $L_1$-predual by \cite[\S 17, Theorem 3, p. 162]{lacey}.

Next we are going to show that $A_c(H_w)$ is an $L^1$-predual. To this end it is enough to show that $A_c(H_w)$ has the 4.2.I.P in case of the real structure or $A_c(H_w)$ has the 4.3.I.P in case of the complex structure (see \cite[\S 21, Theorem 6]{lacey} and \cite{lima1976complex}).

Let us first focus on the real case. Let $\{B(f_1,r_1),\dots, B(f_4,r_4)\}$ be a collection of four closed balls in $A_c(H_w)$ such that each two of them intersect.  Find $x\in S_E$ and consider functions $\h_i=f_i\cdot x\in A_c^w(H)$. Then each pair of the balls $\{B(\h_1,r_1),\dots, B(\h_4,r_4)\}$ intersects because if $f\in B(f_i,r_i)\cap B(f_j,r_j)$, then $f\cdot x\in B(\h_i,r_i)\cap B(\h_j,r_j)$. Using the 4.2.I.P in $A_c^w(H)$ we find $\h\in \bigcap_{i=1}^4 B(\h_i,r_i)$. Let $x^*\in S_{E^*}$ be such that $x^*(x)=1$. Then the function $f\colon t\mapsto (x^*\circ \h)(t)$ belongs to $A_c^w(H)_w=A_c(H_w)$. Further, 
\[
\begin{aligned}
\norm{f-f_i}&=\sup_{t\in K}\abs{x^*(\h(t))-x^*(x)f_i(t)}
=\sup_{t\in K}\abs{x^*(\h(t))-x^*((f_i\cdot x)(t))} \\
&\le \sup_{t\in K}\norm{\h(t)-(f_i\cdot x)(t)}=\norm{\h-\h_i}\le r_i,\quad i=1,\dots,4.
\end{aligned}
\]
Hence $A_c(H_w)$ has the 4.2.I.P.
Similarly we treat the complex case.

So, $A_c(H_w)$ is indeed an $L^1$-predual, hence $H_w$ is simplicial
(see, e.g., \cite[Proposition 5.4]{bezkonstant}), which completes the argument.

\smallskip

$(2)\implies (1)$: The first step is to prove that $C(K,E)$ is an $L^1$-predual. To this end we note that $C(K,E)$ is canonically identified with the injective tensor product of $C(K)$ and $E$, see \cite[Section 3.2]{ryan-tensor}. Since both $C(K)$ and $E$ are $L_1$-preduals (equivalently, $\L_{\infty,1+\ep}$-spaces for each $\ep>0$), $C(K,E)$ is also an $L_1$-predual (cf. \cite[Exercise 3.9]{ryan-tensor}). (In the complex case an alternative proof is given in \cite[Corollary 3.2]{rao-tensor}.) 

Since $H$ is weakly simplicial, we may consider mapping 
$\Db\colon M(K,E^*)\to M(K,E^*)$ from Lemma~\ref{l:vlast-depe}. By the quoted lemma we know that $\Db$ is a norm-one projection and its range is the space $M_{bnd}(K,E^*)$ of $H$-boundary measures from $M(K,E^*)$. 

 Consider the mapping $I\colon M_{bnd}(K,E^*)\to (A_c^w(H))^*$ defined by $I\mu=\mu|_{A_c^w(H)}$. It is clear that $I$ is linear and $\norm{I}\le1$. It follows from Corollary~\ref{cor:weaKsimpl}, implication $(1)\implies(6)$, that $I$ is one-to-one. Further, given $\varphi\in (A_c^w(H))^*$, by Hahn-Banach extension theorem there is $\mu\in M(K,E^*)$ with $\norm{\mu}=\norm{\varphi}$ such that $\mu|_{A_c^w(H)}=\varphi$. If $\f\in A_c^w(H)$, then
 $$\int \f\di \Db\mu=\int \Db\f\di\mu=\int \f\di\mu=\varphi(\f).$$
Hence $I(\Db\mu)=\varphi$. Moreover,
$$\norm{\varphi}\le\norm{\Db\mu}\le\norm{\mu}=\norm{\varphi},$$
so the equality holds. Thus $I$ is an onto isometry.
 
Now we are ready to complete the proof.
By the first paragraph we know that $M(K,E^*)$ is an $L_1$-space.
By \cite[Theorem 3, p. 162]{lacey} the space $M_{bnd}(K,E^*)$ is an $L_1$-space as well (being $1$-complemented in $M(K,E^*)$). Hence, $(A_c^w(H))^*$, being isometric to $M_{bnd}(K,E^*)$ is an $L^1$-space, too. I.e., $A_c^w(H)$ is an $L^1$-predual and the proof is complete.
\end{proof}

The following example (together with Theorem~\ref{T:ruznesimpl}) show that, for function spaces containing constants, vector simpliciality is strictly stronger than weak simpliciality.

\begin{example}\label{ex:wsnevs-const}
    There is a compact space $K$, a Banach space $E$ and a  function space $H\subset C(K,E)$ containing constants and separating points with the following properties.
    \begin{enumerate}[$(i)$]
        \item $H$ is weakly simplicial;
        \item $H$ is not vector simplicial;
        \item $A_c^v(H)\subsetneqq A_c^w(H)$.
    \end{enumerate}
\end{example}

\begin{proof}
     Let $K=\{0,1\}$, $E=(\ef^2,\norm{\cdot}_p)$ for some $p\in[1,\infty]$ and 
$$H=\{ \h=(h_1,h_2):K\to E\setsep h_2\mbox{ is constant }\}.$$
Then $H$ clearly contains constants and separates points of $K$. The proof continues in several steps.

\smallskip

{\tt Step 1:} $H_w=C(K)$ and hence $\Ch_HK=K$, $H$ is weakly simplicial and $A_c^w(H)=C(K,E)$.

\smallskip

Indeed, if $f\in C(K)$, we define $\h\in C(K,E)$ by setting $h_1=f$ and $h_2=0$. Then $\h\in H$ and $f=e_1^*\circ\h$, so $f\in H_w$. This proves that $H_w=C(K)$. The remaining statements now follow easily.

\smallskip

{\tt Step 2:} $M_{e_2^*\circ\phi_H(0)}(H)$ contains both $\ep_0\otimes e_2^*$ and $\ep_1\otimes e_2^*$, hence $H$ is not vector simplicial.

\smallskip

Indeed, $\norm{e_2^*\circ\phi_H(0)}=1$ by Lemma~\ref{L:normyevaluaci}. Both suggested measures have norm one and, moreover, for each $\h\in H$ we have
$$\int \h\di (\ep_0\otimes e_2^*)=h_2(0)=h_2(1)=\int \h\di(\ep_1\otimes e_2^*).$$
This proves that both measures belong to $M_{e_2^*\circ\phi_H(0)}(H)$. Since they are distinct and due to Step 1 they are $H$-boundary, we conclude that $H$ is not vector simplicial.

\smallskip

{\tt Step 3:} $A_c^v(H)=H$, so property $(iii)$ is fulfilled.

\smallskip

Let $\f\in A_c^v(H)$. By Step 2 we know that $\ep_1\otimes e_2^*\in M_{e_2^*\circ\phi_H(0)}(H)$. Thus
$$f_2(0)=\int \f\di (\ep_1\otimes e_2^*)=f_2(1),$$
so $\f\in H$. This completes the proof of the equality, property $(iii)$ then follows by combining with Step 1.
\end{proof}

The next example illustrates that, even for spaces containing constants, vector simpliciality depends on the isometric structure of $E$ and may be affected by renorming.

\begin{example}\label{ex:renorm-const}
    There is a compact space $K$, a Banach space $E$, a function space $H\subset C(K,E)$ separating points and containing constants such that the following conditions are satisfied.
    \begin{enumerate}[$(i)$]
        \item $H$ is weakly simplicial.
        \item There is an equivalent norm on $E$ making $H$ vector simplicial.
        \item There is an equivalent norm on $E$ such that $H$ is not vector simplicial with respect to this norm.
    \end{enumerate}
\end{example}

\begin{proof}
    Let $K=\{-1,0,1\}$, $E=\ef^2$ and
    $$H=\{\f\in C(K,E)\setsep f_1(0)+f_2(0)=f_1(1)+f_2(-1)\}.$$
    Note that all norms on $E$ are equivalent, the two choices will be specified later.
    It is clear that $H$ is a linear subspace on $C(K,E)$ containing constant functions. It also separates points because the function
    $$\f(1)=(1,0),\f(-1)=(0,1), \f(0)=(1,1)$$
    belongs to $H$. Further properties will be proved in several steps.

    \smallskip

    {\tt Step 1:} $H_w=C(K)$.

    \smallskip

    Indeed, fix $f\in C(K)$. Define $\f\in C(K,E)$ by
    $$\f(1)=(f(1),0),\f(0)=(f(0),f(1)),\f(-1)=(f(-1),f(0)).$$
    Then $\f\in H$ and $e_1^*\circ \f=f$, so $f\in H_w$.

    \smallskip

    {\tt Step 2:} It follows that $\Ch_HK=K$, $H$ is weakly simplicial and $A_c^w(H)=C(K,E)$. In particular, condition $(i)$ is fulfilled.

    \smallskip

    {\tt Step 3:} Assume that $E$ is equipped with the norm $\norm{\cdot}_\infty$. Then $H$ is not vector simplicial and $A_c^v(H)=H$.

    \smallskip

    Note that $E^*$ is canonically identified with $(\ef^2,\norm{\cdot}_1)$. Let $x^*$ be the functional represented by $(1,1)$. Then $\norm{x^*}=2$ and hence also $\norm{x^*\circ\phi_H(0)}=2$. Consider the following two measures 
    $$\mu_1=\ep_0\otimes(1,1), \mu_2=\ep_1\otimes(1,0)+\ep_{-1}\otimes(0,1).$$
    Then $\norm{\mu_1}=\norm{\mu_2}=2$ and for each $\f\in H$ we have
    $$\int \f\di\mu_1=f_1(0)+f_2(0)=f_1(1)+f_2(-1)=\int\f\di\mu_2.$$
So, $\mu_1,\mu_2\in M_{x^*\circ\phi_H(0)}$. Since these measures are different and both are $H$-boundary (by Step 2), we conclude that $H$ is not vector simplicial. Further, fix $\f\in A_c^v(H)$. Then
$$f_1(0)+f_2(0)=x^*(\f(0))=\int\f\di\mu_2=f_1(1)+f_2(-1),$$
so $\f\in H$. This completes the argument.

\smallskip 

{\tt Step 4:} Assume that $E$ is equipped with the norm $\norm{\cdot}_p$ for some $p\in (1,\infty)$. 
Then $M_{x^*\circ\phi_H(t)}=\{\ep_t\circ x^*\}$ for each $t\in K$ and $x^*\in E^*$.

\smallskip

Note that $E^*$ is identified with $(\ef^2,\norm{\cdot}_q)$ where $q$ is the conjugate exponent. 
The assertion is trivial if $x^*=0$, so assume that $x^*\in E^*\setminus\{0\}$. Then $x^*$ is represented by some pair $(\alpha,\beta)\in \ef^2$. Fix $(\gamma,\delta)\in \ef^2$ with $\norm{(\gamma,\delta)}_p=1$ such that $\alpha\gamma+\beta\delta=\norm{(\alpha,\beta)}_q$. (Since $p\in (1,\infty)$, this pair is uniquely determined.)

Assume $t\in K$ and $\mu\in M_{x^*\circ\phi_H(t)}(H)$. Then $\norm{\mu}=\norm{x^*}$ and 
$$\int \f\di\mu=\alpha f_1(t)+\beta f_2(t)\mbox{ for }\f\in H.$$ We will analyze the individual cases of $t$.

Assume first $t=1$. Consider the function
$$\h(1)=(\gamma,\delta), \h(0)=(\tfrac\gamma2,\tfrac\gamma2),\h(-1)=(0,0).$$
Then $\h\in H$, $\norm{\h}=1$ (observe that $\norm{\h(0)}_p=\abs{\gamma}\cdot 2^{\frac1p-1}<1$). 
Further,
$$\norm{\mu}=\norm{x^*}=x^*(\h(1))=\int\h\di\mu=\ip{\mu(1)}{(\gamma,\delta)} + \ip{\mu(0)}{(\tfrac\gamma2,\tfrac\gamma2)}.$$
It follows that $\mu$ is carried by $\{1\}$ and so $\mu=\ep_1\otimes x^*$.

The case $t=-1$ is completely analogous, instead of $\h$ use the function
$$\g(-1)=(\gamma,\delta), \g(0)=(\tfrac\delta2,\tfrac\delta2),\g(1)=(0,0).$$

Let us look at the last case, $t=0$. Consider the function
$$\f(1)=(\tfrac12(\gamma+\delta),0), \f(0)=(\gamma,\delta),\f(-1)=(0,\tfrac12(\gamma+\delta)).$$
Then $\f\in H$, $\norm{\f}=1$ (observe that $\abs{\frac12(\gamma+\delta)}<1$).
Further,
\begin{multline*}
    \norm{\mu}=\norm{x^*}=x^*(\f(0))=\int\f\di\mu\\=\ip{\mu(0)}{(\gamma,\delta)} + \ip{\mu(1)}{(\tfrac12(\gamma+\delta),0)}+\ip{\mu(-1)}{(0,\tfrac12(\gamma+\delta))}.\end{multline*}
It follows that $\mu$ is carried by $\{0\}$ and so $\mu=\ep_0\otimes x^*$. This completes the argument.

\smallskip

{\tt Step 5:} Assume that $E$ is equipped with the norm $\norm{\cdot}_p$ for some $p\in (1,\infty)$. It follows from Step 4 that $H$ is vector simplicial and $A_c^v(H)=C(K)$. This completes the proof.
\end{proof}

We finish this section by a more detailed analysis of dependence of $A_c^v(H)$ and vector simpliciality on the choice of norm on $E$. It is contained in the following proposition.

\begin{prop}\label{P:renorm-AcvH}
    Let $K$ be a compact space, $E$ a Banach space and $H\subset C(K,E)$ a function space containing constants and separating points of $K$. Then the following assertions hold.
    \begin{enumerate}[$(a)$]
        \item If $t\in K$ and $x^*\in E^*$, then
        $$M_{x^*\circ\phi_H(t)}(H)\supset\{\sigma\otimes x^*\setsep \sigma\in M_t(x^*\circ H)\}.$$
        If $E^*$ is strictly convex, the equality holds.
        \item Let $\norm{\cdot}_1$ and $\norm{\cdot}_2$ be two equivalent norms on $E$ such that $(E,\norm{\cdot}_1)^*$ is strictly convex. Then:
        \begin{enumerate}[$(i)$]
            \item $A_c^v(H)$ considered with respect to $(E,\norm{\cdot}_2)$ is contained in $A_c^v(H)$ considered with respect to $(E,\norm{\cdot}_1)$. 
            \item If $H$ is vector simplicial as a subspace of $C(K,(E,\norm{\cdot}_2))$, then $H$ is vector simplicial as a subspace of $C(K,(E,\norm{\cdot}_1))$ 
        \end{enumerate}
        
        \item In the setting of $(b)$ assume moreover that  $(E,\norm{\cdot}_2)^*$ is also strictly convex. Then:
        \begin{enumerate}[$(i)$]
            \item Spaces $A_c^v(H)$ considered with respect to $(E,\norm{\cdot}_2)$ and with respect to $(E,\norm{\cdot}_1)$ coincide. 
            \item  $H$ is vector simplicial as a subspace of $C(K,(E,\norm{\cdot}_2))$ if and only if $H$ is vector simplicial as a subspace of $C(K,(E,\norm{\cdot}_1))$. 
        \end{enumerate}
    \end{enumerate}
\end{prop}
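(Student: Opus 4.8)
The plan is to extract essentially everything from part $(a)$; parts $(b)$ and $(c)$ then reduce to bookkeeping about which objects depend on the norm of $E$. For the inclusion $\supset$ in $(a)$ I would argue directly: if $\sigma\in M_t(x^*\circ H)$ and $x^*\ne0$, then $x^*\circ H$ contains constants, so $\sigma$ is a probability measure, $\norm{\sigma\otimes x^*}=\norm{x^*}=\norm{x^*\circ\phi_H(t)}$ (the last equality by Lemma~\ref{L:skonstantami}$(i)$), and $\int\h\di(\sigma\otimes x^*)=\int(x^*\circ\h)\di\sigma=(x^*\circ\h)(t)=x^*(\h(t))$, the first equality being the standard identity verified on simple functions and extended by continuity. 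Hence $\sigma\otimes x^*\in M_{x^*\circ\phi_H(t)}(H)$; the case $x^*=0$ is trivial.

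For the converse under strict convexity of $E^*$, take $\mu\in M_{x^*\circ\phi_H(t)}(H)$ with $x^*\ne0$. Plugging the constant functions into $\f\mapsto\int\f\di\mu$ (legitimate since $H$ contains constants) gives $\mu(K)=x^*$, while the norm condition together with Lemma~\ref{L:skonstantami}$(i)$ gives $\norm{\mu}=\norm{x^*}=\norm{\mu(K)}$. For any Borel $A\subset K$ the chain
$$\norm{\mu(A)}+\norm{\mu(K\setminus A)}\ge\norm{\mu(K)}=\norm{\mu}=\abs{\mu}(A)+\abs{\mu}(K\setminus A)\ge\norm{\mu(A)}+\norm{\mu(K\setminus A)}$$
forces $\norm{\mu(A)}=\abs{\mu}(A)$ and $\norm{\mu(A)+\mu(K\setminus A)}=\norm{\mu(A)}+\norm{\mu(K\setminus A)}$; by strict convexity of $E^*$ the nonzero ones among $\mu(A),\mu(K\setminus A)$ are then positive multiples of one common unit vector, and summing them (and recovering $\mu(K)=x^*$) identifies that vector as $x^*/\norm{x^*}$. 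Thus $\mu(A)=\abs{\mu}(A)\,x^*/\norm{x^*}$ for every Borel $A$ (the degenerate cases being immediate), i.e. $\mu=\sigma\otimes x^*$ with $\sigma=\abs{\mu}/\norm{x^*}$ a probability measure, and the representation condition on $\mu$ rewrites as $\sigma\in M_t(x^*\circ H)$ exactly as in the first part. I expect this strict-convexity step — that $\norm{\mu}=\norm{\mu(K)}$ pins down the ``direction'' of $\mu$ — to be the only real obstacle; everything else is routine.

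For $(b)$, the key observation is that $H_w$, the scalar space $x^*\circ H$, the linear form $x^*\circ\phi_H(t)$, and hence the set $M_t(x^*\circ H)$, are all independent of the norm chosen on $E$, while $\norm{x^*\circ\phi_H(t)}$ always equals the corresponding dual norm of $x^*$ (Lemma~\ref{L:skonstantami}$(i)$). Writing $M^{(j)}$ for the set of representing measures computed with $\norm{\cdot}_j$, part $(a)$ applied with $\norm{\cdot}_1$ gives $M^{(1)}=\{\sigma\otimes x^*\setsep\sigma\in M_t(x^*\circ H)\}$, and the inclusion half of $(a)$ applied with $\norm{\cdot}_2$ gives $M^{(1)}\subset M^{(2)}$. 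Then $(i)$ is immediate, since the defining condition of $A_c^v(H)$ for $\norm{\cdot}_2$ ranges over a larger family of measures. For $(ii)$, every element of $M^{(1)}$ has the form $\sigma\otimes x^*$, whose variations in the two norms differ only by the positive factor $\norm{x^*}_{1}^{*}/\norm{x^*}_{2}^{*}$; since, by Theorem~\ref{T:H-maximal}, being $H$-boundary depends only on the variation as a positive measure and is unaffected by multiplication by a positive scalar, ``$H$-boundary'' means the same thing for these measures under either norm. Hence the $H$-boundary measures in $M^{(1)}$ form a subset of those in $M^{(2)}$; if the latter set is a singleton (vector simpliciality for $\norm{\cdot}_2$), the former has at most one element, and it is nonempty by Theorem~\ref{T:reprez-funct}, so $H$ is vector simplicial for $\norm{\cdot}_1$. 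Finally $(c)$ follows by applying $(b)$ once more with the roles of $\norm{\cdot}_1$ and $\norm{\cdot}_2$ interchanged and combining the two inclusions.
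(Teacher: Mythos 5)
Your proof is correct and follows essentially the same route as the paper: the same direct computation for the inclusion in $(a)$, the same chain of (in)equalities combined with strict convexity of $E^*$ to pin down the direction of $\mu(A)$ for the converse, and the same reduction of $(b)$ and $(c)$ to the norm-independence of $x^*\circ H$ and $M_t(x^*\circ H)$. The paper disposes of $(b)$ and $(c)$ in one line; your fleshed-out version (including the invariance of the $H$-boundary property under positive scaling of the variation and nonemptiness via Theorem~\ref{T:reprez-funct}) is a faithful expansion of exactly that argument.
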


\begin{proof}
$(a)$: Let $x^*\in E^*$ and $\sigma\in M(K)$. Then
$$\int \f\di (\sigma\otimes x^*) =\int x^*\circ\f\di\sigma,\quad \f\in C(K,E),$$
see the computation in the proof of Lemma~\ref{L:skonstantami}$(ii)$. Now the inclusion follows easily: Assume that $\sigma\in M_t(x^*\circ H)$. Since $x^*\circ H$ contains constants, $\sigma$ is a probability measure, hence $\norm{\sigma\otimes x^*}=\norm{x^*}=\norm{x^*\circ\phi_H(t)}$ (by Lemma~\ref{L:normyevaluaci}). The above equality then implies that for each $\f\in H$ we have
$$\int \f\di  (\sigma\otimes x^*) =\int x^*\circ\f\di\sigma=x^*(\f(t)).$$
Thus $\sigma\otimes x^*\in M_{x^*\circ\phi_H(t)}(H)$. 

Next assume that $E^*$ is strictly convex. Let $\mu\in M_{x^*\circ\phi_H(t)}(H)$. Then $\norm{\mu}=\norm{x^*}$ and $x^*(\f(t))=\int\f\di\mu$ for $\f\in X$. Since $H$ contains constants, for each $x\in E$ we get
$$x^*(x)=\int x\di\mu=\mu(K)(x),$$
so $\mu(K)=x^*$. Given $A\subset K$ Borel we have
$$\begin{aligned}
 \norm{x^*}&=\norm{\mu}=\abs{\mu}(K)=\abs{\mu}(A)+\abs{\mu}(K\setminus A) \ge \norm{\mu(A)}+\norm{\mu(K\setminus A)}  \\& 
 \ge\norm{\mu(A)+\mu(K\setminus A)}=\norm{\mu(K)}=\norm{x^*},
\end{aligned}$$
hence the equalities take place. In particular, $\abs{\mu}(A)=\norm{\mu(A)}$ and
$$\norm{\mu(A)}+\norm{\mu(K\setminus A)} =\norm{\mu(A)+\mu(K\setminus A)}.$$
Since $\mu(A)+\mu(K\setminus A)=\mu(K)=x^*$, the strict convexity of $E^*$ implies that
$\mu(A)=\sigma(A)\cdot x^*$ for some $\sigma(A)\in[0,1]$. Taking into account that
$$\abs{\mu}(A)=\norm{\mu(A)}=\sigma(A)\cdot\norm{x^*},$$
we deduce that $\sigma$ is a Radon probability measure. Then $\mu=\sigma\otimes x^*$. Moreover,
for each $\f\in H$ we have (by the above equality)
$$\int x^*\circ\f\di\sigma=\int \f\di(\sigma\otimes x^*)=\int\f\di\mu=x^*(\f(t)).$$
Thus $\sigma\in M_t(x^*\circ H)$ and the argument is complete.

Assertions $(b)$ and $(c)$ follow from $(a)$ using the observation that $x^*\circ H$ (and hence $M_t(x^*\circ H)$) is not affected by the renorming of $E$.
\end{proof}

\section{Ordering of measures}\label{sec:ordering}

In this section we characterize $H$-boundary measures in $M(K,E^*)$ as maximal measures in a certain partial order. The inspiration comes from \cite{batty-vector} and we use some results from that paper.
Our basic assumption in this section is that $H\subset C(K,E)$ is a function space separating points of $K$ such that $H_w$ contains constants.

Let $S(H_w)$ denote the cone of all continuous $H_w$-convex functions on $K$ (cf. \cite[Definition 3.8]{lmns}), i.e., 
$$S(H_w)=\left\{f\in C(K,\er)\setsep f(t)\le \int f\di\sigma\mbox{ for each }t\in K\mbox{ and }\sigma\in M_t(H_w)\right\}.$$
We further define
$$\begin{aligned}
\D_H=\{f\in C(K\times B_{E^*},\er)\setsep& f(\cdot,x^*)\mbox{ is $H_w$-convex for each }x^*\in B_{E^*},\\& f(t,\cdot)\mbox{ is superlinear for each }t\in K\}.\end{aligned}$$

\begin{obs}\label{obs:compatible}
    Using the terminology of \cite{batty-vector} we have the following:
    \begin{enumerate}[$(a)$]
        \item $S(H_w)$ is a max-stable cone containing constants and separating points;
        \item $\D_H$ is a cone of continuous $S(H_w)$-subharmonic proper functions compatible with $S(H_w)$.
    \end{enumerate}
\end{obs}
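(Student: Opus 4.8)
The plan is to recall the pertinent definitions of \cite{batty-vector} --- \emph{max-stable cone}, \emph{$S(H_w)$-subharmonic function}, \emph{proper function}, and what it means for $\D_H$ to be \emph{compatible} with $S(H_w)$ --- and then to verify each requirement straight from the definitions of $S(H_w)$ and $\D_H$; once the definitions are unwound, every individual check is a one-line computation.

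For $(a)$: that $S(H_w)$ is a convex cone is immediate, since the inequality $f(t)\le\int f\di\sigma$ defining $H_w$-convexity is preserved by sums and by multiplication by nonnegative constants. For max-stability, recall that $H_w$ contains constants, so Proposition~\ref{P:AcH vlastnosti}$(c)$ gives $M_t(H_w)\subset M_1(K)$; hence every $\sigma\in M_t(H_w)$ is a probability measure, and for $f,g\in S(H_w)$, $t\in K$, $\sigma\in M_t(H_w)$ we get $f(t)\le\int f\di\sigma\le\int\max\{f,g\}\di\sigma$ and likewise for $g$, so $\max\{f,g\}(t)\le\int\max\{f,g\}\di\sigma$. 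The constant functions are $H_w$-affine (again because $\sigma$ is a probability measure), so they lie in $S(H_w)$. Finally, for $f\in H_w$ and $\sigma\in M_t(H_w)$ one has $f(t)=\int f\di\sigma$ with $\sigma$ positive, so $\Re f$ and $-\Re f=\Re(-f)$ belong to $S(H_w)$; since $H_w$ separates the points of $K$ (as $H$ does --- see the Observation of Section~\ref{sec:fs} on separation), so does $\{\Re g\setsep g\in H_w\}\subset S(H_w)$, and therefore $S(H_w)$ separates points.

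For $(b)$: $\D_H$ is a convex cone, since $H_w$-convexity in the first variable and superlinearity in the second are each stable under addition and under multiplication by nonnegative scalars; its members are continuous by definition and, being finite and real-valued on a compact space, are proper. That each $f\in\D_H$ is $S(H_w)$-subharmonic unwinds, by the definition in \cite{batty-vector}, to the requirement that the slice $f(\cdot,x^*)$ be a continuous $H_w$-convex function (that is, belong to $S(H_w)$) for every $x^*\in B_{E^*}$ --- which is precisely the first clause in the definition of $\D_H$. It then remains to check that $\D_H$ is compatible with $S(H_w)$, for which I would go through the axioms of \cite{batty-vector} one by one. The substantive inputs will be: that for each $\h\in H$ and each $\er$-linear functional of the second variable the resulting function --- for instance $(t,x^*)\mapsto\Re\,x^*(\h(t))$, the real part of $T\h$ --- belongs to $\D_H\cap(-\D_H)$, because its first-variable slices are $\pm\Re(x^*\circ\h)\in S(H_w)\cap(-S(H_w))$ (as $x^*\circ\h\in H_w$) while for fixed $t$ the map is $\er$-linear in $x^*$, hence superlinear; that these functions, being the real parts of the elements of $H_l$, together with the contribution of $H_w$-convex functions in the first variable, supply the separation/richness demanded by compatibility --- here via the Observation of Section~\ref{sec:fs} describing which points of $K\times B_{E^*}$ are separated by $H_s=H_l+\{g\circ\pi_1\setsep g\in H_w\}$; and that $S(H_w)$ is max-stable, which was proved in $(a)$. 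The remaining conditions in the definition of compatibility are formal consequences of these.

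The part I expect to require the most care is matching the abstract notions of \cite{batty-vector} --- ``$S$-subharmonic'', ``proper'', and ``$\D$ compatible with $S$'' --- to our concrete cones $S(H_w)$ and $\D_H$, and, in the complex case, keeping careful track of the passage between the $\ef$-valued spaces $H_w$, $H_l$, $H_s$ and the real cones $S(H_w)$, $\D_H$ via real parts; once this dictionary is set up, each verification reduces to one of the short computations indicated above.
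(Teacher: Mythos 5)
Your part $(a)$ and the ``cone of continuous $S(H_w)$-subharmonic proper functions'' part of $(b)$ are fine --- the paper dismisses these as obvious/definitional, and your verifications fill in the details correctly. The problem is the compatibility check, which is the only point of the observation with real content, and which your proposal does not actually carry out. Compatibility of $\D_H$ with $S(H_w)$ in the sense of \cite[p.~544]{batty-vector} is a single concrete condition: for every $h_1\in S(H_w)$ and $h_2\in -S(H_w)$ with $h_1\le h_2$ and every $x\in E$, the piecewise-defined function
$$g(t,x^*)=\begin{cases} h_1(t)\cdot\Re x^*(x), & \Re x^*(x)\ge 0,\\ h_2(t)\cdot\Re x^*(x), & \Re x^*(x)\le 0,\end{cases}$$
must belong to $\D_H$. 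This is not among the ``substantive inputs'' you list, and it is not a formal consequence of them: knowing that $\Re T\h\in\D_H\cap(-\D_H)$ for $\h\in H$, that $H_s$ separates the right points, and that $S(H_w)$ is max-stable does not give you membership of $g$ in $\D_H$. In particular the superlinearity of $g(t,\cdot)$ is genuinely delicate --- $g(t,\cdot)$ is built from \emph{two different} coefficient functions on the two half-spaces $\{\Re x^*(x)\ge 0\}$ and $\{\Re x^*(x)\le 0\}$, and superadditivity across the boundary requires a case analysis that uses the hypothesis $h_1\le h_2$ in an essential way (when $\Re x^*(x)<0<\Re y^*(x)$ one compares $h_1(t)(\Re x^*(x)+\Re y^*(x))$ or $h_2(t)(\Re x^*(x)+\Re y^*(x))$ with $h_2(t)\Re x^*(x)+h_1(t)\Re y^*(x)$, and the inequality only goes the right way because $h_1\le h_2$ and the relevant factor has the right sign). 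The $H_w$-convexity of the slices $g(\cdot,x^*)$ also needs a word: for $\Re x^*(x)<0$ the slice is a \emph{negative} multiple of $h_2$, which lies in $S(H_w)$ only because $h_2\in -S(H_w)$, not merely because $h_2$ is bounded above by something.

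So the proposal, as written, defers exactly the step where the proof lives. To repair it you must state the compatibility axiom explicitly, write down $g$, and verify (i) continuity and well-definedness on the overlap $\Re x^*(x)=0$, (ii) that each slice $g(\cdot,x^*)$ is $H_w$-convex (splitting on the sign of $\Re x^*(x)$), and (iii) that each slice $g(t,\cdot)$ is positively homogeneous and superadditive, with the two-case analysis according to the sign of $\Re x^*(x)+\Re y^*(x)$ when $\Re x^*(x)$ and $\Re y^*(x)$ have opposite signs.
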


\begin{proof}
    Assertion $(a)$ is obvious. The first part of assertion $(b)$ follows from the definitions, only the compatibility should be checked. So, let us check the definition from \cite[p. 544]{batty-vector}. Assume that $h_1\in S(H_w)$, $h_2\in -S(H_w)$ are such that $h_1\le h_2$ and $x\in E$.
    We need to prove that the function $g$ defined by
   \[
g(t,x^*)=\begin{cases}
            h_1(t)\cdot\Re x^*(x),&\Re x^*(x)\ge 0,\\
            h_2(t)\cdot\Re x^*(x), &\Re x^*(x)\le 0,
\end{cases} \quad
(t,x^*)\in K\times E^*
\]
 belongs to $\D_H$. 

Indeed, it is clear that $g$ is well defined and continuous. Further, fix $x^*\in E^*$. If $\Re x^*(x)\ge0$, then $g(\cdot,x^*)$ is a nonnegative multiple of $h_1$, so it is $H_w$-convex. 
If $\Re x^*(x)<0$, then $g(\cdot,x^*)$ is a negative multiple of $h_2$, so it is $H_w$-convex as well.

Finally, fix $t\in K$. We will show that the function $g(\cdot,x^*)$ is superlinear on $E^*$.
This function is clearly positively homogeneous, so only superadditivity needs to be checked.
Fix $x^*,y^*\in E^*$. If $\Re x^*(x)$ and $\Re y^*(x)$ have the same sign, clearly
$g(t,x^*+y^*)=g(t,x^*)+g(t,y^*)$. So, assume they have opposite sign, say, $\Re x^*(x)<0<\Re y^*(x)$.
We distinguish two cases.

Case 1. $\Re x^*(x)+\Re y^*(x)\ge 0$. Then
$$\begin{aligned} g(t,x^*+y^*)&=h_1(t)(\Re x^*(x)+\Re y^*(x)) \ge h_2(t)\cdot\Re x^*(x)+h_1(t)\cdot\Re y^*(x)\\&=g(t,x^*)+g(t,y^*),\end{aligned}$$
where the inequality follows from the assumptions $h_1\le h_2$ and $\Re x^*(x)<0$.

Case 2. $\Re x^*(x)+\Re y^*(x)< 0$. Then
$$\begin{aligned} g(t,x^*+y^*)&=h_2(t)(\Re x^*(x)+\Re y^*(x)) \ge h_2(t)\cdot\Re x^*(x)+h_1(t)\cdot\Re y^*(x)\\&=g(t,x^*)+g(t,y^*),\end{aligned}$$
where the inequality follows from the assumptions $h_1\le h_2$ and $\Re y^*(x)>0$.

This completes the proof.
\end{proof}

Inspired by \cite{batty-vector} and \cite{transference-studia} we define the following preorders on $M_+(K\times B_{E^*})$. If $\nu_1,\nu_2\in M_+(K\times B_{E^*})$, we define
$$\nu_1\prec_H \nu_2\equiv^{df} \int f\di\nu_1\le \int f\di\nu_2\mbox{ for }f\in \D_H$$
and
$$\nu_1\prec_{H,c} \nu_2\equiv^{df} \nu_1\prec_H \nu_2\ \&\ \norm{\nu_2}\le\norm{\nu_1}.$$ 

Further, let $W:M(K,E^*)\to M_+(K\times B_{E^*})$ be the mapping provided by \cite[Proposition 3.3]{batty-vector} (denoted by $K$ in the quoted paper) and investigated in more detail in \cite{transference-studia} (where it was denoted by $W$ and we keep the notation). Now, following \cite{batty-vector} we define a partial order on $M(K,E^*)$. If $\mu_1,\mu_2\in M(K,E^*)$, we define
$$\mu_1\prec^H\mu_2 \equiv^{df}  W\mu_1\prec_{H,c} W\mu_2.$$
Finally, by $\prec_{H_w}$ we will denote the classical Choquet order on $M_+(K)$ induced by the function space $H_w$ in the sense of \cite[Definition 3.19]{lmns}, i.e.,
$$\sigma_1\prec_{H_w}\sigma_2 \iff \forall f\in S(H_w)\colon\int f\di\sigma_1\le\int f\di\sigma_2.$$

In the following lemma we collect basic properties of the above-defined relations.

\begin{lemma}\label{L:orders-zakl}
    Let $H\subset C(K,E)$ be a function space separating points of $K$ such that $H_w$ contains constants.
    Then the following assertions are valid.
    \begin{enumerate}[$(a)$]
        \item Relations $\prec_{H}$ and $\prec_{H,c}$ are preorders (i.e., reflexive and transitive).
         \item If $\nu_1\prec_H\nu_2$, then $\int \h \di T^*\nu_1=\int \h\di T^*\nu_2$ for each $\h\in A_c^w(H)$.
        \item Relations $\prec_{H}$ and $\prec_{H,c}$ restricted to measures carried by $K\times S_{E^*}$ coincide.
        \item If $\nu_1,\nu_2$ are carried by $K\times S_{E^*}$, $\nu_1\prec_H\nu_2$ and $\norm{\nu_1}=\norm{\nu_2}$, then $\pi_1(\nu_1)\prec_{H_w}\pi_1(\nu_2)$.
        \item 
         Relations $\prec_{H}$ and $\prec_{H,c}$ restricted to measures carried by $K\times S_{E^*}$ are partial orders (i.e., additionally weakly antisymmetric).
       
        \item $\nu\prec_{H,c} WT^*\nu$ for each $\nu\in M_+(K\times B_{E^*})$.
        \item Relation $\prec^H$ is a partial order on $M(K,E^*)$.
    \end{enumerate}
\end{lemma}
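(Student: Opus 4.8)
The plan is to prove the seven items in order. Item $(a)$ is immediate: reflexivity and transitivity of $\prec_H$ come from the defining inequalities over the cone $\D_H$, and for $\prec_{H,c}$ one adds the transitive norm comparison. For $(b)$ the point is that if $\h\in A_c^w(H)$ then $x^*\circ\h\in A_c^w(H)_w=A_c(H_w)$ by Proposition~\ref{P:AcH vlastnosti}$(b)$, so $\Re(x^*\circ\h)$ is $H_w$-affine; hence the function $\Re(T\h)$ on $K\times B_{E^*}$ has $H_w$-convex sections in the first variable and (being linear there) superlinear sections in the second, and the same holds for $-\Re(T\h)$. Thus $\nu_1\prec_H\nu_2$ forces $\int\Re(T\h)\di\nu_1=\int\Re(T\h)\di\nu_2$; replacing $\h$ by $i\h$ handles the imaginary part, and since $\int\h\di T^*\nu=\int T\h\di\nu$ we obtain $(b)$.

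The common tool for $(c)$ and $(d)$ is the following family of test functions. For a finite set $F\subset B_E$ the weak$^*$-continuous function $\ell_F(x^*)=\max_{x\in F}\abs{x^*(x)}$ is sublinear, so $-\ell_F$ is superlinear; consequently, for $f\in S(H_w)$ with $f\le 0$ the function $(t,x^*)\mapsto f(t)\ell_F(x^*)$ belongs to $\D_H$, being a nonnegative multiple of $f$ in the first variable and of $-\ell_F$ in the second. Since $\ell_F\nearrow\norm{\cdot}$ along the directed family of finite subsets of $B_E$, and since $\norm{x^*}=1$ $\nu$-almost everywhere when $\nu$ is carried by $K\times S_{E^*}$, a routine Radon-regularity and monotone-net argument gives $\lim_F\int f(t)\ell_F(x^*)\di\nu=\int f\di\pi_1(\nu)$ for such $\nu$. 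Taking $f\equiv-1$ and using $\nu_1\prec_H\nu_2$ yields $\norm{\nu_2}\le\norm{\nu_1}$, which is $(c)$. Taking general $f\in S(H_w)$ — first for $f\le 0$, then for arbitrary $f$ after replacing it by $f-c$ with $c\ge\max_K f$ and using $\norm{\pi_1(\nu_i)}=\norm{\nu_i}$ together with the hypothesis $\norm{\nu_1}=\norm{\nu_2}$ to cancel the constant term — yields $\pi_1(\nu_1)\prec_{H_w}\pi_1(\nu_2)$, which is $(d)$.

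For $(e)$, the relations are preorders by $(a)$ and coincide on sphere-carried measures by $(c)$, so only antisymmetry is at issue. If $\nu_1\prec_H\nu_2$ and $\nu_2\prec_H\nu_1$ with both carried by $K\times S_{E^*}$, then $(c)$ gives $\norm{\nu_1}=\norm{\nu_2}$, so $(d)$ gives $\pi_1(\nu_1)\prec_{H_w}\pi_1(\nu_2)$ and, symmetrically, the reverse; since the Choquet order $\prec_{H_w}$ is a partial order ($S(H_w)-S(H_w)$ is a point-separating sublattice of $C(K,\er)$ containing constants, hence uniformly dense), we get $\pi_1(\nu_1)=\pi_1(\nu_2)$. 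Upgrading this to $\nu_1=\nu_2$ is the crux, and here I would appeal to the corresponding abstract statement of \cite{batty-vector} for the order attached to a max-stable cone and a compatible cone of continuous subharmonic functions, which is applicable precisely because of Observation~\ref{obs:compatible}. This is the step I expect to be the main obstacle: the second-variable sections of members of $\D_H$ are only superlinear, not linear, so $\D_H-\D_H$ does not separate measures on $K\times S_{E^*}$ by a naive density argument, and one genuinely needs the supply of tent-shaped functions furnished by compatibility.

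Finally, for $(f)$ and $(g)$: $WT^*\nu$ is carried by $K\times S_{E^*}$, and for positive $\nu$ one has $\norm{WT^*\nu}=\norm{T^*\nu}=\norm{\pi_1(\nu)}=\norm{\nu}$ using $\abs{T^*\nu}=\pi_1(\abs\nu)$ (cf. the proof of Theorem~\ref{T:reprez-funct}), so it remains to recall the relation $\nu\prec_H WT^*\nu$, which is the content of the relevant result of \cite{batty-vector} on the map $W$ (refined in \cite{transference-studia}); together with the norm bound this gives $\nu\prec_{H,c}WT^*\nu$, proving $(f)$. For $(g)$, reflexivity and transitivity of $\prec^H$ are $(a)$ transported through $W$, and for antisymmetry, $\mu_1\prec^H\mu_2$ and $\mu_2\prec^H\mu_1$ mean $W\mu_1\prec_{H,c}W\mu_2$ and $W\mu_2\prec_{H,c}W\mu_1$; since $W\mu_1,W\mu_2$ are carried by $K\times S_{E^*}$, part $(e)$ gives $W\mu_1=W\mu_2$, and applying $T^*$ (recall $T^*W=\mathrm{id}$ on $M(K,E^*)$) gives $\mu_1=\mu_2$.
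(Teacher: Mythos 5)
Parts $(a)$--$(d)$, $(f)$ and $(g)$ of your argument essentially track the paper's proof, up to the cosmetic replacement of the test functions $(t,x^*)\mapsto g(t)\min_j\Re x^*(x_j)$ by $(t,x^*)\mapsto f(t)\max_{x\in F}\abs{x^*(x)}$; both families lie in $\D_H$ and both recover $\norm{\cdot}$ on the sphere by a monotone net limit, so your versions of $(c)$ and $(d)$ go through. The genuine gap is in $(e)$. Having reduced to the situation where $\nu_1,\nu_2$ are carried by $K\times S_{E^*}$, $\pi_1(\nu_1)=\pi_1(\nu_2)$, and $\int f\di\nu_1=\int f\di\nu_2$ for all $f\in\D_H$, you must still conclude $\nu_1=\nu_2$, and you delegate this to ``the corresponding abstract statement of \cite{batty-vector}.'' No such statement is available off the shelf: Proposition 4.7 of \cite{batty-vector} concerns the order transported to vector measures in $M(K,E^*)$ via $W$ --- that is, it gives statement $(g)$ --- and it cannot separate two distinct positive measures on $K\times S_{E^*}$ having the same image under $T^*$; such pairs exist in abundance (they are exactly what the set $N(\mu)$ of Section~\ref{sec:nasoucinu} parametrizes). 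The paper closes this gap with an explicit argument: it first shows, via Stone--Weierstrass applied to $S(H_w)$ and a Stone--Kakutani lattice argument in the second variable, that every function $(t,x^*)\mapsto f(t)u(x^*)$ with $f\in C(K)$ and $u$ weak$^*$-continuous positively homogeneous lies in $\overline{\D_H-\D_H}$, and then uses the normalization trick from the proof of Lemma 3.2 of \cite{batty-vector} (approximating $x^*\mapsto\Re x^*(x_1)\cdots\Re x^*(x_n)$ monotonically by positively homogeneous functions agreeing with it on the sphere) followed by a final Stone--Weierstrass application on $K\times B_{E^*}$. Some argument of this kind must be supplied; your own observation that $\D_H-\D_H$ ``does not separate measures by a naive density argument'' is precisely the difficulty, and the citation does not resolve it.

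A smaller point in $(f)$: the chain $\norm{WT^*\nu}=\norm{T^*\nu}=\norm{\pi_1(\nu)}=\norm{\nu}$ is false for a general $\nu\in M_+(K\times B_{E^*})$; for instance, if $\nu$ charges $K\times\{0\}$ then $\norm{T^*\nu}<\norm{\nu}$, and $\abs{T^*\nu}=\pi_1(\nu)$ holds only under the additional hypothesis $\norm{T^*\nu}=\norm{\nu}$ (this is exactly why Lemma~\ref{L:Hc-max}$(c)$ is a nontrivial assertion). What is true, and all that is needed for $\nu\prec_{H,c}WT^*\nu$, is $\norm{WT^*\nu}=\norm{T^*\nu}\le\norm{\nu}$, which is how the paper argues; the conclusion of $(f)$ therefore stands, but the intermediate equalities should be corrected.
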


\begin{proof}
    Assertion $(a)$ is obvious.

   $(b)$: Assume $\nu_1\prec_{H}\nu_2$ and $\h\in A_c^w(H)$. Then $\Re T\h\in \D_H$. Indeed, it is a continuous function. For $x^*\in B_{E^*}$ we have $\Re T\h (\cdot,x^*)=\Re (x^*\circ\h)\in A_c(H_w)$, so it is $H_w$-convex. If $t\in K$, then $\Re T\h(t,\cdot)$ is clearly real-linear, hence superlinear. So, $\int \Re T\h\di\nu_1\le \int\Re T\h\di\nu_2$. Since $-\h\in A_c^w(H)$ as well, we infer $\int \Re T\h\di\nu_1= \int\Re T\h\di\nu_2$. Finally, since $\Im T\h=-\Re T(i\h)$, the equality holds for imaginary part as well. Thus $\int T\h\di\nu_1=\int T\h\di\nu_2$, so the required equality follows.

    $(c)$: Assume that $\nu_1,\nu_2\in M_+(K\times B_{E^*})$ are carried by $K\times S_{E^*}$ and satisfy $\nu_1\prec_H\nu_2$. For $x_1,\dots,x_n\in E$ define
    $$f_{x_1,\dots,x_n}(t,x^*)=\min_{1\le j\le n}\Re x^*(x_j).$$
    Since $H_w$ contains constants, it is clear that these functions belong to $\D_H$, therefore
    $$\int f_{x_1,\dots,x_n}\di\nu_1\le \int f_{x_1,\dots,x_n}\di\nu_2.$$
    Observe that for $j=1,2$ we have
    $$ \begin{aligned}
            \inf\left\{  \int f_{x_1,\dots,x_n}\di\nu_j\setsep x_1,\dots,x_n\in B_{E} \right\}
    &=\int\inf\{   f_{x_1,\dots,x_n}\setsep x_1,\dots,x_n\in B_{E} \}\di\nu_j
    \\&=\int -\norm{x^*}\di\nu_j(x^*)=\int -1\di\nu_j=-\norm{\nu_j}.\end{aligned}$$
    Here the first equality follows from the monotone convergence for nets, the second one follows from the definition of the norm on a dual space and the third one follows from the assumption that $\nu_j$ is carried by $K\times S_{E^*}$. So, by passing to the inf we deduce that $-\norm{\nu_1}\le-\norm{\nu_2}$, hence $\nu_1\prec_{H,c}\nu_2$. This completes the argument.

    $(d)$: Assume that $\nu_1,\nu_2$ are carried by $K\times S_{E^*}$, $\nu_1\prec_H\nu_2$ and $\norm{\nu_1}=\norm{\nu_2}$. Let $g\in S(H_w)$. Fix $c\in\er$ such that $g-c\le 0$. For each choice $x_1,\dots,x_n\in B_E$ the function
    $$(t,x^*)\mapsto (c-g(t))\cdot \min\{0,f_{x_1,\dots,x_n}(x^*)\}$$
    belongs to $\D_H$ (we use the notation from the proof of $(c)$). Hence
    \begin{multline*}
    \int (c-g(t))\cdot \min\{0,f_{x_1,\dots,x_n}(x^*)\} \di\nu_1(t,x^*)\\ \le \int (c-g(t))\cdot \min\{0,f_{x_1,\dots,x_n}(x^*)\} \di\nu_2(t,x^*).\end{multline*}
    Using the monotone convergence of nets and the assumption that $\nu_1,\nu_2$ are carried by $K\times S_{E^*}$ we obtain (as in the proof of $(c)$) that
    $$\int (c-g(t))\cdot (-1) \di\nu_1(t,x^*)\le \int (c-g(t))\cdot (-1) \di\nu_2(t,x^*).$$ 
    Taking into account that $\norm{\nu_1}=\norm{\nu_2}$ we deduce
    $$\begin{aligned}
          \int g\di\pi_1(\nu_1)&= \int (g(t)-c)\di\nu_1(t,x^*) + c\norm{\nu_1}
    \\&\le \int (g(t)-c)\di\nu_2(t,x^*) + c\norm{\nu_2} =\int g\di\pi_1(\nu_1).\end{aligned}$$
    This completes the argument.

   $(e)$: The proof will be done in several steps. 

\smallskip
  
    {\tt Step 1:} Let $f\in S(H_w)$ and let $u$ be a weak$^*$-continuous sublinear functional on $E^*$. Then the function $$(t,x^*)\mapsto f(t)\cdot u(x^*)$$
belongs to $\D_H-\D_H$.

\smallskip

If, additionally, $f\le 0$ and $u\ge 0$, the function clearly belongs even to $\D_H$.
In general we may fix $c\ge 0$ such that $f-c\le0$. Then
$$f(t)\cdot u(x^*)=(f(t)-c)\cdot 2u^+(x^*) - (-c)\cdot 2u^+(x^*) - (f(t)-c)\cdot \abs{u(x^*)}
+ (-c)\cdot\abs{u(x^*)}.$$
Since both $f-c$ and $-c$ are negative functions from $S(H_w)$ and $u^+$ and $\abs{u}$ are positive  weak$^*$-continuous sublinear functionals on $E^*$, the assertion follows.

\smallskip

{\tt Step 2:} Let $f\in C(K)$ and let $u$ be a weak$^*$-continuous sublinear functional on $E^*$.
Then the function 
$$(t,x^*)\mapsto f(t)\cdot u(x^*)$$
belongs to $\overline{\D_H-\D_H}$.

\smallskip

This follows from Step 1 using the Stone-Weierstrass theorem, as $S(H_w)$ is a max-stable convex cone containing constants and separating points.

\smallskip

{\tt Step 3:} Let $f\in C(K)$ and let $u$ be a weak$^*$-continuous positively homogeneous functional on $E^*$. Then the function 
$$(t,x^*)\mapsto f(t)\cdot u(x^*)$$
belongs to $\overline{\D_H-\D_H}$. 

\smallskip

Fix $f\in C(K)$ and set
$$Y=\{u\in C(B_{E^*})\setsep (t,x^*)\mapsto f(t)u(x^*)\mbox{ belongs to }\overline{\D_H-\D_H}.\}$$
It is clear that $Y$ is a closed subspace of $C(B_{E^*})$. By Step 2 it contains all sublinear functionals. Since the span of all sublinear functionals is a vector lattice, to prove $Y$ contains all positively homogeneous functionals on $E^*$, we may use the Stone-Kakutani theorem.

To this end fix $g:B_{E^*}\to\er$ a weak$^*$-continuous positively homogeneous functional and $x^*,y^*\in B_{E^*}$. We distinguish three cases:
\begin{itemize}
    \item $x^*,y^*$ are linearly independent over $\er$. Then there is $x\in E$ with $\Re x^*(x)=g(x^*)$ and $\Re y^*(x)=g(y^*)$. Then $u(z^*)=\Re z^*(x)$ is a sublinear functional coinciding with $g$ on $\{x^*,y^*\}$.
    \item $y^*=\alpha x^*$ or $x^*=\alpha y^*$ for some $\alpha\ge 0$. Without loss of generality the first case takes place. Let $x\in E$ be such that $\Re x^*(x)=g(x^*)$. Then $\Re y^*(x)=\alpha\Re x^*(x)=\alpha g(x^*)=g(\alpha x^*)=g(y^*)$. So, we conclude as in the first case.
    \item $y^*=\alpha x^*$ for some $\alpha<0$. Fix $x,y\in E$ with $\Re x^*(x)=g(x^*)$, $\Re y^*(y)=g(y^*)$. Define $u$ by
    $$u(z^*)=\sign g(x^*)\cdot (\sign g(x^*)\cdot \Re z^*(x))^+ + \sign g(y^*)\cdot (\sign g(y^*)\cdot \Re z^*(y))^+$$ for  $z^*\in E^*$. Then $u$ is a linear combination of sublinear functionals and coincides with $g$ on $\{x^*,y^*\}$.
\end{itemize}
This verifies the assumptions of the Stone-Kakutani theorem and completes the argument.

\smallskip

{\tt Step 4:} Conclusion -- the prooof of assertion $(e)$

\smallskip
  To prove that the two relations are partial orders, it is enough to prove the weak antisymmetry. So, assume that $\nu_1,\nu_2$ are carried by $K\times S_{E^*}$ and $\nu_1\prec_{H,c}\nu_2$ and $\nu_2\prec_{H,c}\nu_2$. Then $\norm{\nu_1}=\norm{\nu_2}$ and $\int f\di\nu_1=\int f\di\nu_2$ for $f\in\D_H$. 

Fix $g\in C(K)$ and denote by
$$\A_g=\{u\in C(B_{E^*})\setsep \int g(t) u(x^*)\di\nu_1(t,x^*)=\int g(t) u(x^*)\di\nu_2(t,x^*)\}.$$
By $(d)$ we know that $\pi_1(\nu_1)=\pi_1(\nu_2)$, hence $\A_g$ contains constant functions. By Step 3 we deduce $\A_g$ contains all positively homogeneous functions. Next we use the idea from \cite[proof of Lemma 3.2]{batty-vector}: 

For $x_1,\dots,x_n\in B_E$ define
$$u^{\{x_1,\dots,x_n\}}(x^*)= \Re x^*(x_1)\cdots \Re x^*(x_n), \quad x^*\in B_{E^*}.$$
 Further, if $F\subset B_E$ is a finite set containing $x_1,\dots,x_n$, define
$$u^{\{x_1,\dots,x_n\}}_F(x^*)=\begin{cases}
    0 & \mbox{ if }\Re x^*(x)=0\mbox{ for }x\in F, \\
    \frac{u^{\{x_1,\dots,x_n\}}(x^*)}{\max\{\abs{\Re x^*(x)}\setsep x\in F\}^{n-1}} & \mbox{ otherwise}.
\end{cases}$$
Then $u^{\{x_1,\dots,x_n\}}_F$ is continuous and positively homogeneous, hence $u^{\{x_1,\dots,x_n\}}_F\in\A_g$. Moreover, the net $(u^{\{x_1,\dots,x_n\}}_F)_F$ pointwise converges in a monotone way to the function $x^*\mapsto \frac{u^{\{x_1,\dots,x_n\}}(x^*)}{\norm{x^*}}$. Therefore the monotone convergence for nets together with the assumption that $\nu_1,\nu_2$ are carried by $K\times S_{E^*}$ implies
$$\int g(t)u^{\{x_1,\dots,x_n\}}(x^*)\di\nu_1(t,x^*) = \int g(t)u^{\{x_1,\dots,x_n\}}(x^*)\di\nu_2(t,x^*),$$
hence, $u^{\{x_1,\dots,x_n\}}\in\A_g$. 

Since the linear span of constants and  functions $u^{\{x_1,\dots,x_n\}}$ is a self-adjoint algebra containing constants and separating points and it is contained in $\A_g$, the Stone-Weierstrass theorem yields that $\A_g=C(B_{E^*})$. Using the Stone-Weierstrass theorem once more we deduce that $\int f\di\nu_1=\int f\di\nu_2$ for $f\in C(K\times B_{E^*})$. Thus $\nu_1=\nu_2$.

$(f)$: It follows from \cite[Lemma 4.1]{batty-vector} that $\nu\prec_H WT^*\nu$. Moreover, using \cite[Proposition 3.3]{batty-vector} we get $\norm{WT^*\nu}=\norm{T^*\nu}\le\norm{\nu}$. This completes the argument. (Alternatively we may use \cite[Lemma 5.2]{transference-studia}.)

$(g)$: This follows from \cite[Proposition 4.7]{batty-vector} using Observation~\ref{obs:compatible}. Alternatively, we may derive it from $(e)$: Assume $\mu_1\prec^H\mu_2$ and $\mu_2\prec^H\mu_1$.
By the very definition this implies $W\mu_1\prec_{H,c}W\mu_2$ and $W\mu_2\prec_{H,c}W\mu_1$. Since $W\mu_1$ and $W\mu_2$ are carried by $K\times S_{E^*}$ (by \cite[Proposition 3.4]{batty-vector} or \cite[Proposition 3.8(ii)]{transference-studia}), assertion $(e)$ yields $W\mu_1=W\mu_2$. Hence $\mu_1=\mu_2$ (since $\mu_j=T^*W\mu_j$ by \cite[Proposition 3.4(1)]{batty-vector} or \cite[Proposition 3.8(ii)]{transference-studia}).
\end{proof}

As in the classical Choquet theory, a distinguished role is played by maximal measures. In particular, there is a close connection between $\prec^H$-maximal measures from $M(K,E^*)$, $\prec_{H,c}$-maximal measures on $K\times B_{E^*}$ and $\prec_{H_w}$-maximal measures on $K$ (as pointed out already in \cite{batty-vector}). We first note that $\prec_{H,c}$ is just a preorder, not a partial order, so a measure $\nu$ is defined to be $\prec_{H,c}$-maximal if for any $\nu^\prime$ satisfying $\nu\prec_{H,c}\nu^\prime$ we have $\nu^\prime\prec_{H,c}\nu$. However, it is easy to observe that even the equality holds. It is the content of the following lemma.

\begin{lemma}\label{L:Hc-max}  Let $H$ be a subspace of $C(K,E)$ separating points of $K$ such that $H_w$ contains constants.
Let $\nu\in M_+(K\times B_{E^*})$ be $\prec_{H,c}$-maximal. Then we have the following:
\begin{enumerate}[$(a)$]
    \item If $\nu^\prime\in M_+(K\times B_{E^*})$ is such that $\nu\prec_{H,c}\nu^\prime$, then $\nu=\nu^\prime$.
    \item $\nu=WT^*\nu$.
    \item $\nu$ is carried by $K\times S_{E^*}$.
\end{enumerate}
\end{lemma}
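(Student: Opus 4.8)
The plan is to use the canonical measure $WT^*\nu$ furnished by Lemma~\ref{L:orders-zakl}$(f)$ as a competitor against the maximal measure $\nu$, and then to invoke Lemma~\ref{L:orders-zakl}$(e)$, according to which $\prec_H$ is a genuine (antisymmetric) partial order once restricted to measures carried by $K\times S_{E^*}$. The first thing I would record is that, by Lemma~\ref{L:orders-zakl}$(f)$, $\nu\prec_{H,c}WT^*\nu$, so $\prec_{H,c}$-maximality of $\nu$ forces $WT^*\nu\prec_{H,c}\nu$; comparing the norm clauses in the two relations gives $\norm{\nu}=\norm{WT^*\nu}$, and the remaining clauses give $\nu\prec_H WT^*\nu$ together with $WT^*\nu\prec_H\nu$. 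Recall also that $WT^*\nu$ is carried by $K\times S_{E^*}$ (by \cite[Proposition 3.4]{batty-vector} or \cite[Proposition 3.8(ii)]{transference-studia}).

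For $(c)$ I would test $\nu\prec_H WT^*\nu$ against the continuous functions $f_{x_1,\dots,x_n}\in\D_H$ introduced in the proof of Lemma~\ref{L:orders-zakl}$(c)$, and pass to the infimum over finite subsets $\{x_1,\dots,x_n\}\subset B_E$. These functions decrease, along that directed family, to $(t,x^*)\mapsto-\norm{x^*}$, so monotone convergence for nets yields
$$-\int\norm{x^*}\di\nu\ \le\ -\int\norm{x^*}\di(WT^*\nu)\ =\ -\norm{WT^*\nu}\ =\ -\norm{\nu},$$
where the first equality uses that $WT^*\nu$ lives on $K\times S_{E^*}$ and the last one uses $\norm{\nu}=\norm{WT^*\nu}$. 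Since trivially $\int\norm{x^*}\di\nu\le\int1\di\nu=\norm{\nu}$, we conclude $\int\norm{x^*}\di\nu=\norm{\nu}$, hence $\norm{x^*}=1$ for $\nu$-almost every $(t,x^*)$; that is, $\nu$ is carried by $K\times S_{E^*}$. For $(b)$: now $\nu$ and $WT^*\nu$ are both carried by $K\times S_{E^*}$ and each precedes the other in $\prec_H$, so the antisymmetry in Lemma~\ref{L:orders-zakl}$(e)$ gives $\nu=WT^*\nu$. For $(a)$: if $\nu'\in M_+(K\times B_{E^*})$ satisfies $\nu\prec_{H,c}\nu'$, then maximality yields $\nu'\prec_{H,c}\nu$, hence $\norm{\nu'}=\norm{\nu}$ and $\nu'\prec_H\nu$, $\nu\prec_H\nu'$; repeating the computation of $(c)$ with $\nu'$ in place of $\nu$ and the measure $\nu$ (already known to be carried by the sphere) in place of $WT^*\nu$ shows that $\nu'$ is carried by $K\times S_{E^*}$, and Lemma~\ref{L:orders-zakl}$(e)$ concludes $\nu=\nu'$.

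The one genuine difficulty is that $x^*\mapsto\norm{x^*}$ is only weak$^*$-lower semicontinuous on $B_{E^*}$, so $-\norm{\cdot}$ is not itself an admissible element of $\D_H$ (which consists of \emph{continuous} functions); this is bypassed, exactly as in the proof of Lemma~\ref{L:orders-zakl}$(c)$, by approximating $-\norm{\cdot}$ from above by the continuous functions $f_{x_1,\dots,x_n}$ (which are superlinear in $x^*$ since $H_w$ contains constants) and taking a monotone limit. Everything else is bookkeeping with the two preorders $\prec_H$ and $\prec_{H,c}$.
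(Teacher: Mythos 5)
Your proof is correct and follows essentially the same route as the paper: use Lemma~\ref{L:orders-zakl}$(f)$ plus maximality to get $WT^*\nu\prec_{H,c}\nu$, deduce $\norm{\nu}=\norm{WT^*\nu}=\norm{T^*\nu}$, conclude that the relevant measures live on $K\times S_{E^*}$, and finish with the antisymmetry from Lemma~\ref{L:orders-zakl}$(e)$. The only difference is that where the paper cites the external fact that $\norm{T^*\nu}=\norm{\nu}$ forces $\nu$ to be carried by $K\times S_{E^*}$ (\cite[Lemma 3.1]{batty-vector} or \cite[Lemma 3.1]{transference-studia}), you rederive it inline by testing against the functions $f_{x_1,\dots,x_n}$ and passing to a monotone limit, exactly as in the proof of Lemma~\ref{L:orders-zakl}$(c)$ -- a perfectly valid, self-contained substitute.
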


\begin{proof}
    Assume $\nu$ is $\prec_{H,c}$-maximal. By Lemma~\ref{L:orders-zakl}$(f)$ we deduce $WT^*\nu\prec_{H,c}\nu$. In particular,
    $$\norm{\nu}\le \norm{WT^*\nu}=\norm{T^*\nu}\le \norm{\nu},$$
    so the equalities hold. In particular, $\norm{T^*\nu}=\norm{\nu}$ and hence $\nu$ is carried by $K\times S_{E^*}$ (by \cite[Lemma 3.1]{batty-vector} or \cite[Lemma 3.1]{transference-studia}).
    This proves assertion $(c)$.

    Now assume that $\nu^\prime\in M_+(K\times B_{E^*})$ is such that $\nu\prec_{H,c}\nu^\prime$. Then $\nu^\prime$ is clearly also $\prec_{H,c}$-maximal. So, assertion $(c)$ implies that both $\nu$ and $\nu^\prime$ are carried by $K\times S_{E^*}$. Since $\nu^\prime\prec_{H,c}\nu$, we deduce $\nu=\nu^\prime$ by Lemma~\ref{L:orders-zakl}$(e)$. This proves $(a)$.

    Finally, assertion $(b)$ follows from $(a)$ using Lemma~\ref{L:orders-zakl}$(f)$.
\end{proof}

We continue by a proposition characterizing $H$-boundary measures using various forms of maximality. A substantial part of this result follows from \cite{batty-vector}.

\begin{prop}\label{P:maximalitamu}  Let $H$ be a subspace of $C(K,E)$ separating points of $K$ such that $H_w$ contains constants.
    Let $\mu\in M(K,E^*)$. Then the following assertions are equivalent.
    \begin{enumerate}[$(1)$]
        \item $\mu$ is $H$-boundary;
        \item $\mu$ is $\prec^H$-maximal;
        \item $W\mu$ is $\prec_{H,c}$-maximal;
        \item $\abs{\mu}$ is $\prec_{H_w}$-maximal. 
    \end{enumerate}
\end{prop}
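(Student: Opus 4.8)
The plan is to establish the cycle of equivalences $(1)\iff(4)$, $(2)\iff(3)$, and $(3)\iff(4)$, exploiting the machinery already built in the section together with the results of \cite{batty-vector} and Theorem~\ref{T:H-maximal}.

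First I would handle $(1)\iff(4)$. By the definition of $H$-boundary measure (end of Section~\ref{sec:hranice a miry}) together with condition $(b)$ of Theorem~\ref{T:H-maximal}, a measure $\mu\in M(K,E^*)$ is $H$-boundary if and only if $\phi_{H_w}(\abs{\mu})$ is a maximal measure on $B_{H_w^*}$. Since $H_w$ contains constants and separates points, the classical theory identifies maximality of $\phi_{H_w}(\abs{\mu})$ on $B_{H_w^*}$ with $\prec_{H_w}$-maximality of $\abs{\mu}$ on $K$ --- this is exactly the equivalence recalled in the introduction of Section~\ref{sec:hranice a miry} (using \cite[Definition 3.57 and Proposition 4.28]{lmns} in the real case, and for $\ef=\ce$ passing to $A_c(H_w)\cap C(K,\er)$ as explained in Section~\ref{sec:dilation}). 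This yields $(1)\iff(4)$.

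Next, $(2)\iff(3)$ is almost immediate from the definition $\mu_1\prec^H\mu_2\equiv W\mu_1\prec_{H,c}W\mu_2$. If $W\mu$ is $\prec_{H,c}$-maximal and $\mu\prec^H\mu'$, then $W\mu\prec_{H,c}W\mu'$, so by Lemma~\ref{L:Hc-max}$(a)$ we get $W\mu=W\mu'$, hence $\mu=T^*W\mu=T^*W\mu'=\mu'$ (the identity $\mu=T^*W\mu$ is \cite[Proposition 3.4]{batty-vector}), so $\mu$ is $\prec^H$-maximal. Conversely, if $\mu$ is $\prec^H$-maximal and $W\mu\prec_{H,c}\nu'$ for some $\nu'\in M_+(K\times B_{E^*})$, then by Lemma~\ref{L:orders-zakl}$(f)$ applied to $\nu'$ we have $\nu'\prec_{H,c}WT^*\nu'$, and since $T^*\nu'$ can be compared: actually the cleanest route is that $\mu\prec^H T^*\nu'$ because $W\mu\prec_{H,c}\nu'\prec_{H,c}WT^*\nu'$ (using transitivity and Lemma~\ref{L:orders-zakl}$(f)$), whence $\mu=T^*\nu'$ by maximality; then $W\mu=WT^*\nu'$, and $\nu'\prec_{H,c}WT^*\nu'=W\mu$ gives $\prec_{H,c}$-maximality of $W\mu$.

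Finally, for $(3)\iff(4)$ I would invoke the results of \cite{batty-vector} directly, via Observation~\ref{obs:compatible}: the cone $S(H_w)$ and the compatible cone $\D_H$ fit the framework there, and the relevant theorem in \cite{batty-vector} characterizes $\prec_{H,c}$-maximality of a measure $\nu$ carried by $K\times S_{E^*}$ (in particular $\nu=W\mu$, which is so carried by \cite[Proposition 3.4]{batty-vector}) in terms of $\prec_{H_w}$-maximality of $\pi_1(\nu)$; and by \cite[Proposition 3.4]{batty-vector} (or \cite[Proposition 3.8]{transference-studia}) one has $\pi_1(W\mu)=\abs{T^*\mu}$-type identities --- more precisely $\pi_1(W\mu)=\abs{\mu}$ when $\mu$ has the right form, which is the content needed. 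Alternatively, one can argue more self-containedly: if $W\mu$ is $\prec_{H,c}$-maximal and $\abs\mu\prec_{H_w}\sigma$ for some $\sigma\in M_+(K)$, lift $\sigma$ through the disintegration of $W\mu$ (using Lemma~\ref{L:dezintegrace-obec} and Lemma~\ref{L:factorizace-quotient}) to a measure $\nu'$ on $K\times B_{E^*}$ with $\pi_1(\nu')=\sigma$ and $W\mu\prec_H\nu'$, deduce $W\mu\prec_{H,c}\nu'$ via Lemma~\ref{L:orders-zakl}$(c)$, conclude $W\mu=\nu'$ by maximality, hence $\sigma=\abs\mu$; the converse direction uses Lemma~\ref{L:orders-zakl}$(d)$ in the same spirit.

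\medskip

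The main obstacle I anticipate is the precise bookkeeping in $(3)\iff(4)$: relating $\prec_{H,c}$-maximality of $W\mu$ on $K\times B_{E^*}$ to $\prec_{H_w}$-maximality of $\abs{\mu}$ on $K$ requires knowing that $\pi_1(W\mu)=\abs{\mu}$ (or at least that the push-forward under $\pi_1$ intertwines the two orders in the needed direction), and that lifting a $\prec_{H_w}$-dominating measure $\sigma\succ\abs\mu$ back up to the product keeps domination in $\prec_H$; this is where the disintegration lemmas and the structure of $\D_H$ (its functions being superlinear in the $E^*$-variable and $H_w$-convex in the $K$-variable) must be used carefully, and where citing \cite{batty-vector} saves the most work.
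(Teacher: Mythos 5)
Your proposal is correct and follows essentially the same route as the paper: $(1)\iff(4)$ via Theorem~\ref{T:H-maximal} together with the classical identification of $\phi_{H_w}(\abs{\mu})$-maximality with $\prec_{H_w}$-maximality, and the remaining equivalences via Observation~\ref{obs:compatible} and the results of \cite{batty-vector}. The only (welcome) difference is that you derive $(2)\iff(3)$ explicitly from Lemma~\ref{L:Hc-max}$(a)$, Lemma~\ref{L:orders-zakl}$(f)$ and the identity $\mu=T^*W\mu$, where the paper simply folds this into the citation of \cite[Theorem 4.9]{batty-vector}; that argument is sound.
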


\begin{proof}
    Equivalences $(2)\iff(3)\iff(4)$ follow from Observation~\ref{obs:compatible} and \cite[Theorem 4.9]{batty-vector}.

    $(1)\iff(4)$: By Theorem~\ref{T:H-maximal} and the following definition $\mu$ is $H$-boundary if and only if $\phi_{H_w}(\abs{\mu})$ is a maximal measure on $B_{H_w^*}$. It follows from \cite[Proposition 4.28(d)]{lmns} that this is equivalent to $\abs{\mu}$ being $H_w$-maximal.
\end{proof}

We continue by a characterization of $\prec_{H,c}$-maximal measures which may be viewed as a strengthening of Lemma~\ref{L:Hc-max}.

\begin{prop}
    Let $\nu\in M_+(K\times B_{E^*})$. Then the following assertions are equivalent.
    \begin{enumerate}[$(1)$]
        \item $\nu$ is $\prec_{H,c}$-maximal.
        \item $\nu=WT^*\nu$ and $\pi_1(\nu)$ is $H_w$-maximal.
    \end{enumerate}
\end{prop}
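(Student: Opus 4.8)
The plan is to avoid any new hard work and reduce the statement to the already–proved Proposition~\ref{P:maximalitamu}, combined with Lemma~\ref{L:Hc-max}$(b)$ and the identity $\abs{T^*\nu}=\pi_1(\nu)$ valid for positive $\nu$. A priori, proving $(1)\implies(2)$ directly would seem to require a Cartier–Fell–Meyer–type dilation argument (to lift a measure dominating $\pi_1(\nu)$ in $\prec_{H_w}$ to a measure dominating $\nu$ in $\prec_{H,c}$), but routing everything through $T^*$ and $W$ bypasses this.

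First I would record the two elementary translations. For $\nu\in M_+(K\times B_{E^*})$ we have $\abs{\nu}=\nu$, so by \cite[Proposition 3.5(a)]{transference-studia} (already used in the proof of Theorem~\ref{T:reprez-funct}) the measure $\mu:=T^*\nu\in M(K,E^*)$ satisfies $\abs{\mu}=\abs{T^*\nu}=\pi_1(\abs{\nu})=\pi_1(\nu)$, and of course $W\mu=WT^*\nu$. Feeding these two facts into the equivalence $(3)\iff(4)$ of Proposition~\ref{P:maximalitamu} applied to this $\mu$ yields the key reformulation: $WT^*\nu$ is $\prec_{H,c}$-maximal if and only if $\pi_1(\nu)$ is $\prec_{H_w}$-maximal, the latter being the same as $H_w$-maximality, as noted inside the proof of Proposition~\ref{P:maximalitamu}.

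With this in hand both implications are immediate. For $(1)\implies(2)$: if $\nu$ is $\prec_{H,c}$-maximal, then Lemma~\ref{L:Hc-max}$(b)$ gives $\nu=WT^*\nu$, which is the first half of $(2)$; and then, since $WT^*\nu=\nu$ is $\prec_{H,c}$-maximal, the reformulation above shows $\pi_1(\nu)$ is $H_w$-maximal, the second half of $(2)$. For $(2)\implies(1)$: assuming $\nu=WT^*\nu$ and $\pi_1(\nu)$ $H_w$-maximal, the reformulation gives that $WT^*\nu$ is $\prec_{H,c}$-maximal, and $WT^*\nu=\nu$, so $\nu$ is $\prec_{H,c}$-maximal.

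The only subtlety worth flagging is that $\prec_{H,c}$ is merely a preorder, so ``$\prec_{H,c}$-maximal'' must be read in the sense fixed just before Lemma~\ref{L:Hc-max} and used in Proposition~\ref{P:maximalitamu}; once that convention is in place, the proof is a direct concatenation of the cited results and presents no real obstacle.
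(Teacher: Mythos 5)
Your route is the same as the paper's: both directions are obtained by combining Lemma~\ref{L:Hc-max}$(b)$ with the equivalence $(3)\iff(4)$ of Proposition~\ref{P:maximalitamu} applied to $\mu=T^*\nu$, using the identification $\pi_1(\nu)=\abs{T^*\nu}$. So in substance the proof is correct and matches the paper.

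One point needs repair, though. Your ``elementary translation'' $\abs{T^*\nu}=\pi_1(\abs{\nu})=\pi_1(\nu)$ is \emph{not} valid for an arbitrary $\nu\in M_+(K\times B_{E^*})$: take $\nu=\tfrac12(\ep_{(t,x^*)}+\ep_{(t,-x^*)})$ with $x^*\in S_{E^*}$; then $T^*\nu=0$ while $\pi_1(\nu)=\ep_t$. The cited \cite[Proposition 3.5(a)]{transference-studia} requires $\norm{T^*\nu}=\norm{\nu}$ (it is stated for $\nu\in N(\mu)$, and in the proof of Theorem~\ref{T:reprez-funct} that norm equality is in force). Consequently your ``key reformulation'' --- $WT^*\nu$ is $\prec_{H,c}$-maximal iff $\pi_1(\nu)$ is $H_w$-maximal --- is false as a statement about general positive $\nu$ (e.g.\ the example above with $t\notin\Ch_{H_w}K$ has $WT^*\nu=0$ maximal but $\pi_1(\nu)=\ep_t$ not $H_w$-maximal). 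The argument survives because at both places where you invoke the identity the hypothesis $\nu=WT^*\nu$ is already available (from Lemma~\ref{L:Hc-max}$(b)$ in one direction, by assumption in the other), and then $\pi_1(\nu)=\pi_1(WT^*\nu)=\abs{T^*\nu}$ holds by the basic properties of $W$ --- which is exactly how the paper justifies it. So: state the identity only under the hypothesis $\nu=WT^*\nu$ (or $\norm{T^*\nu}=\norm{\nu}$), and the proof is complete.
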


\begin{proof}
    $(2)\implies (1)$: If $\nu=WT^*\nu$, then $\pi_1(\nu)=\abs{T^*\nu}$ (by \cite[Propositions 3.3 and 3.4]{batty-vector} or \cite[Proposition 3.8(iii)]{transference-studia}), so we may use implication $(4)\implies(3)$ from Proposition~\ref{P:maximalitamu}.

    $(1)\implies (2)$: Assume $\nu$ is $\prec_{H,c}$-maximal. By Lemma~\ref{L:Hc-max} we know that
    $\nu=WT^*\nu$. Hence $\pi_1(\nu)=\abs{T^*\nu}$ (as in the proof of the converse implication), so we conclude by implication $(3)\implies(4)$ from  Proposition~\ref{P:maximalitamu}.
\end{proof}

\begin{remarks}
    (1) The orders addressed in this section provide an alternative proof of the representation theorem, assuming that $H_w$ contains constants: Given $\varphi\in H^*$, find $\widetilde{\varphi}\in C(K,E)^*$ extending $\varphi$ with the preservation of the norm. Let $\mu_0\in M(K,E^*)$ represent $\widetilde{\varphi}$. Find $\mu\in M(K,E^*)$ $\prec^H$-maximal with $\mu_0\prec^H\mu$ (it exists due to \cite[Theorem 4.3]{batty-vector}). Then $\mu\in M_\varphi(H)$ is $H$-boundary.

    (2) Proposition~\ref{P:maximalitamu} also permits (assuming that $H_w$ contains constants)  to express vector simpliciality and functional vector simpliciality using uniqueness of representing $\prec^H$-maximal measures, similarly as in the classical setting.
\end{remarks}

In the following theorem we provide a characterization of weak simpliciality via uniqueness of $\prec^H$-maximal measures. 

\begin{thm}
    Let $H\subset C(K,E)$ be a linear subspace separating points such that $H_w$ contains constants. Then the following assertions are equivalent.
    \begin{enumerate}[$(1)$]
        \item $H$ is weakly simplicial.
        \item For any $\sigma\in M_+(K)$ there is a unique $H_w$-maximal $\sigma^\prime\in M_+(K)$ with $\sigma\prec_{H_w}\sigma^\prime$.
        \item  For any $\mu\in M(K,E^*)$ there is a unique $\prec^H$-maximal $\mu^\prime\in M(K,E^*)$ with $\mu\prec^{H}\mu^\prime$.
    \end{enumerate}
\end{thm}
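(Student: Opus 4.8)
The plan is to prove the chain of equivalences $(1)\iff(2)\iff(3)$, using the identification results already established. The equivalence $(1)\iff(2)$ is essentially the scalar statement: by Observation~\ref{obs:weak simpl}$(a)$, $H$ is weakly simplicial if and only if $H_w$ is simplicial; since $H_w$ contains constants, the classical characterization of simpliciality via uniqueness of maximal representing measures in the Choquet order (e.g.\ \cite[Theorem 6.11 and the surrounding discussion]{lmns}, or \cite[Proposition 4.6]{bezkonstant}) gives exactly condition $(2)$. The only mild subtlety here is to note that for a function space containing constants, every $\sigma\in M_+(K)$ is dilated (via the dilation operator $D$) to an $H_w$-maximal measure $\sigma'$ with $\sigma\prec_{H_w}\sigma'$, so existence is automatic and the content of $(2)$ is uniqueness; simpliciality of $H_w$ is precisely this uniqueness.

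For $(3)\implies(2)$, I would argue by specializing the vector-valued order to positive scalar measures. Given $\sigma\in M_+(K)$, pick any nonzero $x^*\in E^*$ and consider $\mu=\sigma\otimes x^*\in M(K,E^*)$; then $\abs{\mu}=\norm{x^*}\sigma$ and, by the results of \cite{transference-studia} on the map $W$ (and Proposition~\ref{P:maximalitamu}), $\mu$ is $\prec^H$-maximal iff $\abs{\mu}$ is $\prec_{H_w}$-maximal iff $\sigma$ is $H_w$-maximal. Moreover $W(\sigma_1\otimes x^*)$ and $W(\sigma_2\otimes x^*)$ relate to $\sigma_1,\sigma_2$ in a way compatible with the orders, so a $\prec^H$-maximal $\mu'$ dominating $\sigma\otimes x^*$ restricts (via $\mu'\mapsto\pi_1(W\mu')$, or more directly via the $x^*$-component) to an $H_w$-maximal $\sigma'$ dominating $\sigma$; uniqueness of $\mu'$ forces uniqueness of $\sigma'$. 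One must check that two distinct $H_w$-maximal dominants $\sigma_1',\sigma_2'$ of $\sigma$ would produce two distinct $\prec^H$-maximal dominants $\sigma_i'\otimes x^*$ of $\sigma\otimes x^*$ — this uses that $\sigma\mapsto\sigma\otimes x^*$ is injective and order-preserving, which follows from the formula $\int f\di(\sigma\otimes x^*)=\int x^*\circ f\di\sigma$ and the structure of $\D_H$.

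For $(1)\implies(3)$, assume $H$ weakly simplicial, so $H_w$ (equivalently $A_c(H_w)$) is simplicial. I would use the dilation operator $\Db\colon M(K,E^*)\to M(K,E^*)$ from Lemma~\ref{l:vlast-depe}: it is a norm-one projection onto the space of $H$-boundary measures, and $\mu\prec^H\Db\mu$ should hold (one checks $W\mu\prec_{H,c}W\Db\mu$ using Lemma~\ref{L:orders-zakl}, since $\Db$ increases the measure in the order while not increasing the norm). This gives existence of a $\prec^H$-maximal dominant. For uniqueness, suppose $\mu'_1,\mu'_2$ are both $\prec^H$-maximal with $\mu\prec^H\mu'_i$; by Proposition~\ref{P:maximalitamu} each $\mu'_i$ is $H$-boundary, so $\mu'_i=\Db\mu'_i=\Db\mu$ (the last equality because $\mu\prec^H\mu'_i$ forces, after applying $\Db$, that $\Db\mu$ and $\mu'_i$ are $\prec^H$-comparable boundary measures representing the same functionals on $A_c^w(H)$, and on boundary measures $\Db$ is the identity while the order is antisymmetric by Lemma~\ref{L:orders-zakl}$(g)$). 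More carefully: $\mu\prec^H\mu'_i$ implies $\int\h\di T^*W\mu=\int\h\di T^*W\mu'_i$, i.e.\ $\mu$ and $\mu'_i$ agree on $A_c^w(H)$; since for each $x\in E$ the scalar measures $(\mu)_x,(\mu'_i)_x$ then agree on $A_c(H_w)$ and $(\mu'_i)_x$ is $H_w$-boundary, simpliciality of $A_c(H_w)$ gives $(\mu'_i)_x=\Db(\mu)_x=(\Db\mu)_x$ for all $x$, hence $\mu'_1=\mu'_2=\Db\mu$.

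The main obstacle I anticipate is the bookkeeping in $(1)\implies(3)$: turning ``agreeing on $A_c^w(H)$'' into ``equal after $\Db$'' requires combining the scalar simpliciality of $A_c(H_w)$ (applied componentwise to $(\mu)_x$) with the fact, from Lemma~\ref{l:vlast-depe}$(4)$ and its proof, that $(\Db\mu)_x=D(\mu_x)$, plus the antisymmetry of $\prec^H$ on $M(K,E^*)$ from Lemma~\ref{L:orders-zakl}$(g)$ to pin down the $\prec^H$-maximal dominant uniquely. Care is also needed that $\mu\prec^H\Db\mu$ actually holds — this is where Lemma~\ref{L:orders-zakl}$(f)$ ($\nu\prec_{H,c}WT^*\nu$) and the relation between $W\Db\mu$ and $WT^*W\mu$ must be invoked, and it may be cleanest to route the argument through the $\prec_{H,c}$-maximality characterization of the preceding proposition rather than manipulating $\prec^H$ directly.
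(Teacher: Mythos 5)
Your proposal is correct and follows essentially the same route as the paper: the tensor trick $\sigma\mapsto\sigma\otimes x^*$ for $(3)\implies(2)$, specialization to Dirac measures for $(2)\implies(1)$, and for uniqueness in $(1)\implies(3)$ the observation that two $\prec^H$-maximal dominants of $\mu$ agree on $A_c^w(H)$ (via Lemma~\ref{L:orders-zakl}$(b)$) and differ by an $H$-boundary measure in $A_c^w(H)^\perp$, which must vanish — the paper cites Corollary~\ref{cor:weaKsimpl}$(1)\implies(6)$ where you unroll the same componentwise scalar argument. The one caveat is the existence half of $(3)$: rather than the unverified claim $\mu\prec^H\Db\mu$, which you rightly flag as the weak point, existence of a $\prec^H$-maximal dominant should simply be taken from Batty's Zorn-lemma result (\cite[Theorem 4.3]{batty-vector}), which is what the paper implicitly relies on.
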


\begin{proof}
    $(3)\implies(2)$: Let $\sigma\in M_+(K)$ and let $\sigma_1,\sigma_2\in M_+(K)$ be $H_w$-maximal such that $\sigma\prec_{H_w}\sigma_1$ and $\sigma\prec_{H_w}\sigma_2$. Fix $x^*\in S_{E^*}$. Then $\sigma_1\otimes x^*$ and $\sigma_2\otimes x^*$ are $\prec^H$-maximal by Proposition~\ref{P:maximalitamu}. Moreover, $\sigma\otimes x^*\prec^H\sigma_1\otimes x^*$ and $\sigma\otimes x^*\prec^H\sigma_2\otimes x^*$.

    Indeed, $W(\sigma\otimes x^*)=\sigma\times \ep_{x^*}$, $W(\sigma_1\otimes x^*)=\sigma_1\times \ep_{x^*}$ and clearly 
    $\sigma\times \ep_{x^*}\prec_{H,c}\sigma_1\times \ep_{x^*}$. Similarly for $\sigma_2$. 

    Hence, assuming $(3)$, we deduce $\sigma_1=\sigma_2$, which completes the argument.

    $(2)\implies(1)$: Let $t\in K$. Then $M_t(H_w)=\{\sigma\in M_+(K)\setsep \ep_t\prec_{H_w}\sigma\}$. Thus, assuming $(2)$, we get that $H_w$ is simplicial.

    $(1)\implies (3)$ Assume that $H$ is weakly simplicial. Let $\mu,\mu_1,\mu_2\in M(K,E^*)$ be such that $\mu\prec^H\mu_1$, $\mu\prec^H\mu_2$ and $\mu_1,\mu_2$ are $\prec^H$-maximal. By definition of $\prec^H$ and Lemma~\ref{L:orders-zakl}$(b)$ we get that $\mu_1|_{A_c^w(H)}=\mu_2|_{A_c^w(H)}=\mu|_{A_c^w(H)}$, hence $\mu_1-\mu_2\in A_c^w(H)^\perp$. Since $\mu_1-\mu_2$ is $H$-boundary (by Proposition~\ref{P:maximalitamu}), Corollary~\ref{cor:weaKsimpl} (implication $(1)\implies(6)$) yields $\mu_1=\mu_2$. This completes the proof.
\end{proof}

\begin{remark}
    The previous theorem is inspired by  \cite[Theorem 5.1 and Example 5.2]{batty-vector}. The quoted result says (in particular) that, assuming moreover that $H$ contains constants and $E^*$ is uniformly convex, condition $(2)$ is equivalent to
\begin{enumerate}[$(3^\prime)$]
    \item $\forall \mu\in M(K,E^*),\norm{\mu}=\norm{\mu|_H}\,\exists! \mu^\prime\in M(K,E^*) \prec^H\mbox{-maximal}, \mu\prec^H\mu^\prime$.
\end{enumerate}
Note that clearly $(3)\implies(3^\prime)$ and, if $H$ contains constants, the above proof of $(3)\implies (2)$ gives even $(3^\prime)\implies(2)$. So, the above theorem is a substantial strengthening of \cite[Example 5.2]{batty-vector}, as we may completely drop the assumption that $E^*$ is uniformly convex.

 We note that condition $(3^\prime)$ is strictly weaker than functional vector simpliciality (of $H$), even in case $A_c^v(H)=H$. This is witnessed by Example~\ref{ex:nezachovani}. The reason for this behavior is the observation that, given $t\in K$, $x^*\in E^*$ and  $\mu\in M(K,E^*)$, then
 $$\ep_t\otimes x^*\prec^H\mu \begin{smallmatrix}{}
     \mkern15mu\not{\ }\mkern-25mu{}\impliedby \\ \implies  \end{smallmatrix}\mu\in M_{x^*\circ\phi_H(t)}(H).
$$
 
\end{remark}

\section{More on the orderings on $M_+(K\times B_{E^*})$}\label{sec:nasoucinu}

In this section we provide a more detailed analysis of the relation $\prec_{H,c}$. We will apply and generalize results of \cite[Sections 5.3--5.5]{transference-studia}. First we recall some notation from \cite{transference-studia}. Given $\mu\in M(K,E^*)$, set
$$N(\mu)=\{\nu\in M_+(K\times B_{E^*})\setsep T^*\nu=\mu\ \&\ \norm{\nu}=\norm{\mu}\}.$$
All measures in $N(\mu)$ are carried by $K\times S_{E^*}$ (by \cite[Lemma 3.1]{transference-studia}), hence relations $\prec_{H,c}$ and $\prec_H$ coincide and are partial orders on $N(\mu)$ (by Lemma~\ref{L:orders-zakl}$(e)$). Always $W\mu\in N(\mu)$ and $W\mu$ is the $\prec_{H,c}$-largest element of $N(\mu)$ (by Lemma~\ref{L:orders-zakl}$(f)$). We start by a variant of \cite[Theorem 4.4]{transference-studia}.

\begin{prop}
    Let $H\subset C(K,E)$ be a function space separating points such that $H_w$ contains constant functions. Then the following assertions are equivalent.
    \begin{enumerate}[$(1)$]
        \item $E^*$ is strictly convex.
        \item $N(\mu)=\{W\mu\}$ for each $\mu\in M(K,E^*)$.
        \item $N(\mu)=\{W\mu\}$ for each $\prec^H$-maximal measure $\mu\in M(K,E^*)$.
    \end{enumerate}
  If, moreover, $H$ contains constants, the above assertions are also equivalent to the following one.
    \begin{enumerate}[$(4)$]
        \item $N(\mu)=\{W\mu\}$ for each $\prec^H$-maximal measure $\mu\in M(K,E^*)$ satisfying $\norm{\mu}=\norm{\mu|_H}$.
    \end{enumerate}
\end{prop}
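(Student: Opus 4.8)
The plan is to prove the cyclic chain of implications $(1)\implies(2)\implies(3)\implies(1)$, and then, under the extra hypothesis that $H$ contains constants, close the shorter loop $(3)\implies(4)\implies(3)$ (where $(4)\implies(3)$ will really be $(4)\implies(1)$). The implication $(1)\implies(2)$ should be essentially a citation: if $E^*$ is strictly convex, then by \cite[Theorem 4.4]{transference-studia} (or the relevant statement there) every $\mu\in M(K,E^*)$ has $N(\mu)$ a singleton, and since $W\mu\in N(\mu)$ always, we get $N(\mu)=\{W\mu\}$. The implication $(2)\implies(3)$ is trivial, as $(3)$ is a special case of $(2)$.

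The heart of the argument is $(3)\implies(1)$. I would argue by contraposition: suppose $E^*$ is not strictly convex. Then there exist $e_1^*,e_2^*\in S_{E^*}$ with $e_1^*\ne e_2^*$ and $\frac12(e_1^*+e_2^*)\in S_{E^*}$. Fix any $t\in K$ and set $e^*=\frac12(e_1^*+e_2^*)$. Consider the measure $\mu=\ep_t\otimes e^*\in M(K,E^*)$; this is $\prec^H$-maximal because $\abs{\mu}=\ep_t$ is trivially $\prec_{H_w}$-maximal (a Dirac measure is always maximal in the Choquet order), so Proposition~\ref{P:maximalitamu} applies. Now I want to exhibit an element of $N(\mu)$ different from $W\mu=\ep_t\times\ep_{e^*}$. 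The natural candidate is $\nu=\frac12(\ep_t\times\ep_{e_1^*}+\ep_t\times\ep_{e_2^*})=\ep_t\otimes\frac12(\ep_{e_1^*}+\ep_{e_2^*})$. One checks $T^*\nu=\ep_t\otimes e^*=\mu$ by linearity of the pairing in the $B_{E^*}$-variable, and $\norm{\nu}=1=\norm{\mu}$ since all of $e_1^*,e_2^*,e^*$ lie on the sphere. Since $e_1^*\ne e_2^*$, the measure $\frac12(\ep_{e_1^*}+\ep_{e_2^*})$ on $B_{E^*}$ differs from $\ep_{e^*}$, so $\nu\ne W\mu$, contradicting $(3)$.

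For the case when $H$ contains constants: $(2)\implies(4)$ is trivial (again a special case), so it remains to prove $(4)\implies(1)$. I would again argue by contraposition, reusing the construction above, but I must now ensure the witness measure $\mu$ additionally satisfies $\norm{\mu}=\norm{\mu|_H}$. Take $\mu=\ep_t\otimes e^*$ as before. Since $H$ contains constants, Lemma~\ref{L:skonstantami}$(i)$ gives $\norm{e^*\circ\phi_H(t)}=\norm{e^*}=1=\norm{\mu}$; and $e^*\circ\phi_H(t)$ is precisely the restriction $\mu|_H$ (because $\mu|_H(\f)=\int\f\di(\ep_t\otimes e^*)=e^*(\f(t))=(e^*\circ\phi_H(t))(\f)$), so $\norm{\mu|_H}=1=\norm{\mu}$ as required. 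The same $\nu$ as above then violates $(4)$. The main obstacle I anticipate is making sure the cited result \cite[Theorem 4.4]{transference-studia} is stated in exactly the form $N(\mu)=\{W\mu\}\iff E^*$ strictly convex (as opposed to some reformulation), and double-checking the elementary identity $T^*(\ep_t\otimes\sigma)=\ep_t\otimes(\text{bar}(\sigma))$ for $\sigma\in M_1(B_{E^*})$, i.e.\ that $T^*$ of a product measure integrates the $B_{E^*}$-coordinate; but both of these are routine once the notation from \cite{transference-studia} is unwound.

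\medskip

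\begin{proof}
$(1)\implies(2)$: This is \cite[Theorem 4.4]{transference-studia}: if $E^*$ is strictly convex, then $N(\mu)$ is a singleton for each $\mu\in M(K,E^*)$, and since $W\mu\in N(\mu)$ (by Lemma~\ref{L:orders-zakl}$(f)$), necessarily $N(\mu)=\{W\mu\}$.

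$(2)\implies(3)$ and $(2)\implies(4)$ are trivial, being special cases.

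$(3)\implies(1)$: We prove the contrapositive. Assume $E^*$ is not strictly convex, so there are $e_1^*,e_2^*\in S_{E^*}$ with $e_1^*\ne e_2^*$ and $e^*:=\frac12(e_1^*+e_2^*)\in S_{E^*}$. Fix $t\in K$ and set $\mu=\ep_t\otimes e^*\in M(K,E^*)$. Then $\abs{\mu}=\ep_t$, which is $\prec_{H_w}$-maximal (any Dirac measure is maximal in the Choquet order), hence $\mu$ is $\prec^H$-maximal by Proposition~\ref{P:maximalitamu}. Put
$$\nu=\ep_t\otimes\tfrac12(\ep_{e_1^*}+\ep_{e_2^*})\in M_+(K\times B_{E^*}).$$
For $\h\in H$ we have $T^*\nu(\h)=\int T\h\di\nu=\frac12(e_1^*(\h(t))+e_2^*(\h(t)))=e^*(\h(t))$, so $T^*\nu=\ep_t\otimes e^*=\mu$; moreover $\norm{\nu}=1=\norm{\mu}$ since $e_1^*,e_2^*\in S_{E^*}$. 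Thus $\nu\in N(\mu)$. But $W\mu=\ep_t\times\ep_{e^*}$, and since $e_1^*\ne e_2^*$ the measure $\frac12(\ep_{e_1^*}+\ep_{e_2^*})$ on $B_{E^*}$ is not equal to $\ep_{e^*}$, so $\nu\ne W\mu$. Hence $N(\mu)\ne\{W\mu\}$ for the $\prec^H$-maximal measure $\mu$, so $(3)$ fails.

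$(4)\implies(1)$ (assuming $H$ contains constants): Again we argue by contraposition, keeping the notation of the previous paragraph. Since $H$ contains constants, Lemma~\ref{L:skonstantami}$(i)$ gives $\norm{e^*\circ\phi_H(t)}=\norm{e^*}=1$. As $\mu|_H(\h)=\int\h\di(\ep_t\otimes e^*)=e^*(\h(t))=(e^*\circ\phi_H(t))(\h)$ for $\h\in H$, we get $\norm{\mu|_H}=1=\norm{\mu}$. Thus $\mu$ is a $\prec^H$-maximal measure with $\norm{\mu}=\norm{\mu|_H}$, yet $N(\mu)\ne\{W\mu\}$ as shown above, so $(4)$ fails.
\end{proof}
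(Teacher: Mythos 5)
Your overall strategy matches the paper's: cite \cite[Theorem 4.4]{transference-studia} for $(1)\implies(2)$, note $(2)\implies(3)$ is trivial, and refute $(3)$ (resp.\ $(4)$) by exhibiting a second element of $N(\ep_t\otimes e^*)$ when $E^*$ fails to be strictly convex. The explicit witness $\nu=\ep_t\times\tfrac12(\ep_{e_1^*}+\ep_{e_2^*})$ is exactly the right object, and your norm computation for the $(4)$ case via Lemma~\ref{L:skonstantami}$(i)$ agrees with the paper's use of Lemma~\ref{L:normyevaluaci}$(a)$.

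However, there is a genuine gap in $(3)\implies(1)$: you fix an \emph{arbitrary} $t\in K$ and justify the $\prec^H$-maximality of $\mu=\ep_t\otimes e^*$ by asserting that ``any Dirac measure is maximal in the Choquet order.'' This is false. For a scalar function space containing constants and separating points, $\ep_t$ is $\prec_{H_w}$-maximal if and only if $t\in\Ch_{H_w}K$ (e.g.\ for $H_w$ the affine functions on $[0,1]$ one has $\ep_{1/2}\prec\tfrac12(\ep_0+\ep_1)$ strictly, so $\ep_{1/2}$ is not maximal). If $t\notin\Ch_HK$ your $\mu$ is not $\prec^H$-maximal, and then it does not contradict assertion $(3)$, which quantifies only over $\prec^H$-maximal measures. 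The paper avoids this by choosing $t\in\Ch_HK$ (nonempty since $\ext B_{H_w^*}$ is nonempty and contained in $S_\ef\phi_{H_w}(K)$ by Milman's theorem), after which $\abs{\mu}=\ep_t$ is $\prec_{H_w}$-maximal and Proposition~\ref{P:maximalitamu} applies as you intend. With that one change the rest of your argument goes through; a cosmetic point is that $T^*\nu=\mu$ should be verified against all $\f\in C(K,E)$, not only $\h\in H$, but the identical computation does this.
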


\begin{proof}
    Implication $(1)\implies (2)$ follows from \cite[Theorem 4.4] {transference-studia} (implication $(1)\implies (3)$). Implication $(2)\implies (3)$ is trivial. 

    $(3)\implies(1)$: Assume $E^*$ is not strictly convex. Fix $t\in\Ch_HK$ and $x^*\in S_{E^*}\setminus \ext B_{E^*}$. Set $\mu=\ep_t\otimes x^*$. Then $\mu$ is $\prec^H$-maximal and it is easy to see that $N(\mu)$ contains more measures (see the proof of \cite[Theorem 4.4]{transference-studia}, implication $(2)\implies(1)$).

    Implication $(3)\implies(4)$ is trivial. If $H$ contains constants, then the above argument proving $(3)\implies (1)$ in fact yields $(4)\implies (1)$. Indeed, we have
    $$\norm{\ep_t\otimes x^*|_H}=\norm{x^*\circ\phi_H(t)}=\norm{x^*}=\norm{\ep_t\otimes x^*},$$
    where the second equality follows from Lemma~\ref{L:normyevaluaci}$(a)$.
\end{proof}

If $E^*$ is not strictly convex, $N(\mu)$ may have a richer structure. Using Zorn's lemma it is easy to see that there are $\prec_{H,c}$-minimal elements of $N(\mu)$. We will analyze this structure in more detail in case $\mu$ is $\prec^H$-maximal. To this end we need some tools which will be contained in the following two subsections.

\subsection{On the Choquet ordering on $B_{E^*}$.}

In this auxiliary subsection we prove a strengthening of \cite[Theorem 5.4]{transference-studia} which we find to be of an independent interest. It is contained in the following lemma.

\begin{lemma}\label{L:uspornakouli}
    Let $E$ be a real Banach space and $\sigma_1,\sigma_2\in M_1(B_{E^*})$ satisfy $r(\sigma_1)=r(\sigma_2)\in S_{E^*}$. Assume that
    $$\int p\di\sigma_1\le \int p\di\sigma_2\mbox{ whenever $p:E^*\to[0,\infty)$ is weak$^*$-continuous sublinear}.$$
    Then $\sigma_1\prec\sigma_2$ in the Choquet ordering.
\end{lemma}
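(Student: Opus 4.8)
The plan is to reduce the assertion to a comparison of integrals of sublinear functionals on $E^*$, which is essentially what the hypothesis provides. The first observation is that the condition $r(\sigma_1)=r(\sigma_2)=:z^*_0\in S_{E^*}$ already forces both measures to be carried by the sphere: since
\[
1=\norm{z^*_0}=\norm{\int x^*\di\sigma_j(x^*)}\le\int\norm{x^*}\di\sigma_j(x^*)\le\int 1\di\sigma_j=1 ,
\]
we get $\int(1-\norm{x^*})\di\sigma_j=0$ with a nonnegative integrand, so $\norm{x^*}=1$ holds $\sigma_j$-almost everywhere, i.e. $\sigma_j$ is carried by $S_{E^*}$ for $j=1,2$. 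Secondly, I would upgrade the hypothesis to: $\int q\di\sigma_1\le\int q\di\sigma_2$ for \emph{every} weak$^*$-continuous sublinear functional $q$ on $E^*$ (no longer assumed nonnegative). Indeed, by Hahn--Banach separation of the epigraph of $q$ from a point $(0,-\varepsilon)$ in $(E^*,w^*)\times\er$ there is $x_0\in E$ with $\langle x_0,\cdot\rangle\le q$; then $p:=q-\langle x_0,\cdot\rangle$ is a nonnegative weak$^*$-continuous sublinear functional, so $\int p\di\sigma_1\le\int p\di\sigma_2$ by assumption, while $\int\langle x_0,\cdot\rangle\di\sigma_1=\langle x_0,z^*_0\rangle=\int\langle x_0,\cdot\rangle\di\sigma_2$ because the barycenters coincide; adding the two relations gives the claim.

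To prove $\sigma_1\prec\sigma_2$ it suffices to check $\int g\di\sigma_1\le\int g\di\sigma_2$ for every $g$ which is a finite maximum $g=\max_{1\le i\le n}(\langle x_i,\cdot\rangle+c_i)$ with $x_i\in E$, $c_i\in\er$: every weak$^*$-continuous convex function on $B_{E^*}$ is the uniform limit of an increasing net of such maxima (Dini's theorem), and by the Banach--Dieudonn\'e theorem every weak$^*$-continuous affine function on $B_{E^*}$ has the form $\langle x,\cdot\rangle+c$. Fix such a $g$. Adding a constant to $g$ changes both sides of the desired inequality by the same amount (the $\sigma_j$ are probability measures), so we may assume $c_i\ge0$ for all $i$. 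Now consider the positively homogeneous function
\[
q(x^*)=\max_{1\le i\le n}\bigl(\langle x_i,x^*\rangle+c_i\norm{x^*}\bigr),\qquad x^*\in E^* .
\]
Since $c_i\ge0$, each term is sublinear, hence $q$ is sublinear; moreover $q=g$ on $S_{E^*}$, so by the first paragraph $\int g\di\sigma_j=\int q\di\sigma_j$ for $j=1,2$. Substituting $\norm{x^*}=\sup_{x\in B_E}\langle x,x^*\rangle$ we obtain $q=\sup_{z\in D}\langle z,\cdot\rangle$ with $D=\bigcup_{i=1}^n(x_i+c_iB_E)\subset E$, so $q$ is the pointwise supremum of the upward directed family of weak$^*$-continuous sublinear functionals $q_F:=\max_{z\in F}\langle z,\cdot\rangle$, $F\subset D$ finite. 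By monotone convergence for nets, $\int q\di\sigma_j=\sup_F\int q_F\di\sigma_j$, and the second half of the first paragraph gives $\int q_F\di\sigma_1\le\int q_F\di\sigma_2$ for each $F$. Therefore $\int g\di\sigma_1=\int q\di\sigma_1\le\int q\di\sigma_2=\int g\di\sigma_2$, which finishes the proof.

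The step I expect to be the crux is exactly the passage from an arbitrary continuous convex function on $B_{E^*}$ to a sublinear functional on $E^*$; this is where the two preliminary reductions are indispensable, namely that $\sigma_1,\sigma_2$ are supported on $S_{E^*}$ (so that positively homogeneous extension off the sphere does no harm) and that the affine pieces $\langle x_i,\cdot\rangle+c_i$ may be assumed to have nonnegative constant terms (so that the homogenized maximum is genuinely sublinear). The remaining ingredients — the Banach--Dieudonn\'e description of $\fra(B_{E^*})$, the Hahn--Banach minorant, Dini's theorem and monotone convergence for nets — are routine.
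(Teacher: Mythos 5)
Your proof is correct, but it follows a genuinely different route from the paper's. The paper first establishes the inequality for nonnegative convex weak$^*$-continuous \emph{norm-Lipschitz} functions vanishing at $0$, by invoking two technical lemmas from the companion paper (one producing a weak$^*$-compact convex subset of $S_{E^*}$ carrying most of the mass of $\sigma_1+\sigma_2$, the other approximating a convex Lipschitz function from below by a Lipschitz weak$^*$-continuous sublinear functional on that set), with explicit $\ep$-error bookkeeping; it then bootstraps through lower semicontinuous functions and the lower envelope to reach arbitrary convex continuous functions. You instead reduce, via the standard increasing net of finite maxima of weak$^*$-continuous affine minorants, to a single finite maximum $\max_i(\langle x_i,\cdot\rangle+c_i)$ with $c_i\ge0$, and your key step is the homogenization $\langle x_i,\cdot\rangle+c_i\mapsto\langle x_i,\cdot\rangle+c_i\norm{\cdot}$, which produces a sublinear function agreeing with the maximum on $S_{E^*}$ (where both measures live, by the barycenter computation) and which is the upward-directed supremum of the weak$^*$-continuous sublinear support functions $\max_{z\in F}\langle z,\cdot\rangle$ of finite subsets of $\bigcup_i(x_i+c_iB_E)$; monotone convergence for nets and your preliminary upgrade of the hypothesis from nonnegative to arbitrary weak$^*$-continuous sublinear functionals (subtract a linear minorant and use equality of barycenters — the paper achieves the same effect by passing to positive parts) then finish the proof. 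What each approach buys: yours is shorter and essentially self-contained, needing only Hahn--Banach, Banach--Dieudonn\'e and $\tau$-additivity of Radon measures, and dispenses entirely with the Lipschitz approximation machinery; the paper's argument stays closer to the toolbox already developed in the companion paper and handles a general nonnegative convex Lipschitz function in one quantitative step rather than passing through finite maxima. All the individual steps you use — the measures being carried by the sphere, the description of weak$^*$-continuous affine functions on $B_{E^*}$, the directedness and monotone convergence arguments — check out.
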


\begin{proof} The proof will be done in several steps. 

\smallskip

{\tt Step 1:} Let $f:B_{E^*}\to [0,\infty)$ be a convex, weak$^*$-continuous and norm-Lipschitz function with $f(0)=0$. Then $\int f\di\sigma_1\le\int f\sigma_2$.

\smallskip

Denote by $L$ the Lipschitz constant of $f$. Let $\ep>0$ be arbitrary. By \cite[Lemma 5.5]{transference-studia} applied to $\frac12(\sigma_1+\sigma_2)$ we obtain a weak$^*$-compact convex set $L\subset S_{E^*}$ such that $\sigma_1(L)>1-\ep$ and $\sigma_2(L)>1-\ep$ (as in the proof of \cite[Theorem 5.4]{transference-studia}). By \cite[Lemma 5.6]{transference-studia} and the monotone convergence theorem for nets there is a $6L$-Lipschitz weak$^*$-continuous sublinear functional $p$ on $E$ satisfying $p\le f$ on $K$ and $\int_K p\di\sigma_1>\int_K f\di\sigma_1-\ep$. Then $p^+$ has the same properties and, moreover, it is non-negative. Hence
$$\begin{aligned}
  \int_{B_{E^*}} f\di\sigma_1&\le \int_K f\di\sigma_1+L\ep\le \int_K p^+ \di\sigma_1+ (L+1)\ep
  \le \int_{B_{E^*}}p^+ \di\sigma_1+ (7L+1)\ep
  \\& \le \int_{B_{E^*}}p^+ \di\sigma_2+ (7L+1)\ep 
  \le \int_{K}p^+ \di\sigma_2+ (13L+1)\ep \\&
  \le \int_{K}f \di\sigma_2+ (13L+1)\ep 
  \le \int_{B_{E^*}} f\di\sigma_2 + (14L+1)\ep. 
  \end{aligned}$$
Since $\ep>0$ is arbitrary, the argument is complete.

\smallskip

{\tt Step 2:} Let $f:B_{E^*}\to [0,\infty)$ be a bounded convex, weak$^*$-lower semicontinuous function with $f(0)=0$. Then $\int f\di\sigma_1\le\int f\sigma_2$.

\smallskip

It is a consequence of the Hahn-Banach theorem that
$$\begin{aligned}
   f&=\sup\{u\le f\setsep u\mbox{ affine continuous}\} 
\\& = \sup\{ (u_1\vee\dots\vee u_n)^+\setsep u_1,\dots, u_n \mbox{ affine continuous}, u_j\le f\mbox{ for }j=1,\dots,n\}.\end{aligned}$$
Each of the functions $(u_1\vee\dots\vee u_n)^+$ satisfies the assumptions of Step 1, therefore we may conclude by the monotone convergence theorem for nets.

\smallskip

{\tt Step 3:} Let $f:B_{E^*}\to [0,\infty)$ be a convex weak$^*$-continuous function. Then $\int f\di\sigma_1\le\int f\sigma_2$.

\smallskip

Define $g(0)=0$ and $g=f$ on $B_{E^*}\setminus \{0\}$. Then $g$ is a lower semicontinuous function.
Then $g_*$, the lower envelope of $g$, satisfies the assumptions of Step 2, hence $\int g_*\di\sigma_1\le\int g_*\di\sigma_2$. To complete the argument it is enough to show that $g_*=f$ on $S_{E^*}$ (note that $\sigma_1$ and $\sigma_2$, having the barycenter on the sphere, are carried by the sphere). So, fix $x^*\in S_{E^*}$. By \cite[Lemma 3.21]{lmns} (applied to $-g$) there is some $\mu\in M_1(B_{E^*})$ with $r(\mu)=x^*$ such that $g_*(x^*)=\int g\di\mu$.
Since $x^*\in S_{E^*}$, $\mu$ is carried by $S_{E^*}$.
Thus
$$g_*(x^*)=\int g\di\mu=\int f\di\mu\ge f(x^*)=g(x^*)\ge g_*(x^*),$$
where we used the convexity of $f$. Hence equalities hold, in particular $g_*(x^*)=f(x^*)$
and the argument is complete.

\smallskip

{\tt Step 4:} $\int f\di\sigma_1\le\int f\di\sigma_2$ for each $f:B_{E^*}\to\er$ convex weak$^*$-continuous, i.e., $\sigma_1\prec\sigma_2$.

\smallskip

Let $f$ be such a function. Then there is $c\in\er$ such that $f+c\ge0$. Since $\sigma_1,\sigma_2$ are probabilities, we conclude using Step 3.  
\end{proof}

\subsection{Disintegration of measures from $N(\mu)$}

Disintegration of measures was a key tool in \cite{transference-studia}. The basic result on disintegration was recalled in Lemma~\ref{L:dezintegrace-obec}. In this section we apply it for $B=B_{E^*}$. The following lemma collects some special properties for measures from $N(\mu)$.

\begin{lemma}\label{L:dezintegrace-Nmu}  Let $H\subset C(K,E)$ be a function space separating points such that $H_w$ contains constant functions. Let $\mu\in M(K,E^*)\setminus\{0\}$ and $\nu\in N(\mu)$. 
Then $\pi_1(\nu)=\abs{\mu}$.
Moreover, if $(\nu_t)_{t\in K}$ is a disintegration kernel of $\nu$, then $r(\nu_t)\in S_{E^*}$ for $\abs{\mu}$-almost all $t\in K$.
\end{lemma}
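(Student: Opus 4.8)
The plan is to establish the two assertions separately, obtaining the statement about barycenters from the identity $\pi_1(\nu)=\abs\mu$.

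\emph{Proving $\pi_1(\nu)=\abs\mu$.} First I would fix $x\in E$ and analyze the scalar measure $\mu_x$. Using $T^*\nu=\mu$ and the definition of $T$, for every $f\in C(K)$ one has
$$\int_K f\di\mu_x=\int_K f\cdot x\di\mu=\int_{K\times B_{E^*}}T(f\cdot x)\di\nu=\int_{K\times B_{E^*}}f(t)\,x^*(x)\di\nu(t,x^*),$$
so by the Riesz representation theorem $\mu_x$ is the image under $\pi_1$ of the measure with density $(t,x^*)\mapsto x^*(x)$ with respect to $\nu$; in particular $\mu(A)(x)=\mu_x(A)=\int_{A\times B_{E^*}}x^*(x)\di\nu(t,x^*)$ for each Borel $A\subseteq K$. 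Hence $\norm{\mu(A)}=\sup_{x\in B_E}\abs{\mu(A)(x)}\le\int_{A\times B_{E^*}}\norm{x^*}\di\nu\le\nu(A\times B_{E^*})=\pi_1(\nu)(A)$. Summing over a finite Borel partition of a Borel set $A$ and passing to the supremum over all such partitions gives $\abs\mu(A)\le\pi_1(\nu)(A)$. On the other hand, since $\nu\ge0$ and $\norm\nu=\norm\mu$, $\pi_1(\nu)(K)=\nu(K\times B_{E^*})=\norm\nu=\norm\mu=\abs\mu(K)$; two finite Borel measures with $\abs\mu\le\pi_1(\nu)$ and equal total mass coincide, so $\pi_1(\nu)=\abs\mu$.

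\emph{Proving $r(\nu_t)\in S_{E^*}$ almost everywhere.} Given a disintegration kernel $(\nu_t)_{t\in K}$, by the first part it disintegrates $\nu$ over $\abs\mu$. Each $\nu_t$ is a Radon probability on the weak$^*$-compact convex set $B_{E^*}$, hence has a barycenter $r(\nu_t)\in B_{E^*}$ characterized by $r(\nu_t)(x)=\int_{B_{E^*}}x^*(x)\di\nu_t(x^*)$, $x\in E$. Combining the disintegration formula with the formula for $\mu_x$ from the first part, for each $x\in E$,
$$\int_K f\di\mu_x=\int_K f(t)\Bigl(\int_{B_{E^*}}x^*(x)\di\nu_t(x^*)\Bigr)\di\abs\mu(t)=\int_K f(t)\,r(\nu_t)(x)\di\abs\mu(t),\quad f\in C(K),$$
so $\mu(A)(x)=\int_A r(\nu_t)(x)\di\abs\mu(t)$ for all Borel $A$. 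Now for a finite Borel partition $\{A_j\}_{j=1}^n$ of a Borel set $A$ and $x_1,\dots,x_n\in B_E$,
$$\Re\sum_{j=1}^n\mu(A_j)(x_j)=\int_A\Bigl(\sum_{j=1}^n\chi_{A_j}(t)\,\Re r(\nu_t)(x_j)\Bigr)\di\abs\mu(t)\le\int_A\norm{r(\nu_t)}\di\abs\mu(t),$$
because for each $t$ the bracketed expression equals $\Re r(\nu_t)(x_j)$ for the unique block $A_j\ni t$, which is at most $\norm{r(\nu_t)}$. Since $B_E$ is balanced, $\norm{\mu(A_j)}=\sup_{x\in B_E}\Re\mu(A_j)(x)$; taking the supremum over $x_1,\dots,x_n\in B_E$ and then over partitions yields $\abs\mu(A)\le\int_A\norm{r(\nu_t)}\di\abs\mu(t)$. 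As $\norm{r(\nu_t)}\le1$, this forces $\int_A\bigl(1-\norm{r(\nu_t)}\bigr)\di\abs\mu(t)=0$ for every Borel $A\subseteq K$, hence $\norm{r(\nu_t)}=1$, i.e.\ $r(\nu_t)\in S_{E^*}$, for $\abs\mu$-almost every $t$.

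\emph{Main obstacle.} The only subtle point I anticipate is measurability: one needs $t\mapsto r(\nu_t)(x)$ (for each $x\in E$) and $t\mapsto\norm{r(\nu_t)}$ to be $\abs\mu$-measurable, so that the displayed integrals are meaningful and the final step "a vanishing integral over every Borel set forces the integrand to vanish a.e." applies. Measurability of $t\mapsto r(\nu_t)(x)=\int_{B_{E^*}}x^*(x)\di\nu_t(x^*)$ is immediate from property $(i)$ of the disintegration kernel; for $t\mapsto\norm{r(\nu_t)}$ one uses that the kernel may be chosen so that $t\mapsto\nu_t$ is $\abs\mu$-measurable into $M_1(B_{E^*})$ with the narrow topology, together with the weak$^*$-lower semicontinuity of $\norm{\cdot}$ on $E^*$ — alternatively, one keeps working with the genuinely measurable functions $t\mapsto\sum_j\chi_{A_j}(t)\,\Re r(\nu_t)(x_j)$ and runs the last estimate through outer measure. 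This is handled exactly as in \cite{transference-studia}; everything else is elementary, and in particular the proof does not need the (also available) fact that measures in $N(\mu)$ are carried by $K\times S_{E^*}$.
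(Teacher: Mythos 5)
Your argument is correct, but it is worth noting that the paper does not prove this lemma at all: it simply cites Proposition 3.5(a) and (b) of the companion paper \cite{transference-studia}, so what you have written is a self-contained replacement for that external reference. Both halves of your proof check out. For $\pi_1(\nu)=\abs{\mu}$, the identity $\mu_x(A)=\int_{A\times B_{E^*}}x^*(x)\di\nu$ (which is exactly the ``formula (2.2)'' the paper invokes elsewhere), the partition estimate $\abs{\mu}\le\pi_1(\nu)$, and the equal-total-mass squeeze using $\norm{\nu}=\norm{\mu}$ are all sound. For the barycenter statement, your partition/supremum computation correctly forces $\int_A(1-\norm{r(\nu_t)})\di\abs{\mu}=0$. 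Two small points deserve the care you already flagged: (1) passing from Lemma~\ref{L:dezintegrace-obec}, which is stated for continuous functions and Borel rectangles, to integrands of the form $\chi_A(t)\,x^*(x)$ is a routine monotone-class step but should be said; (2) $t\mapsto\norm{r(\nu_t)}$ is an uncountable supremum of measurable functions and need not be measurable when $E$ is nonseparable, so the clean way to finish is exactly your ``outer measure'' alternative: for each partition datum the measurable function $h(t)=\sum_j\chi_{A_j}(t)\Re r(\nu_t)(x_j)$ satisfies $h\le\norm{r(\nu_\cdot)}\le 1$ and $\sup_h\int h\di\abs{\mu}=\abs{\mu}(K)$, whence $\{t:\norm{r(\nu_t)}<1-\delta\}\subset\{t:h(t)<1-\delta\}$ has arbitrarily small measure and so $\{t:\norm{r(\nu_t)}<1\}$ is contained in a Borel null set. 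With that spelled out, the proof is complete; you are also right that it does not need the fact that measures in $N(\mu)$ are carried by $K\times S_{E^*}$ (which by itself would not suffice anyway, since a probability on the sphere can have its barycenter in the open ball).
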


\begin{proof}
    The first statement follows from \cite[Proposition 3.5(a)]{transference-studia}). The `moreover part' follows from \cite[Proposition 3.5(b)]{transference-studia}.
\end{proof}

We continue by a lemma on fine properties of disintegrations of measures from $N(\mu)$.

\begin{lemma}\label{L:lifting}  Let $H\subset C(K,E)$ be a linear subspace separating points such that $H_w$ contains constant functions.
Let $\mu\in M(K,E^*)\setminus\{0\}$. Then there is an assignment of disintegration kernels $N(\mu)\ni\nu\mapsto(\nu_t)_{t\in K}$ such that for  any two measures $\nu_1,\nu_2\in N(\mu)$ the following conditions are fulfilled.
\begin{enumerate}[$(i)$]
    \item If  $g_1,g_2:B_{E^*}\to\er$  are bounded Borel functions, then 
   \begin{multline*}       
   \int g_1\di\nu_{1,t}\le \int g_2\di\nu_{2,t} \quad \abs{\mu}\mbox{-almost everywhere} \\
   \implies \int g_1\di\nu_{1,t}\le \int g_2\di\nu_{2,t} \quad \mbox{for each }t\in K.\end{multline*}    
   \item $r(\nu_{1,t})=r(\nu_{2,t})$ for each $t\in K$.
\end{enumerate}
\end{lemma}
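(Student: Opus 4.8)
The statement asks for an assignment of disintegration kernels, depending on all of $N(\mu)$ simultaneously, so that the pointwise comparisons and barycenter identities hold \emph{everywhere} on $K$ rather than merely almost everywhere. The natural device is a \emph{lifting} of the measure space $(K,\abs{\mu})$: using the lifting theorem (see e.g.\ Fremlin, Vol.~3), fix a lifting $\rho$ of $L^\infty(\abs{\mu})$, i.e.\ a positive linear projection from the bounded Borel (or $\abs{\mu}$-measurable) functions on $K$ onto a set of ``canonical'' representatives, which is multiplicative, preserves constants, and such that $f=\rho(f)$ $\abs{\mu}$-a.e. The idea is that, for each $\nu\in N(\mu)$ and each $g\in C(B_{E^*})$, the function $t\mapsto\int g\di\nu_t$ is a bounded Borel function on $K$ (by Lemma~\ref{L:dezintegrace-obec}), so we may replace it by its lifted representative $t\mapsto\rho\big(t\mapsto\int g\di\nu_t\big)(t)$; doing this coherently in $g$ produces, for each fixed $t$, a positive normalized functional on $C(B_{E^*})$, hence a new probability measure $\widetilde{\nu}_t$ on $B_{E^*}$. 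Because $\rho$ is multiplicative and preserves order and constants, $(\widetilde{\nu}_t)_t$ is again a disintegration kernel of $\nu$ (the defining integral identity in Lemma~\ref{L:dezintegrace-obec}$(i)$ only involves integrals against $\pi_1(\nu)=\abs{\mu}$, by Lemma~\ref{L:dezintegrace-Nmu}, and lifting does not change a function $\abs{\mu}$-a.e.). This is the assignment we declare.

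With this assignment in hand, condition $(i)$ becomes almost formal. Suppose $g_1,g_2:B_{E^*}\to\er$ are bounded Borel functions with $\int g_1\di\nu_{1,t}\le\int g_2\di\nu_{2,t}$ for $\abs{\mu}$-a.e.\ $t$. The two maps $t\mapsto\int g_j\di\nu_{j,t}$ are bounded Borel functions which, by construction, are already in the ``lifted'' (canonical) form — more precisely they agree with their lifted representatives, since we built $\widetilde{\nu}_{j,t}$ precisely from those representatives; one needs here that the lifting acts coherently, so that $t\mapsto\int g\di\widetilde{\nu}_{j,t}=\rho(t\mapsto\int g\di\nu_{j,t})(t)$ for \emph{every} bounded Borel $g$, which requires first checking it for $g\in C(B_{E^*})$ and then extending by a monotone-class / dominated-convergence argument using that $\rho$ commutes with bounded monotone limits up to null sets and is chosen to be ``strong'' enough; alternatively one works with the Baire $\sigma$-algebra on $B_{E^*}$, which for our purposes suffices since all barycenters and all functions $g_1,g_2$ we will ever apply $(i)$ to arise from $C(B_{E^*})$ by monotone limits. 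Granting this, an a.e.\ inequality between two canonical (lifted) functions is an everywhere inequality, because $\rho$ respects the order relation $\le$ pointwise on canonical representatives: if $u\le v$ $\abs{\mu}$-a.e.\ then $\rho(u)\le\rho(v)$ everywhere. That gives $(i)$.

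For condition $(ii)$, note first that $r(\nu_{1,t})=r(\nu_{2,t})$ holds $\abs{\mu}$-a.e.: for each fixed $x\in E$ the function $t\mapsto\langle r(\nu_{j,t}),x\rangle=\int x^*(x)\di\nu_{j,t}(x^*)$ integrates against any Borel $A\subset K$ to $\nu_j(A\times B_{E^*})$-type quantities, and since $T^*\nu_1=T^*\nu_2=\mu$ one computes $\int_A\langle r(\nu_{1,t}),x\rangle\di\abs{\mu}(t)=\int_A\langle r(\nu_{2,t}),x\rangle\di\abs{\mu}(t)$ for all Borel $A$, whence equality a.e.\ for each fixed $x$, and then — running $x$ through a countable norm-dense subset of $E$ if $E$ is separable, or invoking that $r(\nu_{j,t})$ lives in the norm-separable set $\pi_2(\operatorname{spt}\nu_j)$ and using weak$^*$-continuity — equality of the barycenters a.e. (If $E$ is non-separable one argues through the functions $t\mapsto\int g\di\nu_{j,t}$ for $g$ ranging over finite suprema of elements of $T(H)+$ constants, which determine the barycenter on the relevant face; this is the same bookkeeping that appears in \cite{transference-studia}.) Once $r(\nu_{1,t})=r(\nu_{2,t})$ $\abs{\mu}$-a.e., apply $(i)$ with $g_1=g_2=$ any fixed $g\in C(B_{E^*})$ of the form $x^*\mapsto x^*(x)$ (real part, in the complex case) to upgrade this to an everywhere identity, coordinate by coordinate, hence $r(\nu_{1,t})=r(\nu_{2,t})$ for every $t\in K$.

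\textbf{Main obstacle.} The delicate point is not the existence of the lifting but the \emph{coherence} of the induced kernels: one must verify that the map $g\mapsto\rho\big(t\mapsto\int g\di\nu_t\big)(t)$, for each fixed $t$, is genuinely a positive unital linear functional on $C(B_{E^*})$ (positivity and linearity follow from those properties of $\rho$, unitality from $\rho(1)=1$), and then that the resulting probability measures $\widetilde{\nu}_t$ really disintegrate $\nu$ and really have the property that $t\mapsto\int g\di\widetilde{\nu}_t$ is the canonical representative for \emph{all} bounded Borel $g$, not just continuous ones — this last extension from $C(B_{E^*})$ to bounded Borel functions is where care is needed, and is the step I expect to consume the bulk of the argument. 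It is handled by a standard but slightly technical approximation: the class of bounded $g$ for which $t\mapsto\int g\di\widetilde{\nu}_t=\rho(t\mapsto\int g\di\nu_t)(t)$ everywhere is closed under bounded monotone sequential limits (because both sides pass to such limits, the left by dominated convergence inside the integral, the right because a ``strong'' lifting — or the usual lifting combined with the fact that $\abs{\mu}$ is Radon — commutes with bounded monotone limits up to everywhere equality on a fixed co-null set absorbed into the construction), and contains $C(B_{E^*})$, hence contains all Baire, and then (by regularity, $B_{E^*}$ being a compact — possibly non-metrizable — space, working with the Baire $\sigma$-algebra which is all we need) all the functions to which we apply $(i)$ and $(ii)$.
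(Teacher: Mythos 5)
Your overall architecture matches the paper's: an assignment of kernels built from a lifting of $L^\infty(\abs{\mu})$ is supposed to give $(i)$, and $(ii)$ is then deduced from $(i)$ together with an almost-everywhere identity of barycenters. Your treatment of $(ii)$ is correct and in fact more direct than the paper's: the paper reduces without loss of generality to $\nu_2=W\mu$, invokes the order $\prec_{\D}$ and \cite[Theorem 5.11]{transference-studia} to compare integrals of sublinear functionals almost everywhere, and only then specializes to linear functionals, whereas you read the a.e.\ identity $\int \Re x^*(x)\di\nu_{1,t}(x^*)=\int \Re x^*(x)\di\nu_{2,t}(x^*)$ (for each fixed $x\in E$) straight off $T^*\nu_1=T^*\nu_2=\mu$ and then apply $(i)$ to these \emph{continuous} test functions. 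Your digression about separability of $E$ is unnecessary: since $(i)$ upgrades the identity to every $t\in K$ for each fixed $x$ separately, there is no null set to control uniformly in $x$.

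The gap is in $(i)$. The paper does not prove this part; it cites \cite[Proposition 2.10]{transference-studia}, which is precisely the statement you are trying to reconstruct, and your reconstruction does not close. The Riesz-representation step (defining $\widetilde{\nu}_t$ by $\int g\di\widetilde{\nu}_t=\rho\bigl(s\mapsto\int g\di\nu_s\bigr)(t)$ for $g\in C(B_{E^*})$) is fine, but the extension to all bounded Borel $g_1,g_2$ --- which is how $(i)$ is stated and how it is used later in the paper (Proposition~\ref{P:hranice-max} applies it to the characteristic function of $B_{E^*}\setminus\ext B_{E^*}$) --- does not follow from your monotone-class argument. A lifting commutes with bounded monotone sequential limits only up to a null set that depends on the sequence; there is no fixed conull set absorbing all of them, and discarding any null set is fatal here because the conclusion must hold at \emph{every} $t\in K$. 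Concretely, writing $F_g(s)=\int g\di\nu_s$, if $g_n\downarrow g$ with $g_n$ continuous you only obtain $\int g\di\widetilde{\nu}_t=\lim_n\rho(F_{g_n})(t)\ge\rho(F_g)(t)$ everywhere, while for $g_n\uparrow g$ you obtain the reverse inequality; so for a pair with $g_1$ upper semicontinuous and $g_2$ lower semicontinuous the two one-sided bounds point the wrong way for deducing $\int g_1\di\widetilde{\nu}_{1,t}\le\int g_2\di\widetilde{\nu}_{2,t}$ from $\rho(F^1_{g_1})\le\rho(F^2_{g_2})$, and general Borel functions are worse. Either cite the companion result or supply its genuinely nontrivial proof; the same issue also affects your claim that $(\widetilde{\nu}_t)_t$ satisfies the Borel-rectangle identity of Lemma~\ref{L:dezintegrace-obec}$(ii)$ when $B_{E^*}$ is non-metrizable.
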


\begin{proof}
    Since $\pi_1(\nu)=\abs{\mu}$ for each $\nu\in N(\mu)$ (by Lemma~\ref{L:dezintegrace-Nmu}), the existence of an assignment satisfying $(i)$ follows from \cite[Proposition 2.10]{transference-studia} applied to $\abs{\mu}$ in place of $\sigma$.

    Next we will show that $(ii)$ is a consequence of $(i)$. Since $W\mu\in N(\mu)$, without loss of generality we may assume that $\nu_2=W\mu$, hence $\nu_1\prec_{\D}\nu_2$, where $\prec_{\D}$ is the preorder introduced and investigated in \cite{transference-studia} (cf. \cite[Lemma 5.2(c)]{transference-studia}). By \cite[Theorem 5.11]{transference-studia} we deduce that, given  a weak$^*$-continuous sublinear functional $p$ on $E^*$, we have  $\int p\di\nu_{2,t}\le\int p\di\nu_{1,t}$ $\abs{\mu}$-almost everywhere.  By $(i)$ we infer that the inequality holds everywhere. It follows that for $t\in K$ and any weak$^*$-continuous linear functional $p$ on $E^*$ we have $\int p\di\nu_{2,t}=\int p\di\nu_{1,t}$. Now it easily follows that $r(\nu_{1,t})=r(\nu_{2,t})$ for each $t\in K$.
\end{proof}

\subsection{Finer structure of $N(\mu)$ for $\mu$ $\prec^H$-maximal}

Now we are ready to formulate and prove the promised results on the structure of order $\prec_{H,c}$ on certain $N(\mu)$. As in \cite{transference-studia} we set 
$$\K=\{f\in C(K\times B_{E^*})\setsep f(t,\cdot)\mbox{ is concave for each }t\in K\}$$
and by $\prec_{\K}$ we denote the partial order on $M_+(K\times B_{E^*})$ induced by $\K$. 

\begin{thm}\label{T:Nmu-uspor}  Let $H\subset C(K,E)$ be a function space separating points such that $H_w$ contains constant functions.
    Let $\mu\in M(K,E^*)\setminus\{0\}$ be $\prec^H$-maximal.  
    Then there is an assignment of disintegration kernels $\nu\mapsto (\nu_t)_{t\in K}$ for $\nu\in N(\mu)$ such that, given $\nu_1,\nu_2\in N(\mu)$, the following assertions are equivalent.
     \begin{enumerate}[$(1)$]
        \item $\nu_1\prec_{H,c}\nu_2$.
        \item $\nu_{2,t}\prec\nu_{1,t}$  for each $t\in K$.
        \item $\nu_1\prec_{\K}\nu_2$.
    \end{enumerate}
\end{thm}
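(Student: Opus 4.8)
The plan is to prove the chain of equivalences by establishing $(1)\iff(3)$ and $(1)\iff(2)$, using the disintegration assignment provided by Lemma~\ref{L:lifting} (applied with $\mu$ now assumed $\prec^H$-maximal). First I would fix such an assignment $\nu\mapsto(\nu_t)_{t\in K}$ for $\nu\in N(\mu)$, so that conditions $(i)$ and $(ii)$ of Lemma~\ref{L:lifting} hold; in particular $r(\nu_{1,t})=r(\nu_{2,t})$ for every $t\in K$ and every pair $\nu_1,\nu_2\in N(\mu)$, and $\pi_1(\nu_j)=\abs{\mu}$ by Lemma~\ref{L:dezintegrace-Nmu}. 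The key extra input available because $\mu$ is $\prec^H$-maximal is that, by Proposition~\ref{P:maximalitamu}, $\abs{\mu}$ is $\prec_{H_w}$-maximal, equivalently $H_w$-maximal; this will let me replace the $H_w$-convexity constraint in the definition of $\D_H$ by an unconstrained (concavity-in-the-second-variable) condition, since for an $H_w$-maximal measure testing against $S(H_w)$-functions in the first variable adds nothing.

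The implication $(1)\implies(2)$ is the heart of the matter. Suppose $\nu_1\prec_{H,c}\nu_2$. Since both measures lie in $N(\mu)$, they are carried by $K\times S_{E^*}$ and $\pi_1(\nu_1)=\pi_1(\nu_2)=\abs{\mu}$. For a weak$^*$-continuous sublinear functional $p$ on $E^*$ and an arbitrary $g\in S(H_w)$, after subtracting a constant so that $g\le 0$, the function $(t,x^*)\mapsto (-g(t))\,p(x^*)$ belongs to $\D_H$ (it is $H_w$-convex in $t$ because $-g(t)\ge 0$ is a nonnegative multiple of a concave-after-negation function... more precisely $-g\in -S(H_w)$ times a nonnegative scalar $p(x^*)$ gives an $H_w$-convex function, and superlinear in $x^*$); testing $\prec_H$ against these, together with $\norm{\nu_1}=\norm{\nu_2}$ and $H_w$-maximality of $\abs{\mu}$, I would deduce $\int p\,d\nu_{2,t}\le\int p\,d\nu_{1,t}$ for $\abs{\mu}$-a.e.\ $t$, hence for every $t\in K$ by Lemma~\ref{L:lifting}$(i)$. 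Combined with $r(\nu_{1,t})=r(\nu_{2,t})\in S_{E^*}$ (from Lemma~\ref{L:lifting}$(ii)$ and Lemma~\ref{L:dezintegrace-Nmu}), Lemma~\ref{L:uspornakouli} applies in the real case (and the complex case reduces to it, or one invokes the real version of the ordering) to give $\nu_{2,t}\prec\nu_{1,t}$ in the Choquet ordering on $B_{E^*}$ for every $t\in K$, which is $(2)$. I expect the bookkeeping here — verifying the test functions land in $\D_H$, and the a.e.-to-everywhere upgrade — to be the main obstacle, but it is essentially the same mechanism as in \cite[Sections 5.3--5.5]{transference-studia}.

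For $(2)\implies(3)$: if $\nu_{2,t}\prec\nu_{1,t}$ for each $t$, then for any $f\in\K$, i.e.\ $f(t,\cdot)$ concave, the disintegration formula (Lemma~\ref{L:dezintegrace-obec}$(i)$) together with $\pi_1(\nu_1)=\pi_1(\nu_2)=\abs{\mu}$ gives
\[
\int f\,d\nu_1=\int_K\Big(\int f(t,x^*)\,d\nu_{1,t}(x^*)\Big)d\abs{\mu}(t)\le\int_K\Big(\int f(t,x^*)\,d\nu_{2,t}(x^*)\Big)d\abs{\mu}(t)=\int f\,d\nu_2,
\]
since $x^*\mapsto f(t,x^*)$ is concave and $\nu_{2,t}\prec\nu_{1,t}$. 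Thus $\nu_1\prec_{\K}\nu_2$. For $(3)\implies(1)$: every $f\in\D_H$ has $f(t,\cdot)$ superlinear, in particular concave, so $f\in\K$ up to the real-part convention, whence $\nu_1\prec_{\K}\nu_2$ forces $\int f\,d\nu_1\le\int f\,d\nu_2$, i.e.\ $\nu_1\prec_H\nu_2$; and since $\nu_1,\nu_2\in N(\mu)$ we have $\norm{\nu_1}=\norm{\mu}=\norm{\nu_2}$, so $\nu_1\prec_{H,c}\nu_2$. This closes the cycle $(1)\implies(2)\implies(3)\implies(1)$.
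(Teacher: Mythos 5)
Your proposal follows the paper's proof essentially verbatim: the same choice of kernels via Lemma~\ref{L:lifting}, the same cycle of implications, the reduction of $(1)\implies(2)$ to an a.e.\ inequality $\int p\,\dd\nu_{2,t}\le\int p\,\dd\nu_{1,t}$ for nonnegative sublinear $p$ (the paper isolates this as Lemma~\ref{L:Nmu-uspor}, proved exactly by your mechanism of testing products against $\D_H$ and invoking Mokobodzki for the $H_w$-maximal measure $\abs{\mu}$), followed by Lemma~\ref{L:uspornakouli}. Two small slips worth fixing: the test functions must be $(t,x^*)\mapsto g(t)p(x^*)$ with $g\le 0$ \emph{$H_w$-convex} and $p\ge 0$ sublinear (this lies in $\D_H$, being $H_w$-convex in $t$ and superlinear in $x^*$); your $(-g(t))p(x^*)$ with $-g\ge0$ is sublinear in $x^*$ and $H_w$-concave in $t$, hence lies in $-\D_H$, not $\D_H$. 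Also, since Lemma~\ref{L:dezintegrace-Nmu} puts the common barycenters on $S_{E^*}$ only for $\abs{\mu}$-a.e.\ $t$, Lemma~\ref{L:uspornakouli} yields $\nu_{2,t}\prec\nu_{1,t}$ only a.e., and one needs a second application of Lemma~\ref{L:lifting}$(i)$ (to each fixed convex continuous test function on $B_{E^*}$) to upgrade this to every $t\in K$.
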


To prove the theorem we will use the following lemma.

\begin{lemma}\label{L:Nmu-uspor}  Let $H\subset C(K,E)$ be a function space separating points such that $H_w$ contains constant functions.
    Assume that $\mu\in M(K,E^*)\setminus\{0\}$ is $\prec^H$-maximal. Let $\nu_1,\nu_2\in N(\mu)$ be such that $\nu_1\prec_{H,c}\nu_2$. Let $(\nu_{1,t})_{t\in K}$ and $(\nu_{2,t})_{t\in K}$ be disintegration kernels of $\nu_1$ and $\nu_2$. If $p:B_{E^*}\to[0,\infty)$ is weak$^*$-continuous and sublinear, then
    $$\int p\di\nu_{2,t}\le\int p\di\nu_{1,t}\mbox{ for $\abs{\mu}$-almost all }t\in K.$$
\end{lemma}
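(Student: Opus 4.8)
The statement asserts that if $\mu$ is $\prec^H$-maximal and $\nu_1\prec_{H,c}\nu_2$ in $N(\mu)$, then for every weak$^*$-continuous sublinear $p\ge 0$ on $B_{E^*}$ we have $\int p\di\nu_{2,t}\le\int p\di\nu_{1,t}$ for $\abs\mu$-a.e.\ $t\in K$. The plan is to test the inequality $\nu_1\prec_H\nu_2$ against suitably chosen functions in $\D_H$ built from $p$ and from functions in $S(H_w)$, and then to use a disintegration argument to localize the resulting global inequality to $\abs\mu$-almost every $t$. Recall first that since $\mu$ is $\prec^H$-maximal, $\pi_1(\nu_j)=\abs\mu$ for $j=1,2$ (Lemma~\ref{L:dezintegrace-Nmu}), and moreover $\abs\mu$ is $H_w$-maximal (by Proposition~\ref{P:maximalitamu}, via the definition of $\prec^H$ as $W\mu_1\prec_{H,c}W\mu_2$ together with $WT^*\nu_j$-type identifications; more directly $\abs\mu=\pi_1(\nu_1)$ and $\nu_1$ is dominated by a maximal measure, so $\abs\mu$ is $H_w$-maximal). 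This maximality of $\abs\mu$ is what will let me replace $H_w$-convex functions by continuous affine ones on the support.

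First I would fix a weak$^*$-continuous sublinear $p\ge 0$ on $B_{E^*}$. For any $g\in S(H_w)$ with $g\le 0$, the function $(t,x^*)\mapsto (-g(t))\cdot p(x^*)$ lies in $\D_H$: it is continuous, nonnegative, for fixed $x^*$ it is a nonnegative multiple of $-g\in -S(H_w)\subset$ no --- careful: $-g$ is $H_w$-concave, so this is wrong. Instead I should use $g\in -S(H_w)$, i.e.\ $g$ is $H_w$-concave, with $g\ge 0$; then $(t,x^*)\mapsto g(t)p(x^*)$ has $g(t)p(\cdot)$ superlinear in $x^*$ (nonnegative multiple of a superlinear function --- wait, $p$ is sublinear, $-p$ is superlinear). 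Let me restart the bookkeeping cleanly: the function $(t,x^*)\mapsto -g(t)p(x^*)$ with $g\in S(H_w)$, $g\ge 0$ arbitrary is \emph{not} in $\D_H$ because $-g$ is $H_w$-concave. The right building block is: for $h\in S(H_w)$ with $h\le c$ (some constant), the function $(t,x^*)\mapsto (c-h(t))\cdot(-p(x^*))$; here $c-h\ge 0$ is $H_w$-concave and $-p$ is superlinear, but a nonnegative-multiple structure with a varying nonnegative coefficient does preserve $H_w$-convexity only when the coefficient is constant. So the clean approach is: for fixed constants, $(t,x^*)\mapsto \lambda\cdot(-p(x^*))$ with $\lambda\ge 0$ is in $\D_H$ (it is $H_w$-convex in $t$, being constant, hence both convex and concave, and superlinear in $x^*$). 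That already gives $-\int p\di\nu_1\le -\int p\di\nu_2$ globally, i.e.\ $\int p\di\nu_2\le\int p\di\nu_1$ as \emph{global} integrals over $K\times B_{E^*}$. The actual content of the lemma is the a.e.\ pointwise refinement in $t$.

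To get the pointwise statement I would argue as follows. Using the assignment of disintegration kernels from Lemma~\ref{L:lifting} (applicable since $\pi_1(\nu_j)=\abs\mu$), set $\psi_j(t)=\int p\di\nu_{j,t}$; these are bounded Borel functions on $K$, and $\int_K\psi_j\di\abs\mu=\int p\di\nu_j$. Suppose for contradiction that the set $A=\{t: \psi_2(t)>\psi_1(t)\}$ has $\abs\mu(A)>0$. I want to produce $f\in\D_H$ with $\int f\di\nu_1<\int f\di\nu_2$. The idea is to multiply $-p$ (in the $x^*$-variable) by a factor depending on $t$ that concentrates mass on $A$; but as noted, a varying coefficient destroys $H_w$-convexity in general. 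This is precisely the main obstacle, and the way around it is the $H_w$-maximality of $\abs\mu$: by the scalar Choquet theory (Mokobodzki / the fact that for $H_w$-maximal $\sigma$ one has $\int h\di\sigma=\int \widehat h\di\sigma$), it suffices to test against \emph{continuous $H_w$-affine} functions, i.e.\ functions from $A_c(H_w)$, up to an $\varepsilon$-error supported off a set of small $\abs\mu$-measure. Concretely, approximate the indicator-type weight on $A$ by a function $w\in A_c(H_w)$ with $0\le w\le 1$, $\int_{K}w\di\abs\mu$ close to $\abs\mu(A)$ and $w$ small off $A$ (possible because $\abs\mu$ is $H_w$-maximal, hence carried by the Choquet boundary / behaves like a boundary measure, so $A_c(H_w)$ is rich enough near $\operatorname{supp}\abs\mu$). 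Then $(t,x^*)\mapsto w(t)\cdot(-p(x^*))$ \emph{is} in $\D_H$ (for fixed $x^*$ it is $-p(x^*)$ times $w$; if $p(x^*)\ge 0$ this is a nonpositive multiple of $w\in A_c(H_w)$, hence $H_w$-affine, in particular $H_w$-convex; for fixed $t$ it is $w(t)\ge 0$ times the superlinear $-p$, hence superlinear). Applying $\nu_1\prec_H\nu_2$ to this function and disintegrating gives
\[
\int_K w(t)\,\psi_2(t)\di\abs\mu(t)=\int w\cdot(-(-p))\di\nu_2\,(\text{wait, sign})\ \ldots
\]
--- more carefully, $\int w(t)(-p(x^*))\di\nu_1\le\int w(t)(-p(x^*))\di\nu_2$, i.e.\ $\int_K w\psi_2\di\abs\mu\le\int_K w\psi_1\di\abs\mu$. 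Since $w$ is concentrated on $A$ where $\psi_2>\psi_1$, letting $w\to\mathbf 1_A$ in the appropriate sense yields $\int_A\psi_2\di\abs\mu\le\int_A\psi_1\di\abs\mu$, contradicting $\abs\mu(A)>0$. Hence $\psi_2\le\psi_1$ $\abs\mu$-a.e., which is the claim.

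\textbf{Main obstacle.} The delicate point is exactly the passage from a varying weight $w$ to functions that are legitimately in $\D_H$: an arbitrary nonnegative Borel (or even continuous) weight $w$ need not make $(t,x^*)\mapsto w(t)p(x^*)$ compatible with $S(H_w)$. The resolution leans on $H_w$-maximality of $\abs\mu$ (a consequence of $\prec^H$-maximality of $\mu$ via Proposition~\ref{P:maximalitamu}) to reduce testing to $A_c(H_w)$-weights, together with the scalar structure theory: for $H_w$-maximal measures the cone $A_c(H_w)$ separates relevant sets well enough (this is where one invokes results like \cite[Theorem 3.58, Proposition 4.28]{lmns} and the fact that $H_w$-maximal measures are carried by the Choquet boundary in the metrizable case, or are boundary measures in general). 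Combined with the disintegration/lifting machinery of Lemma~\ref{L:lifting} and Lemma~\ref{L:dezintegrace-Nmu}, this yields the a.e.\ inequality.
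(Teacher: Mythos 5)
Your overall strategy (disintegrate, test $\nu_1\prec_H\nu_2$ against products of a $t$-dependent weight with $p$, then localize in $t$) is the right one, and you correctly identify the main obstacle, but the resolution you propose has a genuine gap. You restrict the weights to $w\in A_c(H_w)$ with $0\le w\le 1$ and claim that $\chi_A$ can be approximated by such $w$ in $L^1(\abs{\mu})$ because $\abs{\mu}$ is $H_w$-maximal. This is false in general, even for maximal measures: take $H_w=\fra(X)$ for $X=[0,1]^2$ and $\abs{\mu}=\frac14\sum_i\ep_{v_i}$ the uniform measure on the four vertices (a maximal measure carried by the Choquet boundary); then $A_c(H_w)=\fra(X)$ restricted to the four vertices is only three-dimensional, so $\chi_A$ for a single vertex $A$ is at positive $L^1(\abs{\mu})$-distance from it. Worse, the family of inequalities $\int w\psi_2\di\abs{\mu}\le\int w\psi_1\di\abs{\mu}$ for all $w\in A_c(H_w)$ with $0\le w\le1$ simply does not imply $\psi_2\le\psi_1$ $\abs{\mu}$-a.e.: in the same example the function $\delta=\psi_2-\psi_1$ with values $(1,-2,-2,1)$ at the vertices satisfies $\sum_i w(v_i)\delta_i=-(2a+b+c)\le 0$ for every affine $w=a+bx+cy$ with $0\le w\le1$ (since $a\ge0$ and $a+b+c\ge0$), yet $\delta>0$ at two vertices. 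So the class of test functions you allow yourself is too small to force the pointwise conclusion, independently of how the approximation is attempted.

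The repair uses maximality in the opposite direction from the one you propose: it lets you \emph{enlarge} the admissible weights to all of $\{f\in C(K)\setsep f\le0\}$ rather than shrink them to $A_c(H_w)$. For any $H_w$-convex $g\le f\le0$ the function $(t,x^*)\mapsto g(t)p(x^*)$ lies in $\D_H$ (in $t$ it is a nonnegative multiple $p(x^*)\cdot g$ of an $H_w$-convex function; in $x^*$ it is a nonpositive multiple $g(t)\cdot p$ of a sublinear one, hence superlinear), so $\nu_1\prec_{H,c}\nu_2$ gives $\int gp\di\nu_1\le\int gp\di\nu_2$. Since $\abs{\mu}=\pi_1(\nu_j)$ is $H_w$-maximal (Proposition~\ref{P:maximalitamu}), Mokobodzki's criterion gives $f=\widecheck{f}$ $\abs{\mu}$-a.e., where $\widecheck{f}$ is the upward-directed supremum of the $H_w$-convex minorants $g\le f$ ($S(H_w)$ is max-stable); monotone convergence for nets, applied after disintegrating against the nonnegative weights $t\mapsto\int p\di\nu_{j,t}$, then upgrades the inequality to $\int f(t)p(x^*)\di\nu_1\le\int f(t)p(x^*)\di\nu_2$ for \emph{every} continuous $f\le0$. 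Since $C(K)_{\le0}$ is dense enough to approximate $-\chi_A$ in $L^1(\abs{\mu})$, the localization to $\abs{\mu}$-a.e.\ $t$ now goes through. Your opening step (the global inequality via constant weights) and your appeal to Lemma~\ref{L:lifting} are fine; it is only the intermediate class of weights that must be replaced.
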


\begin{proof}
Let $p$ be as above. The proof will be done in two steps.

\smallskip

{\tt Step 1:} If $f\in C(K)$ is such that $f\le 0$, then 

$$\int f(t)p(x^*)\di\nu_1(t,x^*)\le \int f(t)p(x^*)\di\nu_2(t,x^*).$$

\smallskip

Let $f\in C(K)$ be such that $f\le0$. Then 
    $$\begin{aligned}
         \int_{K\times B_{E^*}} f(t)p(x^*)&\di\nu_1(t,x^*)=\int_K\left( f(t) \int_{B_E^*} p\di\nu_{1,t}\right)\di\abs{\mu}(t) \\& =  \int_K\left( \widecheck{f}(t) \int_{B_E^*} p\di\nu_{1,t}\right)\di\abs{\mu}(t)
         \\&= \sup\left\{ \int_K\left( g(t) \int_{B_E^*} p\di\nu_{1,t}\right)\di\abs{\mu}(t)\setsep g\le f\ H_w\mbox{-convex}\right\}
         \\&=\sup\left\{ \int_{K\times B_{E^*}} g(t) p(x^*)\di\nu_1(t,x^*)\setsep g\le f\ H_w\mbox{-convex}\right\}
         \\&\le \sup\left\{ \int_{K\times B_{E^*}} g(t) p(x^*)\di\nu_2(t,x^*)\setsep g\le f\ H_w\mbox{-convex}\right\}
         \\&=\int_{K\times B_{E^*}} f(t)p(x^*)\di\nu_2(t,x^*).
    \end{aligned}$$
The first equality follows from the disintegration formula (Lemma~\ref{L:dezintegrace-obec}$(i)$). To explain the second one recall that $\widecheck{f}$ is the lower envelope of $f$ and $f=\widecheck{f}$ $\abs{\mu}$-almost everywhere by the Mokobodzki criterion (recall that $\abs{\mu}$ is $H_w$-maximal by Proposition~\ref{P:maximalitamu}). The third equality follows from the definition of the lower envelope and monotone convergence of nets. The fourth equality is again an application of the disintegration formula. The inequality follows from the assumption $\nu_1\prec_{H,c}\nu_2$ as the function $(t,x^*)\mapsto g(t)p(x^*)$ belongs to $\D_H$. The last equality follows from the first four equalities applied to $\nu_2$.

\smallskip

{\tt Step 2:} If $\int p\di\nu_{2,t}>\int p\di\nu_{1,t}$ for $t$ from a set of strictly positive measure, then there is $f\in C(K)$ with $f\le 0$ such that $\int f(t)p(x^*)\di\nu_1(t,x^*)> \int f(t)p(x^*)\di\nu_2(t,x^*)$.

\smallskip

This follows from the proof of implication $(1)\implies(2)$ of \cite[Theorem 5.11]{transference-studia} (the computation there may be directly applied to our setting).

\smallskip

We now conclude by combining Steps 1 and 2.
\end{proof}

\begin{proof}[Proof of Theorem~\ref{T:Nmu-uspor}]
We choose an assignment of disintegration kernels provided by Lemma~\ref{L:lifting}. Let us prove the individual implications.

$(1)\implies(2)$: Assume $\nu_1\prec_{H,c}\nu_2$. Let $p:B_{E^*}\to[0,\infty)$ be a weak$^*$-continuous sublinear function. By Lemma~\ref{L:Nmu-uspor} we know that $\int p\di\nu_{2,t}\le\int p\di\nu_{1,t}$ for $\abs{\mu}$-almost all $t\in K$. By property $(i)$ from Lemma~\ref{L:lifting} the inequality holds for each $t\in K$.
Further, property $(ii)$ of the same lemma implies that $r(\nu_{1,t})=r(\nu_{2,t})$ for each $t\in K$. By Lemma~\ref{L:dezintegrace-Nmu} almost all the barycenters are on the sphere, hence Lemma~\ref{L:uspornakouli} implies that $\nu_{2,t}\prec\nu_{1,t}$ for $\abs{\mu}$-almost all $t\in K$.

It implies that for any convex continuous function $g:B_{E^*}\to \er$ we have $\int g\di\nu_{2,t}\le\int g\di\nu_{1,t}$ for $\abs{\mu}$-almost all $t\in K$. By property $(i)$ from Lemma~\ref{L:lifting} the inequality holds for each $t\in K$. Now it follows that $\nu_{2,t}\prec\nu_{1,t}$ for all $t\in K$, hence the argument is complete.

$(2)\implies (3)$: This follows from the disintegration formula in Lemma~\ref{L:dezintegrace-obec}$(i)$.

$(3)\implies(1)$: Assume $\nu_1\prec_{\K}\nu_2$. Since $\D_H\subset\K$, we deduce $\nu_1\prec_H\nu_2$. Since both measures have the same norm, we conclude $\nu_1\prec_{H,c}\nu_2$.
\end{proof}

Theorem~\ref{T:Nmu-uspor} yields that, given a $\prec^H$-maximal $\mu\in M(K,E^*)$, the order $\prec_{H,c}$ on $N(\mu)$ coincides with the order $\prec_{\D}$ from \cite{transference-studia}. Therefore several results of \cite{transference-studia} may be directly applied to our setting. 
In particular, the following proposition is an immediate consequence of \cite[Theorem 5.20]{transference-studia}.

\begin{prop}\label{P:minimalchar}
     Let $H\subset C(K,E)$ be a linear subspace separating points such that $H_w$ contains constant functions. Let $\mu\in M(K,E^*)\setminus\{0\}$ be $\prec^H$-maximal. Then a measure $\nu\in N(\mu)$ is $\prec_{H,c}$-minimal in $N(\mu)$ if and only if it admits a disintegration kernel consisting of maximal measures.
\end{prop}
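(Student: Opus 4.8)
The plan is to prove Proposition~\ref{P:minimalchar} by transferring the question to the framework of \cite{transference-studia} via Theorem~\ref{T:Nmu-uspor}. The key observation is that for a $\prec^H$-maximal measure $\mu$, Theorem~\ref{T:Nmu-uspor} identifies the order $\prec_{H,c}$ restricted to $N(\mu)$ with the order $\prec_{\K}$, which in turn (via the equivalence $(1)\iff(3)$ there) agrees with the order $\prec_{\D}$ studied in \cite{transference-studia}. Hence a measure $\nu\in N(\mu)$ is $\prec_{H,c}$-minimal in $N(\mu)$ if and only if it is $\prec_{\D}$-minimal in $N(\mu)$, and then \cite[Theorem 5.20]{transference-studia} directly characterizes the latter as exactly those $\nu$ admitting a disintegration kernel consisting of maximal measures (maximal in the Choquet ordering on $B_{E^*}$).

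In more detail, first I would record that $N(\mu)$ is nonempty (it contains $W\mu$ by Lemma~\ref{L:orders-zakl}$(f)$, since $W\mu\in N(\mu)$), that all its members are carried by $K\times S_{E^*}$ (by \cite[Lemma 3.1]{transference-studia}), and that on such measures $\prec_{H,c}$ and $\prec_H$ coincide and are genuine partial orders (by Lemma~\ref{L:orders-zakl}$(c)$ and $(e)$); this is exactly the setting in which ``$\prec_{H,c}$-minimal'' is unambiguous. Next I would invoke Theorem~\ref{T:Nmu-uspor}: fix the assignment $\nu\mapsto(\nu_t)_{t\in K}$ of disintegration kernels provided there, so that for $\nu_1,\nu_2\in N(\mu)$ one has $\nu_1\prec_{H,c}\nu_2 \iff \nu_{2,t}\prec\nu_{1,t}$ for every $t\in K \iff \nu_1\prec_{\K}\nu_2$. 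The chain of equivalences shows in particular that the relation $\prec_{H,c}$ on $N(\mu)$ is literally the same binary relation as $\prec_{\K}$ on $N(\mu)$, and — comparing with \cite[Theorem 5.11 or the relevant characterization]{transference-studia}, where $\prec_{\D}$ on $N(\mu)$ is characterized by the same pointwise condition $\nu_{2,t}\prec\nu_{1,t}$ on the disintegration kernels — also the same as $\prec_{\D}$ on $N(\mu)$. Therefore the $\prec_{H,c}$-minimal elements of $N(\mu)$ are precisely the $\prec_{\D}$-minimal elements of $N(\mu)$.

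Finally I would quote \cite[Theorem 5.20]{transference-studia}, which asserts that $\nu\in N(\mu)$ is $\prec_{\D}$-minimal in $N(\mu)$ if and only if it admits a disintegration kernel consisting of maximal measures (on $B_{E^*}$); combined with the identification above this is exactly the claimed characterization of $\prec_{H,c}$-minimality. One small point to address carefully is that ``admits a disintegration kernel consisting of maximal measures'' is a property of the measure $\nu$ and not of the particular assignment fixed via Theorem~\ref{T:Nmu-uspor}: this is harmless because two disintegration kernels of the same $\nu$ agree $\pi_1(\nu)$-almost everywhere (i.e.\ $\abs{\mu}$-a.e.\ by Lemma~\ref{L:dezintegrace-Nmu}), and the set of maximal probability measures on $B_{E^*}$ is universally measurable, so the existence of an a.e.-maximal kernel is independent of the choice; moreover \cite[Theorem 5.20]{transference-studia} is stated in exactly this a.e.\ sense. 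The main obstacle, such as it is, is purely bookkeeping: making sure that the order $\prec_{\D}$ of \cite{transference-studia} restricted to $N(\mu)$ really coincides with our $\prec_{H,c}$ on $N(\mu)$ under the $\prec^H$-maximality hypothesis — but this is precisely what the equivalence $(1)\iff(2)$ of Theorem~\ref{T:Nmu-uspor}, together with the corresponding characterization of $\prec_{\D}$ in \cite{transference-studia}, delivers, so no new argument is needed.
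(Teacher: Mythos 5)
Your proposal is correct and is essentially the paper's own argument: the paper likewise deduces from Theorem~\ref{T:Nmu-uspor} that $\prec_{H,c}$ on $N(\mu)$ coincides with the order $\prec_{\D}$ of \cite{transference-studia} and then cites \cite[Theorem 5.20]{transference-studia} verbatim. The extra remarks you add (that $N(\mu)\ni W\mu$ is nonempty, that the relation is a genuine partial order there, and that ``admitting a kernel of maximal measures'' is independent of the chosen disintegration) are sound and only make explicit what the paper leaves implicit.
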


We continue by a theorem characterizing uniqueness of $\prec_{H,c}$-minimal measures. We recall that a convex set is a \emph{simplexoid} if each of its proper faces is a simplex. This notion was introduced in \cite{phelps-complex} and used in \cite{bezkonstant,transference-studia}. We also note that $B_{E^*}$ is a simplexoid if and only if any $x^*\in S_{E^*}$ admits a unique maximal representing probability measure (cf. \cite[Fact 2.4]{bezkonstant}).

\begin{thm}\label{T:simplexoid}
     Let $H\subset C(K,E)$ be a linear subspace separating points such that $H_w$ contains constant functions. The following assertions are equivalent.
     \begin{enumerate}[$(1)$]
         \item $B_{E^*}$ is a simplexoid.
         \item For each $\mu\in M(K,E^*)$ $\prec^H$-maximal there is a unique $\prec_{H,c}$-minimal measure $\nu\in N(\mu)$.
     \end{enumerate}
     If, moreover, $H$ contains constants, then these assertions are also equivalent to the following one.
      \begin{enumerate}[$(3)$]
          \item For each $\mu\in M(K,E^*)$ $\prec^H$-maximal satisfying $\norm{\mu|_H}=\norm{\mu}$ there is a unique $\prec_{H,c}$-minimal measure $\nu\in N(\mu)$.
     \end{enumerate}
\end{thm}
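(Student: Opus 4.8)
The plan is to prove the equivalences by combining the characterization of $\prec_{H,c}$-minimal measures in $N(\mu)$ from Proposition~\ref{P:minimalchar} (for $\mu$ that is $\prec^H$-maximal) with the structure theory developed in Section~\ref{sec:nasoucinu}, reducing everything to the behavior of maximal probability measures on $B_{E^*}$. Since by Proposition~\ref{P:minimalchar} a measure $\nu\in N(\mu)$ is $\prec_{H,c}$-minimal exactly when it admits a disintegration kernel $(\nu_t)_{t\in K}$ consisting of maximal measures on $B_{E^*}$, and since by Lemma~\ref{L:dezintegrace-Nmu} for $\nu\in N(\mu)$ one has $\pi_1(\nu)=\abs{\mu}$ and $r(\nu_t)\in S_{E^*}$ for $\abs{\mu}$-almost every $t$, the uniqueness question for $\prec_{H,c}$-minimal elements of $N(\mu)$ becomes a question about uniqueness of maximal probability measures representing points of $S_{E^*}$, which by the quoted \cite[Fact 2.4]{bezkonstant} is precisely the assertion that $B_{E^*}$ is a simplexoid.

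\textbf{Implication $(1)\implies(2)$.} Assume $B_{E^*}$ is a simplexoid and let $\mu\in M(K,E^*)$ be $\prec^H$-maximal. If $\mu=0$ there is nothing to prove (the statement is about $\mu\ne0$ implicitly via $N(\mu)$); so assume $\mu\ne0$. By Proposition~\ref{P:minimalchar} a $\prec_{H,c}$-minimal $\nu\in N(\mu)$ is one admitting a disintegration kernel of maximal measures on $B_{E^*}$. Fix the assignment of disintegration kernels provided by Lemma~\ref{L:lifting}; by part $(ii)$ of that lemma all kernels $(\nu_t)$ arising from measures in $N(\mu)$ have the same barycenter function $t\mapsto r(\nu_t)$, which by Lemma~\ref{L:dezintegrace-Nmu} lies in $S_{E^*}$ for $\abs{\mu}$-a.e.\ $t$. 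Since $B_{E^*}$ is a simplexoid, each such barycenter admits a unique maximal representing probability measure; hence for two $\prec_{H,c}$-minimal $\nu_1,\nu_2\in N(\mu)$ we get $\nu_{1,t}=\nu_{2,t}$ for $\abs{\mu}$-almost every $t$, and then the disintegration formula (Lemma~\ref{L:dezintegrace-obec}$(i)$) forces $\nu_1=\nu_2$. This gives uniqueness; existence of a $\prec_{H,c}$-minimal element of $N(\mu)$ follows from Zorn's lemma together with Proposition~\ref{P:minimalchar} (or directly: disintegrate $W\mu$ and replace each fiber by a maximal measure dominating it via Lemma~\ref{L:lifting}$(i)$ and Theorem~\ref{T:Nmu-uspor}).

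\textbf{Implications $(2)\implies(1)$ and the case $H$ contains constants.} For $(2)\implies(1)$ I argue by contraposition, imitating the construction in the proof of \cite[Theorem 4.4]{transference-studia}: if $B_{E^*}$ is not a simplexoid, pick $x^*\in S_{E^*}$ admitting two distinct maximal representing probabilities $\sigma_1\ne\sigma_2$, fix any $t_0\in\Ch_HK$ (nonempty since $H_w$ contains constants), and set $\mu=\ep_{t_0}\otimes x^*$. Then $\mu$ is $\prec^H$-maximal by Proposition~\ref{P:maximalitamu} (its variation is $\ep_{t_0}$, which is $\prec_{H_w}$-maximal as $t_0\in\Ch_{H_w}K$), and $\ep_{t_0}\times\sigma_1$, $\ep_{t_0}\times\sigma_2$ are two distinct measures in $N(\mu)$ whose disintegration kernels are $\sigma_1$, $\sigma_2$ respectively — both consisting of a single maximal measure — hence both $\prec_{H,c}$-minimal by Proposition~\ref{P:minimalchar}, contradicting $(2)$. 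For the last part, $(2)\implies(3)$ is trivial, and when $H$ contains constants the same counterexample witnesses $(3)\implies(1)$: the constructed $\mu=\ep_{t_0}\otimes x^*$ satisfies $\norm{\mu|_H}=\norm{x^*\circ\phi_H(t_0)}=\norm{x^*}=\norm{\mu}$ by Lemma~\ref{L:normyevaluaci}$(a)$, so it meets the hypothesis of $(3)$. The main obstacle I anticipate is the bookkeeping in $(1)\implies(2)$: one must be careful that the ``simplexoid'' hypothesis is applied only to those barycenters $r(\nu_t)$ that actually lie on $S_{E^*}$ (a full-measure set, but not necessarily all $t$), and that the common-barycenter property from Lemma~\ref{L:lifting}$(ii)$ together with the everywhere-to-a.e.\ transfer in Lemma~\ref{L:lifting}$(i)$ is exactly what makes the a.e.\ equality of fibers upgrade to equality of the global measures; once this is in place the rest is routine given Proposition~\ref{P:minimalchar} and Theorem~\ref{T:Nmu-uspor}.
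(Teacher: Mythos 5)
Your proof is correct and follows essentially the same route as the paper: the paper simply cites the corresponding implications of Theorem 5.23 of the transference paper (whose proof is exactly the disintegration-kernel argument you reconstruct via Proposition~\ref{P:minimalchar} and Lemma~\ref{L:lifting}), and your treatment of $(2)\implies(1)$ (choosing $t_0\in\Ch_HK$ and $\mu=\ep_{t_0}\otimes x^*$) and of condition $(3)$ via Lemma~\ref{L:normyevaluaci}$(a)$ matches the paper's proof verbatim. The one point you rightly flag — transferring the ``kernel of maximal measures'' property from the kernel furnished by Proposition~\ref{P:minimalchar} to the fixed assignment of Lemma~\ref{L:lifting} — is handled exactly by the a.e.-to-everywhere upgrade in Lemma~\ref{L:lifting}$(i)$ together with the Mokobodzki test, as you indicate.
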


\begin{proof}
    Implication $(1)\implies(2)$ follows directly from the corresponding implication of \cite[Theorem 5.23]{transference-studia}.

    Implication $(2)\implies(1)$ follows from the proof of the corresponding implication of \cite[Theorem 5.23]{transference-studia}. The only modification is to choose $t\in \Ch_HK$.

    Implication $(2)\implies (3)$ is trivial. If $H$ contains constants, the proof of $(2)\implies (1)$ yields $(3)\implies (1)$ due to Lemma~\ref{L:normyevaluaci}.
\end{proof}

We continue by the following proposition on the relationship of $\prec_{H,c}$-minimal measures to Choquet boundaries.

\begin{prop}\label{P:hranice-max}
    Let $H\subset C(K,E)$ be a linear subspace separating points such that $H_w$ contains constant functions. Let $\nu\in M_+(K\times B_{E^*})$.
    \begin{enumerate}[$(a)$]
        \item Assume that $T^*\nu$ is $\prec^H$-maximal and  $\nu$ is a $\prec_{H,c}$-minimal element of $N(T^*\nu)$. Then $\nu$ is carried by any set
        of the form $A\times B$, where $A\subset K$ is a Baire set containing $\Ch_HK$ and $B\subset B_{E^*}$ is a Baire set containing $\ext B_{E^*}$.

        \item  Assume that, moreover, $K$ is metrizable, $E$ is separable and $\norm{T^*\nu}=\norm{\nu}$. Then the following two assertions are equivalent.
        \begin{enumerate}[$(i)$]
            \item $T^*\nu$ is $\prec^H$-maximal and $\nu$ is $\prec_{H,c}$-minimal in $N(T^*\nu)$.
            \item $\nu$ is carried by $\Ch_HK\times\ext B_{E^*}$.
        \end{enumerate}
    \end{enumerate}
\end{prop}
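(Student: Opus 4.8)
The plan is to reduce everything to the known scalar/product picture from \cite{transference-studia} together with the maximality characterizations already proved here. For part $(a)$: since $T^*\nu$ is $\prec^H$-maximal, by Proposition~\ref{P:maximalitamu} $\abs{T^*\nu}=\pi_1(\nu)$ is $H_w$-maximal, and by Proposition~\ref{P:minimalchar} the measure $\nu$ admits a disintegration kernel $(\nu_t)_{t\in K}$ consisting of maximal measures on $B_{E^*}$. First I would handle the two coordinates separately. For the $K$-coordinate: $\pi_1(\nu)=\abs{T^*\nu}$ is $H_w$-maximal, hence (scalar theory, \cite[Theorem 3.79]{lmns} applied to $H_w$, which contains constants) it is carried by any Baire set $A\supset\Ch_{H_w}K=\Ch_HK$; therefore $\nu$ is carried by $A\times B_{E^*}$. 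For the $B_{E^*}$-coordinate: each $\nu_t$ is a maximal probability measure on $B_{E^*}$, hence carried by any Baire set $B\supset\ext B_{E^*}$ (again by the non-metrizable maximality theorem applied on the compact convex set $B_{E^*}$); integrating the disintegration formula (Lemma~\ref{L:dezintegrace-obec}$(ii)$) over $K\setminus A$ and over $B_{E^*}\setminus B$ shows $\nu((K\setminus A)\times B_{E^*})=0$ and $\nu(K\times(B_{E^*}\setminus B))=0$, so $\nu$ is carried by $A\times B$. The one technical point is measurability of $t\mapsto \nu_t(B_{E^*}\setminus B)$; this is standard for Baire $B$ since the kernel can be chosen measurable, so I would only remark on it.

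For part $(b)$, in the metrizable separable setting with $\norm{T^*\nu}=\norm{\nu}$ (so $\nu\in N(T^*\nu)$, using \cite[Lemma 3.1]{transference-studia} to see $\nu$ is carried by $K\times S_{E^*}$), implication $(i)\implies(ii)$ is just part $(a)$ with $A=\Ch_HK$ and $B=\ext B_{E^*}$, which are now Borel (indeed $G_\delta$, using metrizability of $K$ and of $B_{E^*}$ — the latter from separability of $E$, which makes $(B_{E^*},w^*)$ metrizable). For $(ii)\implies(i)$: assume $\nu$ is carried by $\Ch_HK\times\ext B_{E^*}$. Then $\pi_1(\nu)$ is carried by $\Ch_HK=\Ch_{H_w}K$, hence $H_w$-maximal (metrizable scalar theory, \cite[Corollary 3.62]{lmns}); since $\pi_1(\nu)=\abs{T^*\nu}$ (Lemma~\ref{L:dezintegrace-Nmu}, as $\nu\in N(T^*\nu)$), Proposition~\ref{P:maximalitamu} gives that $T^*\nu$ is $\prec^H$-maximal. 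For $\prec_{H,c}$-minimality in $N(T^*\nu)$: take a disintegration kernel $(\nu_t)_{t\in K}$ of $\nu$; since $\nu$ lives on $K\times\ext B_{E^*}$, for $\pi_1(\nu)$-a.e.\ $t$ the measure $\nu_t$ is carried by $\ext B_{E^*}$, hence maximal on $B_{E^*}$ (metrizable case), and then Proposition~\ref{P:minimalchar} yields that $\nu$ is $\prec_{H,c}$-minimal in $N(T^*\nu)$.

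The main obstacle I anticipate is a bookkeeping one rather than a conceptual one: making sure the disintegration kernel can be chosen so that the "a.e." statements about $\nu_t$ (carried by $\ext B_{E^*}$, resp.\ maximal) are compatible with the ones needed from Proposition~\ref{P:minimalchar}, and confirming that "$\nu$ carried by $K\times\ext B_{E^*}$" really forces $\nu_t(\ext B_{E^*})=1$ for $\pi_1(\nu)$-a.e.\ $t$ — this is immediate from Lemma~\ref{L:dezintegrace-obec}$(ii)$ with $D=B_{E^*}\setminus\ext B_{E^*}$ (Borel here) and $C=K$. A secondary subtlety is that in part $(a)$ the sets $A,B$ are only Baire, not Borel, so one must invoke the correct non-metrizable maximality theorems (for $B_{E^*}$ this is the classical statement on compact convex sets; for $K$ it is \cite[Theorem 3.79]{lmns}). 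Once these citations are in place the argument is routine, so I would keep the write-up short and point to the quoted results.
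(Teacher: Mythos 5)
Your argument is correct. Part $(b)$ is essentially the paper's proof: $(i)\implies(ii)$ via part $(a)$ plus the observation that in the metrizable/separable case $\Ch_HK$ and $\ext B_{E^*}$ are Baire (indeed $G_\delta$), and $(ii)\implies(i)$ via \cite[Corollary 3.62]{lmns} for $\pi_1(\nu)$, the identity $\pi_1(\nu)=\abs{T^*\nu}$, Proposition~\ref{P:maximalitamu}, and then Proposition~\ref{P:minimalchar} after checking that the kernel measures $\nu_t$ are a.e.\ carried by $\ext B_{E^*}$. The one bookkeeping point you flag -- upgrading ``maximal for a.e.\ $t$'' to a kernel consisting of maximal measures -- is resolved in the paper by using the distinguished kernel of Lemma~\ref{L:lifting}, whose property $(i)$ promotes a.e.\ statements to everywhere statements; your alternative of modifying the kernel on a $\pi_1(\nu)$-null set works equally well. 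The only genuine divergence is in the $B_{E^*}$-coordinate of part $(a)$: the paper gets ``$\nu$ is carried by $K\times B$'' by combining Theorem~\ref{T:Nmu-uspor} with an external result (\cite[Corollary 5.19]{bezkonstant}), whereas you instead take the disintegration kernel of maximal measures supplied by Proposition~\ref{P:minimalchar}, use the classical fact that a maximal measure on the compact convex set $B_{E^*}$ is carried by any Baire set containing $\ext B_{E^*}$, and integrate via Lemma~\ref{L:dezintegrace-obec}$(ii)$ (legitimate, since Baire sets are Borel, so the measurability of $t\mapsto\nu_t(B_{E^*}\setminus B)$ is exactly what that lemma provides). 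Your route is slightly more self-contained relative to the results stated in this paper; the paper's is shorter at the cost of one more external citation. Both are sound.
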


\begin{proof}
    $(a)$: Let $A$ and $B$ be as in the statement.    
    If $T^*\nu$ is $\prec^H$-maximal, then $\abs{T^*\nu}$ is 
    $\prec_{H_w}$-maximal by Proposition~\ref{P:maximalitamu}, hence $\abs{T^*\nu}$ is carried by $A$ (due to \cite[Theorem 3.79(a)]{lmns}). Assume that, additionally, $\nu$ is $\prec_{H,c}$-minimal element of $N(T^*\nu)$. It follows, in particular, that $\nu\in N(T^*\nu)$ and hence $\abs{T^*\nu}=\pi_1(\nu)$ (by Lemma~\ref{L:dezintegrace-Nmu}), so $\nu$ is carried by $A\times B_{E^*}$. Further, by combining Theorem~\ref{T:Nmu-uspor} with \cite[Corollary 5.19]{bezkonstant} 
    we get that $\nu$ is carried by $K\times B$. Now it easily follows that $\nu$ is carried by $A\times B$.

    $(b)$: Implication $(i)\implies (ii)$ follows from $(a)$, as in this case $\Ch_HK$ and $\ext B_{E^*}$ are Baire sets.  To prove $(ii)\implies (i)$ assume that $\nu$ is carried by $\Ch_HK\times B_{E^*}$. Then $\pi_1(\nu)$ is carried by $\Ch_HK$ and so it is $\prec_{H_w}$-maximal (by \cite[Corollary 3.62]{lmns}). Since $\pi_1(\nu)=\abs{T^*\nu}$ (by \cite[Proposition 3.5(a)]{transference-studia}), Proposition~\ref{P:maximalitamu} implies that $T^*\nu$ is $\prec^H$-maximal. Further, let $(\nu_t)_{t\in K}$ be a disintegration kernel of $\nu$ provided by Lemma~\ref{L:lifting}. Then
    $$0=\nu(K\times (B_{E^*}\setminus \ext B_{E^*}))=\int_K \nu_t(B_{E^*}\setminus \ext B_{E^*})\di\pi_1(\nu)(t),$$
    so $\nu_t(B_{E^*}\setminus \ext B_{E^*})=0$ for $\pi_1(\nu)$-almost all $t\in K$. By the properties of the kernel this equality holds for each $t\in K$. Thus each $\nu_t$ is a maximal measure on $B_{E^*}$ (by \cite[Corollary 3.62]{lmns}). Hence $\nu$ is $\prec_{H,c}$-minimal by Proposition~\ref{P:minimalchar}.
\end{proof}

\subsection{Another form of representing measures and their uniqueness}\label{ss:jinarep}

We now present a new type of a boundary representation theorem, with the use of positive measures on $K\times B_{E^*}$.

\begin{thm}\label{T:reprez-soucin}
    Let $H\subset C(K,E)$ be a function space separating points such that $H_w$ contains constants. Let $\varphi\in H^*$ be given. Then there is a measure $\nu\in M_+(K\times B_{E^*})$ satisfying the following conditions.
    \begin{enumerate}[$(i)$]
        \item $\norm{\nu}=\norm{\varphi}$;
        \item $\varphi(\f)=\int x^*(\f(t))\di\nu(t,x^*)$ for each $\f\in H$;
        \item $\pi_1(\nu)$ is $\prec_{H_w}$-maximal;
        \item $\nu$ is $\prec_{H,c}$-minimal within the set
    $$N(T^*\nu)=\{\nu^\prime\in M_+(K\times B_{E^*})\setsep \norm{\nu^\prime}=\norm{\nu}\ \&\ T^*\nu^\prime=T^*\nu\}.$$
    \end{enumerate}
\end{thm}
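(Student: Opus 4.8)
The plan is to reduce everything to the scalar representation theorem for $H_s$ combined with the machinery developed in Sections~\ref{sec:ordering} and~\ref{sec:nasoucinu}. First I would start as in the proof of Theorem~\ref{T:reprez-funct}: given $\varphi\in H^*$, choose $\widetilde{\varphi}\in H_l^*$ with $\varphi=\widetilde{\varphi}\circ T$ and extend it to $\psi\in H_s^*$ with $\norm{\psi}=\norm{\widetilde{\varphi}}=\norm{\varphi}$ (the last equality because $T$ is an isometry). Since $H_w$ contains constants, $H_s$ contains constants, so $\es(H_s)$ makes sense and $\norm{\psi}=\abs{\psi(\1)}$ can be normalized; in any case by the scalar representation theorem (\cite[Theorem 1.3]{saab-canad}) there is an $H_s$-boundary measure $\rho\in M_\psi(H_s)$, i.e.\ $\norm{\rho}=\norm{\psi}=\norm{\varphi}$ and $\rho$ represents $\psi$ on $H_s$. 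Replacing $\rho$ by a suitable density times $\abs{\rho}$ is not needed here; what I want is to pass from $\rho\in M(K\times B_{E^*})$ to a \emph{positive} measure on $K\times B_{E^*}$. This is exactly what the operator $W$ does: it is more convenient to first produce the vector measure $\mu=T^*\rho\in M_\varphi(H)$ (as in Theorem~\ref{T:reprez-funct}, $\abs{\mu}=\pi_1(\abs\rho)$ and $\mu$ is $H$-boundary, hence $\prec^H$-maximal by Proposition~\ref{P:maximalitamu}), and then set $\nu_0=W\mu\in M_+(K\times B_{E^*})$.

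Next I would check the four required properties for a suitable $\nu$ refining $\nu_0$. By \cite[Proposition 3.3 and 3.4]{batty-vector} (or \cite[Proposition 3.8]{transference-studia}) we have $T^*W\mu=\mu$, $\norm{W\mu}=\norm{\mu}=\norm{\varphi}$, and $\pi_1(W\mu)=\abs{\mu}$; so $\nu_0=W\mu$ already satisfies $(i)$, $(ii)$ (since $\int x^*(\f(t))\di\nu_0 = \int T\f\di\nu_0 = \langle T^*\nu_0,\f\rangle = \langle\mu,\f\rangle = \varphi(\f)$), and $(iii)$ (because $\abs{\mu}$ is $\prec_{H_w}$-maximal by Proposition~\ref{P:maximalitamu}, $\mu$ being $H$-boundary). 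The only issue is $(iv)$: $W\mu$ is in fact the $\prec_{H,c}$-\emph{largest} element of $N(\mu)$ by Lemma~\ref{L:orders-zakl}$(f)$ and Lemma~\ref{L:Hc-max}, not the smallest. So I would apply Zorn's lemma: the partial order $\prec_{H,c}$ on $N(\mu)$ (a genuine partial order there by Lemma~\ref{L:orders-zakl}$(e)$, since all measures in $N(\mu)$ are carried by $K\times S_{E^*}$) has a $\prec_{H,c}$-minimal element $\nu$ below $\nu_0$. For Zorn's lemma I must verify that every chain in $N(\mu)$ has a lower bound; the weak$^*$-compactness of the set $\{\nu'\in M_+(K\times B_{E^*})\setsep \norm{\nu'}\le\norm{\mu}\}$ together with lower semicontinuity of $\nu'\mapsto\int f\di\nu'$ for $f\in\D_H$ (these are continuous, actually) and of the norm should give that a decreasing net has a $\prec_{H,c}$-lower bound which lies in $N(\mu)$ (one must check $T^*$ of the limit is still $\mu$ and the norm is still $\norm\mu$ — the norm is constant along the chain since $\norm{\nu'}=\norm{T^*\nu'}=\norm\mu$ for measures carried by $K\times S_{E^*}$, and $T^*$ is weak$^*$-continuous). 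Such a minimal $\nu$ satisfies $T^*\nu=\mu=T^*\nu_0$ and $\norm\nu=\norm{\nu_0}=\norm\varphi$, so $N(T^*\nu)=N(\mu)$ and $(iv)$ holds; properties $(i)$ and $(ii)$ are unchanged; and $(iii)$ holds because $\pi_1(\nu)=\abs{T^*\nu}=\abs\mu$ by Lemma~\ref{L:dezintegrace-Nmu}, which is $\prec_{H_w}$-maximal.

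\textbf{Main obstacle.} The delicate point is the Zorn's-lemma step: one must be sure that a $\prec_{H,c}$-decreasing chain (or net) in $N(\mu)$ really has a lower bound \emph{inside} $N(\mu)$, i.e.\ that the weak$^*$-limit of a decreasing net of positive measures of norm $\norm\mu$ carried by $K\times S_{E^*}$ is again such a measure with $T^*$ equal to $\mu$. The norm is automatically preserved (it equals $\norm{T^*}$ on such measures and $T^*$ is weak$^*$-continuous, so the limit $\nu_\infty$ has $\norm{T^*\nu_\infty}=\norm\mu$; one then needs $\norm{\nu_\infty}=\norm\mu$ too, which follows because $\nu_\infty\prec_{H,c}$ every element of the chain forces $\norm{\nu_\infty}\ge\norm{\mu}$ while lower semicontinuity of the norm gives $\norm{\nu_\infty}\le\norm\mu$). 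Being carried by $K\times S_{E^*}$ is then a consequence of $\norm{T^*\nu_\infty}=\norm{\nu_\infty}$ via \cite[Lemma 3.1]{transference-studia}. The $\prec_{H,c}$-lower-bound property of $\nu_\infty$ follows from continuity of $\nu'\mapsto\int f\di\nu'$ on bounded sets for $f\in\D_H\subset C(K\times B_{E^*})$. Once these routine functional-analytic verifications are in place, the proof is complete. Alternatively, one could avoid Zorn entirely by invoking Proposition~\ref{P:minimalchar}: since $\mu$ is $\prec^H$-maximal, a $\prec_{H,c}$-minimal element of $N(\mu)$ is precisely one admitting a disintegration kernel of maximal measures on $B_{E^*}$, and such a measure can be built directly by disintegrating $W\mu$ along $K$ and replacing each fiber $(W\mu)_t$ by a maximal measure dominating it (using \cite[Lemma 2.7]{transference-studia} to reassemble), which is perhaps cleaner — I would present whichever turns out shorter, but the Zorn argument is the safe default.
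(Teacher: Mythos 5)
Your proposal is correct and follows essentially the same route as the paper: obtain an $H$-boundary $\mu\in M_\varphi(H)$ via Theorem~\ref{T:reprez-funct}, note that $\mu$ is $\prec^H$-maximal and $\abs{\mu}$ is $\prec_{H_w}$-maximal by Proposition~\ref{P:maximalitamu}, and take a $\prec_{H,c}$-minimal element of $N(\mu)$ (whose existence the paper also attributes to Zorn's lemma, stated just before this theorem in Section~\ref{sec:nasoucinu}). Your extra verification of the chain condition for Zorn's lemma is sound and merely fills in a step the paper leaves implicit.
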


\begin{proof}
  By Theorem~\ref{T:reprez-funct} there is an $H$-boundary measure $\mu\in M(K,E^*)$ with $\norm{\mu}=\norm{\varphi}$ such that $\varphi(\f)=\int\f\di\mu$ for $\f\in H$. By Proposition~\ref{P:maximalitamu} $\mu$ is $\prec^H$-maximal and hence $\abs{\mu}$ is $\prec_{H_w}$-maximal. Let $\nu\in N(\mu)$ be a $\prec_{H,c}$-minimal element. Then $\nu$ satisfies $(i)$ and $(ii)$. Since $\pi_1(\nu)=\abs{\mu}$ it satisfies also $(iii)$.
 Property $(iv)$ clearly follows from the choice of $\nu$.
  \end{proof}

We continue by characterizing uniqueness of the representation provided by the previous theorem.

\begin{prop}
   Let $H\subset C(K,E)$ be a function space separating points and containing constants.
\begin{enumerate}[$(a)$]
    \item The following assertions are equivalent:
    \begin{enumerate}[$(i)$]
        \item For any $\varphi\in H^*$ there is a unique $\nu\in M_+(K\times B_{E^*})$ with properties $(i)-(iv)$ from Theorem~\ref{T:reprez-soucin}.
        \item $H$ is functionally vector simplicial and $B_{E^*}$ is a simplexoid.
    \end{enumerate}
    \item The following assertions are equivalent:
    \begin{enumerate}[$(i)$]
        \item For any $t\in K$ and $x^*\in E^*$ there is a unique $\nu\in M_+(K\times B_{E^*})$ corresponding to $\varphi=x^*\circ\phi_H(t)$ with properties $(i)-(iv)$ from Theorem~\ref{T:reprez-soucin}.
        \item $H$ is vector simplicial and $B_{E^*}$ is a simplexoid.
    \end{enumerate}
\end{enumerate}
\end{prop}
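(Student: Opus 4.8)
The plan is to translate the four conditions of Theorem~\ref{T:reprez-soucin} into a statement about the two "layers" of non-uniqueness (choice of an $H$-boundary vector measure representing $\varphi$, and choice of a $\prec_{H,c}$-minimal lift of it), and then feed the result into Theorem~\ref{T:simplexoid}. Concretely, I first record the following dictionary: for $\varphi\in H^*$, a measure $\nu\in M_+(K\times B_{E^*})$ satisfies $(i)$--$(iv)$ \emph{if and only if} $\mu:=T^*\nu$ is an $H$-boundary measure in $M_\varphi(H)$ and $\nu$ is a $\prec_{H,c}$-minimal element of $N(\mu)$. Indeed, $(ii)$ says $T^*\nu$ represents $\varphi$ on $H$, so with $(i)$ we get $\norm{\varphi}\le\norm{T^*\nu}\le\norm{\nu}=\norm{\varphi}$; hence $\mu\in M_\varphi(H)$, $\nu\in N(\mu)$, and by Lemma~\ref{L:dezintegrace-Nmu} $\pi_1(\nu)=\abs{\mu}$, so $(iii)$ becomes $\prec_{H_w}$-maximality of $\abs{\mu}$, which by Proposition~\ref{P:maximalitamu} means $\mu$ is $H$-boundary; finally $(iv)$ is literally $\prec_{H,c}$-minimality of $\nu$ in $N(\mu)$. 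The converse is immediate. Consequently, uniqueness of $\nu$ for a fixed $\varphi$ is equivalent to: there is a unique $H$-boundary measure in $M_\varphi(H)$, and for that measure $\mu$ there is a unique $\prec_{H,c}$-minimal element of $N(\mu)$; recall that such minimal elements always exist by Zorn's lemma and that $W\mu\in N(\mu)$, so nothing is vacuous.

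Granting the dictionary, the implications $(ii)\implies(i)$ are short. In part (a), functional vector simpliciality provides the unique $H$-boundary $\mu\in M_\varphi(H)$, which is $\prec^H$-maximal; Theorem~\ref{T:simplexoid}, implication $(1)\implies(2)$, provides the unique $\prec_{H,c}$-minimal $\nu\in N(\mu)$; together with the existence given by Theorem~\ref{T:reprez-soucin} this is exactly uniqueness of $\nu$. Part (b), $(ii)\implies(i)$, is identical with $\varphi=x^*\circ\phi_H(t)$ and vector simpliciality in place of functional vector simpliciality.

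For the converse implications I would argue in two independent steps. \emph{Step 1: $(i)\implies$ (functional) vector simpliciality.} Take $\varphi\in H^*$ (arbitrary for (a); of the form $x^*\circ\phi_H(t)$ for (b)) and two $H$-boundary measures $\mu_1,\mu_2\in M_\varphi(H)$; pick $\prec_{H,c}$-minimal $\nu_j\in N(\mu_j)$. By the dictionary both $\nu_j$ satisfy $(i)$--$(iv)$, hence $\nu_1=\nu_2$, so $\mu_1=T^*\nu_1=T^*\nu_2=\mu_2$. \emph{Step 2: $(i)\implies B_{E^*}$ is a simplexoid.} Suppose not. By \cite[Fact 2.4]{bezkonstant} there is $x^*\in S_{E^*}$ with two distinct maximal representing probabilities $\sigma_1\ne\sigma_2\in M_1(B_{E^*})$. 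Choose $t\in\Ch_HK$, which is nonempty since $H$, and hence $H_w$, separates points and contains constants. Put $\varphi=x^*\circ\phi_H(t)\in H^*$. Then $\ep_t\times\sigma_1$ and $\ep_t\times\sigma_2$ are distinct and each satisfies $(i)$--$(iv)$ for this $\varphi$: one has $T^*(\ep_t\times\sigma_j)=\ep_t\otimes x^*$, whose variation is $\ep_t$ with $t\in\Ch_HK=\Ch_{H_w}K$, so $\ep_t\otimes x^*$ is $H$-boundary, i.e. $\prec^H$-maximal (Proposition~\ref{P:maximalitamu}); $\norm{\ep_t\times\sigma_j}=1=\norm{x^*}=\norm{x^*\circ\phi_H(t)}=\norm{\varphi}$ by Lemma~\ref{L:normyevaluaci}$(a)$; condition $(ii)$ holds because $r(\sigma_j)=x^*$; $(iii)$ holds because $\pi_1(\ep_t\times\sigma_j)=\ep_t$ is $\prec_{H_w}$-maximal; and $(iv)$ holds because the constant disintegration kernel $\sigma_j$ consists of maximal measures, so $\ep_t\times\sigma_j$ is $\prec_{H,c}$-minimal in $N(\ep_t\otimes x^*)$ by Proposition~\ref{P:minimalchar}. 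This contradicts $(i)$ (in either part, since only a functional $x^*\circ\phi_H(t)$ is used), so $B_{E^*}$ is a simplexoid. Combining Steps 1 and 2 gives $(i)\implies(ii)$ in both parts.

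The point requiring care is Step 2 for part (b): since there one may only test uniqueness against functionals of the special form $x^*\circ\phi_H(t)$, one cannot quote Theorem~\ref{T:simplexoid} $(3)\implies(1)$ as a black box, because its hypothesis ranges over all $\prec^H$-maximal $\mu$ with $\norm{\mu|_H}=\norm{\mu}$. The resolution is that the measures witnessing failure of the simplexoid property can already be chosen of the concrete shape $\ep_t\otimes x^*$ with $t\in\Ch_HK$ (exactly the measures used inside the proof of Theorem~\ref{T:simplexoid}), and these restrict on $H$ to $x^*\circ\phi_H(t)$; this is precisely where $H\supset$ constants enters, through $\norm{x^*\circ\phi_H(t)}=\norm{x^*}$ (Lemma~\ref{L:normyevaluaci}$(a)$), and where one needs $\Ch_HK\neq\emptyset$. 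All remaining verifications are routine bookkeeping with the already-established equivalences among $H$-boundary vector measures, $\prec^H$-maximal measures, and their $\prec_{H,c}$-minimal lifts.
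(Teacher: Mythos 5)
Your proposal is correct and follows essentially the same route as the paper: translate conditions $(i)$--$(iv)$ into ``$T^*\nu$ is an $H$-boundary measure in $M_\varphi(H)$ and $\nu$ is $\prec_{H,c}$-minimal in $N(T^*\nu)$'', then combine Proposition~\ref{P:maximalitamu} with Theorem~\ref{T:simplexoid} for $(ii)\implies(i)$, and for the converse read off simpliciality from uniqueness of $T^*\nu$ and rerun the explicit $\ep_t\times\sigma_j$ construction from the proof of Theorem~\ref{T:simplexoid} to get the simplexoid property. Your explicit observation that in part $(b)$ one must unwind that construction (rather than cite the theorem as a black box, since only functionals $x^*\circ\phi_H(t)$ are available) is exactly the point the paper's terse reference to ``the proof of'' Theorem~\ref{T:simplexoid} is relying on.
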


\begin{proof}
   Implications $(ii)\implies (i)$ in both cases follow from definitions, Proposition~\ref{P:maximalitamu} and Theorem~\ref{T:simplexoid} (implication $(1)\implies(3)$).

   Let us look at $(i)\implies (ii)$ in both cases. Firstly, uniqueness of $\nu$ implies uniqueness of $T^*\nu$, thus we get functional vector simpliciality in $(a)$ and vector simpliciality in $(b)$ (using definitions and Proposition~\ref{P:maximalitamu}).
   By the proof of Theorem~\ref{T:simplexoid}, implication $(3)\implies(1)$ we deduce that $B_{E^*}$ is a simplexoid.
\end{proof}

We finish this section by formulating the above two results for the special case when $K$ is metrizable and $E$ is separable. They follow directly from the previous two results using Proposition~\ref{P:hranice-max}$(b)$.

\begin{prop} \label{P:reprez-souc-separ}
    Assume that $K$ is metrizable and $E$ is separable. 
      Let $H\subset C(K,E)$ be a function space separating points such that $H_w$ contains constants. Let $\varphi\in H^*$ be given. Then there is a measure $\nu\in M_+(K\times B_{E^*})$ satisfying the following conditions.
    \begin{enumerate}[$(i)$]
        \item $\norm{\nu}=\norm{\varphi}$;
        \item $\varphi(\f)=\int x^*(\f(t))\di\nu(t,x^*)$ for each $\f\in H$;
        \item $\nu$ is carried by $\Ch_HK\times \ext B_{E^*}$.
    \end{enumerate}
\end{prop}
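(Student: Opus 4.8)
\emph{Proof plan.} The plan is to deduce the statement from Theorem~\ref{T:reprez-soucin} together with Proposition~\ref{P:hranice-max}$(b)$; essentially all the substantive work is already contained in those two results, so what remains is to check that the maximality and minimality data produced by the representation theorem fit the hypotheses of Proposition~\ref{P:hranice-max}$(b)$. If $\varphi=0$ one simply takes $\nu=0$, so I would assume $\varphi\ne0$ from the outset.

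First I would invoke Theorem~\ref{T:reprez-soucin} to obtain $\nu\in M_+(K\times B_{E^*})$ with $\norm{\nu}=\norm{\varphi}$, with $\varphi(\f)=\int x^*(\f(t))\di\nu(t,x^*)$ for all $\f\in H$, with $\pi_1(\nu)$ being $\prec_{H_w}$-maximal, and with $\nu$ being $\prec_{H,c}$-minimal within $N(T^*\nu)$. Conditions $(i)$ and $(ii)$ of the statement are then already in hand, and the task is to upgrade them to condition $(iii)$. The next step is to record the norm identity $\norm{T^*\nu}=\norm{\nu}$: the displayed formula for $\varphi$ says precisely that $(T^*\nu)|_H=\varphi$, whence $\norm{\nu}=\norm{\varphi}\le\norm{T^*\nu}\le\norm{\nu}$, the last inequality because $T$ is an isometric inclusion. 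In particular $\nu\in N(T^*\nu)$ in the sense used in Section~\ref{sec:nasoucinu}, and $T^*\nu\ne0$ since $\varphi\ne0$. Now Lemma~\ref{L:dezintegrace-Nmu} applied with $\mu=T^*\nu$ gives $\pi_1(\nu)=\abs{T^*\nu}$, so $\abs{T^*\nu}$ is $\prec_{H_w}$-maximal, and the implication $(4)\implies(2)$ of Proposition~\ref{P:maximalitamu} shows that $T^*\nu$ is $\prec^H$-maximal.

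Finally I would apply Proposition~\ref{P:hranice-max}$(b)$: the hypotheses ``$K$ metrizable, $E$ separable, $\norm{T^*\nu}=\norm{\nu}$'' all hold, and assertion $(i)$ of that proposition — namely that $T^*\nu$ is $\prec^H$-maximal and $\nu$ is $\prec_{H,c}$-minimal in $N(T^*\nu)$ — has just been verified, so assertion $(ii)$ holds, i.e. $\nu$ is carried by $\Ch_HK\times\ext B_{E^*}$. That is condition $(iii)$, and the proof is complete. The only genuinely delicate point is the last step's bookkeeping: one must match the three different flavours of extremality ($\prec_{H_w}$-maximality of $\abs{T^*\nu}$, $\prec^H$-maximality of $T^*\nu$, $\prec_{H,c}$-minimality of $\nu$) to exactly what Proposition~\ref{P:hranice-max}$(b)$ demands, and for that the identification $\pi_1(\nu)=\abs{T^*\nu}$ together with the norm identity $\norm{T^*\nu}=\norm{\nu}$ is what makes the passage between the two notational conventions legitimate.
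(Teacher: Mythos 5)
Your proof is correct and takes essentially the same route as the paper, which derives this proposition in a single line from Theorem~\ref{T:reprez-soucin} together with Proposition~\ref{P:hranice-max}$(b)$. Your write-up merely makes explicit the bookkeeping the paper leaves implicit — the norm identity $\norm{T^*\nu}=\norm{\nu}$, the resulting membership $\nu\in N(T^*\nu)$ with $\pi_1(\nu)=\abs{T^*\nu}$, and the upgrade from $\prec_{H_w}$-maximality of $\pi_1(\nu)$ to $\prec^H$-maximality of $T^*\nu$ via Proposition~\ref{P:maximalitamu} — and all of these steps are sound.
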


\begin{prop}
  Assume that $K$ is metrizable and $E$ is separable. 
   Let $H\subset C(K,E)$ be a function space separating points and containing constants.
\begin{enumerate}[$(a)$]
    \item The following assertions are equivalent:
    \begin{enumerate}[$(i)$]
        \item For any $\varphi\in H^*$ there is a unique $\nu\in M_+(K\times B_{E^*})$ with properties $(i)-(iii)$ from Proposition~\ref{P:reprez-souc-separ}.
        \item $H$ is functionally vector simplicial and $B_{E^*}$ is a simplexoid.
    \end{enumerate}
    \item The following assertions are equivalent:
    \begin{enumerate}[$(i)$]
        \item For any $t\in K$ and $x^*\in E^*$ there is a unique $\nu\in M_+(K\times B_{E^*})$ corresponding to $\varphi=x^*\circ\phi_H(t)$ with properties  $(i)-(iii)$ from Proposition~\ref{P:reprez-souc-separ}.
        \item $H$ is vector simplicial and $B_{E^*}$ is a simplexoid.
    \end{enumerate}
\end{enumerate}
\end{prop}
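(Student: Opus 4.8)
The plan is to deduce both equivalences from the preceding proposition — the one characterizing, without metrizability or separability assumptions, when for every $\varphi\in H^*$ (resp.\ for every $\varphi=x^*\circ\phi_H(t)$) there is a unique $\nu\in M_+(K\times B_{E^*})$ having properties $(i)$--$(iv)$ of Theorem~\ref{T:reprez-soucin}. To do this I would show that, in the present setting, for a fixed $\varphi$ the measures satisfying conditions $(i)$--$(iii)$ of Proposition~\ref{P:reprez-souc-separ} are exactly the measures satisfying conditions $(i)$--$(iv)$ of Theorem~\ref{T:reprez-soucin}. Granting this, the uniqueness clauses in $(a)(i)$ and $(b)(i)$ of the statement coincide with the corresponding clauses of the preceding proposition, and the asserted equivalences with functional vector simpliciality (resp.\ vector simpliciality) together with ``$B_{E^*}$ is a simplexoid'' follow immediately.

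First I would record a normalization fact: if $\nu\in M_+(K\times B_{E^*})$ satisfies $(i)$ and $(ii)$ of Proposition~\ref{P:reprez-souc-separ} with respect to $\varphi$, then $\norm{T^*\nu}=\norm{\nu}=\norm{\varphi}$. Indeed, $(ii)$ rewrites as $\int \f\di T^*\nu=\varphi(\f)$ for $\f\in H$, so $T^*\nu\in M(K,E^*)=C(K,E)^*$ restricts to $\varphi$ on $H$, whence $\norm{T^*\nu}\ge\norm{\varphi}$; since $T$ is an isometric inclusion we have $\norm{T^*\nu}\le\norm{\nu}$, and $(i)$ gives $\norm{\nu}=\norm{\varphi}$. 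Consequently $\nu\in N(T^*\nu)$, and Lemma~\ref{L:dezintegrace-Nmu} yields $\pi_1(\nu)=\abs{T^*\nu}$.

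With this normalization the desired set equality is routine. Conditions $(i)$ and $(ii)$ are literally the same in both lists. For the rest, Proposition~\ref{P:maximalitamu} applied to $\mu=T^*\nu$ (together with $\pi_1(\nu)=\abs{T^*\nu}$) shows that $(iii)$ of Theorem~\ref{T:reprez-soucin}, i.e.\ $\pi_1(\nu)$ being $\prec_{H_w}$-maximal, is equivalent to $T^*\nu$ being $\prec^H$-maximal; hence $(iii)$ and $(iv)$ of Theorem~\ref{T:reprez-soucin} together say precisely that $T^*\nu$ is $\prec^H$-maximal and $\nu$ is $\prec_{H,c}$-minimal in $N(T^*\nu)$. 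Because $\norm{T^*\nu}=\norm{\nu}$, $K$ is metrizable and $E$ is separable, Proposition~\ref{P:hranice-max}$(b)$ tells us this is equivalent to $\nu$ being carried by $\Ch_HK\times\ext B_{E^*}$, i.e.\ to condition $(iii)$ of Proposition~\ref{P:reprez-souc-separ}. Thus the two families of representing measures coincide, for every $\varphi\in H^*$ and in particular for $\varphi=x^*\circ\phi_H(t)$, and the proof is completed by invoking the preceding proposition.

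I expect the only genuinely delicate point to be the normalization step: Proposition~\ref{P:hranice-max}$(b)$ and the very definition of $N(T^*\nu)$ require $\norm{T^*\nu}=\norm{\nu}$, so it must be checked that every $\nu$ satisfying $(i)$--$(iii)$ of Proposition~\ref{P:reprez-souc-separ} automatically has this property — otherwise the two descriptions would not match in both directions. Everything else is a bookkeeping translation between the ordering-theoretic language of Theorem~\ref{T:reprez-soucin} and the ``carried by $\Ch_HK\times\ext B_{E^*}$'' language of Proposition~\ref{P:reprez-souc-separ}.
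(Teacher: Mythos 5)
Your proposal is correct and takes essentially the same route as the paper, which deduces the statement from the preceding uniqueness proposition together with Proposition~\ref{P:hranice-max}$(b)$; your translation between conditions $(iii)$--$(iv)$ of Theorem~\ref{T:reprez-soucin} and the ``carried by $\Ch_HK\times\ext B_{E^*}$'' condition is exactly the intended argument. The normalization $\norm{T^*\nu}=\norm{\nu}=\norm{\varphi}$, which you rightly flag as the one point needing verification (so that $\nu\in N(T^*\nu)$ and Proposition~\ref{P:hranice-max}$(b)$ applies), is handled correctly.
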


\section{Overview of the results}

It turns out that the theory of integral representation for vector-valued function spaces $H\subset C(K,E)$ has simultaneously similarities and differences to the scalar theory. One of the major differences is the fact that there are two different natural directions of representation theorems (see Section~\ref{sec:repmeas}). The first possibility is representing functionals from $H^*$ using $E^*$-valued vector measures on $K$. This approach is used for example in \cite{saab-tal} and we address it in Section~\ref{sec:reprez-functionals}. In this case any functional admits a representing measure of the same norm by a combination of Hanh-Banach and Singer's theorems.  

The second possibility consists in representation of operators from $L(H,E)$ by scalar measures on $K$ with the use of Bochner integral. We address it in Section~\ref{sec:reprez-oper}. In this case not all operators may be represented, so we introduce \emph{representable} operators. Not all operators are representable (Example~\ref{ex:nonrepresentable}) and, moreover, the smallest norm of a representing measure may be larger than the operator norm (Example~\ref{ex:mensinorma}). Further, representable operators are in a natural isometric correspondence with $H_w^*$ (see Proposition~\ref{P:Upsilon}), where $H_w$ is the scalar function space generated by $x^*\circ\h$ for $\h\in H$ and $x^*\in E^*$.

On the other hand, there are unique natural notions of Choquet boundary and of boundary measures and they coincide with the notions used (explicitly or implicitly in the literature). There are several equivalent descriptions (see Section~\ref{sec:hranice a miry}) which work in full generality (i.e., without assuming any properties of $H$ like separation of points or containment of constants). In both variants of representation we have general representation theorems by boundary measures (see Section~\ref{sec:reprez}). These theorems are essentially known and may be proved using a result of \cite{saab-canad}. The two representation theorems lead to two different way of looking at uniqueness, i.e. to different variants of simpliciality -- vector simpliciality and weak simpliciality (see Section~\ref{sec:reprez}). They appear to have a quite different nature.

Weak simpliciality, similiarly as simpliciality of scalar function spaces, has two degrees.
The first one (called weak simpliciality) is defined by uniqueness of boundary representation of evaluation operators, the second one (called functional weak simpliciality) is defined by uniqueness of boundary representation of all representable operators. It turns out that these properties are equivalent to the corresponding properties of the scalar function space $H_w$ (Observation~\ref{obs:weak simpl}). In particular, these properties are not affected by a renorming of $E$. Further, the space $A_c^w(H)$ defined in Section~\ref{sec:H-aff} is a complete analogy of the classical space of continuous $H$-affine functions known from the scalar theory. In particular, Choquet boundaries of $H$ and $A_c^w(H)$ coincide, $H$-boundary and $A_c^w(H)$-boundary measures coincide (Proposition~\ref{P:AcwHboundary}) and $H$ is weakly simplicial if and only if $A_c^w(H)$ is weakly simplicial (Proposition~\ref{P:weaksimp}).

Vector simpliciality has a quite different nature. It has also two degrees -- functional vector simpliciality defined by uniqueness of boundary representation of any functional from $H^*$ and vector simpliciality defined by uniqueness of boundary representation of special functionals, namely
compositions of functionals on $E$ with evaluation operators. Also in this case we may define an analogue of continuous $H$-affine functions, this time denoted by $A_c^v(H)$. But similarity with the scalar case is rather limited. The Choquet boundary of $A_c^v(H)$ may be strictly larger than that of $H$, the vector simpliciality of $H$ does not imply that of $A_c^v(H)$ (Example~\ref{ex:nezachovani}).
Moreover, both vector simpliciality and the space $A_c^v(H)$ may be affected by renorming $E$ (see Examples~\ref{ex:nezachovani},~\ref{ex:renorm1} and~\ref{ex:renorm2}). Further, spaces $A_c^v(H)$ and $A_c^w(H)$ may be mutually incomparable (Example~\ref{ex:nezachovani}) and weak simpliciality and vector simpliciality are mutually incomparable as well (see Examples~\ref{ex:nezachovani} and~\ref{ex:vsnews}).

The situation is much easier if $H$ contains constants (see Section~\ref{sec:skonstantami}). Then $A_c^v(H)\subset A_c^w(H)$ (Lemma~\ref{L:skonstantami}) and vector simpliciality admits a nice characterization witnessing that it is strictly stronger than weak simpliciality (Theorem~\ref{T:ruznesimpl} and Example~\ref{ex:wsnevs-const}). However, even in this case vector simpliciality and $A_c^v(H)$ are affected by renorming of $E$ (Example~\ref{ex:renorm-const}). A distinguished role is played by equivalent norms with strictly convex dual (Proposition~\ref{P:renorm-AcvH}).

If $H_w$ contains constants (but $H$ not necessarily), weak simpliciality has nice characterizations, in particular, it is equivalent to vector simpliciality of $A_c^w(H)$ (Corollary~\ref{cor:weaKsimpl}).
If additionally $E$ is an $L^1$-predual, then $H$ is weakly simplicial if and only if $A_c^w(H)$ is an $L^1$-predual (Theorem~\ref{T:L1pred}) which extends a known result from the scalar setting. Further, in this case there are mutually related natural order structures on $M_+(K)$, $M_+(K\times B_{E^*})$ and $M(K,E^*)$ (Section~\ref{sec:ordering}) which provide a characterization of $H$-boundary vector measures by certain maximality conditions (Proposition~\ref{P:maximalitamu}).
Finally, using the method of disintegration and some results from \cite{batty-vector} and \cite{transference-studia} we formulate and prove a finer representation theorem using positive measures on $K\times B_{E^*}$ and characterize their uniqueness (Section~\ref{ss:jinarep}).

\bibliographystyle{acm}
\bibliography{vector-obecH}

\end{document}